\tikzstyle{vertex}=[circle, draw, inner sep=0pt, minimum size=5pt]
\newtheorem{theorem}{Theorem}[section]
\newtheorem{corollary}[theorem]{Corollary}
\newtheorem{lemma}[theorem]{Lemma}
\newtheorem{proposition}[theorem]{Proposition}
\theoremstyle{definition}
\newtheorem{remark}[theorem]{Remark}
\newtheorem{da-scrivere}[theorem]{da-scrivere}
\newcommand{\Aut}{{\mathrm {Aut}}}
\newcommand{\out}{{\mathrm {Out}}}
\newcommand{\End}{\operatorname{End}}
\def\B{{\cal B}}
\def\D{{\cal D}}
\def\F{{\cal F}}
\def\H{{\cal H}}
\def\K{{\cal K}}
\def\O{{\cal O}}
\def\P{{\cal P}}
\def\Q{{\cal Q}}
\def\S{{\cal S}}
\def\U{{\cal U}}
\begin{document}
\title{A look at the inner structure of the $2$-adic ring $C^*$-algebra and its automorphism groups}
\author{Valeriano Aiello$^\dag$, Roberto Conti$^\sharp$, Stefano Rossi$^\natural$\footnote{E-mail: 
valerianoaiello@gmail.com, roberto.conti@sbai.uniroma1.it, rossis@mat.uniroma2.it}
\\
$^\dag$ Dipartimento di Matematica e Fisica\\ Universit\`a Roma Tre \\ Largo S. Leonardo Murialdo 1, 00146 Roma, Italy.\\
$^\sharp$ Dipartimento di Scienze di Base e Applicate per l'Ingegneria \\ Sapienza Universit\`a di Roma \\ Via A. Scarpa 16,
I-00161 Roma, Italy.\\
$^\natural$ Dipartimento di Matematica,\\ Universit\`a di Roma Tor
Vergata \\ Via della Ricerca Scientifica 1, I--00133 Roma, Italy.}
\date{}
\maketitle

\begin{abstract}
We undertake a systematic study of the so-called  $2$-adic ring $C^*$-algebra $\Q_2$. This is the universal $C^*$-algebra generated by a unitary $U$ and an isometry $S_2$ such that $S_2U=U^2S_2$ and $S_2S_2^*+US_2S_2^*U^*=1$. 
Notably, it contains a copy of the Cuntz algebra $\O_2=C^*(S_1, S_2)$ through the injective homomorphism mapping $S_1$ to $US_2$.
Among the main results, the relative commutant $C^*(S_2)'\cap \Q_2$ is shown to be trivial. This  in turn leads to a rigidity property enjoyed by the inclusion $\O_2\subset\Q_2$, namely the endomorphisms of $\Q_2$ that restrict to the identity on $\O_2$ are actually the identity on the whole 
$\Q_2$. Moreover, there is no conditional expectation from $\Q_2$ onto $\O_2$. As for the inner structure of $\Q_2$,  the diagonal subalgebra $\D_2$ and $C^*(U)$ are both proved to be maximal abelian in $\Q_2$. The maximality of the latter allows a thorough investigation of several classes of endomorphisms and automorphisms of $\Q_2$. In particular, the semigroup of the endomorphisms fixing $U$ turns out to be a maximal abelian subgroup of $\Aut(\Q_2)$ topologically isomorphic with $C(\mathbb{T},\mathbb{T})$. Finally, it is shown 
by an explicit construction
that ${\rm Out}(\Q_2)$ is uncountable and non-abelian.
\end{abstract}

\tableofcontents

\section{Introduction}
Ever since their formal debut in the most cited paper \cite{Cuntz1}, the Cuntz algebras have received a great deal of attention. The reasons are so many they resist any attempt to be only briefly accounted, and this introduction will be no exception.
Therefore, we cannot but draw a rather quick and incomplete outline of the later developments until the present state of the art,  if only to better frame the scope of our work. 
For many authors who have focused their interest on more and more general constructions inspired by the Cuntz algebras,
there are as many authors who have devoted themselves to as thorough as possible a study of the concrete Cuntz algebras. 
This study includes, in particular, an in-depth investigation of endomorphisms and automorphisms. Cuntz is  among those who have undertaken both the tasks.   
As for the first, he and other authors have written a long series of works where increasingly broad classes of $C^*$-algebras associated with algebraic objects such as rings are contrived. 
In particular, in \cite{Cuntz3} he introduced a $C^*$-algebra $\Q_{\mathbb{N}}$ associated with the $ax+b$-semigroup over the natural numbers. A few years later, Larsen and Li \cite{LarsenLi} considered  its $2$-adic version which, accordingly, they denoted by $\Q_2$. The main object of our interest in the present paper, this novel
 $C^*$-algebra is in fact naturally associated with the semidirect
product semigroup of the additive group $\mathbb{Z}$ acted upon by multiplication with non-negative powers of $2$.
It did appear before elsewhere, cf. \cite{LarsenLi} and the references therein, but it is in the above-mentioned work of Larsen and Li that it was studied systematically for the first time. 
After recalling that $\Q_2$ is a nuclear $C^*$-algebra, they prove, among other things, that $\Q_2$ is also a purely infinite simple $C^*$-algebra. 
They give two proofs of this fact. Notably, one is a straightforward  application of $\Q_2$ being a Cuntz-Pimsner algebra, to which general results of Exel, an Huef and Raeburn \cite{Exel} apply.
From our viewpoint, this lucky circumstance is well worth mentioning. Indeed, very little is known about the general structure of endomorphisms or automorphisms for general Pimsner algebras, cf. \cite{Zac,DS,CHSJFA}.
Therefore, as should follow from some of the main results announced in the abstract, a good way to look at $\Q_2$ might be to regard it as a felicitous example of a Pimsner algebra for which a far-reaching study is not that prohibitive.
Far be it from us, however, to allege we have done all that could be done. Rather, our hope is that further research may stem from this work. 
That for a Pimsner algebra a thorough comprehension of the properties of all its endomorphisms is a virtually impossible task should be no surprise. Indeed, already for the Cuntz algebras $\O_n$ the problem, at least in its full generality, has turned out to be well beyond the reach of current research. For instance, all endomorphisms of $\O_n$ are known to come from unitary elements of $\O_n$ via a correspondence first pointed out by Takesaki, see e.g. \cite{Cuntzsurvey}, and yet it is notoriously difficult to find non-tautological and effective characterizations of those unitaries that yield automorphisms, 
although a number of intriguing if partial results about the structure of $\Aut(\O_n)$ have recently been achieved in \cite{ContiSzymanski,CKS-MoreLoc, Roberto, CRS, CHSCrelle,CHS,CHS2015}, see also \cite{ContiHongSzymanski, Contisurvey} for an informative account. 
Without further ado, we can now move on to the basic definitions needed throughout the paper. While being a Pimsner algebra, the $2$-adic ring $C^*$-algebra is perhaps best described as the universal $C^*$-algebra 
$\Q_2$ generated by a unitary $U$ and an isometry $S_2$ such that 
$$
S_2U=U^2S_2\quad\textrm{and}\quad S_2S_2^*+US_2S_2^*U^*=1
$$
The reader interested in its description in terms of Pimsner algebras is again referred to 
\cite{LarsenLi}. However, in this paper we will never need to resort to that picture, which is why we may as well dispense with it. What we do need to observe is that $\Q_2$ contains 
a copy of the Cuntz algebra $\O_2$. Indeed, the latter is by definition the universal $C^*$-algebra generated by two isometries $X_1$ and $X_2$ such that $X_1X_1^*+X_2X_2^*=1$. Therefore, the map
taking $X_1$ to $US_2$ and $X_2$ to $S_2$ extends by universality to a homomorphism from $\O_2$ to $\Q_2$, which is injective thanks to the simplicity of $\O_2$.  Accordingly, as of now it will be
convenient to think of $\O_2$ as being a subalgebra of $\Q_2$. To us the rather explicit description of the inclusion $\O_2\subset\Q_2$ was in fact among the strongest motivations to carry out the present study of $\Q_2$, especially as far as the extension problem is concerned. This asks whether an endomorphism of $\O_2$ extends to $\Q_2$.  It turns out that this is not always the case. For instance, as soon as Bogoljubov automorphisms are looked at, 
easy examples are found of non-extensible automorphisms. More precisely, we find that the only extensible Bogoljubov automorphisms are the flip-flop, the gauge automorphisms and their products.
In addition, facing the extension problem in general leads to an interesting rigidity property enjoyed by the inclusion $\O_2\subset\Q_2$, namely if an endomorphism of $\Lambda$ of $\Q_2$ restricts to $\O_2$ trivially, then it is the identity automorphism. To the best of our knowledge, the pair $(\O_2,\Q_2)$ is the only known example of a non-trivial inclusion of Pimsner algebras that fulfills the rigidity condition. In this respect, it is also worth mentioning that there is no conditional expectation from the larger onto the smaller of the two.\\ 

Once these questions have been answered, it is natural to go on to study endomorphisms and automorphisms of $\Q_2$ irrespective of whether they leave $\O_2$ globally invariant or not. 
Asking questions of this sort is of course motivated by the overwhelming literature written on similar issues for the Cuntz algebras. However, this entails a preliminary study of the inner structure of $\Q_2$. In this regard, we prove that both the $C^*$-algebra generated by $U$ and the diagonal subalgebra $\D_2\subset\O_2$ are maximal abelian. It came as a surprise to us to learn that not as many results as one would expect are known on maximal abelian subalgebras for general $C^*$-algebras. Apparently, that of maximal abelian subalgebra is a notion far more relevant to von Neumann algebras. Yet $\Q_2$ seems to be one of the few exceptions, for its theory does benefit from $C^*(U)$ being such a subalgebra. Indeed, we exploit the maximality of $C^*(U)$ to derive a number of results on the general form of selected classes of automorphisms, many of which are, incidentally,  quasi-free in the sense of Dykema-Shlyakhtenko and Zacharias, see \cite{DS, Zac}. Notably, we show that the semigroup of the endomorphisms of $\Q_2$ that fix $U$ is in fact a maximal abelian subgroup of $\Aut(\Q_2)$ isomorphic with $C(\mathbb{T},\mathbb{T})$, the group of all continuous $\mathbb{T}$-valued functions defined on the one-dimensional torus $\mathbb{T}$ understood as the spectrum of $U$. These results are in fact in same spirit as those expounded in \cite{Cuntzsurvey}. 
Moreover, they indicate that it is not unconceivable to regard $C(\mathbb{T},\mathbb{T})\subset\Aut(\Q_2)$ as a generalized maximal torus, although not connected, for which a kind of infinite-dimensional Weyl theory might well worth attempting, as is done in \cite{CHSCrelle, CHS} for Cuntz algebras.\\

Since the group of equivalence classes of outer automorphisms of $\O_2$ is known to be so large as to contain most locally compact groups, our investigation also addresses $\out(\Q_2)$.  We have partial evidence to hold that $\out(\Q_2)$ is not as large, not least because, as a drawback of the aforementioned result on Bogoljubov automorphisms, we no longer have a general procedure for embedding locally compact groups into $\out(\Q_2)$ as we  would do with $\out(\O_2)$. At any rate, we prove that $\out(\Q_2)$ is still an uncountable non-abelian group. This is done in two steps. First, we prove that both the flip-flop and the gauge automorphisms are mutually non-equivalent outer automorphisms. Second, we provide a broad class of outer automorphisms that do not commute in $\out(\Q_2)$ with  the flip-flop. Even so, the non-commutativity thus exhibited is admittedly of a rather mild form. We do believe that it is an interesting, albeit difficult, problem to say to what extent $\out(\Q_2)$ is non-abelian.\\

A few words on the organization of the material are in order.  The various results of the paper are scattered throughout several sections, which more or less follow the order in which the topics developed have been introduced above, as to allow the reader to find them more easily. 
For convenience, here follows a description of the content of the several sections, to be also understood as a short guide to the main results.
Section \ref{SecPrep} is preparatory in character, as it sets the stage for our subsequent considerations. Indeed, all the needed definitions and  basic properties are to be found here.    
In Section \ref{SecMASA} both $C^*(U)$ and $\D_2$ are shown to be maximal abelian subalgebras of $\Q_2$, see Theorems \ref{UMASA} and \ref{DMASA}, respectively.
In Section \ref{SecIrr} the extended canonical endomorphism is proved to be a shift on $\Q_2$, Theorem \ref{Shift}. Moreover, there is no conditional expectation from $\Q_2$ onto $\O_2$,
Theorem \ref{Nocondexp}. The main result of Section \ref{Secrelcomm} is Theorem \ref{Relcomm}, where the relative commutant
$C^*(S_2)'\cap\Q_2$ is shown to be trivial. Section \ref{SecExt} is focused on the uniqueness of extensions of automorphisms from $\O_2$ to $\Q_2$, which is
proved in Theorem \ref{Rigidity}, and the non-extendability of general Bogoljubov automorphisms, which is proved
in Theorem \ref{Bogoljubov}. Section \ref{Outerness} deals with the outerness of the gauge automorphisms and the flip-flop, but it also includes a general result, see Theorems \ref{GaugeOut}, \ref{FlipFlopOut}, \ref{GenOut},
respectively. Finally, Section \ref{SecAut} provides a complete description of $\Aut_{C^*(U)}(\Q_2)$, see Theorem \ref{Struct}. Moreover, this group is shown to be isomorphic
with $C(\mathbb{T},\mathbb{T})$ and maximal abelian in $\Aut(\Q_2)$, see Theorems \ref{Isom} and \ref{MaxAb}, respectively.   
The  last two results along with Theorem \ref{out-not-ab}, which states that the outer automorphism group is non-abelian, should be regarded as the main results of the present paper. \\

Throughout the paper, all endomorphisms are assumed to be unital and $*$-preserving.
Finally, for endomorphisms of the Cuntz algebra $\O_2$  we adopt the well-established notations to be found 
in the wide literature of the field (see the beginning of Section \ref{SecExt} for a very short description of the Cuntz-Takesaki correspondence).
This is certainly the case for the symbols $\alpha_z$, $\varphi$, $\lambda_f$ introduced in full detail in the next section.

\section{First results}\label{SecPrep}

As we observed in the introduction, the  $2$-adic ring $C^*$-algebra contains a copy of the Cuntz algebra $\O_2$, as the $C^*$-subalgebra generated by
$S_2$ and $S_1\doteq US_2$. Since the theory of the latter has been enriched by a deeper and deeper knowledge
of distinguished classes of endomorphisms as well as automorphisms, problems to do with their extensions to $\Q_2$ are undeniably among the most natural things to initiate a study with. We will see in Section 4.1 that as soon as too much generality is allowed,
these problems begin to be intractable for all practical purposes. If one asks a bit more specific questions, a great many partial  results do start cropping out. 
At any rate, we will also obtain a general result, namely that whenever extensions exist they are unique, which is yet another way to state the rigidity property of the inclusion $\O_2\subset\Q_2$ we explained in the introduction. The precise statement of this fact, too, is contained in Section 4.1.     
In the present section, we limit ourselves to three remarkable examples that are easily dealt with. The first is the canonical shift. The second is the flip-flop. The third are the gauge automorphisms.\\           

The canonical shift is explicitly defined on every $x\in\O_2$ as $\varphi(x)=S_1xS_1^*+S_2xS_2^*$, therefore if we set  
$$\widetilde\varphi(x) = US_2 x S_2^*U^* + S_2 x S_2^*\quad\textrm{for any}\; x \in {\mathcal Q}_2$$ 
we still define an endomorphism of $\Q_2$, which restricts to $\O_2$ as the usual shift. The intertwining rules $S_i x = \widetilde\varphi(x)S_i$ for any  $x\in {\mathcal Q}_2$ with $i=1,2$ still hold true.
Moreover, a straightforward computation shows that $\widetilde\varphi(U) = U^2 $.
Since the continuous functional calculus of a normal operator commutes with any endomorphism, the above equality can also be rewritten as $\widetilde\varphi(f(U))=f(U^2)$, which is true for any continuous function $f$.  
It goes without saying that the same equality retains its validity with any Borel function whenever $\Q_2$ is represented on some Hilbert space. We shall avail ourselves of this useful fact later on.\\

As is well known, the flip-flop is the involutive automorphism $\lambda_f\in\Aut(\Q_2)$ that switches $S_1$ and $S_2$ with each other. 
The flip-flop extends as well, although the proof is less obvious and needs an argument. This is done here below.
\begin{proposition}(\cite{CS-private})
The flip-flop automorphism of $\O_2$ extends to an automorphism of $\Q_2$. 
\end{proposition}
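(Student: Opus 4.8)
The plan is to lean entirely on the universal property of $\Q_2$. An endomorphism of $\Q_2$ is uniquely determined by the images of the two generators $U$ and $S_2$, provided one sends $U$ to a unitary $V$ and $S_2$ to an isometry $T$ for which the two defining relations $TV=V^2T$ and $TT^*+VTT^*V^*=1$ both hold. Thus the whole task reduces to producing the right pair $(V,T)$. Since the sought-for extension $\widetilde\lambda_f$ must restrict to the flip-flop on $\O_2$, it is forced to satisfy $\widetilde\lambda_f(S_2)=S_1=US_2$ and $\widetilde\lambda_f(S_1)=S_2$. Writing $V=\widetilde\lambda_f(U)$ and combining these with $S_1=US_2$, one obtains the consistency constraint $VUS_2=S_2$, which, $U$ being unitary, points unmistakably to the choice $V=U^*$.

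I would therefore set $\widetilde\lambda_f(U)=U^*$ and $\widetilde\lambda_f(S_2)=US_2$ and simply check that the defining relations are preserved. For the first relation this uses only the commutation rule rewritten as $S_2U^*=U^{*2}S_2$, which follows at once from $S_2U=U^2S_2$; for the second relation the two summands collapse, via $U^*U=1$, exactly onto the original identity $US_2S_2^*U^*+S_2S_2^*=1$. Both verifications are short and mechanical, so by universality $\widetilde\lambda_f$ is a well-defined unital $*$-endomorphism of $\Q_2$.

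It then remains to confirm two things. First, that $\widetilde\lambda_f$ genuinely extends the flip-flop: indeed $\widetilde\lambda_f(S_2)=US_2=S_1$, while $\widetilde\lambda_f(S_1)=U^*\cdot US_2=S_2$, so $\widetilde\lambda_f|_{\O_2}=\lambda_f$. Second, that it is an automorphism; this is immediate from involutivity, since on the generators $\widetilde\lambda_f^2(U)=(U^*)^*=U$ and $\widetilde\lambda_f^2(S_2)=U^*US_2=S_2$, whence $\widetilde\lambda_f^2=\mathrm{id}$ and $\widetilde\lambda_f$ is its own inverse. The only genuinely nonroutine point in the whole argument is the initial guess $\widetilde\lambda_f(U)=U^*$: because $U\notin\O_2$, its image is not dictated by the $\O_2$-data alone, and one has to recognize that the consistency constraint $VUS_2=S_2$ singles out $U^*$. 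Once that is seen, everything else is a direct computation.
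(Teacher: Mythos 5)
Your proposal is correct and follows essentially the same route as the paper: the same assignment $U\mapsto U^{*}$, $S_2\mapsto US_2$, verification of the two defining relations, and an appeal to universality. The one small divergence is at the final step: the paper gets injectivity from simplicity of $\Q_2$ and surjectivity from the fact that the image contains $U$ and $S_2$, whereas you observe that $\widetilde\lambda_f^{\,2}$ fixes both generators, so $\widetilde\lambda_f^{\,2}=\mathrm{id}$ and bijectivity is automatic; your variant is slightly cleaner, as it does not invoke simplicity at all.
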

\begin{proof}
If we set $U'\doteq U^*$ and $S_2'\doteq US_2$, then the identity $S_2'S_2'^*+U'S_2'S_2'^*U^*=1$ is immediately checked. By universality of $\Q_2$, there exists a unique endomorphism $\widetilde\lambda_f\in\End(\Q_2)$ such that $\widetilde\lambda_f (U)=U'=U^*$ and $\widetilde\lambda_f(S_2)=S_2'=US_2$.
This endomorphism is necessarily injective as $\Q_2$ is simple.
 Since $\widetilde\lambda_f(U^*)=U$ and $\widetilde\lambda_f(US_2)=S_2$, the image of $\alpha$ must be the whole $\Q_2$, that is to say $\widetilde\lambda_f$ is an automorphism. Finally, it is obviously an extension of the flip-flop.
\end{proof}
As of now, the above extension will be referred to simply as the flip-flop of $\Q_2$ and will be denoted by $\widetilde\lambda_f$.\\

It is also well known that the Cuntz algebra $\O_2$ is acted upon by $\mathbb{T}$ through the so-called gauge automorphisms $\alpha_z$ given by $\alpha_z(S_i)=zS_i$, with $z\in\mathbb{T}$. We can also prove the following extension result concerning gauge automorphisms.
\begin{proposition}
The gauge automorphisms of $\O_2$ can all be extended to automorphisms of $\Q_2$.
\end{proposition}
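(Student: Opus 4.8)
The plan is to use the universal property of $\Q_2$, exactly as in the proof just given for the flip-flop. The gauge automorphism $\alpha_z$ of $\O_2$ acts by $S_1\mapsto zS_1$ and $S_2\mapsto zS_2$; since $S_1=US_2$, this means $US_2\mapsto z\,US_2$ and $S_2\mapsto zS_2$, which forces $U$ to be fixed. So the natural candidate for the extension is the endomorphism sending $U\mapsto U$ and $S_2\mapsto zS_2$.

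First I would verify that the pair $(U'\doteq U,\; S_2'\doteq zS_2)$ satisfies the two defining relations of $\Q_2$. The first relation $S_2'U'=U'^2S_2'$ reads $zS_2U=U^2 zS_2$, which holds because $z$ is a scalar commuting with everything and $S_2U=U^2S_2$ already holds. For the second relation, $S_2'S_2'^*+U'S_2'S_2'^*U'^*=|z|^2S_2S_2^*+|z|^2US_2S_2^*U^*=S_2S_2^*+US_2S_2^*U^*=1$, since $|z|=1$. By universality there is then a unique endomorphism $\widetilde\alpha_z\in\End(\Q_2)$ with $\widetilde\alpha_z(U)=U$ and $\widetilde\alpha_z(S_2)=zS_2$, and it restricts on $\O_2$ to $\alpha_z$ because it sends $S_2\mapsto zS_2$ and $US_2\mapsto zUS_2$.

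It then remains to check that $\widetilde\alpha_z$ is an automorphism rather than merely an endomorphism. Injectivity is immediate from the simplicity of $\Q_2$, as in the previous proof. For surjectivity I would exhibit a preimage of each generator: since $\bar z\in\mathbb{T}$ as well, we have $\widetilde\alpha_z(\bar zS_2)=\bar z\,zS_2=S_2$ and $\widetilde\alpha_z(U)=U$, so both generators lie in the range, whence $\widetilde\alpha_z$ is onto. (Equivalently, one observes directly that $\widetilde\alpha_{\bar z}$ is a two-sided inverse.) This shows $\widetilde\alpha_z\in\Aut(\Q_2)$ and completes the extension.

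I do not expect any serious obstacle here: the only point requiring the slightest care is recognizing that the gauge action on $\O_2$ already determines that $U$ must be fixed (so that one cannot, say, scale $U$), and then checking that scaling $S_2$ alone is compatible with both relations — the second relation is where $|z|=1$ is genuinely used. Everything else is a routine application of universality and simplicity, mirroring the flip-flop argument verbatim.
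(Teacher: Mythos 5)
Your proof is correct and follows essentially the same route as the paper: verify that the pair $(U, zS_2)$ satisfies the two defining relations, invoke universality of $\Q_2$, and note that the resulting map sends $S_1=US_2$ to $zS_1$, hence restricts to $\alpha_z$ on $\O_2$. The only difference is that you spell out the automorphism check (injectivity via simplicity, surjectivity via the inverse $\widetilde\alpha_{\bar z}$), which the paper leaves implicit.
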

\begin{proof}
Now we set $U'\doteq U$ and $S_2'\doteq zS_2$, where $z$ is any complex number of absolute value equal to $1$ . As we still have $S_2'S_2'^*+U'S_2'S_2'^*U^*=1$, there exists an automorphism $\widetilde\alpha_z\in\Aut(\Q_2)$ such that $\widetilde\alpha_z(U)=U$ and
$\widetilde\alpha_z(S_2)=zS_2$. To conclude, all that we are left to do is note that $\widetilde\alpha_z(S_1)=\widetilde\alpha_z(US_2)=\widetilde\alpha_z(U)\widetilde\alpha_z(S_2)=U zS_2=zS_1$.
\end{proof}
With a slight abuse of terminology, the automorphisms $\widetilde\alpha_z$ obtained above will be referred to as the gauge automorphisms. To conclude, it is worth noting that the flip-flop and the gauge automorphisms commute.\\

\subsection{The gauge-invariant subalgebra}

The gauge-invariant subalgebra of $\O_2$, usually denoted  by $\F_2$, is known to be isomorphic with the CAR algebra. 
The corresponding gauge-invariant subalgebra of $\Q_2$, which throughout this paper will be denoted by $\Q_2^{\mathbb{T}}$, can no longer be identified with such a remarkable $C^*$-algebra. However, it can be described far more conveniently as the closure of a suitable linear span. To do so, we need to point out the following simple but useful result. In order to state it as clearly as possible, let us first set some notation. As in \cite{Cuntz1},  we denote by $W_2$ the set of all multi-indices $\mu=(\mu_1, \mu_2,\ldots, \mu_n)$ with $\mu_i\in\{1, 2\}$ and $n\in\mathbb{N}$; the integer $n$ is commonly referred to as the length of the multi-index $\mu$ and is
denoted by $|\mu|$.  For any such multi-index $\mu=(\mu_1, \mu_2,\ldots, \mu_n)$, we denote by $S_\mu$ the monomial 
$S_{\mu_1}S_{\mu_2}\ldots S_{\mu_n}$. 
\begin{proposition}
$\Q_2=\overline{{\rm span}}\{S_\mu S_\nu^*U^k: \mu,\nu\in W_2, k\in\mathbb{Z}\}$.
\end{proposition}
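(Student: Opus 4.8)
The plan is to show that the closed linear span $\mathcal{L}\doteq\overline{{\rm span}}\{S_\mu S_\nu^* U^k:\mu,\nu\in W_2,\,k\in\mathbb{Z}\}$ is a $C^*$-subalgebra of $\Q_2$ containing the generators $U$ and $S_2$; since $\Q_2=C^*(U,S_2)$ and trivially $\mathcal{L}\subseteq\Q_2$, this forces $\mathcal{L}=\Q_2$. As $\mathcal{L}$ is by construction a closed subspace, it remains to check that it is self-adjoint, closed under multiplication, and contains the generators.

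The engine of the whole argument is a set of commutation relations that let one move any power of $U$ from the left of a word past the $S_i$'s and $S_i^*$'s to the right, each move producing a single monomial and never a sum. From $S_1=US_2$ and $S_2U=U^2S_2$ one reads off $US_2=S_1$ and $US_1=S_2U$, and, taking adjoints, $US_1^*=S_2^*U$ and $US_2^*=S_2^*U^2$, together with the analogous identities for $U^{-1}=U^*$. First I would record these (and their inverses) and observe that each rewrites $U^{\pm1}S_i^{\pm}$ as $S_j^{\pm}U^m$ with a single letter and a single integer power. An immediate induction then yields, for every multi-index and every $k\in\mathbb{Z}$, identities of the shape $U^kS_\mu=S_{\mu'}U^{k'}$ and $U^kS_\nu^*=S_{\nu'}^*U^{k''}$, each a single term. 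This is the step I expect to be the main obstacle: one must set up the bookkeeping carefully, since the relations behave like a binary ``carry'', and verify that lengths are preserved and that no sums ever arise, in all four sign cases.

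Granting these normal-ordering identities, self-adjointness is immediate, for $(S_\mu S_\nu^* U^k)^*=U^{-k}S_\nu S_\mu^*$, and pushing $U^{-k}$ first past $S_\nu$ and then the resulting power past $S_\mu^*$ returns an element of the form $S_{\nu'}S_{\mu'}^*U^{j}$, which lies in the spanning set. For closure under products I would treat $(S_\mu S_\nu^* U^k)(S_\alpha S_\beta^* U^l)$ by carrying $U^k$ to the right of $S_\alpha$ and then past $S_\beta^*$, reducing it to $S_\mu(S_\nu^* S_{\alpha'})S_{\beta'}^*U^{m}$; the middle factor $S_\nu^*S_{\alpha'}$ then collapses, by the Cuntz relation $S_i^*S_j=\delta_{ij}1$, to a single $S_\gamma$, a single $S_\delta^*$, or $0$, leaving once more a single term $S_{\text{word}}S_{\text{word}}^*U^{m}$ in the span.

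Finally I would exhibit the generators inside $\mathcal{L}$. Using $1=S_1S_1^*+S_2S_2^*$ one has $S_2=S_2S_1S_1^*+S_2S_2S_2^*=S_{(2,1)}S_{(1)}^*+S_{(2,2)}S_{(2)}^*$, and similarly $U=US_1S_1^*+US_2S_2^*=S_2S_2^*U+S_1S_2^*$ after applying the commutation relations; both are manifestly in $\mathcal{L}$, and the argument is unaffected by whether or not one admits the empty multi-index. Hence $\mathcal{L}$ is a closed self-adjoint subalgebra containing $U$ and $S_2$, so $\mathcal{L}=\Q_2$, as claimed.
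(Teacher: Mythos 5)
Your proof is correct and takes essentially the same route as the paper: both arguments rest on the same commutation relations ($US_1=S_2U$, $US_2=S_1$, $US_1^*=S_2^*U$, $US_2^*=S_2^*U^2$, and their $U^*$ counterparts), which push powers of $U$ from the left of any $S_i^{\pm}$ to the right as a single monomial. You simply make explicit the scaffolding the paper leaves implicit, namely that these relations (together with the Cuntz relations collapsing $S_\nu^*S_\alpha$) show the closed span is a $*$-subalgebra containing the generators $U$ and $S_2$.
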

\begin{proof}
In order to prove the equality above all we have to do is observe that the following relations allow us to take both $U$ and $U^*$
from the left to the right side of any monomial of the form $S_\mu\S_\nu^*$.
\begin{itemize}
\item $US_1=S_2U$
\item $US_2=S_1$
\item $US_1^*=S_2^*U$
\item $US_2^*=S_2^*U^2$
\item $U^*S_1=S_2$
\item $U^*S_2=S_1U^*$
\item $U^*S_1^*=S_1^*(U^*)^2$
\item $U^*S_2^*=S_1^*U^*$
\end{itemize}
The relations themselves are immediately verified by direct computation instead.
\end{proof}
The gauge automorphisms yield  a conditional expectation $\widetilde E$ from $\Q_2$ onto $\Q_2^{\mathbb{T}}$ by averaging the action itself on $\mathbb{T}$, that is for any $x\in\Q_2$ we have $\widetilde E(x)=\int_{\mathbb{T}}\widetilde\alpha_z(x){\rm d}z$, with ${\rm d}z$ being the normalized Haar measure of $\mathbb{T}$.
Now since $\widetilde E(S_\mu S_\nu^*U^k)=S_\mu S_\nu^*U^k\int_0^{2\pi}e^{i(|\mu|-|\nu|)\theta}\frac{{\rm d}\theta}{2\pi}$, we also have $\widetilde E(S_\mu S_\nu^*U^k)=0$ if and only if $|\mu|\neq |\nu|$. This helps to prove the description alluded to above.
\begin{proposition}
The equalities below hold:
$$\Q_2^{\mathbb{T}}=\overline{{\rm span}}\left\{S_\mu S_\nu^*U^k: \mu,\nu\in W_2, |\mu|=|\nu|, k\in\mathbb{Z}\right\}=C^*(U, \F_2)\subset\Q_2$$
\end{proposition}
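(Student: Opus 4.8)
The plan is to establish the two equalities in turn, writing $\B \doteq \overline{{\rm span}}\{S_\mu S_\nu^* U^k : \mu,\nu\in W_2,\ |\mu|=|\nu|,\ k\in\mathbb{Z}\}$ for the middle term. For the first equality $\Q_2^{\mathbb{T}} = \B$ I would exploit the conditional expectation $\widetilde E$ together with the formula already recorded, namely that $\widetilde E$ fixes $S_\mu S_\nu^* U^k$ when $|\mu|=|\nu|$ and annihilates it otherwise. The inclusion $\B \subseteq \Q_2^{\mathbb{T}}$ is the direct one: since $\widetilde\alpha_z(S_1) = zS_1$ and $\widetilde\alpha_z(S_2) = zS_2$ while $\widetilde\alpha_z(U)=U$, one computes $\widetilde\alpha_z(S_\mu S_\nu^* U^k) = z^{|\mu|-|\nu|} S_\mu S_\nu^* U^k$, which equals $S_\mu S_\nu^* U^k$ precisely because $|\mu|=|\nu|$; hence each spanning element lies in the (closed) fixed-point algebra $\Q_2^{\mathbb{T}}$, and so does $\B$. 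For the reverse inclusion I would take $x\in\Q_2^{\mathbb{T}}$, so that $\widetilde E(x)=x$, approximate $x$ in norm by finite linear combinations $x_n$ of the full family $\{S_\mu S_\nu^* U^k\}$ afforded by the previous proposition, and apply $\widetilde E$: by continuity $\widetilde E(x_n)\to \widetilde E(x)=x$, while each $\widetilde E(x_n)$ is a finite combination of only those monomials with $|\mu|=|\nu|$, hence lies in $\B$. Since $\B$ is closed, $x\in\B$.

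For the second equality I would first observe that the identity $\Q_2^{\mathbb{T}}=\B$ just proved already guarantees that $\B$ is a genuine $C^*$-subalgebra, being the fixed-point algebra of a group of automorphisms; this spares us from checking by hand that $\B$ is closed under products and adjoints. The inclusion $\B\subseteq C^*(U,\F_2)$ is then immediate: for $|\mu|=|\nu|$ one has $S_\mu S_\nu^*\in\F_2$ and $U^k\in C^*(U)$, so every spanning element $S_\mu S_\nu^* U^k$ belongs to $C^*(U,\F_2)$, and hence so does the closed span. For the opposite inclusion it suffices to check that the $C^*$-algebra $\B$ contains both generators $U$ and $\F_2$: taking $k=0$ shows $S_\mu S_\nu^*\in\B$ whenever $|\mu|=|\nu|$, whence $\F_2\subseteq\B$; and writing $U = (S_1S_1^*+S_2S_2^*)U = S_1S_1^*U + S_2S_2^*U$ exhibits $U$ as a sum of two spanning elements with $|\mu|=|\nu|=1$, so $U\in\B$. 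As $C^*(U,\F_2)$ is the smallest $C^*$-algebra containing $U$ and $\F_2$, we conclude $C^*(U,\F_2)\subseteq\B$.

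I do not anticipate a serious obstacle here: the whole argument is a standard Fourier/conditional-expectation decomposition relative to the gauge action, and the only point requiring a little care is the reverse inclusion of the first equality, where one must be sure that applying the contractive map $\widetilde E$ to the approximants $x_n$ keeps exactly the diagonal monomials and discards the rest. The route through the fixed-point characterization is what makes the second equality painless; the alternative of verifying directly that $\B$ is multiplicatively closed would force one to push the various $U^{\pm 1}$ past the $S_\mu, S_\nu^*$ using the eight commutation relations and to collapse products $S_\nu^* S_\alpha$ by the Cuntz relations, which is correct but tediously long, so I would deliberately avoid it.
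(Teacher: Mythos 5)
Your proposal is correct and follows essentially the same route as the paper: the key inclusion $\Q_2^{\mathbb{T}}\subseteq\overline{{\rm span}}\{S_\mu S_\nu^*U^k:|\mu|=|\nu|\}$ is proved exactly as in the paper, by approximating a fixed point $x=\widetilde E(x)$ with elements $x_n$ of the algebraic span of all monomials and pushing them through the bounded map $\widetilde E$, which keeps precisely the monomials with $|\mu|=|\nu|$. The only difference is that you spell out the second equality (via $U=S_1S_1^*U+S_2S_2^*U$ and $\F_2\subseteq\B$), which the paper simply declares obvious; this is a harmless elaboration, not a different argument.
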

\begin{proof}
The second equality is obvious. We focus then on the first, for which we only have to worry about the inclusion $\Q_2^{\mathbb{T}}\subset \overline{{\rm span}}\{S_\mu S_\nu^*U^k: |\mu|=|\nu|\}$, the other being immediately checked. If $x\in\Q_2^{\mathbb{T}}$, then $x=\widetilde E(x)$.  Now pick a sequence $\{x_n\}$ in the algebraic linear span of the set $\{S_\mu S_\nu^* U^k: \mu,\nu\in W_2, k\in\mathbb{Z}\}$ such that $\|x_n- x\|$ tends to zero. As $\tilde E$ is a bounded map, $\|\widetilde E(x_n)-\widetilde E(x)\|=\|\widetilde E(x_n)-x\|$ tends to zero as well. The conclusion follows easily now because $\widetilde{E}(x_n)\in {\rm span}\{S_\mu S_\nu^* U^k: |\mu|=|\nu|\}$ by the remark we made above.
\end{proof}
We should also mention that $C^*(\F_2,U)=C^*(\D_2,U)$ is the Bunce-Deddens algebra of type $2^\infty$, see \cite[Remark 2.8]{BOS}.

\subsection{The canonical representation}
In this section we gather as much information as we need about a distinguished representation of $\Q_2$, which will actually play a major role in most of what follows here and in the next sections. As far as we know, it was first  exhibited in \cite{LarsenLi}, where it is called the canonical representation. Therefore, from now on it will always be referred to as the canonical representation. After a brief review of its main properties, we discuss a number of results where the canonical representation proves to be rather useful.\\

The canonical representation acts on $\ell_2(\mathbb{Z})$ through the operators $S_2, U\in B(\ell_2(\mathbb{Z}))$ given by $S_2 e_k\doteq e_{2k}$ and $Ue_k\doteq e_{k+1}$, where $\{e_k: k\in\mathbb{Z}\}$ is the canonical orthonormal basis of $\ell_2(\mathbb{Z})$, i.e. $e_k(m)=\delta_{k,m}$. The very first thing to note is that $1$ is the only eigenvalue of $S_2$, corresponding to the one-dimensional eigenspace generated by $e_0$. 
This simple observation enables  us to give a short proof that the canonical representation is irreducible. Since we do not know of any reference where  this possibly known fact is explicitly pointed out, we do include an independent proof for the reader's convenience.
\begin{proposition}
The canonical representation of $\Q_2$ is irreducible.
\end{proposition}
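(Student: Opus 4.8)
The plan is to prove irreducibility by showing directly that the commutant of the represented algebra is trivial. Since $\Q_2=C^*(U,S_2)$, an operator $T\in B(\ell_2(\mathbb{Z}))$ lies in the commutant of the canonical representation precisely when it commutes with $U$ and $S_2$ together with their adjoints; in fact commutation with $U$ and $S_2$ alone will already force $T$ to be scalar, so I will only use those two relations. This is an instance of the standard equivalence between triviality of the commutant and irreducibility.

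First I would exploit the observation recorded just before the statement, namely that $e_0$ spans the eigenspace of $S_2$ for the eigenvalue $1$. To see that this space is one-dimensional, write $\xi=\sum_k c_k e_k\in\ell_2(\mathbb{Z})$ with $S_2\xi=\xi$ and compare coefficients using $S_2e_k=e_{2k}$: the coefficient of $e_m$ in $S_2\xi$ is $c_{m/2}$ when $m$ is even and $0$ when $m$ is odd, so $c_m=0$ for odd $m$ and $c_{2j}=c_j$ for all $j$. Writing any $m\neq 0$ as $m=2^a b$ with $b$ odd and iterating the relation $c_m=c_{m/2}$ down to the odd part $b$ gives $c_m=c_b=0$, leaving only $c_0$ free. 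Hence $\ker(S_2-1)=\mathbb{C}\,e_0$.

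Next, given $T$ commuting with $S_2$, from $S_2(Te_0)=TS_2e_0=Te_0$ I conclude that $Te_0$ is again fixed by $S_2$, so by one-dimensionality $Te_0=\lambda e_0$ for some $\lambda\in\mathbb{C}$. Now I would use that $U$ is the bilateral shift, so that $e_k=U^k e_0$ for every $k\in\mathbb{Z}$. Since $T$ commutes with $U$, and hence with every power $U^k$, we get $Te_k=TU^k e_0=U^k Te_0=\lambda U^k e_0=\lambda e_k$. As this holds on the whole orthonormal basis, $T=\lambda I$.

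The conclusion is that $\{U,S_2\}'=\mathbb{C}\,I$, and a fortiori the commutant of the canonical representation is trivial, so the representation is irreducible. I do not expect any serious obstacle here: the whole argument rests on the one-dimensionality of the fixed-point space of $S_2$, and the only point requiring a moment's care is the coefficient computation establishing that fact. Everything else is the routine passage from a trivial commutant to irreducibility, made especially transparent by $U$ acting as the bilateral shift, which propagates the single eigenvector $e_0$ to the entire basis.
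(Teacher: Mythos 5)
Your proof is correct and takes essentially the same approach as the paper's: both arguments reduce irreducibility to the two key facts that the fixed space $\ker(S_2-1)$ is $\mathbb{C}e_0$ and that the $U$-orbit $\{U^k e_0\}=\{e_k\}$ spans $\ell_2(\mathbb{Z})$. The only cosmetic difference is that the paper verifies triviality of the commutant on projections onto invariant subspaces, whereas you apply the same two facts directly to an arbitrary operator $T$ commuting with $U$ and $S_2$, concluding $T=\lambda I$.
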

\begin{proof}
Let $M\subset \ell_2(\mathbb{Z})$ be a $\Q_2$-invariant closed subspace. If $P$ is the associated orthogonal projection, then $P\in\Q_2'$. In particular, $S_2P=PS_2$, and so $S_2Pe_0=Pe_0$. As the eigenspace of $S_2$ corresponding to the eigenvalue $1$ is spanned by $e_0$, we must have either $Pe_0=e_0$ or $Pe_0=0$. In the first case, $e_0\in M$, and therefore $C^*(U)e_0\subset M$, which says that $M=\ell_2(\mathbb{Z})$ because $e_0$ is a cyclic vector for $C^*(U)$. In the second, $e_0\in M^{\perp}$ instead. As above, $M^{\perp}$ being $\Q_2$-invariant too, we have $M^{\perp}=\ell_2(\mathbb{Z})$, i.e. $M=0$. Note, however, that $\O_2$ does not act irreducibly on $\ell_2(\mathbb{Z})$, for the closed span of the set $\{e_k: k=0,1,\dots,\}$ is obviously a proper $\O_2$-invariant subspace. 
\end{proof}

\medskip

However, the canonical representation restricts to $\O_2$ as a reducible representation, which we denote by $\pi$. More precisely, it is a direct sum of two inequivalent irreducible representations of $\O_2$. To see this, 
let us define $\H_+, \H_-\subset \ell_2(\mathbb{Z})$ as the closed subspaces given by  $$\H_+\doteq \overline{{\rm span}\{e_k: k\geq 0\}}$$  and $$\H_-\doteq \overline{{\rm span}\{e_k: k< 0\}}$$
The Hilbert space $\ell_2(\mathbb{Z})$ is immediately seen to decompose into the direct sum of these subspaces, 
i.e.  $\ell_2(\mathbb{Z})=\H_+\oplus\H_-$. Furthermore, both $\H_+$ and $\H_-$ are $\O_2$-invariant, and finally they may be checked to be $\O_2$-irreducible too. This last statement should be a well-known fact. Even so, we give the proof for the sake of self-containedness. 
\begin{proposition}
The subspaces $\H_{\pm}$ are both $\O_2$-irreducible.
\end{proposition}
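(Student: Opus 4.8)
The plan is to adapt the argument just used for the canonical representation. Write $\pi$ for the restriction of the canonical representation to $\O_2$ and recall that, since $S_1=US_2$, the two generators act by $S_2e_k=e_{2k}$ and $S_1e_k=e_{2k+1}$ for all $k\in\mathbb{Z}$; from these formulas the invariance of $\H_+$ and $\H_-$ is immediate, as the even and odd indices associated with a nonnegative (resp. negative) integer stay nonnegative (resp. negative). Let $M\subseteq\H_+$ be a closed $\O_2$-invariant subspace and $P$ the corresponding orthogonal projection, so that $P$ commutes with $S_1,S_2$ and their adjoints. As in the previous proof, everything will hinge on locating a distinguished fixed vector of one of the generators together with a cyclicity statement.

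First I would treat $\H_+$. Here the natural candidate is $e_0$, which satisfies $S_2e_0=e_0$. Solving $S_2\xi=\xi$ for $\xi=\sum_{k\ge 0}c_ke_k$ forces $c_{2k+1}=0$ and $c_{2k}=c_k$, whence $c_k=0$ for all $k\ge 1$; thus the eigenspace of $S_2|_{\H_+}$ for the eigenvalue $1$ is exactly $\mathbb{C}e_0$. Since $S_2Pe_0=PS_2e_0=Pe_0$, the vector $Pe_0$ is fixed by $S_2$, hence a multiple of $e_0$, and being the image of a unit vector under a projection it must equal either $e_0$ or $0$. The cyclicity of $e_0$ is the other ingredient: applying the monomials $S_\mu$ to $e_0$ one reaches every $e_m$ with $m\ge 0$, because the assignment $k\mapsto(2k,\,2k+1)$ realizes the usual binary tree on the nonnegative integers rooted at $0$. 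Consequently, if $Pe_0=e_0$ then $M\supseteq\overline{{\rm span}}\{e_m:m\ge 0\}=\H_+$, while if $Pe_0=0$ the same reasoning applied to the invariant subspace $M^\perp$ (with projection $1-P$) gives $M^\perp=\H_+$, i.e. $M=0$. This proves $\H_+$ irreducible.

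For $\H_-$ the obstacle is that $S_2$ has no fixed vector there (a fixed index would have to satisfy $2k=k$), so the previous recipe does not apply verbatim. The fix is to switch generators: one checks $S_1e_{-1}=e_{-1}$, and solving $S_1\xi=\xi$ on $\H_-$ shows, by the analogous recursion on the coefficients, that the eigenspace of $S_1|_{\H_-}$ for the eigenvalue $1$ is exactly $\mathbb{C}e_{-1}$. Running the projection argument with $e_{-1}$ in place of $e_0$ — and noting that the monomials $S_\mu e_{-1}$ exhaust $\{e_m:m<0\}$, again by the binary-tree structure, now rooted at $-1$ — yields the irreducibility of $\H_-$. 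Alternatively, and perhaps more transparently, the involution $J$ defined by $Je_k=e_{-1-k}$ maps $\H_+$ onto $\H_-$ and satisfies $JS_2J=S_1$ and $JS_1J=S_2$, so that conjugation by $J$ implements the flip-flop and carries the representation on $\H_+$ to the one on $\H_-$; irreducibility of the latter then follows at once from that of the former. I expect the only genuinely delicate points to be the two coefficient recursions pinning down the one-dimensionality of the eigenspaces and, above all, the realization that $\H_-$ must be handled through $S_1$ rather than $S_2$.
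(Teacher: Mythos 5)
Your proof is correct and follows essentially the same route as the paper: for $\H_+$, the fixed vector $e_0$ of $S_2$ combined with cyclicity of $e_0$ under the monomials $S_\mu$ (the paper phrases the latter as an induction on $k$, writing $e_{k+1}=S_2e_{\frac{k+1}{2}}$ or $e_{k+1}=S_1e_{\frac{k}{2}}$ according to parity). The paper dismisses $\H_-$ with ``dealt with in much the same way,'' and your switch to the fixed vector $e_{-1}$ of $S_1$ is exactly the modification that phrase requires; your alternative argument via the intertwiner $Je_k=e_{-1-k}$ is also sound, and that very unitary reappears later in the paper as the operator implementing the flip-flop in the canonical representation.
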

\begin{proof}
We only need to worry about $\H_+$, for $\H_-$ is dealt with in much the same way. Exactly as above, if $M\subset H_+$ is an $\O_2$-invariant subspace, then either $M$ or its orthogonal complement $M^{\perp}$ must contain $e_0$. The proof is thus complete if we can show that an $\O_2$-invariant subspace containing $e_0$, say $N$, is the whole $\H_+$, and this is proved once we show $e_k\in N$ for every $k\geq 0$. This is in turn easily achieved by induction on $k$. Suppose we have proved $\{e_l: l=0,1,\dots,k\}\subset N$. For the inductive step we have two cases, according as $k+1$ is even or odd. If it is even, then $e_{k+1}=S_2 e_{\frac{k+1}{2}}$; if it is odd, then $e_{k+1}=S_1e_{\frac{k}{2}}$. In either cases we see that $e_{k+1}$ is in $N$, as wished.
\end{proof}
Denoting by $\pi_{\pm}$ the restriction of $\pi$ to $\H_{\pm}$ respectively, the decomposition into irreducible representations $\pi=\pi_+\oplus\pi_-$ has just been proved to hold. Now,  as what we are really interested in is the commutant $\pi(\O_2)'$, we also need to observe that $\pi_+$ and $\pi_-$ are disjoint. This is done here below.
\begin{lemma}
If $\pi_+$ and $\pi_-$ are the irreducible representations defined above, then $\pi_+\downspoon \pi_-$.
\end{lemma}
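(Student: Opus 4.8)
The plan is to exploit the simplest spectral invariant at hand, namely the point spectrum of $S_2$. Since $\pi_+$ and $\pi_-$ have already been shown to be irreducible, disjointness is equivalent to inequivalence, and this in turn is equivalent to the statement that every bounded intertwiner $W\colon\H_+\to\H_-$, i.e.\ every $W$ with $W\pi_+(a)=\pi_-(a)W$ for all $a\in\O_2$, must be zero. So I would argue by contradiction, assuming a non-zero such $W$, which after normalization may be taken to be a unitary implementing an equivalence $\pi_+\cong\pi_-$.

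The key step is to feed the intertwiner the distinguished vector $e_0\in\H_+$ singled out in the irreducibility arguments. Specializing the intertwining relation to the element $a=S_2\in\O_2$ and evaluating at $e_0$ gives $\pi_-(S_2)(We_0)=W\pi_+(S_2)e_0=W(S_2e_0)=We_0$, where I have used $S_2e_0=e_0$. Hence $We_0$ is a unit vector lying in $\H_-$ and fixed by $S_2$; in other words, $We_0$ is an eigenvector of $S_2$ with eigenvalue $1$ sitting inside $\H_-$.

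This is where the contradiction is reached, and it rests entirely on the observation recorded at the beginning of this section: on the whole of $\ell_2(\mathbb{Z})$ the only eigenvalue of $S_2$ is $1$, with one-dimensional eigenspace $\mathbb{C}e_0$. Since $e_0\in\H_+$ and $\H_-\perp\H_+$, no non-zero vector of $\H_-$ can be fixed by $S_2$; therefore $We_0=0$, contradicting $\|We_0\|=1$. Consequently no non-zero intertwiner exists and $\pi_+\downspoon\pi_-$, as claimed. If one preferred not to invoke the global eigenspace computation, the same fact that $S_2$ has no eigenvalue-$1$ eigenvector in $\H_-$ can be checked directly by the recursion $c_{2j}=c_j$ forced on the coefficients of a fixed vector, which is incompatible with square-summability unless all coefficients vanish.

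The only subtle point — and the one I would flag as the crux — is the passage from an abstract equivalence to a concrete eigenvector: once the intertwiner is applied to the unique $S_2$-fixed vector $e_0$, the presence of an $S_2$-fixed unit vector in $\H_-$ is forced, and this is precisely what $\H_-$ cannot accommodate. Everything else is a direct consequence of the already-established description of the $S_2$-eigenspace, so no fresh analysis of the action of $S_2$ on $\H_-$ is really needed.
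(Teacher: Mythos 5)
Your proof is correct and is essentially the paper's own argument: the paper disposes of the lemma in one line by noting that $\pi_+(S_2)$ has $1$ in its point spectrum while $\pi_-(S_2)$ does not, and your intertwiner-chasing with the fixed vector $e_0$ is precisely the unpacking of why that spectral invariant forces disjointness of two irreducible representations.
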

\begin{proof}
It is enough to note that $\pi_+(S_2)$ has $1$ in its point spectrum, whereas $\pi_-(S_2)$ does not.
\end{proof}
To state the next result as clearly as possible some notation is needed, so let us  denote by $E_{\pm}$ the orthogonal projections onto $\H_{\pm}$ respectively.
\begin{proposition}
 The commutant of $\pi(\O_2)$ is given by  $\pi(\O_2)'=\mathbb{C}E_++\mathbb{C}E_-$. 
\end{proposition}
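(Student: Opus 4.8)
The plan is to exploit the decomposition $\pi=\pi_+\oplus\pi_-$ together with the irreducibility of each summand and their disjointness, all three of which have already been established. The whole argument is a textbook application of Schur's lemma to a direct sum of representations; the only input beyond pure generalities will be the disjointness recorded in the lemma above.

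First I would observe that, since both $\H_+$ and $\H_-$ are $\O_2$-invariant, the two orthogonal projections $E_\pm$ already commute with $\pi(\O_2)$. This gives the easy inclusion $\mathbb{C}E_++\mathbb{C}E_-\subseteq\pi(\O_2)'$ for free, and it also lets me decompose an arbitrary $T\in\pi(\O_2)'$ into its four blocks relative to $\ell_2(\mathbb{Z})=\H_+\oplus\H_-$, namely $T=\sum_{\varepsilon,\varepsilon'\in\{+,-\}}E_\varepsilon T E_{\varepsilon'}$. For any $x\in\O_2$ the relation $T\pi(x)=\pi(x)T$ passes to each block, because $\pi(x)$ commutes with $E_\pm$; hence $E_\varepsilon T E_{\varepsilon'}$ intertwines $\pi_{\varepsilon'}$ with $\pi_\varepsilon$.

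Next I would treat the diagonal and the off-diagonal blocks separately. The diagonal block $E_+TE_+$ is a self-intertwiner of the irreducible representation $\pi_+$, so by Schur's lemma it equals $\lambda E_+$ for some scalar $\lambda$; likewise $E_-TE_-=\mu E_-$ for some scalar $\mu$. For the off-diagonal blocks, $E_+TE_-$ intertwines $\pi_-$ with $\pi_+$ and $E_-TE_+$ intertwines $\pi_+$ with $\pi_-$. Since the previous lemma gives $\pi_+\downspoon\pi_-$, there are no nonzero intertwiners in either direction, and therefore both off-diagonal blocks vanish.

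Assembling the blocks yields $T=\lambda E_++\mu E_-$, which proves $\pi(\O_2)'\subseteq\mathbb{C}E_++\mathbb{C}E_-$; combined with the reverse inclusion noted at the outset this gives the claimed equality. I do not anticipate any genuine obstacle here: the single step that truly uses the preparatory results rather than abstract representation theory is the killing of the off-diagonal blocks, and that rests entirely on the disjointness $\pi_+\downspoon\pi_-$.
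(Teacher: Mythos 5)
Your proposal is correct and follows essentially the same route as the paper: the paper's proof invokes the standard fact that the commutant of a direct sum of disjoint representations splits as the direct sum of the commutants, then applies irreducibility, while you simply unpack that fact via the four-block decomposition, killing the off-diagonal blocks by disjointness and reducing the diagonal blocks to scalars by Schur's lemma. The two arguments use exactly the same inputs ($\pi=\pi_+\oplus\pi_-$, irreducibility of $\pi_\pm$, and $\pi_+\downspoon\pi_-$), with yours merely written out in greater detail.
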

\begin{proof}
According to the decomposition described above we have $\pi(\O_2)'=(\pi_+(\O_2)\oplus\pi_-(\O_2))'$. But because $\pi_+$ and $\pi_-$ are disjoint, we can go a bit further and write
$$(\pi_+(\O_2)\oplus\pi_-(\O_2))'=(\pi_+(\O_2))'\oplus(\pi_-(\O_2))'=\mathbb{C}E_+\oplus\mathbb{C}E_-$$
where the last equality is due to the irreducibility of $\pi_{\pm}$. 
\end{proof}
This immediately leads to the following corollary, which needs no proof.
\begin{corollary}
In the canonical representation of $\Q_2$ the bicommutant of $\O_2$ is given by  $$\pi(O_2)''=\{T\in B(\ell_2(\mathbb{Z})): T=T_+\oplus T_-: T_{\pm}\in B(\H_{\pm})\}$$
\end{corollary}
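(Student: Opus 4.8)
The plan is to read off the bicommutant directly from the computation of the commutant in the preceding proposition, where it was shown that $\pi(\O_2)'=\mathbb{C}E_++\mathbb{C}E_-$. Since $\pi(\O_2)''$ is by definition $(\pi(\O_2)')'$, the entire task reduces to determining $(\mathbb{C}E_++\mathbb{C}E_-)'$, and this is a purely elementary question about the operators that commute with a single orthogonal projection.

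First I would note that, as $E_-=1-E_+$, the unital $*$-algebra $\mathbb{C}E_++\mathbb{C}E_-$ is nothing but the algebra generated by $E_+$; consequently an operator $T\in B(\ell_2(\mathbb{Z}))$ belongs to $\pi(\O_2)''$ if and only if $[T,E_+]=0$. I would then translate this commutation relation into an invariance statement: a bounded operator commutes with an orthogonal projection exactly when it leaves invariant both the range and the kernel of that projection. Here $\Ran E_+=\H_+$ and $\ker E_+=\H_-$, so $T$ commutes with $E_+$ if and only if $T\H_+\subseteq\H_+$ and $T\H_-\subseteq\H_-$, that is to say $T$ is block diagonal for the decomposition $\ell_2(\mathbb{Z})=\H_+\oplus\H_-$. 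Writing $T_\pm$ for the restriction of $T$ to $\H_\pm$, this means precisely $T=T_+\oplus T_-$ with $T_\pm\in B(\H_\pm)$, which is the asserted description.

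There is in fact no genuine difficulty to overcome here, which is exactly why the statement is flagged as requiring no proof. The only point worth spelling out is the equivalence between commuting with $E_+$ and preserving its range and kernel: expanding $T=(E_++E_-)T(E_++E_-)$, the relation $TE_+=E_+T$ is seen to be equivalent to the vanishing of the off-diagonal corners $E_-TE_+$ and $E_+TE_-$, which is in turn the same as the block-diagonality just described.
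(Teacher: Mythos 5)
Your proposal is correct and is precisely the argument the paper leaves implicit when it says the corollary ``needs no proof'': since $\pi(\O_2)''=(\pi(\O_2)')'=(\mathbb{C}E_++\mathbb{C}E_-)'$, and commuting with $E_+$ is equivalent to leaving both $\H_+$ and $\H_-$ invariant, the bicommutant consists exactly of the block-diagonal operators $T_+\oplus T_-$. Your spelled-out equivalence via the vanishing of the corners $E_+TE_-$ and $E_-TE_+$ is the standard justification and matches the intended reasoning.
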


The information we have gathered about $\pi$ will actually turn out to be vital in tackling the problem as to whether $C^*(U)'\cap \O_2\subset\Q_2$ is trivial. This is the case indeed, as anyone would expect. However, the relative proof is not as obvious as the statement. In fact, we still lack some basic ingredients. In particular, we need to observe that the basis vectors $e_k$ are all cyclic and separating for $U$. Therefore, 
the $W^*$-algebra $W^*(U)$ generated by $U$ is a maximal abelian von Neumann algebra of $B(\ell_2(\mathbb{Z}))$. Furthermore, it is common knowledge that 
$W^*(U)$ is  isomorphic with $L^{\infty}(\mathbb{T},\mu)$, where $\mu$ is the Haar measure of $\mathbb{T}$.\\ 

In passing, we also take the opportunity to exploit the canonical representation to show that $\D_2''\subset B(\ell_2(\mathbb{Z}))$ is a maximal abelian subalgebra as well. This will in turn be vital to conclude that $\D_2$ is a maximal abelian subalgebra of $\Q_2$.  Henceforward we shall denote by $\ell_\infty(\mathbb{Z})$ the atomic \emph{MASA} of $B(\ell_2(\mathbb{Z}))$ acting through diagonal operators with respect to the canonical basis. 

\begin{proposition}
In the canonical representation we have $\D_2'=\ell_\infty(\mathbb{Z})$.
\end{proposition}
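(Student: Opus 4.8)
The plan is to prove the two inclusions separately, the easy one being almost automatic and all the substance residing in the reverse one. First I would observe that each generating projection $S_\mu S_\mu^*$ of $\D_2$ acts diagonally in the canonical representation, so that $\D_2\subseteq\ell_\infty(\mathbb{Z})$; since the atomic MASA satisfies $\ell_\infty(\mathbb{Z})'=\ell_\infty(\mathbb{Z})$, taking commutants immediately yields $\ell_\infty(\mathbb{Z})\subseteq\D_2'$. The real task is therefore to establish $\D_2'\subseteq\ell_\infty(\mathbb{Z})$, that is, that every operator commuting with $\D_2$ is diagonal with respect to the canonical basis.

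To this end I would first make the action of the diagonal projections completely explicit. Using $S_2 e_k=e_{2k}$ and $S_1 e_k=US_2 e_k=e_{2k+1}$, a short induction on the length shows that for a multi-index $\mu$ with $|\mu|=n$ one has $S_\mu e_k=e_{2^n k+r_\mu}$, where $r_\mu\in\{0,1,\dots,2^n-1\}$ is the integer whose base-$2$ digits are read off from $\mu$. Consequently $S_\mu S_\mu^*$ is precisely the orthogonal projection onto $\overline{\mathrm{span}}\{e_j: j\equiv r_\mu\pmod{2^n}\}$, a single residue class modulo $2^n$. As $\mu$ ranges over the $2^n$ multi-indices of length $n$, the values $r_\mu$ range bijectively over all residues modulo $2^n$, so $\D_2$ contains the projection onto every residue class of every $2$-power modulus.

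With this in hand the reverse inclusion follows from a one-line matrix-coefficient argument. Let $T\in\D_2'$ and fix $i\neq j$ in $\mathbb{Z}$; choosing $n$ large enough that $2^n>|i-j|$ forces $i\not\equiv j\pmod{2^n}$, so the level-$n$ residue-class projection $q\in\D_2$ containing $e_j$ satisfies $q e_j=e_j$ and $q e_i=0$. Since $q$ commutes with $T$, we get $\langle e_i,T e_j\rangle=\langle e_i,Tq e_j\rangle=\langle q e_i,T e_j\rangle=0$, whence $T$ has no off-diagonal entries and lies in $\ell_\infty(\mathbb{Z})$; combining the two inclusions gives $\D_2'=\ell_\infty(\mathbb{Z})$. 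The only genuine obstacle is the bookkeeping in the middle step — correctly identifying each $S_\mu S_\mu^*$ with a residue class and checking that the length-$n$ projections exhaust all residues modulo $2^n$ (equivalently, that their common refinements shrink to the rank-one projections $|e_k\rangle\langle e_k|$, so that each $|e_k\rangle\langle e_k|\in\D_2''$). Once this combinatorial identification is secured, the commutation argument is immediate.
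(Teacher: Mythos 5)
Your proof is correct, but it takes a genuinely more elementary route than the paper's. The paper argues at the von Neumann algebra level: since $\ell_\infty(\mathbb{Z})$ is a MASA, it suffices to show $\D_2''=\ell_\infty(\mathbb{Z})$, and this is done by exhibiting each rank-one projection $E_k$ onto $\mathbb{C}e_k$ as a strong limit of the projections $U^kS_2^n(S_2^*)^nU^{-k}\in\D_2$; the commutant statement then follows by taking commutants ($\D_2'=(\D_2'')'=\ell_\infty(\mathbb{Z})'=\ell_\infty(\mathbb{Z})$). You instead prove the inclusion $\D_2'\subseteq\ell_\infty(\mathbb{Z})$ directly: having identified each $S_\mu S_\mu^*$ with $|\mu|=n$ as the projection onto a single residue class modulo $2^n$, you pick, for given $i\neq j$, a modulus $2^n>|i-j|$ and a projection $q\in\D_2$ with $qe_j=e_j$ and $qe_i=0$, and conclude $\langle e_i,Te_j\rangle=0$ from $Tq=qT$. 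The underlying structural fact is the same in both proofs --- the level-$n$ projections of $\D_2$ are exactly the residue-class projections mod $2^n$; your $S_\mu S_\mu^*$ coincide with the paper's $U^kS_2^n(S_2^*)^nU^{-k}$ up to relabelling --- but your argument needs no strong limits, no bicommutant theorem and no von Neumann duality: commutation with finitely many projections lying in $\D_2$ itself already kills every off-diagonal matrix entry of $T$. What the paper's route buys in exchange is the explicit extra fact that $\D_2$ is strongly dense in the atomic MASA (i.e.\ $\D_2''=\ell_\infty(\mathbb{Z})$), a reusable statement in its own right; what your route buys is a self-contained, purely algebraic argument whose only analytic input is that a bounded operator with vanishing off-diagonal entries is diagonal.
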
  
\begin{proof}
Since $\ell_\infty(\mathbb{Z})$ is a MASA, it is enough to prove that $\D_2''=\ell_\infty(\mathbb{Z})$, which will be immediately checked once we have proved that the projections $E_k$ onto $\mathbb{C}e_k$ all
belong to the strong closure of $\D_2$. To begin with, we note that the sequence $\{S_2^n(S_2^*)^n\}\subset\D_2$ strongly converges to $E_0$. But then the sequence $\{U^kS_2^n(S_2^*)^nU^{-k}:n\in\mathbb{N}\}$ strongly converges
to $E_k$. The conclusion now follows from the fact that $\D_2$ is globally invariant under $\rm {ad}(U)$.
\end{proof}

We now have all the necessary tools to get to prove that  $C^*(U)'\cap\O_2 $ is trivial.
This will in turn  result from a straightforward application of the next proposition, where much more is proved.
\begin{proposition}\label{Intr}
We have $W^*(U)\vee\pi(\O_2)'=B(\ell_2(\mathbb{Z}))$.
\end{proposition}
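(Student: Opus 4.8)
The plan is to invoke von Neumann's bicommutant theorem. Setting $\M\doteq W^*(U)\vee\pi(\O_2)'$, which is by construction a von Neumann algebra, it suffices to prove that its commutant is trivial, i.e. $\M'=\mathbb{C}1$. Using the standard rule $(\A\vee\B)'=\A'\cap\B'$, one has $\M'=W^*(U)'\cap\pi(\O_2)''$. Since $W^*(U)$ has already been seen to be a MASA in $B(\ell_2(\mathbb{Z}))$ (the vectors $e_k$ being cyclic and separating for $U$), we have $W^*(U)'=W^*(U)$, and hence $\M'=W^*(U)\cap\pi(\O_2)''$. Feeding in the description of $\pi(\O_2)''$ as the algebra of block-diagonal operators with respect to $\ell_2(\mathbb{Z})=\H_+\oplus\H_-$, an element of $\M'$ is precisely an operator of the form $f(U)\in W^*(U)$ that commutes with the projection $E_+$ onto $\H_+$ (equivalently, leaves both $\H_+$ and $\H_-$ invariant).

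It then remains to show that the only functions of $U$ commuting with $E_+$ are the scalars. First I would pass to the Fourier picture, identifying $\ell_2(\mathbb{Z})$ with $L^2(\mathbb{T})$ via $e_k\mapsto z^k$; under this unitary $U$ becomes multiplication by $z$, each $f(U)$ becomes multiplication by some $f\in L^\infty(\mathbb{T})$, and $\H_+$ becomes the Hardy space $H^2(\mathbb{T})=\overline{\rm span}\{z^k:k\geq 0\}$, with $E_+$ the associated Riesz projection. Writing the Fourier expansion $f=\sum_n a_n z^n$, the requirement that $f(U)$ leave $\H_+$ invariant, tested on $e_0$, gives $f=f\cdot 1\in H^2$, that is $a_n=0$ for all $n<0$. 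Dually, since $f(U)$ also commutes with $E_-=1-E_+$ it leaves $\H_-$ invariant; tested on $e_{-1}$, this forces $f\cdot z^{-1}$ to have only strictly negative Fourier coefficients, i.e. $a_n=0$ for all $n\geq 1$. Combining the two conditions leaves only $a_0$, so $f$ is constant and $f(U)$ is a scalar.

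This yields $\M'=\mathbb{C}1$ and therefore $\M=B(\ell_2(\mathbb{Z}))$, as claimed. The only genuine content lies in the final computation, which is the concrete manifestation of the classical fact that the Riesz projection $E_+$ fails to commute with any non-constant multiplication operator; everything preceding it is bookkeeping resting on the already-established facts that $W^*(U)$ is maximal abelian and that $\pi(\O_2)''$ is exactly the block-diagonal algebra. The main point requiring care is the bookkeeping of indices in the two invariance conditions (and the $L^2$-convergence of the Fourier series), but I expect no real obstacle there, since testing on the single vectors $e_0$ and $e_{-1}$ already pins down $f$ completely.
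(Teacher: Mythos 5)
Your proof is correct, but it is organized differently from the paper's. You first compute the commutant algebraically, $\bigl(W^*(U)\vee\pi(\O_2)'\bigr)'=W^*(U)'\cap\pi(\O_2)''=W^*(U)\cap\pi(\O_2)''$, using the MASA property $W^*(U)'=W^*(U)$, and then show by a Hardy-space computation that this intersection is trivial. Note that $W^*(U)\cap\pi(\O_2)''=\mathbb{C}1$ is precisely the Corollary that the paper \emph{deduces from} the proposition; so you have inverted the logical order, proving the corollary directly and recovering the proposition via the bicommutant theorem. The paper instead argues bare-hands: it takes an arbitrary projection $P$ in the commutant of $W^*(U)\vee\pi(\O_2)'$, uses $P$-invariance of $\H_\pm$ together with $Pe_{-1}=U^*Pe_0$ to force $e_0$ to be an eigenvector of $P$, and then invokes cyclicity of $e_0$ for $W^*(U)$ to conclude $P=0$ or $P=1$. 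The two arguments ultimately rest on the same inputs (the cyclic/separating vectors for $W^*(U)$ and the block-diagonal description of $\pi(\O_2)''$) and even on the same two test vectors $e_0$ and $e_{-1}$; what your route buys is a cleaner conceptual reduction --- the statement becomes the classical fact that the Riesz projection commutes with no nonconstant multiplication operator on $L^2(\mathbb{T})$ --- while the paper's route stays entirely inside $\ell_2(\mathbb{Z})$, works with a single projection, and does not need the commutant identity $(\A\vee\B)'=\A'\cap\B'$ at all.
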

\begin{proof}
Let $P$ be an orthogonal projection in the commutant  of $W^*(U)\vee\pi(\O_2)'$. From $PE_+=E_+P$ and $PE_-=E_-P$, $H_\pm$ are straightforwardly seen to be both $P$-invariant. In particular, $Pe_0$ must take the form $Pe_0=\sum_{k\geq 0}a_ke_k$. For the same reason, $Pe_{-1}$ is in $H_-$, but it is also given by $Pe_{-1}=PU^*e_0=U^*Pe_o=\sum_{k\geq 0}a_ke_{k-1}$, which means $a_k=0$ for every $k>0$. In other words, $e_0$ must be an eigenvector of $P$. As such, we have either $Pe_0=0$ or $Pe_0=e_0$. In the first case $P=0$, whilst in the second $P=1$, because $Pf(U)e_0=f(U)Pe_0$ for every $f\in L^{\infty}(\mathbb{T})$.    
\end{proof}
\begin{corollary}
In the canonical representation $W^*(U)\cap \pi(O_2)''=\mathbb{C}1$. As a consequence, $C^*(U)'\cap \O_2=\mathbb{C}1$.
\end{corollary}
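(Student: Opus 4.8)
The plan is to deduce the corollary from Proposition \ref{Intr} by a single application of the commutation theorem for von Neumann algebras. Both $W^*(U)$ and $\pi(\O_2)'$ are von Neumann algebras, so the identity $(M \vee N)' = M' \cap N'$ applies with $M = W^*(U)$ and $N = \pi(\O_2)'$. Taking commutants on both sides of the equality $W^*(U)\vee\pi(\O_2)'=B(\ell_2(\mathbb{Z}))$ supplied by Proposition \ref{Intr} therefore yields
$$\mathbb{C}1 = B(\ell_2(\mathbb{Z}))' = \left(W^*(U)\vee\pi(\O_2)'\right)' = W^*(U)'\cap \pi(\O_2)''.$$

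First I would invoke the fact, already recorded in the excerpt, that the basis vectors $e_k$ are cyclic and separating for $U$, so that $W^*(U)$ is maximal abelian and hence $W^*(U)'=W^*(U)$. Substituting this into the displayed identity gives at once $W^*(U)\cap\pi(\O_2)''=\mathbb{C}1$, which is the first assertion.

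For the consequence, I would argue as follows. Let $x\in C^*(U)'\cap\O_2$. Since $C^*(U)$ is the norm closure of the $*$-algebra of polynomials in $U$, an operator commuting with $C^*(U)$ commutes in particular with $U$ and $U^*$, and hence with the whole von Neumann algebra $W^*(U)=C^*(U)''$ they generate; equivalently, $C^*(U)'=\left(C^*(U)''\right)'=W^*(U)'$, so that $x\in W^*(U)'=W^*(U)$. On the other hand $x\in\O_2=\pi(\O_2)\subset\pi(\O_2)''$. Combining these, $x\in W^*(U)\cap\pi(\O_2)''=\mathbb{C}1$, as desired.

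There is no genuine obstacle here, the statement being a formal corollary of Proposition \ref{Intr}; the only points that deserve care are the correct use of the commutation theorem $(M\vee N)'=M'\cap N'$ (which requires $M$ and $N$ to be von Neumann algebras, duly satisfied) and the passage from commuting with $C^*(U)$ to commuting with $W^*(U)$, which rests on the weak density of $C^*(U)$ in $W^*(U)$ together with the maximal abelianness $W^*(U)'=W^*(U)$.
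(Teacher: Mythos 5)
Your proof is correct and is exactly the argument the paper intends: the corollary is stated there without proof as an immediate consequence of Proposition \ref{Intr}, obtained by taking commutants, using $(M\vee N)'=M'\cap N'$ together with the maximality $W^*(U)'=W^*(U)$ already established via the cyclic and separating vectors $e_k$, and then intersecting with $\O_2\subset\pi(\O_2)''$ after identifying $C^*(U)'=W^*(U)'$. Your write-up simply makes these routine steps explicit, so there is nothing to correct.
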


\begin{remark}
We have included an elementary proof of \ref{Intr} for the reader's convenience. However, note that the rank-one orthogonal projections onto $\mathbb{C}e_n$, with $n\in\mathbb{Z}$, all belong to $W^*(U)\vee\pi(\O_2)'$:
indeed, we have $U^nE_+(U^*)^n=E_{\overline{ {\rm span} }\left\{e_k:\; k\geq n\right\} }$ for any $n\in\mathbb{Z}$. In light of this, the above proposition is well worth comparing with the more far-reaching classical result that for any discrete group $\Gamma$ the von Neumann algebra
on $\ell_2(\Gamma)$ generated by $\lambda(\Gamma)$ and the multiplication operators $M_f$, with $f\in c_0(\Gamma)$,  is the whole $B(\ell_2(\Gamma))$. Obviously, our case corresponds to $\Gamma=\mathbb{Z}$.  
\end{remark}

At this stage, there is another step to take to improve our knowledge of the $C^*$-algebra generated by $U$. Indeed, we are yet to prove that  $C^*(U)$ is a maximal abelian subalgebra of $\Q_2$ as well. Since this task requires some technical work, we postpone the proof to the next section.

\section{Structure results}
\subsection{Two maximal abelian subalgebras}\label{SecMASA}
The goal of the present section is to tackle two structure problems for $\Q_2$, namely that both $C^*(U)$ and $\D_2$ are maximal abelian subalgebras.
We start with $C^*(U)$. The relative result is easily guessed, and yet its proof is unfortunately far from being straightforward, in that it needs some more refined tools such as  conditional expectations from $B(\H)$ onto a maximal subalgebra.  As is known, the proof of the existence of such conditional expectations can be traced back to the classic work of Kadison and Singer \cite{KS}, where 
the authors first described a general procedure to obtain them. Nowadays, the existence of conditional
expectations  of this sort is  preferably  seen as an immediate consequence of the injectivity of abelian von Neumann algebras. 
Even so, we do sketch the original procedure, not least because we make use of it in the proof of Theorem \ref{Faith}. This runs as follows.
Given $T\in B(\H)$, set $T^{|P}\doteq PTP+(I-P)T(I-P)$ for any projection $P$ in $W^*(U)$.  If $\{P_i\}$ is a generating sequence of projections of $W^*(U)$, then every cluster point of the sequence $\{T^{|P_1|P_2|\ldots |P_n}\}$ lies in $W^*(U)'=W^*(U)$, 
as explained in \cite{KS}. This enables us to define a conditional expectation from $B(\H)$ onto $W^*(U)$ associated with each ultrafilter $p\in\beta\mathbb{N}$ simply by taking the strong limit of the subnet corresponding to that ultrafilter.
\\
 
We next show as a key lemma to achieve our result that, for any  conditional expectation $E$ from $B(\H)$ onto $W^*(U)$, we have that
$E[S_\alpha S_\beta^*]$ is at worst a monomial in $U$. For the sake of clarity, our proof is in turn divided into a series of preliminary lemmata.

\begin{lemma}\label{Lemma-Exp-S_2}
With the notations set above, the equalities $E[S_2^k]=E[(S_2^*)^k]=0$ hold for $k\in\mathbb{N}$.
\end{lemma}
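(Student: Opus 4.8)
The plan is to lean on the single most robust property shared by \emph{every} conditional expectation onto $W^*(U)$: since the range of a conditional expectation is precisely the subalgebra it projects onto, $E$ is automatically a $W^*(U)$-bimodule map, i.e. $E[aTb]=aE[T]b$ whenever $a,b\in W^*(U)$. Coupling this with the covariance built into the defining relation of $\Q_2$ will pin down $E[S_2^k]$ outright, and the fact that the conclusion must hold for an arbitrary $E$ causes no trouble, since only the bimodule property is used.

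First I would record the iterated commutation rule $S_2^k\,U^m=U^{2^k m}\,S_2^k$, valid for every $m\in\mathbb{Z}$, which follows by induction from $S_2U=U^2S_2$ (and its inverse form $S_2U^{-1}=U^{-2}S_2$ for negative $m$). Setting $A\doteq E[S_2^k]\in W^*(U)$ and applying $E$ to both sides of this identity, the bimodule property gives $A\,U^m=U^{2^k m}\,A$ for all $m$. Next I would use that $W^*(U)$ is abelian, so $A$ commutes with $U$ and hence $A\,U^m=U^m\,A$; comparing the two expressions yields $(U^{2^k m}-U^m)A=0$, and taking $m=1$ this reads $(U^{2^k}-U)\,A=0$.

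To finish, I would invoke the identification $W^*(U)\cong L^\infty(\mathbb{T},\mu)$ established in the previous subsection (the $e_k$ being cyclic and separating for $U$). Under this isomorphism $U^{2^k}-U$ is multiplication by $z^{2^k}-z=z(z^{2^k-1}-1)$, a function whose only zeros on $\mathbb{T}$ are the finitely many $(2^k-1)$-th roots of unity, hence nonzero $\mu$-almost everywhere. Being injective as a multiplication operator, $U^{2^k}-U$ has trivial kernel, so $(U^{2^k}-U)A=0$ forces $A=0$, that is $E[S_2^k]=0$. Since $E$ is $*$-preserving, the companion equality $E[(S_2^*)^k]=E[(S_2^k)^*]=E[S_2^k]^*=0$ follows at once.

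I expect the main (indeed the only genuinely delicate) obstacle to be the very last deduction $(U^{2^k}-U)A=0\Rightarrow A=0$: this is not a formal manipulation but rests on the concrete structure of $W^*(U)$ as a maximal abelian algebra isomorphic to $L^\infty(\mathbb{T})$ together with the almost-everywhere non-vanishing of the symbol $z^{2^k}-z$. Everything preceding it is a direct consequence of the bimodule property of $E$ and the covariance relation $S_2U=U^2S_2$.
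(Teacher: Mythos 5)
Your proposal is correct and follows essentially the same route as the paper: apply $E$ to the commutation relation $S_2^kU=U^{2^k}S_2^k$, use the $W^*(U)$-bimodule property together with commutativity of $W^*(U)$ to get the functional equation $(z^{2^k}-z)f(z)=0$ in $L^\infty(\mathbb{T})$, conclude $E[S_2^k]=0$, and obtain the adjoint case from $E[T^*]=E[T]^*$. Your only addition is making explicit the measure-theoretic point (the symbol $z^{2^k}-z$ vanishes on a finite set, hence the multiplication operator is injective), which the paper glosses over by asserting $f(z)=0$ for every $z$.
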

\begin{proof}
First note that the second equality is a straightforward consequence of the first thanks to the fact that $E[T^*]=E[T]^*$, which holds for every $T\in B(\H)$. The commutation rules $S_2^k U=U^{2^k} S_2^k$ give $E[S_2^k]U=U^{2^k}E[S_2^k]=E[S_2^k]U^{2^k}$. If we now set $f(U)\doteq E[S_2^k]$, we see that $f(z)(z-z^{2^k})=0$, hence $f(z)=0$ for every $z\in\mathbb{T}$. This says
$E[S_2^k]=0$
and we are done.
\end{proof}

\begin{lemma}
We have that $E[S_2^k(S_2^*)^k]=2^{-k}$ for every non-negative integer $k$.
\end{lemma}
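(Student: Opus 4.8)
The plan is to realize the positive element $S_2^k(S_2^*)^k$ as one block of a partition of unity whose other blocks are its unitary conjugates $U^j S_2^k(S_2^*)^k U^{-j}$, and then let $E$ average over this partition. Writing $Q_k\doteq S_2^k(S_2^*)^k$, I would first establish the identity
$$\sum_{j=0}^{2^k-1} U^j Q_k U^{-j} = 1 ,$$
which is the $k$-th avatar of the defining relation $S_2S_2^*+US_2S_2^*U^*=1$ (exactly the case $k=1$). Granting it, the conclusion is immediate: since $U,U^*\in W^*(U)$ and $E$ is a $W^*(U)$-bimodule map, one has $E[U^j Q_k U^{-j}]=U^j E[Q_k] U^{-j}$; but $E[Q_k]\in W^*(U)$, which is abelian, so each summand equals $E[Q_k]$, and applying $E$ to the displayed identity yields $1=E[1]=2^k\,E[Q_k]$, whence $E[Q_k]=2^{-k}$.

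To prove the partition-of-unity identity I would argue by induction on $k$, the base case $k=1$ being precisely the second defining relation. For the inductive step I would use the factorization $Q_k=S_2\,Q_{k-1}\,S_2^*$ together with the commutation rule $S_2U=U^2S_2$, which in the form $U^{2i}S_2=S_2U^i$ and its adjoint $S_2^*U^{-2i}=U^{-i}S_2^*$ lets one push \emph{even} powers of $U$ through $S_2$ and $S_2^*$. Splitting the index $j\in\{0,\ldots,2^k-1\}$ as $j=2i+r$ with $r\in\{0,1\}$ and $i\in\{0,\ldots,2^{k-1}-1\}$, the $r=0$ terms collapse, via these relations and the inductive hypothesis, to $S_2\bigl(\sum_{i} U^i Q_{k-1} U^{-i}\bigr)S_2^* = S_2S_2^*$, while the $r=1$ terms collapse to $US_2S_2^*U^*$; their sum is $S_2S_2^*+US_2S_2^*U^*=1$.

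The only genuine work is this inductive identity; everything after it is a one-line averaging argument, and in particular the previous lemma on $E[S_2^k]$ is not needed here. As a sanity check, in the canonical representation on $\ell_2(\mathbb{Z})$ the identity is transparent: $Q_k$ is the projection onto $\overline{\mathrm{span}}\{e_m: 2^k\mid m\}$, and its translates $U^j Q_k U^{-j}$ are the projections onto the residue classes $m\equiv j\pmod{2^k}$, which tile $\mathbb{Z}$ as $j$ runs over a complete residue system. This also makes evident why the value $2^{-k}$ is forced: the $2^k$ mutually orthogonal, unitarily conjugate projections must share the same $E$-image, and those images sum to $1$.
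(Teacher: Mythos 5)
Your proof is correct and is essentially the paper's argument: both rest on writing $1$ as a sum of the $2^k$ unitary conjugates $U^jS_2^k(S_2^*)^kU^{-j}$ (the paper presents these as the range projections $S_\alpha S_\alpha^*$, $|\alpha|=k$, via $S_\alpha=U^{h(\alpha)}S_2^k$, and invokes the Cuntz relation $\sum_{|\alpha|=k}S_\alpha S_\alpha^*=1$), and then apply the $W^*(U)$-bimodule property of $E$ together with the commutativity of $W^*(U)$ to conclude that all $2^k$ summands have the same expectation, forcing $2^kE[S_2^k(S_2^*)^k]=1$. Your inductive derivation of the partition identity is merely a self-contained verification of what the paper gets directly from the Cuntz relation in $\O_2\subset\Q_2$, so the two proofs coincide in substance.
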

\begin{proof}
To begin with, we observe that
$$
	E[S_\alpha S_\alpha^*]=U^hE[S_2^{|\alpha|}(S_2^*)^{|\alpha|}]U^{-h}=E[S_2^{|\alpha|}(S_2^*)^{|\alpha|}]
$$
for some positive integer $h$.
Since $1=\sum_{|\alpha|=k} S_\alpha S_\alpha^*$
we have that 
$$
1=\sum_{|\alpha|=k} E[S_\alpha S_\alpha^*]=2^k E[S_2^{|\alpha|}(S_2^*)^{|\alpha|}].
$$
This implies that $E[S_2^k(S_2^*)^k]=2^{-k}$.
\end{proof}

\begin{lemma}
We have that $E[S_2^k(S_2^*)^m]=0$ for $k, m\neq 0$, $k \neq m$.
\end{lemma}
\begin{proof}
Thanks to the last two lemmas, it suffices to show the statement for $k>m>0$.
By using the commutation rules $S_2^k U=U^{2^k} S_2^k$ we get $U^{2^k}E[S_2^k(S_2^*)^m]U^{-2^m}=E[S_2^k(S_2^*)^m]$. If we now set $f(U)\doteq E[S_2^k(S_2^*)^m]$, we 
obtain the functional equation $(z^{2^k-2^m}-1)f(z)=0$,  
which clearly implies $f(z)=0$ for every $z\in\mathbb{T}$. This shows that
$E[S_2^k(S_2^*)^m]=0$
and we are done.   
\end{proof}
 We now have all the necessary information to carry out our proof of $C^*(U)$ being a maximal abelian algebra.

\begin{theorem}\label{UMASA}
$C^*(U)$ is a maximal abelian $C^*$-subalgebra of $\Q_2$.
\end{theorem}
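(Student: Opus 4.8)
The plan is to exploit the conditional expectations $E\colon B(\H)\to W^*(U)$ whose existence was recalled above, working throughout inside the faithful canonical representation on $\ell_2(\mathbb{Z})$, and to show that every such $E$ in fact maps the whole of $\Q_2$ into the smaller algebra $C^*(U)$. Once this is established, maximal abelianness follows in one stroke: if $T\in C^*(U)'\cap\Q_2$, then $T$ commutes with $U$ and hence, by strong continuity of multiplication on bounded sets, with all of $W^*(U)$, so that $T\in W^*(U)'=W^*(U)$ because $W^*(U)$ is a MASA of $B(\ell_2(\mathbb{Z}))$. But then $E[T]=T$, since a conditional expectation onto $W^*(U)$ restricts to the identity there, and combining this with $E[T]\in C^*(U)$ we would conclude $T\in C^*(U)$, as desired.

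The heart of the matter is thus the claim that $E[\Q_2]\subseteq C^*(U)$, and this is where the preliminary lemmata come in. First I would observe that the relation $S_2U=U^2S_2$ allows one to push every $U$ to the left of a monomial, so that $S_\alpha=U^{p(\alpha)}S_2^{|\alpha|}$ for a suitable non-negative integer $p(\alpha)$ depending only on $\alpha$. Consequently
$$S_\alpha S_\beta^*=U^{p(\alpha)}\,S_2^{|\alpha|}(S_2^*)^{|\beta|}\,U^{-p(\beta)},$$
and since $U^{p(\alpha)}$ and $U^{-p(\beta)}$ lie in the range $W^*(U)$ of $E$, the module property gives $E[S_\alpha S_\beta^*]=U^{p(\alpha)}E[S_2^{|\alpha|}(S_2^*)^{|\beta|}]U^{-p(\beta)}$. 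The three lemmata above now evaluate the middle factor: it is $0$ whenever $|\alpha|\neq|\beta|$ (or when exactly one of the two multi-indices is empty), and it equals $2^{-|\alpha|}$ when $|\alpha|=|\beta|$. In every case $E[S_\alpha S_\beta^*]$ is a scalar multiple of a power of $U$, hence a monomial in $U$ lying in $C^*(U)$; this is precisely the key lemma announced just before the statement.

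To pass from these words to all of $\Q_2$ I would invoke the spanning description $\Q_2=\overline{{\rm span}}\{S_\mu S_\nu^*U^k\}$. Using the module property once more, $E[S_\mu S_\nu^*U^k]=E[S_\mu S_\nu^*]U^k\in C^*(U)$, so $E$ sends the dense $*$-subalgebra of finite linear combinations of such words into $C^*(U)$; since $E$ is contractive and $C^*(U)$ is norm-closed, it follows that $E[\Q_2]\subseteq C^*(U)$. Feeding this back into the opening paragraph then completes the argument.

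The step I expect to require the most care is not any single computation but the interplay of the two closure operations. One must be sure that it is the von Neumann MASA property of $W^*(U)$ in $B(\ell_2(\mathbb{Z}))$—already secured through the cyclic and separating vectors $e_k$—that forces a commuting $T$ into $W^*(U)$, while it is the finer norm-closedness of $C^*(U)$ together with the explicit monomial evaluations of the lemmata that pins $E[T]$ down to $C^*(U)$ rather than merely to $W^*(U)$. In short, the whole proof turns on playing the weak and the norm topologies off against each other, with the lemmata supplying exactly the rigidity needed at the $C^*$-level.
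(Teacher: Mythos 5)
Your proof is correct and takes essentially the same approach as the paper: both rest on the Kadison--Singer conditional expectation $E$ onto $W^*(U)$, the three lemmata showing $E[S_\alpha S_\beta^*]$ is a scalar multiple of a power of $U$, and the norm-density of the span of the words $S_\mu S_\nu^* U^k$ together with contractivity of $E$ to land in the norm-closed algebra $C^*(U)$. The only difference is organizational: you isolate the claim $E[\Q_2]\subseteq C^*(U)$ as an explicit intermediate step, whereas the paper applies $E$ directly to a sequence approximating a given $f(U)\in W^*(U)\cap\Q_2$ and concludes via Laurent polynomials.
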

\begin{proof}
As $C^*(U)'\cap\Q_2=W^*(U)\cap\Q_2$, it is enough to prove that given $f\in L^\infty(\mathbb{T})$ with $f(U)$ in $\Q_2$, then $f$ is in fact a continuous function. 
Now if $f(U)$ belongs to $\Q_2$, it is also the norm limit of a sequence $\{x_k\}\subset\Q_2$ with each $x_k$ taking the form $\sum_{\alpha, \beta, h} c_{\alpha, \beta, h} S_\alpha S_\beta^*U^h$.
If $E: B(\H)\rightarrow W^*(U)$ is any of the conditional expectations considered above, we have $f(U)=E(f(U))=\lim_k E(x_k)$. But then each $E(x_k)$ is of the form $\sum_{\alpha, \beta, h} c_{\alpha, \beta, h}U^{h+k_{\alpha, \beta}}$, where
$U^{k_{\alpha, \beta}}$ is nothing but $E(S_\alpha S_\beta^*)$. Consequently, there exists a sequence of Laurent polynomials $p_k$ such that $\|f(U)- p_k(U)\|$ tends to zero, that is $f(U)\in C^*(U)$, as maintained.

\end{proof}
Among other things, it is interesting to note that the former proof yields a distinguished conditional expectation from $\Q_2$ onto $C^*(U)$, which is simply obtained by restricting any of the aforesaid conditional expectations to $\Q_2$.
Although there are conditional expectations onto $W^*(U)$ aplenty, as proved in \cite{KS}, the above computations also show that $E$ is in fact unique, a fact worth a statement of its own. 
\begin{theorem}
The conditional expectation $E:\Q_2\rightarrow C^*(U)$ is unique.  
\end{theorem}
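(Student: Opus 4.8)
The plan is to observe that the three preliminary lemmas leading up to Theorem \ref{UMASA} in fact compute the value of a conditional expectation on \emph{every} monomial, and that their proofs use nothing about the particular Kadison--Singer construction: they rely only on $E$ being a $*$-preserving conditional expectation onto an algebra containing $U$, hence a bimodule map over $C^*(U)$, together with the defining relations of $\Q_2$. Consequently any conditional expectation $E:\Q_2\to C^*(U)$ is forced to take these same values, and uniqueness will follow by density.

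First I would recall that any conditional expectation $E:\Q_2\to C^*(U)$ is automatically a $C^*(U)$-bimodule map, so that $E(axb)=aE(x)b$ for all $a,b\in C^*(U)$ and $x\in\Q_2$. Since $\Q_2=\overline{\mathrm{span}}\{S_\mu S_\nu^* U^k\}$ by the Proposition of Section \ref{SecPrep}, and since the right-module property gives $E(S_\mu S_\nu^* U^k)=E(S_\mu S_\nu^*)U^k$, it is enough to show that the value $E(S_\mu S_\nu^*)$ does not depend on the chosen expectation.

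Next I would reduce the monomials $S_\mu S_\nu^*$ to powers of $S_2$. Using $S_1=US_2$ and $S_2U^m=U^{2m}S_2$, every $S_\mu$ can be written as $U^{a_\mu}S_2^{|\mu|}$ for a non-negative integer $a_\mu$, whence $S_\mu S_\nu^*=U^{a_\mu}S_2^{|\mu|}(S_2^*)^{|\nu|}U^{-a_\nu}$ and, by the bimodule property, $E(S_\mu S_\nu^*)=U^{a_\mu}E\big(S_2^{|\mu|}(S_2^*)^{|\nu|}\big)U^{-a_\nu}$. Then I would simply invoke Lemma \ref{Lemma-Exp-S_2} and the two lemmas following it: their proofs only manipulate the relation $S_2^kU=U^{2^k}S_2^k$ through the module property, and use $\sum_{|\alpha|=k}S_\alpha S_\alpha^*=1$ together with $E(1)=1$ and $E(T^*)=E(T)^*$, all of which hold for any conditional expectation onto $C^*(U)$. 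They thus pin down $E\big(S_2^{|\mu|}(S_2^*)^{|\nu|}\big)$ as $2^{-k}$ when $|\mu|=|\nu|=k$ and as $0$ otherwise, exactly as before. Hence $E(S_\mu S_\nu^*)$ is forced, and two conditional expectations onto $C^*(U)$ agreeing on the dense $*$-subalgebra $\mathrm{span}\{S_\mu S_\nu^* U^k\}$ must coincide by continuity.

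The point I want to stress, and the only place where care is genuinely needed, is the realization that the earlier lemmas were phrased for expectations onto $W^*(U)$ merely because that was the setting of their first use; their arguments never appeal to the weak topology, to $L^\infty(\mathbb{T})$, or to the Kadison--Singer existence procedure. Re-reading them with an arbitrary conditional expectation onto $C^*(U)$ in mind, one checks that the functional equations of the type $f(z)(z-z^{2^k})=0$ now involve a genuinely continuous $f$, so that the conclusion $f\equiv 0$ is if anything easier to justify. I do not expect any serious obstacle beyond this bookkeeping, since the substance of the uniqueness statement has effectively already been carried out in the computations preceding Theorem \ref{UMASA}.
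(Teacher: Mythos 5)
Your proposal is correct and is essentially the paper's own argument: the paper proves uniqueness simply by observing that the computations in the lemmas preceding Theorem \ref{UMASA} (which use only the bimodule property, $*$-preservation, unitality, and the relations of $\Q_2$) force the value of any conditional expectation on the monomials $S_\mu S_\nu^* U^k$, whose span is dense. Your spelling out of the reduction $S_\mu S_\nu^* = U^{a_\mu}S_2^{|\mu|}(S_2^*)^{|\nu|}U^{-a_\nu}$ and of the continuity of the functional-equation step is exactly the bookkeeping the paper leaves implicit.
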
 
To complete the picture, we next show that $E$ is faithful. This is actually a straightforward consequence of a general well-known result due to Tomiyama \cite{Tomi}, whose proof in our setting is nevertheless included for the sake of completeness, being utterly independent of Tomiyama's work to boot.
\begin{proposition}\label{Faith}
The unique conditional expectation $E$ above is faithful.
\end{proposition}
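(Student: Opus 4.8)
The plan is to deduce faithfulness of $E$ from faithfulness of a single auxiliary state, and to analyse that state through the Kadison--Singer construction recalled above. Let $\tau$ be the integration state on $C^*(U)\cong C(\mathbb{T})$, namely $\tau(f(U))=\int_{\mathbb{T}}f\,d\mu=\langle f(U)e_0,e_0\rangle$, which is faithful since the Haar measure $\mu$ has full support. Put $\phi\doteq\tau\circ E$. If $a\in\Q_2$ is positive with $E(a)=0$, then $\phi(a)=\tau(0)=0$; hence it suffices to prove that $\phi$ is a \emph{faithful} state on $\Q_2$, and the whole problem is reduced to this.

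Two reductions make $\phi$ tractable. First, the gauge expectation $\widetilde E:\Q_2\to\Q_2^{\mathbb{T}}$ is faithful, being the average of the pointwise norm-continuous action of the compact group $\mathbb{T}$: $\widetilde E(a)=0$ with $a\ge 0$ forces $\widetilde\alpha_z(a)=0$ for all $z$, whence $a=0$. Moreover, since $\widetilde\alpha_z$ fixes $U$, the map $E\circ\widetilde\alpha_z$ is again a norm-one projection of $\Q_2$ onto $C^*(U)$, hence a conditional expectation, so by the uniqueness just established $E\circ\widetilde\alpha_z=E$; averaging over $z$ gives $E=E\circ\widetilde E$, so that $\phi=\phi\circ\widetilde E$. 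Because $\widetilde E$ is faithful, faithfulness of $\phi$ on $\Q_2$ reduces to faithfulness of $\phi$ on the gauge-invariant subalgebra $\Q_2^{\mathbb{T}}=C^*(\F_2,U)$.

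Second, I would make the expectation explicit at the level of vectors. Writing each elementary compression as $T^{|P}=\tfrac12(T+VTV)$ with $V=2P-I$ a symmetry in $W^*(U)$, and using that the generating projections commute, $E$ is the strong limit along $p$ of the maps $\Phi_n(T)=2^{-n}\sum_\varepsilon V_\varepsilon TV_\varepsilon$, where each $V_\varepsilon=h_\varepsilon(U)$ is a $\{\pm1\}$-valued function of $U$. Evaluating $\langle\,\cdot\,e_0,e_0\rangle$ and expanding the signs yields, for $a=y^*y$, the manifestly positive formula $\phi(y^*y)=\lim_p\sum_j\|y\,\mathbf{1}_{A_j^{(n)}}(U)e_0\|^2$, where $\{A_j^{(n)}\}$ is the measurable partition of $\mathbb{T}$ generated by the first $n$ projections. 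Since $E$ is unique, this value is independent of the generating sequence, so one may take the dyadic arcs $A_j^{(n)}=[\,j2^{-n},(j+1)2^{-n})$ and exploit the self-similarity $S_2^*f(U)S_2=g(U)$ (with $g$ the even-part pushforward of $f$) together with the intertwining $S_ix=\widetilde\varphi(x)S_i$. These identities let one compute $\phi$ on the spanning monomials: writing $S_\alpha=U^aS_2^{|\alpha|}$ and $S_\beta=U^bS_2^{|\beta|}$, bimodularity gives $E(S_\alpha S_\beta^*U^h)=2^{-|\alpha|}U^{a-b+h}$ when $|\alpha|=|\beta|$ and $0$ otherwise, whence $\phi(S_\alpha S_\beta^*U^h)=2^{-|\alpha|}\delta_{|\alpha|,|\beta|}\,\delta_{a-b+h,0}$; this should identify $\phi$ as a concrete (non-tracial) state compatible with the inductive, Bunce--Deddens structure of $\Q_2^{\mathbb{T}}$.

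The final step is the main obstacle, and the difficulty is genuine rather than bookkeeping. Individually the vector states above are far from faithful: $e_0$ is not even separating, since $(1-S_2S_2^*)e_0=0$, and the crude estimates relating $\sum_j\|y\,\mathbf{1}_{A_j^{(n)}}(U)e_0\|^2$ to $\|ye_k\|$ lose all control, the constants growing like $2^n$. Nor can one invoke uniqueness of a trace, as $\phi$ is not tracial. Indeed the very same formula degenerates on compact operators: for a Hilbert--Schmidt $y$ one checks $\sum_j\|y\,\mathbf{1}_{A_j^{(n)}}(U)e_0\|^2\le\big(\max_j\mu(A_j^{(n)})\big)\|y\|_{2}^2\to0$, so any valid argument must genuinely use that $\Q_2$ contains no nonzero compacts, i.e. the algebraic self-similarity of its generators. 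Concretely, I expect to finish by bounding the positive quantities $\sum_j\|y\,\mathbf{1}_{A_j^{(n)}}(U)e_0\|^2$ from below by the value of a faithful finite-stage expectation onto $C^*(U)$ inside each circle subalgebra of the inductive limit $\Q_2^{\mathbb{T}}$, and showing that this lower bound persists in the limit; pinning down this compatibility/monotonicity along the dyadic tower is the crux of the proof.
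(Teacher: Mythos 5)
Your preparatory reductions are all correct as far as they go: since $\widetilde\alpha_z$ fixes $C^*(U)$ pointwise, $E\circ\widetilde\alpha_z$ is again a conditional expectation onto $C^*(U)$, so the uniqueness theorem gives $E=E\circ\widetilde E$ and the problem does reduce to faithfulness of $\phi=\tau\circ E$ on $\Q_2^{\mathbb{T}}$; the pinching formula $\phi(y^*y)=\lim_p\sum_j\|y\,\chi_{A_j^{(n)}}(U)e_0\|^2$ is right, and so are the values $\phi(S_\alpha S_\beta^*U^h)=2^{-|\alpha|}\delta_{|\alpha|,|\beta|}\delta_{a-b+h,0}$ on monomials. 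But the write-up stops exactly where the proposition begins: faithfulness of $\phi$ on $\Q_2^{\mathbb{T}}$ is never proved. The last paragraph is a declaration of strategy (``I expect to finish by bounding \dots from below by the value of a faithful finite-stage expectation \dots and showing that this lower bound persists in the limit'') in which the finite-stage expectations are not defined, their compatibility with the inclusions of the dyadic tower is not formulated, and no monotonicity is proved. As you yourself demonstrate, the same limiting expression gives $0$ on every Hilbert--Schmidt operator (in particular $E$ kills the rank-one projection onto $\mathbb{C}e_0$), so exhibiting a nonzero lower bound for nonzero positive elements of $\Q_2^{\mathbb{T}}$ is not a final bookkeeping step; it is the entire content of the statement. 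What you have is a correct reduction plus an accurate diagnosis of the difficulty, not a proof.

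For comparison, the paper's own argument is completely different and bypasses all of this: it observes that the Kadison--Singer compression $T\mapsto T^{|P}=PTP+(1-P)T(1-P)$ preserves $\alpha$-coercivity, so that $E[T]\geq\alpha 1$ whenever $(Tx,x)\geq\alpha\|x\|^2$, and then treats a general nonzero positive $T$ by passing, via the spectral theorem, to a $T$-invariant subspace on which $T$ is $\varepsilon$-coercive. It is worth saying that your compact-operator observation is genuinely pertinent to that argument as well: because $E$ annihilates compacts, domination alone cannot conclude --- from $T\geq\varepsilon Q$ with $Q$ a nonzero spectral projection one only gets $E[T]\geq\varepsilon E[Q]$, and $E[Q]$ can vanish, precisely as for the projection onto $\mathbb{C}e_0$ --- so any complete argument must somewhere exploit that the element lies in $\Q_2$ (for instance through $\K(\H)\cap\Q_2=\{0\}$), a point on which the paper's concluding sentence is itself very terse. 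This sharpens your diagnosis of where the real difficulty sits, but it does not close the gap in your proposal.
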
  
\begin{proof}
By uniqueness it is enough to make sure that the conditional expectation yielded by the Kadison-Singer procedure is faithful. Now if $T$ is an $\alpha$-coercive operator, i.e. $(Tx,x)\geq \alpha\|x\|^2 $ with $\alpha>0$, then $T^{|P}$ is $\alpha$-coercive as well regardless of the projection $P$. In particular, if $T\in B(\H)$ is a coercive operator, then $E[T]$ cannot zero, being by definition a weak limit of coercive positive operators all with the same constant as $T$. If now $T$ is any non-zero positive operator and
$\varepsilon>0$ is any real number with $\varepsilon<\|T\|$, the spectral theorem provides us with an orthogonal decomposition $\H=M_\varepsilon\oplus N_\varepsilon$ with $M_\varepsilon$ and $N_\epsilon$ both $T$-invariant and such that the restriction $T\upharpoonright_{N_\varepsilon}$ is $\varepsilon$-coercive. The remark we started our proof with allows to conclude that $E[T]$ is not zero either.
\end{proof}

We can now move on to $\D_2$. Again, the techniques we employ make a rather intensive use of conditional expectations. 
Before we start, it is worth mentioning that this result can be understood as a generalization of the well-known property of $\D_2$ being maximal 
in $\O_2$. We start  attacking the problem  with the following couple of lemmas, for which we first need to set some notation. We still denote by $E$ the unique faithful conditional expectation from $B(\H)$ onto $\ell_\infty(\mathbb{Z})$. As known, this is simply given by $(E[T]e_i, e_j)= (Te_i, e_j)\delta_{i,j}$.
\begin{lemma}
The following relations hold:
\begin{itemize}
\item $E[U^k]=\delta_{k,0}I$,
\item $E[S_\alpha S_\alpha^*]= S_\alpha S_\alpha^*$,
\item if $|\alpha |\neq |\beta|$, $E[S_\alpha S_\beta^*U^h]$ is either $0$ or  $E_i$, 
\item if $|\alpha |= |\beta|$, $E[S_\alpha S_\beta^*U^h]$ is either $0$ or  $S_\alpha S_\beta^*U^h$.
\end{itemize}
\end{lemma}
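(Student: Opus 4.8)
The plan is to work entirely inside the canonical representation on $\ell_2(\mathbb{Z})$, where $E$ is the diagonal cut-down $(E[T]e_i,e_j)=(Te_i,e_j)\delta_{i,j}$, so that $E[T]$ is nothing but the diagonal part of the matrix of $T$ in the canonical basis. The decisive structural remark I would establish first is that every monomial $S_\alpha S_\beta^* U^h$ acts as a \emph{partial bijection of the basis}: it sends each $e_m$ either to $0$ or to a single basis vector $e_n$ with coefficient exactly $1$. This is because $S_i e_k=e_{2k+2-i}$, whence $S_\alpha e_k=e_{2^{|\alpha|}k+r_\alpha}$ for a fixed offset $r_\alpha\in\{0,\dots,2^{|\alpha|}-1\}$ (a one-line induction on $|\alpha|$, with $r_\alpha=2r_{(\alpha_2,\dots,\alpha_n)}+2-\alpha_1$), while $S_\beta^*$ sends $e_m$ to $e_{(m-r_\beta)/2^{|\beta|}}$ when $m\equiv r_\beta\pmod{2^{|\beta|}}$ and to $0$ otherwise, and $U^h$ merely shifts indices. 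Composing three such maps again yields a partial basis bijection.

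Granting this, $E[S_\alpha S_\beta^* U^h]$ is simply the orthogonal projection onto $\overline{\mathrm{span}}\{e_m:S_\alpha S_\beta^* U^h e_m=e_m\}$, i.e. the diagonal projection supported on the fixed indices, and the whole lemma reduces to solving one fixed-point equation. Writing out the composite, $e_m$ is fixed precisely when $m+h\equiv r_\beta\pmod{2^{|\beta|}}$ together with $m=2^{|\alpha|-|\beta|}(m+h-r_\beta)+r_\alpha$, equivalently $(2^{|\beta|}-2^{|\alpha|})\,m=2^{|\alpha|}(h-r_\beta)+2^{|\beta|}r_\alpha$.

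I would then split into the two cases. If $|\alpha|\neq|\beta|$ the coefficient $2^{|\beta|}-2^{|\alpha|}$ is nonzero, so the equation has at most one solution $m$; hence the fixed-index set is empty or a singleton and $E[S_\alpha S_\beta^* U^h]$ is either $0$ or a rank-one projection $E_m$, which is the third bullet. If $|\alpha|=|\beta|$ the $m$-terms cancel and the condition collapses to $h=r_\beta-r_\alpha$, \emph{independent of $m$}: when it fails there are no fixed points and $E[\cdot]=0$, while when it holds every index on which the monomial acts nontrivially is fixed, so $S_\alpha S_\beta^* U^h$ is itself diagonal and $E$ leaves it unchanged, which is the fourth bullet. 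The first two bullets are degenerate instances of this analysis: $E[U^k]$ follows from $U^k e_m=e_{m+k}$ (a fixed index exists iff $k=0$, giving $\delta_{k,0}I$), and $E[S_\alpha S_\alpha^*]$ is the case $\alpha=\beta$, $h=0$, in which the range projection $S_\alpha S_\alpha^*$ is already diagonal.

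The only real care needed — which I expect to be a nuisance of bookkeeping rather than a genuine obstacle — is tracking the offsets $r_\alpha,r_\beta$ together with the divisibility side-condition $m+h\equiv r_\beta\pmod{2^{|\beta|}}$, which must be verified at the candidate solution before one may conclude $E_m$ rather than $0$, and confirming in the equal-length nonzero case that $S_\alpha S_\beta^* U^h$ is genuinely diagonal (not merely that its fixed set happens to be large). Conceptually everything is immediate once the partial-bijection picture is in place.
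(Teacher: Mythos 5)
Your proof is correct and follows essentially the same route as the paper's: both arguments work in the canonical representation, regard each monomial $S_\alpha S_\beta^* U^h$ as a partial bijection of the basis $\{e_m\}$, and determine $E[S_\alpha S_\beta^* U^h]$ from a linear fixed-point equation whose coefficient ($2^{|\beta|}-2^{|\alpha|}$ in your normalization) is nonzero exactly when $|\alpha|\neq|\beta|$, giving at most one fixed index, and whose condition becomes $m$-independent when $|\alpha|=|\beta|$. The only difference is bookkeeping: the paper first normalizes $S_\alpha=U^{h(\alpha)}S_2^{|\alpha|}$ and uses the $\ell_\infty(\mathbb{Z})$-bimodule property of $E$ to peel off diagonal factors (so the fourth bullet reduces to $E[U^{h-h(\beta)+h(\alpha)}]$ and the third to a fixed-point count for $U^{h}S_2^{k}U^{l}$), whereas you keep the monomial whole and track the offsets $r_\alpha$, $r_\beta$ explicitly.
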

\begin{proof}
The first two equalities need no proof. For the third, without harming generality, we may suppose that $|\alpha |<|\beta|$
\begin{align*}
E[S_\alpha S_\beta^*U^h] & = E[U^{h(\alpha)}(S_2)^{|\alpha |} (S_2^*)^{|\beta |}U^{h-h(\beta)}]\\ 
 & = E[U^{h(\alpha)}(S_2)^{|\alpha |} (S_2^*)^{|\alpha |} U^{- h(\alpha)}U^{h(\alpha)} (S_2^*)^{|\beta |-|\alpha |} U^{h-h(\beta)}]\\
 & = U^{h(\alpha)}(S_2)^{|\alpha |} (S_2^*)^{|\alpha |} U^{- h(\alpha)} E[U^{h(\alpha)} (S_2^*)^{|\beta |-|\alpha |} U^{h-h(\beta)}].
\end{align*}
where $h(\alpha)$ and $h(\beta)$ are positive integers.
Accordingly, we are led to compute $E[U^{h} (S_2)^{k} U^{l}]$, where $h\in \mathbb{Z}$.  Now the condition $U^{h} (S_2)^{k} U^{l} e_i=e_i$
implies that $i=2^k(i+l)+h$, and because the former equation has a unique solution, we get the thesis. Finally, for the fourth we have that
\begin{align*}
E[S_\alpha S_\beta^*U^h] & = E[U^{h(\alpha)}(S_2)^{|\alpha |} (S_2^*)^{|\beta |}U^{h-h(\beta)}]\\ 
 & = E[U^{h(\alpha)}(S_2)^{|\alpha |} (S_2^*)^{|\alpha |} U^{-h(\alpha)} U^{h-h(\beta)+h(\alpha)}]\\
 & = U^{h(\alpha)}(S_2)^{|\alpha |} (S_2^*)^{|\alpha |} U^{-h(\alpha)} E[U^{h-h(\beta)+h(\alpha)}]\\
 & = \delta_{h-h(\beta)+h(\alpha),0} U^{h(\alpha)}(S_2)^{|\alpha |} (S_2^*)^{|\alpha |} U^{-h(\alpha)}\\
 & = \delta_{h-h(\beta)+h(\alpha),0} U^{h(\alpha)}(S_2)^{|\alpha |} (S_2^*)^{|\alpha |} U^{h-h(\beta)}\\
 & =  \delta_{h-h(\beta)+h(\alpha),0} S_\alpha S_\beta^*U^h.
 \end{align*}
\end{proof}
In order to make our proof work, we also need to take into account the conditional expectation $\Theta$ from $\Q_2$ onto $\D_2$ described in \cite{LarsenLi}. We recall that this is uniquely determined by 
$\Theta ((S_2^*)^i U^{-l}fU^{l'} S_2^{i'}) \doteq  \delta_{i,i'} \delta_{l,l'} (S_2^*)^i U^{-l}fU^{l} S_2^{i}$, where $f$ is in $\F_2$. Moreover, it is there shown to be faithful too.
\begin{lemma}
If $|\alpha |= |\beta|$, $\Theta [S_\alpha S_\beta^*U^h]$ is either $0$ or $S_\alpha S_\beta^*U^h$. In particular, $\Theta$ and $E$ coincide on monomials $S_\alpha S_\beta^* U^h$ with $|\alpha|=|\beta|$. 
\end{lemma}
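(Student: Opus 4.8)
The plan is to verify the asserted identity on a single monomial $S_\alpha S_\beta^* U^h$ with $|\alpha|=|\beta|=:k$, and then to extend to the closed linear span of such monomials by linearity and continuity of both $\Theta$ and $E$. The first step is to put the monomial into the canonical normal form on which $\Theta$ is defined. To this end I would reuse the elementary identity $S_\mu = U^{h(\mu)} S_2^{|\mu|}$ (with $h(\mu)\ge 0$) already exploited in the computation of $E$ in the previous lemmas; it yields
$$S_\alpha S_\beta^* U^h = U^{h(\alpha)} S_2^{k}(S_2^*)^{k} U^{\,h-h(\beta)}.$$
The crucial observation is that the diagonal projection $f\doteq S_2^{k}(S_2^*)^{k}$ is gauge-invariant, hence lies in $\F_2$, so the right-hand side is exactly of the shape $(S_2^*)^{i}U^{-l}fU^{l'}S_2^{i'}$ with $i=i'=0$, $l=-h(\alpha)$ and $l'=h-h(\beta)$.

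With the monomial written in this way, the second step is simply to apply the defining formula for $\Theta$, giving
$$\Theta[S_\alpha S_\beta^* U^h]=\delta_{l,l'}\,U^{-l}fU^{l}=\delta_{-h(\alpha),\,h-h(\beta)}\;U^{h(\alpha)}fU^{-h(\alpha)}.$$
Here the Kronecker delta is nonzero precisely when $h=h(\beta)-h(\alpha)$, that is when $h-h(\beta)+h(\alpha)=0$; and in that case $l=l'$, so the surviving operator $U^{-l}fU^{l}$ coincides with the original monomial $U^{h(\alpha)}fU^{\,h-h(\beta)}=S_\alpha S_\beta^* U^h$ (which is then nothing but the projection $S_\alpha S_\alpha^*\in\D_2$). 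Consequently $\Theta[S_\alpha S_\beta^* U^h]$ equals either $0$ or $S_\alpha S_\beta^* U^h$, as claimed. Comparing the surviving condition $h-h(\beta)+h(\alpha)=0$ and the surviving value with the formula $E[S_\alpha S_\beta^* U^h]=\delta_{h-h(\beta)+h(\alpha),0}\,S_\alpha S_\beta^* U^h$ obtained in the previous lemma, one sees at once that $\Theta$ and $E$ agree on every such monomial.

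The step I expect to require the most care is the passage to the normal form, namely extracting the $\F_2$-part as the \emph{diagonal} projection $S_2^{k}(S_2^*)^{k}$ while keeping the two $U$-powers \emph{outside}, rather than naively setting $f=S_\alpha S_\beta^*$ and $l'=h$. The latter choice does not respect the canonical form used to define $\Theta$: it would be incompatible with the well-definedness of $\Theta$, since when $h=h(\beta)-h(\alpha)\neq 0$ the element in fact already lies in $\F_2$ and must be recorded with $l=l'$. One should also note that the defining formula is being applied with integer exponents $l,l'\in\mathbb{Z}$ and that $f=S_2^{k}(S_2^*)^{k}$ is genuinely gauge-invariant, so that this instance of the definition is legitimate.
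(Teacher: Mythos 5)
Your proof is correct and follows essentially the same route as the paper: both rewrite the monomial in the normal form $S_\alpha S_\beta^* U^h = U^{h(\alpha)}S_2^{k}(S_2^*)^{k}U^{h-h(\beta)}$, apply the defining formula for $\Theta$ with $i=i'=0$, $l=-h(\alpha)$, $l'=h-h(\beta)$ to get the Kronecker delta $\delta_{h-h(\beta)+h(\alpha),0}$, and match this against the formula for $E$ from the preceding lemma. Your additional remarks (that the surviving monomial is the diagonal projection $S_\alpha S_\alpha^*$, and that one must use this decomposition rather than naively taking $f=S_\alpha S_\beta^*$ with $l'=h$) are accurate refinements of the same argument.
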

\begin{proof}
By direct computation. Indeed, we have
\begin{align*} 
\Theta [S_\alpha S_\beta^*U^h] &=\Theta (U^{h(\alpha)}(S_2)^{|\alpha |} (S_2^*)^{|\alpha |} U^{h-h(\beta)})\\
& =\delta_{h(\alpha ), -h+h(\beta)}U^{h(\alpha)}(S_2)^{|\alpha |} (S_2^*)^{|\alpha |} U^{h-h(\beta)}
\end{align*}
\end{proof}
\begin{theorem}\label{DMASA}
The diagonal subalgebra $\D_2\subset\Q_2$ is a maximal abelian subalgebra.  
\end{theorem}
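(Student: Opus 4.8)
The plan is to show that the relative commutant $\D_2'\cap\Q_2$ reduces to $\D_2$ itself; since $\D_2$ is abelian, this is precisely the assertion that it is maximal abelian. I would work inside the canonical representation, which is faithful because $\Q_2$ is simple. Take $x\in\Q_2$ commuting with every element of $\D_2$. Then $x$ lies in the commutant $\D_2'=\ell_\infty(\mathbb{Z})$ computed above, so $x$ is a diagonal operator; in particular $E[x]=x$, where $E\colon B(\ell_2(\mathbb{Z}))\to\ell_\infty(\mathbb{Z})$ is the faithful diagonal expectation. The whole problem is thus reduced to proving that a diagonal element of $\Q_2$ already belongs to $\D_2$.

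Next I would exploit the description $\Q_2=\overline{\mathrm{span}}\{S_\alpha S_\beta^*U^h\}$ to pick a sequence $x_k\to x$ in norm with each $x_k$ a finite linear combination of such monomials, and split each $x_k=x_k'+x_k''$ into its balanced part ($|\alpha|=|\beta|$) and its unbalanced part ($|\alpha|\neq|\beta|$). Applying $E$ and using the monomial computations of the preceding lemmata, every balanced monomial is sent either to $0$ or to $U^{h(\alpha)}S_2^{|\alpha|}(S_2^*)^{|\alpha|}U^{-h(\alpha)}$, which is the diagonal projection onto a residue class mod $2^{|\alpha|}$, i.e. a generator $S_\mu S_\mu^*$ of $\D_2$; hence $D_k\doteq E[x_k']\in\D_2$. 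On the other hand, each unbalanced monomial is sent by $E$ either to $0$ or to a rank-one projection $E_i$, so that $R_k\doteq E[x_k'']$ is a finite-rank, hence compact, operator. Since $E$ is contractive and $E[x]=x$, we obtain $D_k+R_k=E[x_k]\to x$ in norm.

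Finally I would remove the spurious compact contributions. Because $\Q_2$ is simple, unital and infinite dimensional while acting irreducibly, $\Q_2\cap\K(\ell_2(\mathbb{Z}))=\{0\}$, so the quotient map $q$ onto the Calkin algebra is isometric on $\Q_2$. Now $x-D_k\in\Q_2$ and $R_k$ is compact, whence $\|x-D_k\|=\|q(x-D_k)\|=\|q(x-D_k-R_k)\|\le\|x-D_k-R_k\|\to 0$. Therefore $D_k\to x$ in norm, and since $\D_2$ is closed we conclude $x\in\D_2$, as desired. The one delicate point is precisely this last step: under $E$ the unbalanced monomials produce rank-one projections $E_i$ that do not lie in $\Q_2$, so $E$ does not map $\Q_2$ into $\D_2$ and one cannot argue naively; the Calkin-algebra isometry is what lets these compact remainders be discarded. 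Equivalently, one could invoke the faithful expectation $\Theta\colon\Q_2\to\D_2$ of Larsen--Li together with its agreement with $E$ on balanced monomials, but the compactness argument above is self-contained.
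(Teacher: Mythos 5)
Your proof is correct, and it shadows the paper's own argument for most of its length: the same reduction to showing $\ell_\infty(\mathbb{Z})\cap\Q_2=\D_2$, the same approximation of $x$ by finite sums of monomials $S_\alpha S_\beta^*U^h$, and the same splitting of $E[x_k]$ into a $\D_2$-part coming from balanced monomials and a finite-rank diagonal part coming from unbalanced ones. Where you diverge is the last step. The paper identifies the balanced part $d_k$ with $\Theta(x_k)$, where $\Theta$ is the Larsen--Li conditional expectation onto $\D_2$ --- this is precisely what its second preliminary lemma (that $\Theta$ and $E$ coincide on balanced monomials) is for --- and uses boundedness of $\Theta$ to get $d_k\to d\in\D_2$; then $f_k=E(x_k)-d_k$ converges to the compact operator $x-d\in\Q_2$, which must vanish because $\K(\H)\cap\Q_2=\{0\}$. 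You never invoke $\Theta$: you use the same fact $\K(\H)\cap\Q_2=\{0\}$, but packaged as the statement that the Calkin quotient map $q$ is isometric on $\Q_2$, and you estimate $\|x-D_k\|=\|q(x-D_k-R_k)\|\le\|x-E[x_k]\|\to 0$ directly. The two uses of $\K(\H)\cap\Q_2=\{0\}$ are equivalent in substance, but your packaging buys a small economy: the lemma comparing $\Theta$ with $E$ becomes superfluous, and the proof needs only the diagonal expectation $E$ together with the standard fact that an injective $*$-homomorphism of $C^*$-algebras is isometric. The paper's route, in exchange, produces the limit explicitly as an element in the range of the (already available and faithful) Larsen--Li expectation. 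One cosmetic remark: irreducibility of the canonical representation is not needed for $\K(\H)\cap\Q_2=\{0\}$; simplicity and unitality of $\Q_2$ together with $\dim\H=\infty$ already suffice.
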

\begin{proof}
As usual, all we have to do is make sure that the relative commutant $\D_2'\cap\Q_2=\ell_\infty({\mathbb{Z}}) \cap\Q_2$ reduces to $\D_2$. Let $x\in \ell_\infty(\mathbb{Z})\cap\Q_2$, then there exists a sequence 
$\{x_k\}$ converging normwise to $x$ with each of the $x_k$ of the form $\sum c_{\alpha, \beta, h} S_\alpha S_\beta^* U^h$. As above, $x=E(x)=\lim_k E(x_k)$. Thanks to the former lemmata, 
we can rewrite $E(x_k)$ as $d_k+ f_k$, where $d_k\in \D_2$ and $f_k$ are all diagonal finite-rank operators. Now, being $d_k=\Theta(x_k)$, we see that $d_k$ must converge to some $d\in\D_2$. 
But then $f_k$ converge normwise to a diagonal compact operator, say $k$, which means $k=x-d$ is in $\Q_2$, hence $k=0$, 
being $\K(\H) \cap \Q_2 = \{0\}$,
and $x=d\in\D_2$.   
\end{proof}
The former result readily leads to $\D_2$ being a Cartan subalgebra of $\Q_2$.
An anonymous referee pointed out an alternative approach to show this by using the works of N. Larsen and X. Li, \cite{LarsenLi}, and J. Renault, \cite{Renault}. We believe this approach is very elegant and we give a brief account of it. In the first place, one should see the $2$-adic ring $C^*$-algebra $\Q_2$ as a reduced groupoid  $C^*$-algebra (see \cite[Section 5]{LarsenLi}). 
Indeed, if $G=({\mathbb Z}[\frac{1}{2}] \rtimes 2^{\mathbb Z}) \ltimes {\mathbb Q}_2$ is the transformation groupoid, where ${\mathbb Z}[\frac{1}{2}]$ and $2^{\mathbb Z}$ act on ${\mathbb Q}_2$ by addition and multiplication, respectively, and $F:=G|_{{\mathbb Z}_2}$, then $C^*_r(F) \simeq \Q_2$.
Now, the claim follows at once by using \cite[Proposition 3.1]{Renault} 
after showing that the set of isotropy points of $\mathbb{Z}_2$ is countable, and thus that the set of points with trivial isotropy is dense (that is, the corresponding action is topologically free \cite[Sec. 6.1]{Renault}).
The canonical abelian subalgebra of a crossed product with respect to a group is considered as the most typical example of MASA, provided that the action is topologically free. In the present case, although $\Q_2$ is only a crossed product by a semigroup, the situation is almost the same because it is just a corner of a crossed product by a group. One advantage of this description of $\Q_2$ as a crossed product 
makes more natural to recognize $\D_2$ as the canonical Cartan subalgebra (and the canonical expectation).

\smallskip

As for $\O_2$, later on we will prove that there is no conditional expectation from $\Q_2$ onto $\O_2$.

\subsection{Irreducible subalgebras}\label{SecIrr}
In order to take a step further towards the study of $\Q_2$, especially as far as the properties of the inclusion $\O_2\subset \Q_2$ are concerned,  it is worthwhile to recall a useful result proved by Larsen and Li in their aforementioned paper \cite{LarsenLi}. It says that a representation $\rho$ of $\O_2$ extends to a representation of $\Q_2$ if and only if the unitary part of the Wold decomposition of $\rho(S_1)$ and $\rho(S_2)$ are
unitarily equivalent. Accordingly, once the unitary parts of the Wold decompositions have been proved to be unitarily equivalent,  the isometries  are unitarily equivalent too because of the relation $US_1 = S_2 U$.  
As remarked by the authors themselves, this allows us to think of $\Q_2$ as a sort of symmetrized version of $\O_2$.
Notably, the result applies to those representations $\pi$ of $\O_2$ in which both $\pi(S_1)$ and $\pi(S_2)$ are pure. Moreover, in such cases the sought extension is unique, as pointed out in \cite[Remark 4.2]{LarsenLi}. 
For the reader's convenience, though, we do single out this as a separate statement.
\begin{proposition}\label{prop-uniq-ext}
Let $\pi$ be a representation of $\O_2$ on the Hilbert space $\H_\pi$ such that $\pi(S_1)$ and $\pi(S_2)$ are both pure. Then there exists a unique unitary $\widetilde U\in B(\H_\pi)$ such that $\pi(\S_2) \widetilde U=\widetilde U^2 \pi(S_2)$ and $\pi(S_1)=\tilde U\pi(S_2)$.
\end{proposition}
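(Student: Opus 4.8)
The plan is to produce $\widetilde U$ as an explicit, strongly convergent series dictated by the relations themselves, and then to read off both existence and uniqueness from it. Write $V_i\doteq\pi(S_i)$; from the Cuntz relations one has $V_i^*V_i=1$, $V_1^*V_2=V_2^*V_1=0$ and $V_1V_1^*+V_2V_2^*=1$, while purity of $V_i$ means exactly that $(V_i^*)^n\to0$ strongly. The two desired identities $\pi(S_1)=\widetilde U\pi(S_2)$ and $\pi(S_2)\widetilde U=\widetilde U^2\pi(S_2)$ are, modulo the first, the same as $\widetilde U V_2=V_1$ and $\widetilde U V_1=V_2\widetilde U$. Inserting $1=V_2V_2^*+V_1V_1^*$ on the right shows that any solution must satisfy the recursion $\widetilde U=V_1V_2^*+V_2\widetilde U V_1^*$; iterating it $n$ times and letting $n\to\infty$ (so that the tail $V_2^n\widetilde U(V_1^*)^n\to0$ strongly by purity of $V_1$) forces
$$\widetilde U=\sum_{k\ge0}V_2^kV_1V_2^*(V_1^*)^k ,$$
the series being understood in the strong operator topology. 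Uniqueness is then immediate: the difference $D$ of two solutions obeys $D=V_2^nD(V_1^*)^n$ for every $n$, whence $D=0$.

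Taking the displayed formula as the \emph{definition} of $\widetilde U$, the next step is to check that it really defines a unitary. The summand $T_k\doteq V_2^kV_1V_2^*(V_1^*)^k$ is a partial isometry with initial projection $T_k^*T_k=V_1^kV_2V_2^*(V_1^*)^k$ and final projection $T_kT_k^*=V_2^kV_1V_1^*(V_2^*)^k$. Using $V_2V_2^*=1-V_1V_1^*$ the initial projections telescope to $\sum_{k<n}T_k^*T_k=1-V_1^n(V_1^*)^n$, and symmetrically $\sum_{k<n}T_kT_k^*=1-V_2^n(V_2^*)^n$; by purity both increase strongly to $1$. Thus the $T_k$ have pairwise orthogonal initial spaces with strong sum $\H_\pi$ and pairwise orthogonal final spaces with strong sum $\H_\pi$, so the partial sums converge strongly to an isometry with dense range, i.e. to a unitary $\widetilde U$.

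Finally I would verify the two relations by a direct computation that only uses the orthogonality $V_1^*V_2=V_2^*V_1=0$. In $\widetilde U V_2$ every term with $k\ge1$ carries the factor $(V_1^*)^kV_2$, which vanishes because $V_1^*V_2=0$, while the $k=0$ term gives $V_1V_2^*V_2=V_1$; hence $\widetilde U V_2=V_1$. In $\widetilde U V_1$ the $k=0$ term $V_1V_2^*V_1$ vanishes because $V_2^*V_1=0$, while for $k\ge1$ one has $(V_1^*)^kV_1=(V_1^*)^{k-1}$, and reindexing the sum yields $\widetilde U V_1=V_2\widetilde U$. Combining the two gives $\widetilde U^2V_2=\widetilde U V_1=V_2\widetilde U$, which is the stated relation. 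The only delicate point in the whole argument is the passage to the strongly convergent series: it is precisely purity that makes the recursion terminate in the limit, that guarantees the telescoping sums exhaust the identity, and that thereby delivers simultaneously the unitarity of $\widetilde U$ and its uniqueness.
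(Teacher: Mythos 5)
Your proof is correct, but it follows a genuinely different route from the paper's. The paper gives no self-contained argument at all: it deduces the proposition from the extension theorem of Larsen and Li \cite{LarsenLi}, according to which a representation of $\O_2$ extends to $\Q_2$ if and only if the unitary parts of the Wold decompositions of the images of $S_1$ and $S_2$ are unitarily equivalent — a condition that is vacuously satisfied when both isometries are pure — and it quotes \cite[Remark 4.2]{LarsenLi} for the uniqueness of the extension. You instead build $\widetilde U$ by hand: writing $V_i\doteq\pi(S_i)$, the recursion $\widetilde U=V_1V_2^*+V_2\widetilde U V_1^*$, forced by the two relations together with $V_1V_1^*+V_2V_2^*=1$, is solved by the strongly convergent series $\sum_{k\ge0}T_k$ with $T_k\doteq V_2^kV_1V_2^*(V_1^*)^k$; unitarity follows from the telescoping identities $\sum_{k<n}T_k^*T_k=1-V_1^n(V_1^*)^n$ and $\sum_{k<n}T_kT_k^*=1-V_2^n(V_2^*)^n$ together with purity of both isometries, the intertwining relations are verified term by term using only $V_1^*V_2=V_2^*V_1=0$, and uniqueness falls out of the same recursion since the difference $D$ of two solutions obeys $D=V_2^nD(V_1^*)^n\to0$ strongly. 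All of these steps are sound: multiplication on either side by a fixed bounded operator preserves strong convergence, so the term-by-term computations are legitimate, and an isometry with dense range is automatically unitary because its range is closed. What your approach buys is a completely elementary, constructive proof with an explicit formula — in the canonical representation your series reproduces the bilateral shift $U$ itself — and it extracts existence and uniqueness from one and the same identity; what the paper's citation buys is brevity and the full strength of Larsen--Li's criterion, which also covers representations whose isometries are not pure, where existence genuinely requires comparing the unitary parts of the two Wold decompositions.
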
 
The recalled theorem by Larsen and Li would actually be enough to prove that the Cuntz algebra $\O_2$ is irreducible in $\Q_2$, as might be expected. 
Even so, this can also be derived from the much stronger result that even the UHF subalgebra $\F_2$ has trivial relative commutant, which is shown below.
\begin{theorem}
The UHF subalgebra $\F_2\subset\Q_2$ is irreducible, i.e. $\F_2' \cap\Q_2=\mathbb{C}1$.
\end{theorem}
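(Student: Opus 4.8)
The plan is to derive this from the maximal abelianness of the diagonal $\D_2$ already established in Theorem \ref{DMASA}, rather than from the Larsen--Li extension theorem directly. The key structural observation is that the diagonal $\D_2$ sits inside $\F_2$ as its canonical maximal abelian subalgebra: every generator $S_\mu S_\mu^*$ of $\D_2$ is of the form $S_\mu S_\nu^*$ with $|\mu|=|\nu|$, so $\D_2\subseteq\F_2\subseteq\O_2\subseteq\Q_2$. This lets me trade the problem of controlling the commutant of the large algebra $\F_2$ for the commutant of the small algebra $\D_2$, which I already understand completely.

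First I would take an arbitrary $x\in\F_2'\cap\Q_2$. Since $\D_2\subseteq\F_2$, anything commuting with all of $\F_2$ commutes in particular with $\D_2$, whence the inclusion
$$
\F_2'\cap\Q_2\subseteq\D_2'\cap\Q_2 .
$$
By Theorem \ref{DMASA} the right-hand side is exactly $\D_2$, so I obtain $\F_2'\cap\Q_2\subseteq\D_2$. In particular every such $x$ already lies \emph{inside} $\F_2$, since $\D_2\subseteq\F_2$.

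The second and final step is then immediate: $x\in\F_2$ and $x$ commutes with every element of $\F_2$, so $x$ belongs to the centre $\F_2'\cap\F_2$ of $\F_2$. But $\F_2$ is isomorphic to the CAR algebra, a simple unital $C^*$-algebra, so its centre is $\mathbb{C}1$. Combined with the trivial inclusion $\mathbb{C}1\subseteq\F_2'\cap\Q_2$, this yields $\F_2'\cap\Q_2=\mathbb{C}1$, as claimed; in passing it also recovers the irreducibility of $\O_2$ in $\Q_2$ via the chain $\O_2'\cap\Q_2\subseteq\F_2'\cap\Q_2=\mathbb{C}1$.

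There is no genuine obstacle left at this stage: the entire weight of the argument rests on Theorem \ref{DMASA}, and once the inclusion $\D_2\subseteq\F_2$ and the simplicity of $\F_2$ are invoked the result drops out as a short corollary. The only points meriting a word of care are the verifications that $\D_2$ is indeed contained in the gauge-invariant part $\F_2$ (so that $\F_2'\subseteq\D_2'$) and that $\F_2$, being a UHF algebra, has trivial centre.
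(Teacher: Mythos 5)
Your proof is correct and is essentially identical to the paper's: both pass from $\F_2'\cap\Q_2\subseteq\D_2'\cap\Q_2=\D_2$ via Theorem \ref{DMASA}, and then conclude with $\F_2'\cap\D_2\subseteq\F_2'\cap\F_2=Z(\F_2)=\mathbb{C}1$ using the simplicity of the UHF algebra $\F_2$. Nothing further is needed.
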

\begin{proof}
As $\D_2$ is a subalgebra of $\F_2$,  we have $\F_2'\cap\Q_2\subset\D_2'\cap\Q_2=\D_2$, where the last equality depends upon $\D_2$ being a maximal abelian subalgebra of $\Q_2$. 
Therefore, we find $\F_2'\cap\Q_2=\F_2'\cap \D_2\subset \F_2'\cap\F_2=Z(\F_2)=\mathbb{C}1$.
\end{proof}
While immediately derived from the above theorem, the following couple of corollaries do deserve to be highlighted.  
\begin{corollary}
The relative commutant of $\O_2$ in $\Q_2$ is trivial, i.e.  $\O_2'\cap\Q_2=\mathbb{C}1$.
\end{corollary}
\begin{corollary}
The relative commutant of the gauge-invariant subalgebra $\Q_2^{\mathbb T}$ is trivial, i.e. $(\Q_2^{\mathbb T})' \cap \Q_2=\mathbb{C}1$.
\end{corollary}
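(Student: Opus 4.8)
The plan is to deduce this as an immediate consequence of the theorem that $\F_2$ is irreducible in $\Q_2$, exploiting the inclusion $\F_2\subseteq\Q_2^{\mathbb T}$ together with the order-reversing behaviour of the relative-commutant operation. The one fact I need to invoke is that $\Q_2^{\mathbb T}=C^*(U,\F_2)$, which was already recorded in the description of the gauge-invariant subalgebra; in particular this gives $\F_2\subseteq\Q_2^{\mathbb T}$.

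First I would observe that taking relative commutants in $\Q_2$ is antitone: if $A\subseteq B$ are $C^*$-subalgebras of $\Q_2$, then any element of $\Q_2$ commuting with all of $B$ commutes in particular with all of $A$, so that $B'\cap\Q_2\subseteq A'\cap\Q_2$. Applying this with $A=\F_2$ and $B=\Q_2^{\mathbb T}$, the inclusion $\F_2\subseteq\Q_2^{\mathbb T}$ yields
$$
(\Q_2^{\mathbb T})'\cap\Q_2\ \subseteq\ \F_2'\cap\Q_2.
$$
By the theorem establishing the irreducibility of $\F_2$ in $\Q_2$, the right-hand side equals $\mathbb{C}1$, whence $(\Q_2^{\mathbb T})'\cap\Q_2\subseteq\mathbb{C}1$. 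The reverse inclusion is trivial, since $1$ commutes with everything, so equality holds.

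There is no real obstacle here; the only point worth a moment's care is checking that $\F_2$ genuinely sits inside $\Q_2^{\mathbb T}$, which is clear from the identification $\Q_2^{\mathbb T}=C^*(U,\F_2)$ (equivalently, from the spanning description $\Q_2^{\mathbb T}=\overline{\rm span}\{S_\mu S_\nu^*U^k:\,|\mu|=|\nu|\}$, which manifestly contains the monomials $S_\mu S_\nu^*$ with $|\mu|=|\nu|$ generating $\F_2$). As such the corollary really is a one-line consequence of the preceding theorem, in exactly the same spirit as the companion corollary on $\O_2'\cap\Q_2$.
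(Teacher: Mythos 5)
Your proof is correct and follows exactly the route the paper intends: the corollary is stated there as an immediate consequence of the theorem that $\F_2'\cap\Q_2=\mathbb{C}1$, via the inclusion $\F_2\subseteq\Q_2^{\mathbb T}$ and antitonicity of relative commutants. Nothing is missing; your write-up simply makes explicit the one-line argument the paper leaves implicit.
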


Interestingly, the irreducibility of $\F_2$  also applies to the canonical shift, which  turns out to enjoy the so-called shift property, i.e. $\bigcap_k \widetilde\varphi^k(\Q_2)=\mathbb{C}1$, whence its name. 
This important property should have first been singled out by R. T. Powers, who called it strong ergodicity, but we do not have a precise reference for the reader. The canonical shifts of $\O_n$ are of course known to be strongly ergodic, see \cite{Laca} for a full coverage of the topic.

\begin{theorem}\label{Shift}
The canonical endomorphism $\widetilde\varphi$ of $\Q_2$ is a shift, i.e. $\bigcap_k \widetilde\varphi^k(\Q_2)=\mathbb{C}1$.
\end{theorem}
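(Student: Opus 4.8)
The plan is to reduce the statement to the irreducibility of $\F_2$ established just above, namely $\F_2'\cap\Q_2=\mathbb{C}1$, which will carry the entire weight of the argument. First I would record the explicit form of the iterates of $\widetilde\varphi$. Combining the intertwining rule $S_i x=\widetilde\varphi(x)S_i$ with the Cuntz relation $S_1S_1^*+S_2S_2^*=1$ gives, after multiplying on the right by $S_i^*$ and summing, the familiar expression $\widetilde\varphi(x)=S_1xS_1^*+S_2xS_2^*$ for every $x\in\Q_2$. An immediate induction then yields
$$\widetilde\varphi^k(x)=\sum_{|\mu|=k}S_\mu x S_\mu^*,\qquad x\in\Q_2,\ k\geq 1,$$
the sum running over all multi-indices $\mu\in W_2$ of length $k$; hence $\widetilde\varphi^k(\Q_2)=\{\sum_{|\mu|=k}S_\mu x S_\mu^*:x\in\Q_2\}$.

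Next, for each $k$ I would introduce the finite-dimensional matrix subalgebra $\F_2^k\doteq{\rm span}\{S_\alpha S_\beta^*:|\alpha|=|\beta|=k\}\cong M_{2^k}(\mathbb{C})$, whose increasing union is dense in $\F_2$. The key step is the inclusion $\widetilde\varphi^k(\Q_2)\subseteq(\F_2^k)'\cap\Q_2$. This is a one-line computation from the orthogonality relations $S_\mu^*S_\nu=\delta_{\mu\nu}$ valid for $|\mu|=|\nu|$: for any $\alpha,\beta$ with $|\alpha|=|\beta|=k$ both $(S_\alpha S_\beta^*)\,\widetilde\varphi^k(x)$ and $\widetilde\varphi^k(x)\,(S_\alpha S_\beta^*)$ collapse to $S_\alpha x S_\beta^*$, so $\widetilde\varphi^k(x)$ commutes with every matrix unit of $\F_2^k$. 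Note that I only need this inclusion, not the reverse.

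Finally, I would intersect over $k$. Since an element of $\Q_2$ that commutes with each $\F_2^k$ commutes with the dense union $\bigcup_k\F_2^k$, and hence by continuity with $\F_2$, I obtain
$$\bigcap_k\widetilde\varphi^k(\Q_2)\ \subseteq\ \bigcap_k\big((\F_2^k)'\cap\Q_2\big)=\Big(\overline{\textstyle\bigcup_k\F_2^k}\Big)'\cap\Q_2=\F_2'\cap\Q_2=\mathbb{C}1,$$
the last equality being precisely the irreducibility of $\F_2$. The reverse inclusion $\mathbb{C}1\subseteq\bigcap_k\widetilde\varphi^k(\Q_2)$ is trivial since $\widetilde\varphi(1)=1$, and the proof is complete. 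I do not expect any serious obstacle here: everything is formal once the iterate formula and the commutation identity are in place, and the only point requiring a little care is the density argument identifying $\bigcap_k(\F_2^k)'$ with $\F_2'$. The substantive input is entirely the previously proven triviality of $\F_2'\cap\Q_2$.
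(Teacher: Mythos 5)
Your proposal is correct and follows essentially the same route as the paper: the iterate formula $\widetilde\varphi^k(x)=\sum_{|\mu|=k}S_\mu xS_\mu^*$, the commutation inclusion $\widetilde\varphi^k(\Q_2)\subseteq(\F_2^k)'\cap\Q_2$, and the chain $\bigcap_k\widetilde\varphi^k(\Q_2)\subseteq\bigcap_k(\F_2^k)'\cap\Q_2\subseteq\F_2'\cap\Q_2=\mathbb{C}1$ resting on the irreducibility of $\F_2$. The only differences are cosmetic: you spell out the density argument identifying commutation with $\bigcup_k\F_2^k$ with commutation with $\F_2$, which the paper leaves implicit.
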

\begin{proof}
Since $\widetilde\varphi^k(x)=\sum_{\gamma: |\gamma|=k} S_\gamma x S_\gamma^*$, the equality $\widetilde\varphi^k(x) S_\alpha S_\beta^*= S_\alpha S_\beta^*\widetilde\varphi^k(x)$, $x\in\Q_2$, is
 straightforwardly checked
to hold true for every pair of multi-indices $\alpha$ and $\beta$ with $|\alpha|=|\beta|=k$. This says that  $\widetilde\varphi^k(\Q_2)$ is contained in $(\F_2^k)'\cap\Q_2$ for every $k$, where $\F_2^k\doteq
\textrm{span}\{S_\mu S_\nu^*: |\mu|=|\nu|\}$. But then we have the chain of inclusions
 $\bigcap_k\widetilde\varphi^k(\Q_2)\subset\bigcap_k (\F_2^k)'\cap\Q_2\subset \F_2'\cap\Q_2=\mathbb{C}1$.
\end{proof}

\begin{remark}
The former theorem says in particular that $\widetilde\varphi$ is not surjective in a rather strong sense. We can be more precise  by observing that $U$ is not in $\widetilde\varphi(\Q_2)$. 
Indeed, by maximality of $C^*(U)$, any inverse image of $U$ should lie in $C^*(U)$, but the restriction of $\widetilde\varphi$ to $C^*(U)$ does not yield a homeomorphism of $\mathbb{T}$. 
\end{remark}

We can now go back to the announced result that conditional expectations onto the Cuntz algebra $\O_2$ do not exist.

\begin{theorem}\label{Nocondexp}
There is no unital conditional expectation from $\Q_2$ onto $\O_2$.
\end{theorem}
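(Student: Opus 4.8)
The plan is to argue by contradiction: assume a unital conditional expectation $\Phi\colon\Q_2\to\O_2$ exists, and show that it would force $U$ to lie in $\O_2$, which is impossible. The only inputs needed are the bimodule property $\Phi(axb)=a\Phi(x)b$ for $a,b\in\O_2$ together with $\Phi|_{\O_2}=\mathrm{id}$, and the two structural relations $US_2=S_1$ and $US_1=S_2U$, both of which drag $U$ across elements of $\O_2$. Crucially I will not use any positivity of $\Phi$, only its linearity and the module property over $\O_2$.

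First I would test $\Phi(U)$ against the generators $S_1,S_2\in\O_2$. Since $US_2=S_1\in\O_2$, the right module property gives $\Phi(U)S_2=\Phi(US_2)=\Phi(S_1)=S_1$; and since $US_1=S_2U$, combining the right and left module properties gives $\Phi(U)S_1=\Phi(US_1)=\Phi(S_2U)=S_2\Phi(U)$. Setting $W\doteq U-\Phi(U)\in\Q_2$, these two identities become the clean relations $WS_2=0$ and $WS_1=S_2W$. Using $1=S_1S_1^*+S_2S_2^*$ I then get $W=WS_1S_1^*+WS_2S_2^*=S_2WS_1^*$, and iterating yields $W=S_2^n W (S_1^*)^n$ for every $n\in\mathbb{N}$.

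Next I would pass to the canonical representation on $\ell_2(\mathbb{Z})$, which is faithful since $\Q_2$ is simple. Recall that $S_2^n(S_2^*)^n\to E_0$ strongly, and by the same argument $S_1^n(S_1^*)^n\to E_{-1}$ strongly, where $E_0$ and $E_{-1}$ are the rank-one projections onto the fixed vectors $e_0$ of $S_2$ and $e_{-1}=S_1e_{-1}$ of $S_1$. From $W=S_2^n W (S_1^*)^n$ one reads off $S_2^n(S_2^*)^n\,W=W$ and $W\,S_1^n(S_1^*)^n=W$ for all $n$, and letting $n\to\infty$ gives $E_0W=W=WE_{-1}$, so that $W=E_0WE_{-1}$ is an operator of rank at most one. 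Being a compact element of $\Q_2$, it must vanish, because $\K(\H)\cap\Q_2=\{0\}$; hence $W=0$, i.e. $\Phi(U)=U$. This is absurd: $U\notin\O_2$, since in the canonical representation $U$ is the bilateral shift, which sends $e_{-1}\in\H_-$ to $e_0\in\H_+$ and is therefore not block-diagonal with respect to $\ell_2(\mathbb{Z})=\H_+\oplus\H_-$, so $\pi(U)\notin\pi(\O_2)''\supseteq\pi(\O_2)$.

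The creative step is recognizing that the two module identities collapse into $WS_2=0$ and $WS_1=S_2W$, which together encode the full strength of both defining relations of $\Q_2$. The step I expect to be the genuine obstacle is extracting $W=0$ from $W=S_2^n W(S_1^*)^n$: a naive norm estimate only yields $\|W\|=\|W\|$, so one really must exploit the shift structure, and the cleanest route is the strong-convergence argument above, squeezing $W$ into the rank-one corner $E_0\,\cdot\,E_{-1}$ and then invoking $\K(\H)\cap\Q_2=\{0\}$. Both the faithfulness of the canonical representation and the triviality of $\K(\H)\cap\Q_2$, established earlier, are indispensable here.
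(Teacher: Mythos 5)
Your proof is correct, and every ingredient you use is available from the paper: the bimodule property of a conditional expectation, the relations $US_2=S_1$ and $US_1=S_2U$, the strong limits $S_2^n(S_2^*)^n\to E_0$ and (by the identical argument) $S_1^n(S_1^*)^n\to E_{-1}$ in the canonical representation, the fact that $\K(\H)\cap\Q_2=\{0\}$ (a consequence of simplicity and unitality, already invoked in the proof that $\D_2$ is maximal abelian), and the block-diagonal description of $\pi(\O_2)''$, which gives $U\notin\O_2$. The strategy coincides with the paper's — both arguments show that a conditional expectation would be forced to fix $U$, contradicting $U\notin\O_2$ — but the mechanism for extracting $\Phi(U)=U$ is genuinely different. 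The paper evaluates $E$ on the operators $US_1^nS_2S_2^*(S_1^*)^n$ and works in the representation of \cite{Latremoliere}, in which $S_1$ and $S_2$ are both pure; there the projections $P_n=S_1^nS_2S_2^*(S_1^*)^n$ telescope to $\sum_{n=0}^N P_n=1-S_1^{N+1}(S_1^*)^{N+1}\to 1$ strongly, so the identity $(U-E[U])P_n=0$ forces $E[U]=U$ outright, with no compactness argument. Your relations $WS_2=0$ and $WS_1=S_2W$ imply those same identities ($WP_n=S_2^nWS_2S_2^*(S_1^*)^n=0$), but you stay in the canonical representation, where $S_1$ is \emph{not} pure ($e_{-1}$ is a fixed vector) and the $P_n$ only sum to $1-E_{-1}$; the leftover corner is precisely what you must kill, and you do so by squeezing $W$ into the rank-one corner $E_0\,\cdot\,E_{-1}$ and invoking $\K(\H)\cap\Q_2=\{0\}$. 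In short, the paper trades your compactness step for a judicious choice of representation, while your route buys uniformity — it never leaves the representation used throughout the paper — and it makes explicit, via the block structure of $\pi(\O_2)''$, the final contradiction $U\notin\O_2$, which the paper dismisses as obvious.
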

\begin{proof}
Suppose that such a conditional expectation does exist. We want to show that this leads to $E(U)$ being $U$, which is obviously absurd. We shall work in any representation in which $S_1$ and $S_2$ are both pure, for instance the one described in \cite{Latremoliere}.
If we compute $E$ on the operator $US_1^nS_2S_2^*(S_1^*)^n$ by using the commutation rule $S_2^nU=US_1^n$ we easily get to the equality
$$E[U]S_1^nS_2S_2^*(S_1^*)^n=S_2^nS_1S_2^*(S_1^*)^n$$
But on the other hand we also have that $U S_1^nS_2S_2^*(S_1^*)^n= S_2^nS_1S_2^*(S_1^*)^n$.
Accordingly, $E(U)$ and $U$ must coincide on the direct sum of the subspaces $M_n\doteq S_1^nS_2S_2^*(S_1^*)^n\H_\pi$, which can be easily seen to be the whole $\H_\pi$. 
\end{proof}

\subsection{The relative commutant of the generating isometry}\label{Secrelcomm}

This section is entirely devoted to proving that  $C^*(S_2)'\cap \Q_2$ is trivial. We first observe that this is the same as proving that
$C^*(S_1)'\cap\Q_2$ is trivial, merely because $\textrm{ad}(U^*)(C^*(S_2))=C^*(S_1)$. For this, we still need some preliminary definitions and results. \\

Given any $k\in\mathbb{N}$, we set $\B_2^k\doteq {\rm span}\{S_\alpha S_\beta^* U^h \ | \ |\alpha|=|\beta|=k, h\in \mathbb{Z}\}$. For every $k$ we have the inclusion $\B_2^k \subset \B_2^{k+1}$, which
readily follows from the Cuntz relation $1=S_1S_1^*+S_2S_2^*$.
\begin{lemma}
Let $x\in \B_2^k$, then
\begin{enumerate}
\item $(S_1^*)^k xS_1^k\in C^*(U)$;
\item the sequence $\{(S_1^*)^mxS_1^m\}$ stabilizes to a scalar $c_x\in \mathbb{C}$.
\end{enumerate} 
\end{lemma}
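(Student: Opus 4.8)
The plan is to reduce everything to the single generating relation between $U$ and $S_1$ and then iterate.

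\textbf{Reduction to monomials.} By linearity of the map $x\mapsto (S_1^*)^k x S_1^k$ it suffices to treat a monomial $x=S_\alpha S_\beta^* U^h$ with $|\alpha|=|\beta|=k$. Since the Cuntz relations give $S_i^* S_j=\delta_{ij}1$, peeling off the $k$ factors of $S_1^*$ against $S_\alpha$ (whose length is exactly $k$) yields $(S_1^*)^k S_\alpha=\delta_{\alpha,(1,\dots,1)}1$; hence $(S_1^*)^k x S_1^k=\delta_{\alpha,(1,\dots,1)}\,S_\beta^* U^h S_1^k$, and the whole problem collapses to understanding $S_\beta^* U^h S_1^k$.

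\textbf{The core identity.} First I would record the two relations, valid for every $m\in\mathbb{Z}$ because $U$ is invertible: $U^{2m}S_1=S_1 U^m$ and $U^{2m+1}S_1=S_2 U^{m+1}$, both of which follow from $S_2U=U^2S_2$ and $S_1=US_2$. Iterating these across the $k$ factors of $S_1$ (equivalently, reading off the action in the faithful canonical representation, where $S_1 e_j=e_{2j+1}$, $S_2 e_j=e_{2j}$ and $Ue_j=e_{j+1}$) shows that $U^h S_1^k$ can always be rewritten as $S_\gamma U^q$ for a \emph{unique} multi-index $\gamma$ of length $k$ and a \emph{unique} integer $q$, determined by the Euclidean division of $2^k-1+h$ by $2^k$ (the ``$2$-adic carry''). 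Then orthogonality of the ranges gives $S_\beta^* U^h S_1^k=S_\beta^* S_\gamma U^q=\delta_{\beta,\gamma}U^q$, which lies in $C^*(U)$. This proves (1): for each monomial, $(S_1^*)^k x S_1^k$ is either $0$ or a single power of $U$.

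\textbf{Stabilization.} For (2) I would analyse one further compression step applied to a power of $U$. From the two relations above, $S_1^* U^d S_1=U^{d/2}$ when $d$ is even and $S_1^* U^d S_1=0$ when $d$ is odd; thus $a\mapsto S_1^* a S_1$ halves an even exponent, annihilates an odd one, and fixes $U^0=1$. Consequently, starting from $S_\beta^* U^h S_1^k=U^q$ and applying $(S_1^*)^{m-k}(\cdot)S_1^{m-k}$, a nonzero exponent $q\neq 0$ is repeatedly halved until it becomes odd and is then sent to $0$ after finitely many steps, whereas $q=0$ leaves the scalar $1$ untouched. Hence for every monomial the sequence $\{(S_1^*)^m x S_1^m\}$ is eventually constant, equal to $0$ or to $1$; taking the finite linear combination defining $x\in\B_2^k$ and choosing $m$ beyond the finitely many stabilization thresholds, $(S_1^*)^m x S_1^m$ stabilizes to a scalar $c_x$, namely the sum of those coefficients of $x$ whose monomial has $\alpha=(1,\dots,1)$ and carry exponent $q=0$.

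\textbf{Main obstacle.} The only genuine bookkeeping is the exponent arithmetic in the carry identity $U^h S_1^k=S_\gamma U^q$, in particular making it hold uniformly for negative $h$; this is precisely where passing to the faithful canonical representation is most economical, since there the identity is merely the statement $e_{2^k j+2^k-1+h}=S_\gamma U^q e_j$, in which the sign of $h$ plays no special role.
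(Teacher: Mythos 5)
Your proof is correct, and its first half follows the paper's own skeleton exactly: reduce by linearity to a monomial $x=S_\alpha S_\beta^*U^h$, peel off $(S_1^*)^kS_\alpha=\delta_{\alpha,\underline{1}}$, and rewrite $U^hS_1^k$ as $S_\gamma U^q$ so that orthogonality of ranges yields $\delta_{\beta,\gamma}U^q\in C^*(U)$. Two differences are worth recording. First, your pair of identities $U^{2m}S_1=S_1U^m$ and $U^{2m+1}S_1=S_2U^{m+1}$ handles all $h\in\mathbb{Z}$ uniformly, where the paper runs three separate cases $h>0$, $h=0$, $h<0$ with different commutation rules; this is a genuine streamlining. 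Second, and more substantially, your stabilization argument for part (2) is purely algebraic: since the one-step compression $S_1^*U^dS_1$ halves an even exponent, annihilates an odd nonzero one, and fixes $d=0$, iterating kills $U^q$ for every $q\neq 0$ after finitely many steps (one more than the number of times $2$ divides $q$) and leaves the scalar $1$ when $q=0$. The paper instead records the bound $|l(h)|\leq |h|$ along the way, passes to the canonical representation on $\ell_2(\mathbb{Z})$, and checks that $(S_1^*)^mU^lS_1^me_j\neq 0$ forces $l=2^m(i-j)$, hence $l=0$ once $m>k+|h|+1$. Your route buys independence from both the representation and any quantitative bound on the exponent, so it is the more elementary and self-contained of the two; the paper's route produces a single explicit threshold in terms of $k$ and $|h|$, but at the cost of spectral/representation input that your parity argument shows is unnecessary. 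One cosmetic slip at the very end: your description of $c_x$ as the sum of the coefficients of the monomials with $\alpha=(1,\dots,1)$ and carry exponent $q=0$ omits the condition $\beta=\gamma$, which your own formula $\delta_{\alpha,\underline{1}}\delta_{\beta,\gamma}U^q$ makes necessary; this does not affect the lemma, which only asserts that the limit is some scalar.
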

\begin{proof}
Without loss of generality, suppose that $x=S_\alpha S_\beta^* U^h$.
We have that
\begin{align*}
(S_1^*)^k x S_1^k & = \delta_{\alpha, \underline{1}} S_\beta^* U^h S_1^k .
\end{align*}
If $h>0$, then by using the relation $S_2U=US_1$, we see that the r.h.s. of the last expression is given by
$$
\delta_{\alpha, \underline{1}} S_\beta^* S_\gamma U^{l(h)}=\delta_{\alpha, \underline{1}} \delta_{\beta, \gamma} U^{l(h)} $$
where $\gamma$ is a multi-index of length $k$.
If $h=0$, we obtain
$$
\delta_{\alpha, \underline{1}} S_\beta^* S_1^k=\delta_{\alpha, \underline{1}} \delta_{\beta, \underline{1}} .
$$
When $h<0$, by using the relations $U^*S_1=S_2$ and $U^*S_2=S_1U^*$ we obtain
$$
\delta_{\alpha, \underline{1}} S_\beta^* U^{h} S_1^k= \delta_{\alpha, \underline{1}} U^{l(h)}  S_\gamma^* S_{\underline{1}} =\delta_{\alpha, \underline{1}} \delta_{\gamma,\underline{1}} U^{l(h)} 
$$
where $\gamma$ is a multi-index of length $k$.  We observe that in these cases we always have $|l(h)|\leq |h|$, and this fact will be important in the sequel. For the second part of the thesis, we may suppose that $m > k+|h|+1$. The needed computations can be made faster in the canonical representation (for brevity we write $l$ instead of $l(h)$):
\begin{align*}
(S_1^*)^mU^l S_1^m e_j & = (S_1^*)^mU^l e_{2^mj+2^m-1} = (S_1^*)^m e_{2^mj+2^m-1+l}.
\end{align*}
The expression above is non-zero if and only if $2^mj+2^m-1+l=2^m i+2^m-1$ for some $i$, that is to say $l=2^m(i-j)$. But $m>k+h+1$ and 
$|l|\leq |h|$, therefore we finally get $i=j$ and $l=0$.

\end{proof}

\begin{proposition}\label{limfixpoint}
Let $x\in \Q_2^\mathbb{T}=\overline{{\rm span}}\{S_\alpha S_\beta^* U^h | |\alpha|=|\beta|, h\in \mathbb{Z}\}$. Then 
$$
\lim_h (S_1^*)^h x S_1^h\in \mathbb{C}
$$
\end{proposition}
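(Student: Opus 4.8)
The plan is to reduce the statement to the preceding lemma by a density and approximation argument, using that the map in question is a contraction. Write $\Phi_h(x)\doteq (S_1^*)^h x S_1^h$ for $x\in\Q_2$. Since $S_1$ is an isometry, so is $S_1^h$, and consequently each $\Phi_h$ is a linear contraction, i.e. $\|\Phi_h(x)\|\leq\|x\|$ for every $x$ and every $h$. This uniform bound is the key tool that will let me transfer the conclusion from a dense subspace to the whole of $\Q_2^{\mathbb{T}}$.

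First I would record that the increasing family $\{\B_2^k\}$ has dense linear span in $\Q_2^{\mathbb{T}}$; indeed, by the very definition $\Q_2^{\mathbb{T}}=\overline{\bigcup_k \B_2^k}$, since each generator $S_\alpha S_\beta^* U^h$ with $|\alpha|=|\beta|=k$ lies in $\B_2^k$. On this dense subspace the previous lemma does all the real work: for $y\in\B_2^k$ the sequence $\Phi_m(y)$ stabilizes, for all sufficiently large $m$, to a scalar $c_y\in\mathbb{C}$. Hence $\lim_h\Phi_h(y)$ exists and equals $c_y 1$ for every $y\in\bigcup_k\B_2^k$.

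Next I would upgrade the existence of the limit from the dense subspace to all of $\Q_2^{\mathbb{T}}$ via an $\varepsilon/3$ argument. Given $x\in\Q_2^{\mathbb{T}}$ and $\varepsilon>0$, choose $y\in\bigcup_k\B_2^k$ with $\|x-y\|<\varepsilon/3$. Applying contractivity of $\Phi_h$ twice, and using that $\Phi_h(y)$ is eventually constant, for all $h,h'$ large enough one obtains
$$\|\Phi_h(x)-\Phi_{h'}(x)\|\leq\|\Phi_h(x-y)\|+\|\Phi_h(y)-\Phi_{h'}(y)\|+\|\Phi_{h'}(y-x)\|<\varepsilon.$$
Thus $\{\Phi_h(x)\}_h$ is Cauchy and therefore converges in $\Q_2$; denote its limit by $L$.

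Finally I would verify that $L$ is a scalar. Since $\mathbb{C}1$ is norm-closed in $\Q_2$, it suffices to approximate $L$ by scalars. With $y$ as above, contractivity gives $\|L-c_y 1\|=\lim_h\|\Phi_h(x)-\Phi_h(y)\|\leq\|x-y\|<\varepsilon/3$, so $L$ lies within $\varepsilon/3$ of $\mathbb{C}1$; as $\varepsilon$ is arbitrary, $L\in\mathbb{C}1$. All the genuine content resides in the preceding lemma, which I am assuming; the remaining steps are routine, and the only points needing care are the contractivity of $\Phi_h$ (resting on $S_1$ being an isometry) and the closedness of $\mathbb{C}1$ invoked in the last step.
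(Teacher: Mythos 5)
Your proof is correct and follows essentially the same route as the paper's: density of $\bigcup_k\B_2^k$ in $\Q_2^{\mathbb{T}}$, contractivity of $x\mapsto (S_1^*)^h x S_1^h$, and the stabilization lemma on the dense subspace. The only (cosmetic) difference is the order of steps — the paper first shows the scalars $c_{x_i}$ form a Cauchy sequence in $\mathbb{C}$ and takes their limit as the candidate, whereas you show $\{(S_1^*)^h x S_1^h\}_h$ is Cauchy in $\Q_2$ and then identify its limit as a scalar; both arguments rest on exactly the same estimates.
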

\begin{proof}
By hypothesis there exists a sequence $x_k\in \B_2^{f(k)}$ that tends normwise to $x$. Choose a pair of natural numbers  $i$ and $j$. For any $k\in \mathbb{N}$ sufficiently larger than $f(i)$ and $f(j)$, by the former lemma we have that $(S_1^*)^k x_iS_1^k=:c_i, (S_1^*)^k x_jS_1^k=:c_j \in \mathbb{C}$. The sequence $c_i$ is convergent since
$$
|c_i-c_j|=\|(S_1^*)^k x_iS_1^k-(S_1^*)^k x_jS_1^k \|\leq \|x_i-x_j\| .
$$
We denote by $c$ the limit. Now the sequence $(S_1^*)^h x S_1^h$ tends to $c$ as well.
\end{proof}
For any non-negative integer $i$ we now define the linear maps $F_i: \Q_2\to \Q_2^{\mathbb{T}}$ given by
\begin{align*}
F_i(x)& \doteq \int_\mathbb{T} \widetilde{\alpha}_z [x (S_1^*)^i] dz \\
F_{-i}(x)& \doteq \int_\mathbb{T} \widetilde{\alpha}_z [S_1^i x] dz. 
\end{align*}
We observe that
\begin{align} 
F_i(x)& = F_i(x) S_1^i(S_1^*)^i \label{formula-coeff}\\
F_{-i}(x)& = S_1^i(S_1^*)^i F_{-i}(x). \label{formula-coeff-2}
\end{align}
Before proving the main result of the section,  we also need to recall the following lemma, whose proof can be adapted verbatim from the original \cite[Proposition 1.10]{Cuntz1}, where it is proved for the Cuntz algebras instead.
\begin{proposition} \label{Cuntzlemma}
Let $x\in \Q_2$ be such that $F_i(x)=0$ for all $i\in \mathbb{Z}$. Then $x=0$.
\end{proposition}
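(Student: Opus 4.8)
The plan is to recognize the maps $F_i$ as the Fourier components of $x$ relative to the gauge action $\widetilde\alpha_z$, each dressed with a power of the isometry $S_1$, and then to invoke a standard Fej\'er-mean argument to recover $x$ from its spectral data. To this end I would first introduce, for each $n\in\mathbb{Z}$, the spectral projection $\widetilde E_n(x)\doteq\int_{\mathbb{T}}\bar z^{\,n}\,\widetilde\alpha_z(x)\,\mathrm{d}z$ onto the degree-$n$ spectral subspace $\Q_2^{(n)}=\{y\in\Q_2:\widetilde\alpha_z(y)=z^n y\}$, so that $\widetilde E_0=\widetilde E$ is precisely the conditional expectation onto $\Q_2^{\mathbb{T}}$ used above.

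The first computation I would carry out uses $\widetilde\alpha_z(S_1)=zS_1$ together with the fact that $\widetilde\alpha_z$ is a homomorphism, giving for $i\ge 0$
$$
F_i(x)=\int_{\mathbb{T}}\widetilde\alpha_z(x)\,\bar z^{\,i}(S_1^*)^i\,\mathrm{d}z=\widetilde E_i(x)\,(S_1^*)^i,
$$
and likewise for $i>0$
$$
F_{-i}(x)=\int_{\mathbb{T}}z^i S_1^i\,\widetilde\alpha_z(x)\,\mathrm{d}z=S_1^i\,\widetilde E_{-i}(x).
$$
These identities exhibit each $F_{\pm i}(x)$ as a single spectral component of $x$ with an isometry attached on one side, and they immediately reconcile with the relations \eqref{formula-coeff} and \eqref{formula-coeff-2}.

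The key reduction step then exploits that $S_1$ is an isometry, i.e. $(S_1^*)^iS_1^i=1$. Multiplying the first identity on the right by $S_1^i$ yields $F_i(x)S_1^i=\widetilde E_i(x)$, and multiplying the second on the left by $(S_1^*)^i$ yields $(S_1^*)^iF_{-i}(x)=\widetilde E_{-i}(x)$. Consequently the hypothesis $F_i(x)=0$ for every $i\in\mathbb{Z}$ forces $\widetilde E_n(x)=0$ for every $n\in\mathbb{Z}$; that is, all Fourier coefficients of the orbit map $z\mapsto\widetilde\alpha_z(x)$ vanish.

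To finish I would pass to the Ces\`aro means $\sigma_N(x)\doteq\int_{\mathbb{T}}K_N(z)\,\widetilde\alpha_z(x)\,\mathrm{d}z$, where $K_N$ is the Fej\'er kernel; expanding $K_N$ gives $\sigma_N(x)=\sum_{|n|\le N}\bigl(1-\tfrac{|n|}{N+1}\bigr)\widetilde E_n(x)$, which vanishes identically under our hypothesis. Since the map $z\mapsto\widetilde\alpha_z(x)$ is continuous in norm (the action being isometric and visibly continuous on the dense $*$-algebra spanned by the monomials $S_\mu S_\nu^*U^k$), the classical Fej\'er theorem guarantees $\sigma_N(x)\to x$ in norm, whence $x=0$. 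The one point deserving care---and the step I would expect to be the only genuine obstacle---is the norm-continuity of the orbit map, which underpins the convergence of the Ces\`aro means; everything else is a formal manipulation of the gauge action together with the isometry relation.
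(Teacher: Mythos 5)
Your proof is correct and is essentially the same argument the paper relies on: the paper proves this proposition by citing Cuntz's original Proposition 1.10, whose proof is exactly the vector-valued Fej\'er/Ces\`aro-mean argument you have written out (identify the maps $F_i$ with spectral components of the gauge action via the isometry relation $(S_1^*)^iS_1^i=1$, then use norm-continuity of the orbit map $z\mapsto\widetilde\alpha_z(x)$ to recover $x$ from its Ces\`aro means). In effect you have simply made explicit the ``verbatim adaptation'' of Cuntz's proof that the paper leaves to the reader.
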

Now we have all the tools  for completing our proof.
\begin{theorem}\label{Relcomm}
Let $w\in \U(\Q_2)$ such that $wS_1w^*=S_1$. Then $w\in \mathbb{T}1$.
\end{theorem}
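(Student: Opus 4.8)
The plan is to decompose $w$ into its spectral components for the gauge action and to show that all but the degree-zero one vanish while the latter is a scalar. Since $w$ is unitary, $wS_1w^*=S_1$ is equivalent to $wS_1=S_1w$, and conjugating this relation by $w$ also gives $w^*S_1=S_1w^*$; thus both $w$ and $w^*$ commute with $S_1$. For $i\in\mathbb{Z}$ I set $G_i\doteq\int_{\mathbb{T}}\bar z^{\,i}\,\widetilde\alpha_z(w)\,\mathrm{d}z$, the degree-$i$ component of $w$. Because each $\widetilde\alpha_z$ fixes $S_1$ up to the scalar $z$, the operator $\widetilde\alpha_z(w)$ commutes with $S_1$ for every $z$, and hence so does every $G_i$. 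These components are tied to the maps $F_i$ of the section by $F_i(w)=G_i(S_1^*)^i$ for $i\ge0$ and $F_{-j}(w)=S_1^jG_{-j}$ for $j>0$; in particular $G_i=0$ if and only if $F_i(w)=0$. Since $G_i$ commutes with $S_1$, iterating the map $S_1^*(\,\cdot\,)S_1$ gives the invariance $G_i=(S_1^*)^mG_iS_1^m$ for all $m\ge0$.

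The heart of the argument is to pin down each $G_i$. For $i>0$, the element $F_i(w)=G_i(S_1^*)^i$ lies in $\Q_2^{\mathbb{T}}$, so Proposition \ref{limfixpoint} guarantees that $\lim_m(S_1^*)^mF_i(w)S_1^m$ is a scalar $\gamma_i$. Carrying out this limit explicitly, using $(S_1^*)^iS_1^m=S_1^{m-i}$, the commutation of $G_i$ with $S_1$, and $(S_1^*)^mS_1^{m-i}=(S_1^*)^i$, I find that the sandwiched expression is already constant for $m\ge i$ and equals $(S_1^*)^iG_i$; hence $(S_1^*)^iG_i=\gamma_i1$. Feeding this back into the invariance $G_i=(S_1^*)^iG_iS_1^i$ yields $G_i=\gamma_iS_1^i$. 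Applying the same computation to the unitary $w^*$, whose degree-$i$ component is $G_{-i}^*$, gives $G_{-j}=\mu_j(S_1^*)^j$ for $j>0$; and the degree-zero component $G_0=\widetilde E(w)$, being fixed by $(S_1^*)^m(\,\cdot\,)S_1^m$, is a scalar $c$ by the same lemma.

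It remains to show $\gamma_i=\mu_j=0$, and this is where unitarity enters, through the Parseval identity for the gauge action, $\widetilde E(w^*w)=\sum_iG_i^*G_i$ and $\widetilde E(ww^*)=\sum_iG_iG_i^*$, both equal to $1$. Writing $P_k\doteq S_1^k(S_1^*)^k$ and using $G_i=\gamma_iS_1^i$, $G_{-j}=\mu_j(S_1^*)^j$, $G_0=c1$, one computes $G_i^*G_i=|\gamma_i|^21$ and $G_{-j}^*G_{-j}=|\mu_j|^2P_j$, with the symmetric formulas for $G_iG_i^*$. Evaluating these operator identities on the vector $e_0$ of the canonical representation, where $S_1^*e_0=0$ and therefore $P_ke_0=0$ for every $k\ge1$, forces $\sum_{i>0}|\gamma_i|^2+|c|^2=1$ together with $\sum_j|\mu_j|^2P_j=0$, whence every $\mu_j=0$; the companion identity then gives every $\gamma_i=0$ and $|c|=1$. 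Thus $G_i=0$ for $i\neq0$, so $F_i(w-c1)=0$ for all $i$ and $w=c1$ by Proposition \ref{Cuntzlemma}, with $c\in\mathbb{T}$.

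I expect the main obstacle to be the middle step, namely showing that the spectral components are forced into the rigid form $G_i=\gamma_iS_1^i$, since it requires combining the commutation-induced invariance $G_i=(S_1^*)^mG_iS_1^m$ with the stabilization of Proposition \ref{limfixpoint} in exactly the right way, and then transferring the conclusion to negative degrees via $w^*$. By comparison the concluding positivity argument is routine once the components are identified, the only subtlety being the norm convergence of the spectral sums, which is standard for the circle action.
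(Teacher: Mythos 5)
Your proof is correct, and its skeleton is the paper's own: you use the same gauge-action decomposition (the maps $F_i$, equivalently your components $G_i$), the same stabilization result (Proposition \ref{limfixpoint}) to produce scalars, and the same conclusion via Proposition \ref{Cuntzlemma}. Where you genuinely diverge is the mechanism that kills the nonzero-degree components. The paper exploits from the start that $w$ commutes with both $S_1$ and $S_1^*$, which gives the two-sided invariance $S_1^*F_i(w)S_1=F_i(w)$; hence each $F_i(w)$ is itself a scalar by Proposition \ref{limfixpoint}, and then the range identities \eqref{formula-coeff}--\eqref{formula-coeff-2} finish in one line, since a scalar equal to itself times the proper projection $S_1^i(S_1^*)^i$ must vanish. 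You instead use only the commutation with $S_1$ to pin down the shapes $G_i=\gamma_iS_1^i$, $G_{-j}=\mu_j(S_1^*)^j$, and then bring in unitarity through a Parseval identity for the circle action, evaluated at $e_0$ in the canonical representation. That identity, with norm convergence, is indeed a standard fact (provable via Fej\'er means or the Bessel inequality in the Hilbert module $L^2(\mathbb{T},\Q_2)$), so there is no gap; but it is an extra external ingredient, together with the passage to a concrete representation, that the paper's purely algebraic ending avoids. Note also that your own intermediate results close the argument faster than Parseval does: since $w$ commutes with $S_1^*$ as well, so does every $G_i$, and then $\gamma_iS_1^{i-1}(1-S_1S_1^*)=S_1^*G_i-G_iS_1^*=0$ forces $\gamma_i=0$ because $S_1S_1^*\neq 1$ (and likewise $\mu_j=0$); so the unitarity/Parseval step, while sound, is avoidable.
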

\begin{proof}
First of all we observe that we also have $wS_1^*w^*=S_1^*$. We have that $S_1^* F_i(w)S_1=F_i(w)$. Indeed, for $i \geq 0$,
\begin{align*}
S_1^*F_i(w)S_1& =S_1^*\left( \int_\mathbb{T} \widetilde{\alpha}_z [w (S_1^*)^i] dz \right) S_1 = \int_\mathbb{T} S_1^*\widetilde{\alpha}_z [w (S_1^*)^i] S_1 dz \\
& = \int_\mathbb{T} \widetilde{\alpha}_z [S_1^*w (S_1^*)^i S_1] dz  = \int_\mathbb{T} \widetilde{\alpha}_z [w (S_1^*)^i] dz=F_i(w) \\
S_1^* F_{-i}(w) S_1& =S_1^* \left( \int_\mathbb{T} \widetilde{\alpha}_z [S_1^i w] dz \right) S_1 =\int_\mathbb{T} S_1^*\widetilde{\alpha}_z [S_1^i w] S_1dz \\ 
& =\int_\mathbb{T} \widetilde{\alpha}_z [S_1^*S_1^i wS_1] dz  =\int_\mathbb{T} \widetilde{\alpha}_z [S_1^i w] dz=F_{-i}(w) \\
\end{align*}
By Proposition \ref{limfixpoint} we obtain that for each $i \in {\mathbb Z}$ one has
$$
\lim (S_1^*)^k F_i(w)S_1^k=F_i(w)\in \mathbb{C}.
$$
Equation \eqref{formula-coeff}-\eqref{formula-coeff-2} together imply that for $i\neq 0$ we have $F_i(w)=0$. Now Proposition \ref{Cuntzlemma} applied to $w-F_0(w)$ gives the claim. 
\end{proof}
Exactly as for the Cuntz algebras, we can also state a slight generalization of the former result, which says an inner automorphism of $\Q_2$ cannot send $S_i$ to a scalar multiple of it.

\begin{proposition}\label{zS}
Let $\phi\in\Aut(\Q_2)$ be such that $\phi(S_i)=z S_i$ for some $z\in \mathbb{T}\setminus\{1\}$ and  
$i=1$ or $i=2$. Then $\phi$ is an outer automorphism.
\end{proposition}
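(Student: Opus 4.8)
The plan is to argue by contradiction and recycle the machinery developed for Theorem \ref{Relcomm}, the extra input being that the eigenvalue $z$ is now different from $1$. First I would reduce to the case $i=1$. If instead $\phi(S_2)=zS_2$, I would conjugate by the flip-flop $\widetilde\lambda_f$, which swaps $S_1$ and $S_2$ (recall $\widetilde\lambda_f(S_2)=US_2=S_1$ and $\widetilde\lambda_f(S_1)=\widetilde\lambda_f(U)\widetilde\lambda_f(S_2)=U^*US_2=S_2$). Then $\phi':=\widetilde\lambda_f\circ\phi\circ\widetilde\lambda_f^{-1}$ satisfies $\phi'(S_1)=zS_1$, and since conjugating an inner automorphism by any automorphism yields an inner automorphism, namely $\widetilde\lambda_f\,\mathrm{ad}(w)\,\widetilde\lambda_f^{-1}=\mathrm{ad}(\widetilde\lambda_f(w))$, the automorphism $\phi$ is inner if and only if $\phi'$ is. Hence it is enough to rule out an inner $\phi$ with $\phi(S_1)=zS_1$ and $z\neq 1$.

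So suppose $\phi=\mathrm{ad}(w)$ for some unitary $w\in\Q_2$, i.e.\ $wS_1w^*=zS_1$, equivalently $wS_1=zS_1w$; taking adjoints yields $wS_1^*=\bar z S_1^*w$. I would then compute, exactly as in the proof of Theorem \ref{Relcomm}, how $\mathrm{ad}(S_1^*)$ acts on the coefficients $F_i(w)\in\Q_2^{\mathbb T}$. Pulling $S_1^*$ and $S_1$ inside the gauge average (using $\widetilde\alpha_\zeta(S_1)=\zeta S_1$, whose phases cancel) reduces everything to simplifying $S_1^*w(S_1^*)^iS_1$ by means of the two relations above. The bookkeeping gives $S_1^*w(S_1^*)^iS_1=z\,w(S_1^*)^i$ for $i\geq 0$, and likewise $S_1^*S_1^iwS_1=zS_1^iw$ for $i\geq 1$, so that in all cases one lands on the clean identity $S_1^*F_i(w)S_1=z\,F_i(w)$ for every $i\in\mathbb{Z}$. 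This is the analogue of the fixed-point relation in Theorem \ref{Relcomm}, except that the scalar $z$ now appears on the right-hand side.

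Iterating this identity gives $(S_1^*)^kF_i(w)S_1^k=z^kF_i(w)$. By Proposition \ref{limfixpoint} the left-hand side converges to a scalar as $k\to\infty$, so $z^kF_i(w)$ is convergent; comparing consecutive terms shows $z^k(z-1)F_i(w)\to 0$, and since $z\in\mathbb{T}\setminus\{1\}$ this forces $\|F_i(w)\|=0$, i.e.\ $F_i(w)=0$. Note this now holds for \emph{all} $i$, including $i=0$ — the pleasant difference from the case $z=1$, where one had to treat $F_0(w)$ separately. Proposition \ref{Cuntzlemma} then yields $w=0$, contradicting the unitarity of $w$; hence $\phi$ is outer. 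The main obstacle is the careful verification of the commutation computation $S_1^*F_i(w)S_1=zF_i(w)$, in particular keeping track of the powers of $z$ and $\bar z$ coming from $wS_1^*=\bar z S_1^*w$ versus $wS_1=zS_1w$, and making sure that Proposition \ref{limfixpoint} is applicable — which it is, precisely because each $F_i(w)$ lies in $\Q_2^{\mathbb T}$ by construction of the maps $F_i$.
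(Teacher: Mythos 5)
Your proof is correct and is essentially the paper's intended argument: the paper's proof of Proposition \ref{zS} consists precisely of "recasting Theorem \ref{Relcomm} in terms of a unitary $w$ with $wS_1w^*=zS_1$", which is exactly what you carry out — the twisted relation $S_1^*F_i(w)S_1=zF_i(w)$, Proposition \ref{limfixpoint} forcing $z^kF_i(w)$ to converge, hence $F_i(w)=0$ for all $i$ (now including $i=0$), and Proposition \ref{Cuntzlemma} giving the contradiction $w=0$. The only cosmetic deviation is your reduction of the case $i=2$ to $i=1$ by flip-flop conjugation, where the paper relies on the conjugation $\mathrm{ad}(U^*)(C^*(S_2))=C^*(S_1)$ noted at the start of that section; both reductions are valid.
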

The proof is straightforwardly obtained by recasting Theorem \ref{Relcomm} in terms of a unitary $w$ such that $wS_1w^*=zS_1$. This is done for $\O_n$ in \cite{Matsumoto}, from which some of the techniques deployed in this section have actually  been taken.

\section{Extending endomorphisms of the Cuntz algebra}\label{SecExt}

For what follows it may be convenient to recall that associated with any unitary $V \in \U(\O_2)$ there is an endomorphism $\lambda_V$ of $\O_2$, defined by $\lambda_V(S_i)=VS_i, i=1,2$. 
Notably, the other way round is also true, i.e. any endomorphisms $\lambda\in\O_2$ comes from a unique $V$.  The bijective correspondence thus obtained is often named
after Cuntz and Takesaki. To take just one important example, the flip-flop $\lambda_f$ is nothing but the
automorphism corresponding to $f\doteq S_1 S_2^* + S_2 S_1^*$.

\subsection{Uniqueness of the extensions}
This section is mostly concerned with the problem of extending endomorphisms of $\O_2$ to endomorphisms of $\Q_2$. More precisely, we spot necessary and sufficient conditions for an extension to exist. 
Before entering into the details, some comments are in order. Indeed, the equations we get are hardly ever easy to verify save for the endomorphisms we already know of to extend. Notwithstanding their intrinsic difficulty, they do provide
general information when applied to $\textrm{id}_{\O_2}$. We can now go on with our discussion. To begin with,  if $V$ is a unitary of $\O_2$ such that the associated endomorphism $\lambda_V$ extends to an endomorphism $\tilde\lambda$ of $\Q_2$, then one must have $VUS_2 = VS_1 = \lambda_V(S_1) = \tilde\lambda(S_1)
= \tilde\lambda(U) \lambda_V(S_2) = \tilde\lambda(U) V S_2$. Therefore,
$\tilde\lambda(U)V S_2 = VU S_2$ and thus $U^* V^* \tilde\lambda(U)V S_2 = S_2$. Thus, setting $W = U^* V^* \tilde\lambda(U)V $,
it holds $W S_2 = S_2$ and $\tilde\lambda(U) = V U W V^*$.
We now examine whether such extensions exist. 
We always have 
$$V S_2 S_2^* V^* + V U W V^* V S_2 S_2^* V^* V W^* U^* V^* =
 V (S_2 S_2^* +   U S_2 S_2^* U^*) V^*= VV^* = 1 $$ 
and we must have
$$V S_2 \tilde\lambda(U) = \tilde\lambda(U^2) V S_2$$
or, equivalently,
$$VS_2 VU W V^* = (VUWV^*)^2 V S_2 = V U W V^* V U W V^* V S_2 = V U W U W S_2 = 
V U W U S_2 \ . $$
We have thus shown the following result.
\begin{proposition}
Let $V \in \U(\O_2)$ and let $\lambda_V\in\End(\O_2)$ be the associated endomorphism. Then $\lambda_V$ extends to an endomorphism of $\Q_2$  if and only if there exists a unitary $W \in {\mathcal Q}_2$ such that $W S_2 = S_2$
and $S_2 VU W V^* = U W U S_2$.
For any such $W$, we have an extension $\tilde\lambda = \tilde\lambda_{V,W}$ with $\tilde\lambda(U) = V U W V^*$.
\end{proposition}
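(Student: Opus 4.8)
The plan is to prove both implications through the universal property of $\Q_2$, treating the candidate extension as the endomorphism determined by the prescribed images $\tilde\lambda(U)=VUWV^*$ and $\tilde\lambda(S_2)=VS_2$. Since $\Q_2$ is presented by a unitary $U$ and an isometry $S_2$ subject only to $S_2U=U^2S_2$ and $S_2S_2^*+US_2S_2^*U^*=1$, the whole problem reduces to deciding exactly when these two images satisfy the same two relations. First I would record that $VS_2$ is automatically an isometry and that $VUWV^*$ is automatically a unitary whenever $W$ is, so the only substantive conditions are the two defining relations.

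For the \emph{if} direction, assume a unitary $W\in\Q_2$ with $WS_2=S_2$ and $S_2VUWV^*=UWUS_2$. I would first verify the projection relation: since $WS_2=S_2$ forces $WS_2S_2^*W^*=S_2S_2^*$, a direct substitution collapses $VS_2S_2^*V^*+VUWV^*\,VS_2S_2^*V^*\,VW^*U^*V^*$ to $V(S_2S_2^*+US_2S_2^*U^*)V^*=1$, so this relation holds for free. The covariance relation $VS_2\,VUWV^*=(VUWV^*)^2VS_2$ I would expand on the right, using $WS_2=S_2$, to get $VUWUS_2$; hence the relation is equivalent, after cancelling $V$ on the left, to $S_2VUWV^*=UWUS_2$, which is precisely the hypothesis. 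Universality then yields $\tilde\lambda\in\End(\Q_2)$, and I would close by checking that it restricts to $\lambda_V$: indeed $\tilde\lambda(S_2)=VS_2=\lambda_V(S_2)$ and $\tilde\lambda(S_1)=\tilde\lambda(U)\tilde\lambda(S_2)=VUWS_2=VUS_2=VS_1=\lambda_V(S_1)$.

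For the \emph{only if} direction, suppose $\lambda_V$ extends to $\tilde\lambda$. Then $\tilde\lambda(S_2)=VS_2$, and I would set $W\doteq U^*V^*\tilde\lambda(U)V$, a unitary since $U$, $V$ and $\tilde\lambda(U)$ all are. A short computation gives $VUWV^*=\tilde\lambda(U)$. To obtain $WS_2=S_2$ I would use that $\tilde\lambda$ extends $\lambda_V$ on $S_1=US_2$: the forced identity $\tilde\lambda(U)VS_2=\tilde\lambda(S_1)=VS_1=VUS_2$ yields, after multiplying by $U^*V^*$ on the left, exactly $WS_2=S_2$. Finally, the required equation $S_2VUWV^*=UWUS_2$ is nothing but the image under $\tilde\lambda$ of the covariance relation $S_2U=U^2S_2$, rewritten via $\tilde\lambda(U)=VUWV^*$ and $\tilde\lambda(S_2)=VS_2$ and simplified using $WS_2=S_2$.

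The computations are routine; the only point requiring care, and the closest thing to an obstacle, is organizing the bookkeeping so that the role of each relation is transparent. In particular, one should notice that $WS_2=S_2$ alone makes the projection relation automatic, so that all the genuine content of extendability is concentrated in the single covariance equation $S_2VUWV^*=UWUS_2$; recognizing the correct definition $W=U^*V^*\tilde\lambda(U)V$ in the reverse direction is what makes the equivalence clean.
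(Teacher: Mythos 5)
Your proof is correct and takes essentially the same route as the paper: the same choice $W=U^*V^*\tilde\lambda(U)V$ in the necessity direction, and in the sufficiency direction the same verification that the pair $(VUWV^*,\,VS_2)$ satisfies the two defining relations of $\Q_2$ (the range relation collapsing via $WS_2=S_2$, the covariance relation reducing to $S_2VUWV^*=UWUS_2$) so that universality applies. Your writeup is if anything slightly more explicit than the paper's, e.g.\ in checking that the resulting endomorphism actually restricts to $\lambda_V$ on $\O_2$.
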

As shown later, the $W$ defined above is uniquely determined in all the cases we have examined. 
Furthermore, the endomorphism $\widetilde\lambda$ is necessarily injective because of $\Q_2$ being simple. Moreover, if $\lambda_V$ is an automorphism of $\O_2$, then
$\widetilde\lambda$ is surjective if and only if the associated $W$ is contained in $\widetilde\lambda(\Q_2)$. Moreover, for the extensions built above the following composition rule holds:
$$\widetilde\lambda_{V,W} \circ \widetilde\lambda_{V',W'} = \widetilde\lambda_{\lambda_V(V')V, W V^* \widetilde\lambda_{V,W}(W') V}  $$

As an example, if $\tilde\varphi$ is the canonical shift introduced in Sect. \ref{SecPrep}, we have $$\widetilde\varphi = \widetilde\lambda_{\theta, U^* \theta U^2 \theta}$$ where $\theta = \sum_{i,j=1}^2 S_i S_j S_i^* S_j^* \in \U(\F_2^2)$ is the self-adjoint unitary flip.\\

It is interesting to note that the extensions of the gauge automorphisms we have considered all work with $W=1$. This is not a case. In fact, the converse also holds true.
\begin{proposition}
Let $V\in\U(\O_2)$. If the associated endomorphism $\lambda_V\in\End(\O_2)$ extends to $\tilde\lambda_{V, 1}$, that is to say the choice $W=1$ does yield an extension,   then $V=z1$, for some $z\in\mathbb{T}$.  
\end{proposition}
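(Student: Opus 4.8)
The plan is to feed the hypothesis through the necessary-and-sufficient criterion established in the preceding proposition and watch it collapse to a commutation relation that the earlier structure theory already rules out except for scalars. Concretely, by that proposition, the choice $W=1$ yields an extension $\tilde\lambda_{V,1}$ precisely when the pair of conditions $W S_2 = S_2$ and $S_2 VUWV^* = UWUS_2$ hold with $W=1$. The first is vacuous, and the second specializes to
$$S_2 V U V^* = U^2 S_2.$$
So the whole content of the hypothesis is encoded in this single equation, together with the fact that $V\in\U(\O_2)$.

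Next I would bring in the defining relation of $\Q_2$, namely $S_2 U = U^2 S_2$, to rewrite the right-hand side. Substituting $U^2 S_2 = S_2 U$ gives $S_2 V U V^* = S_2 U$, and since $S_2$ is an isometry, left multiplication by $S_2^*$ (using $S_2^* S_2 = 1$) cancels it and leaves
$$V U V^* = U, \qquad \text{i.e.} \qquad V U = U V.$$
Taking adjoints, or equivalently conjugating, shows $V$ also commutes with $U^*$, so $V$ commutes with all of $C^*(U)$; that is, $V \in C^*(U)' \cap \O_2$.

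The final step is to invoke the triviality of this relative commutant, which was already recorded as a corollary of Proposition \ref{Intr} in the form $C^*(U)' \cap \O_2 = \mathbb{C}1$. Hence $V \in \mathbb{C}1$, and since $V$ is unitary we conclude $V = z1$ for some $z \in \mathbb{T}$, as claimed. I do not anticipate a genuine obstacle here: the argument is a short chain of manipulations once the extension criterion is specialized to $W=1$. The only real ingredient that is not purely formal is the earlier computation of $C^*(U)' \cap \O_2$, so the proof is best viewed as an immediate payoff of that structural result rather than as an independent calculation.
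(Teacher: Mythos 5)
Your proposal is correct and follows exactly the paper's own argument: specialize the extension criterion to $W=1$ to get $S_2VUV^*=U^2S_2$, rewrite $U^2S_2=S_2U$, cancel the isometry $S_2$ to obtain $VUV^*=U$, and conclude via the triviality of $C^*(U)'\cap\O_2$. There is nothing to add or correct.
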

\begin{proof}
If we put $W=1$ in the equality $S_2VUWV^*=UWUS_2$, we get $S_2VUV^*=U^2S_2$. But $U^2S_2=S_2U$, and so we must have $S_2VUV^*=S_2U$. Hence $VUV^*=U$, i.e. $V$ commutes with $U$. Since $V$ is a unitary, we also have $V\in C^*(U)'\cap\O_2$, which concludes the proof.
\end{proof}

Extensions of the identity map of $\O_2$, which obviously correspond to $V=1$, may be looked at more closely. If we define $W\doteq U^* \tilde\lambda(U)$, we find that $W$ is a unitary in $\Q_2$ such that $\tilde\lambda(U) = UW$, $W S_2 = S_2$ and $W S_1 = S_1 W$. Indeed, 
$S_2 S_2^* + UW S_2 S_2^* W^* U^* = S_2 S_2^* + U S_2 S_2^* U^* = 1$ and
$S_2 UW = (UW)^2 S_2$, so that $U^2 S_2 W = U W U W S_2$ and thus $U S_2 W = W U S_2$.
Hence, $S_1 W = W S_1$, as stated. Obviously, the trivial choice $W=1$ corresponds to the trivial extension. 
\begin{proposition}
If $W\in\U(\Q_2)$ is such that $WS_2=S_2$ and $WS_1=S_1W$, then $W=1$.
\end{proposition}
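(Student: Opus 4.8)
The plan is to reduce this statement directly to Theorem \ref{Relcomm}, which already does the essential work. The two hypotheses on $W$ play complementary roles: the commutation relation $WS_1 = S_1 W$ will force $W$ to be a scalar, and the fixing relation $WS_2 = S_2$ will then pin that scalar down to $1$.

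First I would rewrite the commutation hypothesis in conjugation form. Since $W$ is unitary, multiplying $WS_1 = S_1W$ on the right by $W^*$ gives
$$
W S_1 W^* = S_1 .
$$
This is precisely the hypothesis of Theorem \ref{Relcomm} (with $w = W$), so that result applies verbatim and yields $W \in \mathbb{T}1$, i.e. $W = z1$ for some $z \in \mathbb{T}$. At this point the second hypothesis $WS_2 = S_2$ becomes $zS_2 = S_2$, and since $S_2 \neq 0$ this forces $z = 1$, whence $W = 1$.

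The only point deserving care is that Theorem \ref{Relcomm} is stated for the generator $S_1$, so one should make sure the conjugation is written against $S_1$ rather than $S_2$; but this is automatic from the given relation $WS_1 = S_1W$, so no further manipulation is needed. Consequently there is no real obstacle here: the whole difficulty has been absorbed into the triviality of the relative commutant of the generating isometry established in Section \ref{Secrelcomm}, and the present statement is a short corollary of it, with $WS_2=S_2$ serving merely to eliminate the residual phase freedom left over from Theorem \ref{Relcomm}.
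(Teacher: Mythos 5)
Your proof is correct and is precisely the argument the paper itself offers first: its proof opens by noting that the proposition "is in fact a straightforward application of the fact that $C^*(S_1)'\cap\Q_2=\mathbb{C}1$," i.e.\ of Theorem \ref{Relcomm}, and your reduction (rewriting $WS_1=S_1W$ as $WS_1W^*=S_1$ to get $W=z1$, then using $WS_2=S_2$ and $S_2\neq 0$ to force $z=1$) supplies exactly those details, with no circularity since Theorem \ref{Relcomm} precedes this proposition. The paper then also records an alternative, longer proof that bypasses Theorem \ref{Relcomm} entirely, relying only on the Larsen--Li extension criterion and explicit computations in the representation of Latr\'emoli\`ere, but that is presented as an optional independent route rather than the primary justification.
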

\begin{proof}
This is in fact a straightforward application of the fact that $C^*(S_1)'\cap\Q_2=\mathbb{C}1$. However, we can also give an alternative if longer proof that only depends on the theorem of Larsen and Li we have quoted. The computations are easily made in the irreducible representation of $\O_2$  produced in \cite{Latremoliere}. This acts on the Hilbert space 
$\H=L^2([-1,1])$ through the pure isometries $S_1, S_2\in B(\H)$ given by the formulas:
\begin{displaymath}
(S_1 f)(t) = \left\{ \begin{array}{ll}
0 & \textrm{for $-1\leq t\leq 0$}\\
\sqrt{2}f(2t-1) & \textrm{for $0< t\leq 1$}\\
\end{array} \right.
\end{displaymath}

\begin{displaymath}
(S_2 f)(t) = \left\{ \begin{array}{ll}
\sqrt{2}f(2t+1) & \textrm{for $-1\leq t\leq 0$}\\
0 & \textrm{for $0<t\leq 1$}\\
\end{array} \right.
\end{displaymath}
Note that $(S_1^*f)(t)=\frac{\sqrt{2}}{2}f\big(\frac{t+1}{2}\big)$ and $(S_2^*f)(t)=\frac{\sqrt{2}}{2}f\big(\frac{t-1}{2}\big)$ for every $f\in \H$. The unitary operator $U\in U(\H)$ given by $(Uf)(t)=f(-t)$ for every $t\in [-1,1]$ is an intertwiner between $S_1$ and $S_2$, namely $US_2=S_1U$.  By virtue of the result of Larsen and Li we mentioned above, we can then regard this representation as a representation of $\Q_2$ as well, which allows us to think of $\Q_2$ as a subalgebra of $B(\H)$. In order to prove the proposition, we will actually show even more: any unitary $W\in B(\H)$ such that $WS_2=S_2$ and $WS_1=S_1W$ must be the identity operator $I$ on $\H$. To accomplish this task, we define a sequence of orthogonal projections  given by $P_n\doteq (S_1)^nS_2S_2^*(S_1^*)^n$  for each $n\in\mathbb{N}$. It is  straightforwardly checked that $WP_n=P_n$ for every $n$ and $P_nP_m=P_mP_n=0$ for every $m\neq n$. Therefore $Q_n\doteq\sum_{k=0}^n P_k$ is still an orthogonal projection such that $WQ_n=Q_n$. Accordingly, the conclusion will be easily achieved once we have proved that $Q_n$ converges to $I$ in the strong operator topology. As easily recognized, we have the following explicit formulas for $S_1^n$ and $(S_1^*)^n$:

\begin{displaymath}
(S_1^n f)(t) = \left\{ \begin{array}{ll}
0 & \textrm{for $-1\leq t\leq 1-\frac{1}{2^{n-1}}$}\\
\sqrt{2}^nf(2^nt-\sum_{i=0}^k 2^i) & \textrm{for $1-\frac{1}{2^{n-1}}< t\leq 1$}\\
\end{array} \right.
\end{displaymath}
and
\begin{displaymath}
((S_1^*)^n f)(t) =\left(\frac{\sqrt{2}}{2}\right)^n f\left(\frac{t+2^n-1}{2^n}\right) 
\end{displaymath}
We can use them to see that $P_n$ is the projection corresponding to the multiplication operator by $\chi_{\big[1-\frac{1}{2^{n-1}}, 1-\frac{1}{2^n}\big]}$, i.e. the characteristic function of the interval $\big[1-\frac{1}{2^{n-1}}, 1-\frac{1}{2^n}\big]$. As a consequence,
$Q_n$ is nothing but the projection associated with $\chi_{\big[-1, 1-\frac{1}{2^n}\big]}$. Hence $Q_n\rightarrow 1$ in the strong operator topology, which was to be proved.
\end{proof}
\begin{remark}
The representation of $\Q_2$ recalled in the proof of the above result is not equivalent to the canonical representation, merely because its restriction to $\O_2$ is still irreducible, as proved in \cite{Latremoliere}, whereas the restriction of the canonical representation to $\O_2$ is not, as we remarked.
\end{remark}

We are at last in a position to prove the following result that says that a non-trivial endomorphism of $\Q_2$ cannot fix $\O_2$ pointwise.
\begin{theorem}\label{Rigidity}
If $\Lambda\in\End(\Q_2)$ is such that $\Lambda\upharpoonright_{\O_2}={\rm id}_{\O_2}$, then $\Lambda={\rm id}_{\Q_2}$.
\end{theorem}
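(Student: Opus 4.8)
The plan is to reduce everything to the analysis of extensions of ${\rm id}_{\O_2}$ carried out just above, since the hypothesis $\Lambda\upharpoonright_{\O_2}={\rm id}_{\O_2}$ is precisely the case $V=1$ of that framework. First I would observe that $\Lambda$ fixes both generators $S_1$ and $S_2$ of $\O_2$, and that $\Q_2=C^*(U,S_2)$; hence it is enough to prove $\Lambda(U)=U$, for then $\Lambda$ fixes a generating set and so the whole of $\Q_2$.

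Next, since $\Lambda$ is a unital $*$-endomorphism and $U$ is unitary, $\Lambda(U)$ is a unitary of $\Q_2$. I would set $W\doteq U^*\Lambda(U)$, so that $W\in\U(\Q_2)$ and $\Lambda(U)=UW$, and then extract the two intertwining relations that identify $W$. From $\Lambda(S_1)=S_1$ together with $S_1=US_2$ one gets $UWS_2=\Lambda(U)\Lambda(S_2)=\Lambda(S_1)=S_1=US_2$, whence $WS_2=S_2$. To obtain the second relation I would use that $\Lambda$, being a homomorphism, must preserve the defining relation $S_2U=U^2S_2$: applying $\Lambda$ gives $S_2\Lambda(U)=\Lambda(U)^2S_2$, i.e. $S_2UW=(UW)^2S_2$. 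Expanding the right-hand side with $WS_2=S_2$ and the left-hand side with $S_2U=U^2S_2$ yields $U^2S_2W=UWUS_2$, and cancelling one $U$ on the left gives $US_2W=WUS_2$, that is $S_1W=WS_1$.

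At this point $W$ is a unitary of $\Q_2$ satisfying $WS_2=S_2$ and $WS_1=S_1W$, so the Proposition proved immediately above (whose content is exactly that such a $W$ must be trivial) forces $W=1$. Consequently $\Lambda(U)=U$, and combined with $\Lambda(S_2)=S_2$ this shows $\Lambda$ fixes the generators of $\Q_2$, hence $\Lambda={\rm id}_{\Q_2}$.

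Since the genuinely hard step, namely that $WS_2=S_2$ and $WS_1=S_1W$ imply $W=1$, which rests on the triviality of $C^*(S_1)'\cap\Q_2$ established in Theorem \ref{Relcomm}, has already been dispatched, the only real obstacle here is purely a matter of bookkeeping: correctly deriving the two relations on $W$ from the fact that $\Lambda$ respects the presentation of $\Q_2$. The rest is immediate.
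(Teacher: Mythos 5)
Your proof is correct and follows exactly the paper's route: the paper derives the relations $WS_2=S_2$ and $WS_1=S_1W$ for $W=U^*\Lambda(U)$ in the discussion of extensions of ${\rm id}_{\O_2}$ (the case $V=1$) just before the proposition you invoke, and then cites that proposition to conclude $W=1$. You have merely written out those same computations explicitly, so there is nothing to add.
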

\begin{proof}
A straightforward application of the former proposition.
\end{proof} 
\begin{remark}
Actually, the theorem just obtained strengthens the information that the relative commutant $\O_2'\cap\Q_2$ is trivial. For, if $u\in O_2'\cap\Q_2$, then $\textrm{ad}(u)$ is an automorphism fixing $\O_2$ pointwise. As such, $\textrm{ad}(u)$ is trivial, hence $u$ is a central element. Since $\Q_2$ is simple, $u$ must be a multiple of the identity, i.e. $\O_2'\cap\Q_2=\mathbb{C}1$.
\end{remark}
As a simple corollary, we can also get the following property of the inclusion $\O_2\subset\Q_2$. 
\begin{corollary}
If $\Lambda_1\in\Aut(\Q_2)$ and $\Lambda_2\in\End(\Q_2)$ are such that $\Lambda_1\upharpoonright_{\O_2}=\Lambda_2\upharpoonright_{\O_2}$, then $\Lambda_1=\Lambda_2$. In particular, $\Lambda_2$ is an automorphism as well.
\end{corollary}
\begin{proof}
Just apply the above theorem to the endomorphism $\Lambda_1^{-1}\circ\Lambda_2$, which restricts trivially to $\O_2$.
\end{proof}
In particular, the extensions of both the flip-flop and the gauge automorphisms are unique.\\

Of course there are automorphisms of ${\mathcal Q}_2$ that do not leave $\O_2$ globally invariant. The most elementary example we can come up with is probably $\textrm{ad}(U)$. Indeed, ${\rm ad}(U)S_1 = U S_1 U^* = S_2 = U S_1, \quad 
{\rm ad}(U)S_2 = U S_2 U^* = S_1 U^* = U^* S_2 \ $
Hence, ${\rm ad}(U)(\O_2)$ is not contained in $\O_2$, because $S_1 U^*$ is not in $\O_2$. 
Even more can be said. Indeed, ${\rm ad}(U)(\F_2)$ is not contained in $\O_2$ either. This is seen as easily as before, since
for instance ${\rm ad}(U)(S_1S_2^*)=US_1S_2^*U^*$ does not belong to $O_2$ although $S_1S_2^*$ belongs to $\F_2$. Given that $US_1S_2^*U^*=S_2US_2^*U^*=S_2US_1^*UU^*=S_2US_1^*$, if $US_1S_2^*U^*$ were in $\O_2$, then $U=S_2^*S_2US_1^*S_1$ would in turn be in $\O_2$, which is not. 
Even so, ${\rm ad}(U)$ does leave the diagonal subalgebra $\D_2$ globally invariant. This can be shown by means of easy computations involving the projections of $\D_2^k\doteq {\rm span}\{S_\alpha S_\alpha^*\; \; {\rm s. t. } \; |\alpha|=k\}$ for every $k\in\mathbb{N}$.\\

We would like to end this section by remarking that for each $\Lambda\in\End(\Q_2)$ there still exists a unique $u_\Lambda\in\U(\Q_2)$ such that $\Lambda(S_2)=u_\Lambda S_2$ and $\Lambda(S_1)=u_\Lambda S_1$, which is simply given by
$u_\Lambda=\Lambda(S_1)S_1^*+\Lambda(S_2)S_2^*$. Furthermore, $\Lambda$ leaves $\O_2$ globally invariant if and only if $u_\Lambda\in\O_2$. This allows us to regard the map $\End(\Q_2)\ni\Lambda\rightarrow u_\Lambda\in\U(\Q_2)$ as a generalization of the well-known Cuntz-Takesaki correspondence. Nevertheless, this map is decidedly less useful for $\Q_2$ than it is for $\O_2$, not least because it is not surjective. In other words, there exist unitaries $u$ in $\U(\Q_2)$ such that 
the correspondence $S_1\rightarrow uS_1, \; S_2\rightarrow uS_2$ do not extend to any endomorphism of $\Q_2$. Examples of such $u$ are even found in $\U(\O_2)$, as we are going to see in the next section, where we shall give a complete description of the extensible
Bogoljubov automorphisms.  For the time being we observe that if a unitary $u\in\U(\Q_2)$ does give rise to an endomorphism $\Lambda_u$, the equation $uUS_2=\Lambda_u(U)uS_2$ must be satisfied. This says that $\Lambda_u(U)=uUWu^*$ for some $W\in\U(\Q_2)$ such that
$WS_2=S_2$ and $S_2uUWu^*=UWUS_2$. By the same computations as at the beginning of the section, the converse is also seen to be true. Hence we obtain a complete if hitherto unmanageable description of $\End(\Q_2)$. At any rate, our guess is that
the above equations are hardly ever verified unless $u$ is of a very special form, such as $u=v\widetilde\varphi(v^*)$ for any $v\in\U(\Q_2)$, corresponding to inner automorphisms, $u=z 1$, corresponding to the gauge automorphisms $\tilde\alpha_z$, or $u=S_2S_2^*U^*+US_2S_2^*$, corresponding to the flip-flop. In fact, this prediction is partly supported by the result in the negative obtained in the next section. Moreover, it is still not clear at all whether
the map $\End(\Q_2)\ni \Lambda\rightarrow u_\Lambda$ is injective, although its restriction to $\Aut(\Q_2)$ certainly is.

\subsection{Extensible Bogoljubov automorphisms}
We have seen some remarkable classes of automorphisms of $\O_2$ that  extend to $\Q_2$. However, there is no a priori reason to expect every endomorphism of $\O_2$ to extend automatically to an endomorphism of $\Q_2$. In fact, we next give rather elementary examples of automorphisms of $\O_2$ that do not extend. Indeed, if we denote by $\eta_{\alpha,\beta}$ the automorphism of $\O_2$ defined by $\eta_{\alpha,\beta}(S_1)=\alpha S_1$ and $\eta_{\alpha,\beta}(S_2)=\beta S_2$ for any given $\alpha,\beta\in\mathbb{T}$, we have the following proposition.
\begin{proposition}
The automorphisms $\eta_{\alpha,\beta}\in\Aut(\O_2)$  defined above extend to endomorphisms of $\Q_2$ if and only if $\alpha= \beta$.
\end{proposition}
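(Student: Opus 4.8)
The plan is to treat the two implications separately: the forward one is immediate from results already in hand, while the converse reduces to a one-line spectral observation about $S_2$ in the canonical representation. For the sufficiency, note that if $\alpha=\beta=z$ then $\eta_{z,z}$ sends $S_1\mapsto zS_1$ and $S_2\mapsto zS_2$, so it is exactly the gauge automorphism $\alpha_z$; hence it extends to $\widetilde\alpha_z\in\Aut(\Q_2)$ by the extension result for gauge automorphisms proved above, and no further work is needed.

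For the necessity, I would suppose $\eta_{\alpha,\beta}$ extends to some $\widetilde\eta\in\End(\Q_2)$ and set $V\doteq\widetilde\eta(U)\in\U(\Q_2)$ and $\gamma\doteq\alpha\beta^{-1}\in\mathbb{T}$. The first move is to extract two relations by applying the homomorphism $\widetilde\eta$ to the defining identities of $\Q_2$, using $S_1=US_2$. Since $\widetilde\eta(S_1)=\widetilde\eta(U)\widetilde\eta(S_2)$ with $\widetilde\eta(S_1)=\alpha S_1$ and $\widetilde\eta(S_2)=\beta S_2$, one obtains $VS_2=\gamma US_2$; and applying $\widetilde\eta$ to $S_2U=U^2S_2$ gives $S_2V=V^2S_2$. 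Both are routine.

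Then I would pass to the (faithful) canonical representation on $\ell_2(\mathbb{Z})$ and follow a single well-chosen basis vector. The first relation gives $Ve_{2k}=\gamma e_{2k+1}$ for every $k$, in particular $Ve_{-2}=\gamma e_{-1}$. Feeding this into the second relation evaluated at $e_{-1}$ yields $S_2(Ve_{-1})=V^2S_2 e_{-1}=V^2 e_{-2}=\gamma\,Ve_{-1}$, so $Ve_{-1}$ is an eigenvector of $S_2$ for the eigenvalue $\gamma$, and it is nonzero because $V$ is unitary. The decisive input, already recorded for the canonical representation, is that $1$ is the only eigenvalue of $S_2$ (with eigenspace $\mathbb{C}e_0$). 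This forces $\gamma=1$, i.e. $\alpha=\beta$.

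The only subtle point — the obstacle, mild as it is — is choosing the right basis vector. For a non-negative index the recursion coming from $S_2V=V^2S_2$ simply terminates and yields no constraint on $\gamma$; it is precisely at the index $-1$, where $2k+1=k$, that the relation becomes self-referential and collapses into an eigenvalue equation for $S_2$. Everything else is direct computation, and it is worth emphasizing that the argument uses only the homomorphism relations together with the spectrum of $S_2$, so the extra Cuntz relation $S_2S_2^*+VS_2S_2^*V^*=1$ is never needed.
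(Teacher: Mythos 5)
Your proof is correct. Both directions rest on the same core facts as the paper: sufficiency because $\eta_{z,z}$ is just the gauge automorphism, and necessity because $1$ is the only eigenvalue of $S_2$ in the canonical representation. The difference is in how the eigenvalue equation is produced. The paper argues conceptually: since $S_2=US_1U^*$ in $\Q_2$, an extension $\widetilde\eta$ would make $\alpha S_1$ and $\beta S_2$ unitarily equivalent via $\widetilde\eta(U)$, and since the point spectrum is a unitary invariant, $\{\alpha\}=\sigma_p(\alpha S_1)=\sigma_p(\beta S_2)=\{\beta\}$, using that both $S_1$ and $S_2$ have point spectrum $\{1\}$. You instead derive the covariance relation $S_2V=V^2S_2$ from $S_2U=U^2S_2$ and chase the basis vectors $e_{-2}$, $e_{-1}$ to exhibit $Ve_{-1}$ as an eigenvector of $S_2$ with eigenvalue $\gamma=\alpha\beta^{-1}$. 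The two mechanisms are really the same proof in different clothing: your computation is what the slogan ``unitary equivalence preserves point spectrum'' does abstractly, with $e_{-1}$ (the fixed point of $k\mapsto 2k+1$, i.e.\ the eigenvector of $S_1$) playing its expected role. What your version buys is self-containedness — it uses only $\sigma_p(S_2)$ and the defining relations, never invoking the unitary equivalence of $S_1$ and $S_2$ or the point spectrum of $S_1$ — at the cost of being longer; the paper's version is shorter and makes the obstruction (a unitary invariant distinguishing $\alpha S_1$ from $\beta S_2$) more transparent. Your closing observation that the range relation $S_2S_2^*+VS_2S_2^*V^*=1$ is never needed is accurate and holds for the paper's argument as well.
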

\begin{proof}
Since $S_1$ and $S_2$ are unitarily equivalent in $\Q_2$, their images  $\alpha S_1$ and $\beta S_2$ would be unitarily equivalent as well if an extension of $\eta_{\alpha,\beta}$ existed. In particular, we would find $\{\alpha\}=\sigma_p(\alpha S_1)=\sigma_p(\beta S_2)=\{\beta\}$, where $\sigma_p$ denotes the point spectrum. This is absurd unless
$\alpha=\beta$, in which case the corresponding endomorphism does extend being but a gauge automorphism.
\end{proof} 
It is no surprise that the same proof as above covers the case of the so-called anti-diagonal automorphisms. These are simply given  by $\rho_{\alpha,\beta}(S_1)=\alpha S_2$ and $\rho_{\alpha,\beta}(S_2)=\beta S_1$ for any given $\alpha,\beta\in\mathbb{T}$. Again,  an automorphism $\rho_{\alpha,\beta}$ extends precisely when $\alpha=\beta$. 
To complete the picture, we shall presently determine which Bogoljubov automorphisms of $\O_2$ extend to endomorphisms of $\Q_2$. A suitable adaptation of some of the techniques developed by Matsumoto and Tomiyama in \cite{Matsumoto} will be again among the ingredients  to concoct the proof of the main result of this section. 
This says that the extensible Bogoljubov automorphisms are precisely the flip-flop, the gauge automorphisms, and their products, which altogether form a group isomorphic with the direct product $\mathbb{T}\times\mathbb{Z}_2$. To this aim, consider a unitary matrix
\begin{eqnarray*}
	A=\left( \begin{array}{cc} 
 		a & b\\
		c & d\\
		 		\end{array}\right)\in U(\mathbb{C}^2) .
\end{eqnarray*}
and let $\alpha =\lambda_A$ be the corresponding automorphism of $\O_2$, i.e. $\alpha(S_1)=aS_1+cS_2=(aU+c)S_2$ and $\alpha(S_2)=bS_1+dS_2=(bU+d)S_2$. Set $f(U)=(bU+d)$ for short.
 The condition $S_2U=U^2S_2$ implies that $f(U) S_2 \alpha(U)=\alpha(U)^2 f(U) S_2$. Suppose that $\alpha$ is extensible and denote by $\widetilde{\alpha}$ such an extension. Finally, set $\tilde{U}\doteq \widetilde{\alpha}(U)$, $\widetilde{S}_2\doteq \alpha(S_2)$, $\widetilde{S}_1\doteq \alpha(S_1)$. 
From now on we shall always be focusing on the case where $a,b,c,d$ are all different from zero. That said, the first thing we need to prove is the extension is unique provided that it exists.  

\begin{lemma}
If $\lambda_A$ extends, then its extension is unique. 
\end{lemma}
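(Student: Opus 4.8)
The plan is to show that any extension is completely determined by the single unitary $\widetilde U\doteq\widetilde\alpha(U)$, and that this unitary is in turn forced by the relations. Write $S_1'\doteq\alpha(S_1)=(aU+c)S_2$ and $S_2'\doteq\alpha(S_2)=(bU+d)S_2$. Since $\widetilde\alpha$ restricts to $\alpha$ on $\O_2$ and $S_1=US_2$, every extension must satisfy $\widetilde U S_2'=S_1'$ together with the defining intertwining $S_2'\widetilde U=\widetilde U^2 S_2'$. First I would record the two facts I will actually use: the identity $\widetilde U S_2'=S_1'$, and, from $\widetilde U^2 S_2'=\widetilde U(\widetilde U S_2')=\widetilde U S_1'$, the relation $S_2'\widetilde U=\widetilde U S_1'$. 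Both hold for any extension.

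Now suppose $\widetilde\alpha_1,\widetilde\alpha_2$ are two extensions, with associated unitaries $\widetilde U_1,\widetilde U_2$, and set $D\doteq\widetilde U_1-\widetilde U_2$. From $\widetilde U_i S_2'=S_1'$ I get $DS_2'=0$, hence $D=D(1-S_2'S_2'^*)=DS_1'S_1'^*$, using the Cuntz relation $S_1'S_1'^*+S_2'S_2'^*=\alpha(1)=1$. From $S_2'\widetilde U_i=\widetilde U_i S_1'$ I get the intertwining $S_2'D=DS_1'$. Combining the two, $D=DS_1'S_1'^*=(DS_1')S_1'^*=S_2'DS_1'^*$, and iterating, $D=S_2'^{\,n}DS_1'^{*n}$ for every $n\in\mathbb N$. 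Putting $x\doteq D^*D\ge0$ and using $S_2'^{*n}S_2'^{\,n}=\alpha(S_2^{*n}S_2^n)=1$, this yields $x=S_1'^{\,n}xS_1'^{*n}$, and therefore $0\le x\le\|x\|\,S_1'^{\,n}S_1'^{*n}=\|x\|\,\alpha(S_1^nS_1^{*n})$ for all $n$.

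It remains to make the right-hand side arbitrarily small against a faithful functional, and this is the crux. I would use the gauge-invariant conditional expectation $\widetilde E\colon\Q_2\to\Q_2^{\mathbb T}$, which is faithful, together with the canonical tracial state $\tau$ of $\Q_2^{\mathbb T}=C^*(\F_2,U)$, the Bunce--Deddens algebra of type $2^\infty$, which is simple with a unique and hence faithful trace. The projection $\alpha(S_1^nS_1^{*n})=\alpha(S_\mu S_\mu^*)$, with $\mu=(1,\dots,1)$ of length $n$, lies in the UHF core $\F_2$ and is therefore fixed by $\widetilde E$. Applying the positive maps $\widetilde E$ and then $\tau$ to the displayed inequality gives $\tau(\widetilde E(x))\le\|x\|\,\tau(\alpha(S_1^nS_1^{*n}))$. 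Since $\tau$ restricts to the unique normalized trace on $\F_2\cong M_{2^\infty}$ and the automorphism $\alpha|_{\F_2}$ is trace-preserving, $\tau(\alpha(S_1^nS_1^{*n}))=2^{-n}\to0$. Hence $\tau(\widetilde E(x))=0$; faithfulness of $\tau$ forces $\widetilde E(x)=0$, and faithfulness of $\widetilde E$ forces $x=D^*D=0$, that is $\widetilde U_1=\widetilde U_2$, so the extension is unique.

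The hard part is exactly this last step. One is tempted to pass to a representation with both generators pure and quote the uniqueness of Proposition \ref{prop-uniq-ext}, but $S_1'=\alpha(S_1)$ need not be pure in a convenient representation; already $\tfrac1{\sqrt2}(S_1+S_2)$ has a nonzero fixed vector in the representation of \cite{Latremoliere}. Replacing the strong vanishing of $S_1'^{*n}$ by the quantitative trace estimate $\tau(\alpha(S_1^nS_1^{*n}))=2^{-n}$ on the gauge-invariant subalgebra circumvents this obstacle, and the $\alpha$-invariance of the trace on $\F_2$ — a consequence of the uniqueness of the trace on the UHF core — is what makes the bound hold uniformly, independently of the matrix $A$.
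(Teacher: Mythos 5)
Your proof is correct, and it takes a genuinely different route from the paper's. The paper never leaves the canonical representation: writing $\tilde S_1^n=\sum_{|\alpha|=n}c_\alpha S_\alpha$ with $c_\alpha=a^{n_1(\alpha)}c^{n_2(\alpha)}$, it estimates $\|(\tilde S_1^*)^ne_k\|\leq L^n$ where $L=\max\{|a|,|c|\}<1$, concludes that $\alpha(S_1)$ (and likewise $\alpha(S_2)$) is a \emph{pure} isometry on $\ell_2(\mathbb{Z})$, and then invokes Proposition \ref{prop-uniq-ext} to get uniqueness of the intertwining unitary. That estimate is exactly where the standing assumption $a,b,c,d\neq 0$ of that subsection enters; for diagonal or antidiagonal $A$ one has $L=1$, and purity genuinely fails in the canonical representation (note $S_1e_{-1}=e_{-1}$ and $S_2e_0=e_0$). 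Your argument replaces this representation-theoretic step by the algebraic identity $D=S_2'^{\,n}D(S_1'^*)^n$ for the difference $D$ of two candidate unitaries, and then kills $D^*D\leq\|D^*D\|\,\alpha(S_1^nS_1^{*n})$ by composing the gauge expectation $\widetilde E$ with the trace $\tau$ of the Bunce--Deddens algebra $\Q_2^{\mathbb{T}}$. What your route buys: it is representation-free and uniform in $A$ --- no nonvanishing hypothesis on the entries is needed, so the diagonal and antidiagonal cases are covered at one stroke --- and it isolates the only feature of $\lambda_A$ actually used, namely that it carries the projections $S_1^nS_1^{*n}$ to projections of $\F_2$ of trace $2^{-n}$. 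What it costs: two standard but external inputs, namely that the Bunce--Deddens algebra of type $2^\infty$ is simple with a unique trace (the paper only records the identification $\Q_2^{\mathbb{T}}=C^*(\F_2,U)$ with that algebra), together with uniqueness of the trace on the UHF core; note in passing that faithfulness of $\tau$ follows from simplicity (the set $\{x:\tau(x^*x)=0\}$ is a closed two-sided ideal), not from uniqueness, so your ``unique and hence faithful'' has the logic slightly askew. Finally, your closing remark somewhat overstates the obstacle: the ``tempting'' purity route is precisely the one the paper takes, and it succeeds because purity is verified in the canonical representation under the standing assumption on $A$; your example of $\tfrac{1}{\sqrt 2}(S_1+S_2)$ fixing the constant function in the representation of \cite{Latremoliere} shows only that purity is representation-sensitive, not that the approach fails.
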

\begin{proof}
By Proposition \ref{prop-uniq-ext} all we need to check is that $\tilde{S}_1$ is pure as an isometry acting on $\ell_2(\mathbb{Z})$. 
This entails ascertaining that $\bigcap_n\textrm{Ran}[\tilde{S}_1^n(\tilde{S}_1^*)^n]=\{0\}$. To this aim, let us set
$M_n\doteq\textrm{Ran}[\tilde{S}_1^n(\tilde{S}_1^*)^n]$. As $M_{n+1}\subset M_n$, we have that $E_{\bigcap_n M_n}=\lim E_{M_n}$ strongly. Thus
we are led to show $\lim_n E_{M_n}=0$. For this it is enough to prove $\lim_n \|E_{M_n}e_k\|= 0$ for every $k$. Now the powers of
$\tilde{S}_1$ are given by $\tilde{S}_1^n=\sum_{|\alpha |= n} c_\alpha S_\alpha$, where $c_\alpha\doteq a^{n_1(\alpha)}c^{n_2(\alpha)}\in \mathbb{C}^*$ with $n_1(\alpha)$ being the number of $1$'s occurring in $\alpha$ and  $n_2(\alpha)$ the number of $2$'s occurring in $\alpha$. We set $L\doteq\max \{|a |, |c|\}$ and observe that $L<1$ by the hypotheses on the unitary matrix $A$. We have that 
$$
\|(\tilde{S}_1^*)^ne_k \|=|c_{\alpha(k)}| \leq L^{n}\to 0 \quad n\to+\infty 
$$
for a unique coefficient $c_{\alpha(k)}$ that depends on $k$ (this is actually a consequence of the fact that $\sum_{|\alpha |=k}S_\alpha S_\alpha^*=1$).
This in turn implies the claim.
\end{proof}
In light of the previous result, it is a very minor abuse of notation to denote by $\lambda_A$ also its extension to $\Q_2$  when existing. 
\begin{lemma}
If $\lambda_A$ extends, then $\tilde{U}\in\Q_2^{\mathbb{T}}$. 
\end{lemma}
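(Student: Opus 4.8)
We want to show that if the Bogoljubov automorphism $\lambda_A$ (with all entries $a,b,c,d$ nonzero) extends to $\Q_2$, then $\tilde U = \tilde\lambda_A(U)$ lies in the gauge-invariant subalgebra $\Q_2^{\mathbb T}$. The natural strategy is to compute the action of the gauge automorphisms on $\tilde U$ and show that $\tilde U$ is gauge-invariant, i.e. $\widetilde\alpha_z(\tilde U) = \tilde U$ for every $z \in \mathbb T$. Once that is established, the characterization $\Q_2^{\mathbb T} = \{x : \widetilde\alpha_z(x)=x \text{ for all } z\}$ (equivalently $\widetilde E(x)=x$, where $\widetilde E$ is the conditional expectation onto $\Q_2^{\mathbb T}$ obtained by averaging the gauge action) gives the claim immediately.

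Let me think about why $\tilde U$ should be gauge-invariant. The key observation is that $\tilde U$ is uniquely determined by the extension data via the relations it must satisfy with $\widetilde S_1 = \lambda_A(S_1) = (aU+c)S_2$ and $\widetilde S_2 = \lambda_A(S_2) = (bU+d)S_2$: namely $\widetilde S_2 \tilde U = \tilde U^2 \widetilde S_2$ and $\widetilde S_1 = \tilde U \widetilde S_2$, and Proposition~\ref{prop-uniq-ext} guarantees $\tilde U$ is the \emph{unique} unitary with these properties (the previous lemma having verified purity). Here is the plan: apply a gauge automorphism $\widetilde\alpha_z$ to the defining relations. Since $\widetilde\alpha_z(S_1)=zS_1$ and $\widetilde\alpha_z(S_2)=zS_2$ and $\widetilde\alpha_z(U)=U$, one checks that $\widetilde\alpha_z(\widetilde S_i) = z\,\widetilde S_i$ for $i=1,2$. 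Applying $\widetilde\alpha_z$ to $\widetilde S_1 = \tilde U \widetilde S_2$ yields $z\widetilde S_1 = \widetilde\alpha_z(\tilde U)\, z\widetilde S_2$, hence $\widetilde S_1 = \widetilde\alpha_z(\tilde U)\widetilde S_2$; and applying it to the covariance relation gives $\widetilde S_2\,\widetilde\alpha_z(\tilde U) = \widetilde\alpha_z(\tilde U)^2\,\widetilde S_2$. Thus $\widetilde\alpha_z(\tilde U)$ satisfies exactly the same two relations that characterize $\tilde U$.

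\textbf{Invoking uniqueness.} By the uniqueness clause of Proposition~\ref{prop-uniq-ext} applied to the representation $\pi\circ\lambda_A$ of $\O_2$ (whose isometries $\widetilde S_1, \widetilde S_2$ are pure, as just shown in the preceding lemma), there is exactly one unitary $V$ with $\widetilde S_2 V = V^2 \widetilde S_2$ and $\widetilde S_1 = V \widetilde S_2$. Since both $\tilde U$ and $\widetilde\alpha_z(\tilde U)$ satisfy these, we conclude $\widetilde\alpha_z(\tilde U) = \tilde U$ for every $z \in \mathbb T$. Therefore $\tilde U$ is fixed by the whole gauge action, and so $\tilde U = \widetilde E(\tilde U) \in \Q_2^{\mathbb T}$, as required.

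\textbf{Main obstacle.} The delicate point is making rigorous the passage from the abstract uniqueness of Proposition~\ref{prop-uniq-ext} to the equality $\widetilde\alpha_z(\tilde U)=\tilde U$: one must be sure that $\widetilde\alpha_z(\tilde U)$ is genuinely a \emph{unitary} satisfying the covariance relation with the \emph{same} pair $\widetilde S_1,\widetilde S_2$ (not a gauge-rotated pair), which is why it is essential that the scalar factors $z$ cancel cleanly in $\widetilde S_1 = \widetilde\alpha_z(\tilde U)\widetilde S_2$ and do not appear at all in the covariance relation. The fact that $\widetilde\alpha_z$ scales $S_1$ and $S_2$ by the \emph{same} scalar $z$ is exactly what makes this cancellation work, and this is the structural reason the argument succeeds. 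A secondary point to keep clean is that the purity of $\widetilde S_1$ (hence applicability of Proposition~\ref{prop-uniq-ext}) persists after applying the gauge automorphism, but since $\widetilde\alpha_z$ is an automorphism of $\Q_2$ it preserves purity automatically, so no extra work is needed there.
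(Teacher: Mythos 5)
Your proof is correct and is essentially the paper's own argument: both hinge on applying a gauge automorphism to the relations $\tilde S_2\tilde U=\tilde U^2\tilde S_2$ and $\tilde S_1=\tilde U\tilde S_2$, using that $\widetilde\alpha_z$ scales $\tilde S_1$ and $\tilde S_2$ by the \emph{same} factor $z$, and then forcing $\widetilde\alpha_z(\tilde U)=\tilde U$ by a uniqueness statement. The only cosmetic difference is that the paper argues by contradiction, building via universality of $\Q_2$ a second endomorphism $\Lambda$ that extends $\lambda_A$ and contradicting the uniqueness-of-extension lemma, whereas you invoke Proposition \ref{prop-uniq-ext} directly; since that lemma is itself an immediate consequence of Proposition \ref{prop-uniq-ext} together with the purity verified just before, the two arguments coincide in substance.
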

\begin{proof}
Suppose that $\tilde{U}$ is not in $\Q_2^{\mathbb{T}}$. Then by definition there must exist a non-trivial gauge automorphism $\tilde{\alpha_z}$ such that $\tilde{\alpha_z}(\tilde{U})\neq \tilde{U}$. 
By applying $\tilde {\alpha_z}$ to both sides of the equalities $\tilde{S}_2\tilde{U}=\tilde{U}^2\tilde{S}_2$ and $\tilde{S}_2\tilde{S}_2^*+\tilde{U}\tilde{S}_2\tilde{S}_2^*\tilde{U}^*=1$, we also get
\begin{align*}
& \tilde{\alpha_z}(\tilde{U})^2 \tilde{S}_2=\tilde{S}_2\tilde{\alpha_z}(\tilde{U})\\
& \tilde{S}_2\tilde{S}_2^*+\tilde{\alpha_z}(\tilde{U}) \tilde{S}_2\tilde{S}_2^* \tilde{\alpha_z}(\tilde{U})^*=1\\
\end{align*}
which together say  there exists an endomorphism $\Lambda\in\End(\Q_2)$ such that $\Lambda(S_2)=\tilde{S}_2$ and $\Lambda(U)=\tilde{\alpha_z}(\tilde{U})$. 
Now $\Lambda(S_1)=\Lambda(US_2)=\Lambda(U)\Lambda(S_2)=\tilde{\alpha_z}(\tilde{U})\tilde{S}_2= \bar{z}\tilde{\alpha_z}(\tilde{U}\tilde{S}_2)=\bar{z}\tilde{\alpha_z}(\tilde{S}_1)= \tilde{S}_1=\lambda_A(S_1)$.
A contradiction is thus arrived at because $\Lambda$ and $\tilde{\lambda_A}$ are different maps. 
\end{proof}
Now we introduce some lemmas to prove that $\tilde\alpha(U)$ is actually contained in $C^*(U)$.

\begin{lemma}\label{lemma-3-R-2}
For any $x\in \B_2^k$, the element $(\widetilde{S}_2^*)^k x \widetilde{S}_1^k$ belongs to $C^*(U)$.
\end{lemma}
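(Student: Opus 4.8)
The plan is to reduce the statement to a single spanning monomial and then factor the resulting expression into two pieces, each of which is manifestly a function of $U$. By linearity of the map $x\mapsto(\widetilde{S}_2^*)^k x\,\widetilde{S}_1^k$ and the definition $\B_2^k={\rm span}\{S_\alpha S_\beta^* U^h\mid |\alpha|=|\beta|=k\}$, it suffices to treat $x=S_\alpha S_\beta^* U^h$ with $|\alpha|=|\beta|=k$. I would first rewrite this monomial in the normal form already exploited earlier in the paper: since $S_\gamma=U^{h(\gamma)}S_2^{|\gamma|}$ for every multi-index $\gamma$ (a consequence of $S_1=US_2$ together with $S_2U=U^2S_2$), one gets
\[
x=U^{h(\alpha)}S_2^k(S_2^*)^kU^{h-h(\beta)}.
\]

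Second, I would put the two twisted isometries into the same normal form. Using the explicit formulas $\widetilde{S}_1=(aU+c)S_2$ and $\widetilde{S}_2=(bU+d)S_2$ and pushing all the $S_2$-factors to the right via $S_2f(U)=f(U^2)S_2$, I expect
\[
\widetilde{S}_1^k=g_k(U)S_2^k,\qquad \widetilde{S}_2^k=h_k(U)S_2^k,
\]
with the explicit Laurent polynomials $g_k(U)=\prod_{j=0}^{k-1}(aU^{2^j}+c)$ and $h_k(U)=\prod_{j=0}^{k-1}(bU^{2^j}+d)$. Consequently $(\widetilde{S}_2^*)^k=(S_2^*)^k h_k(U)^*$, where $h_k(U)^*$ is again a Laurent polynomial in $U$.

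Third -- and this is the crux -- I would substitute and recognise the projection $S_2^k(S_2^*)^k$ sitting in the middle as the "glue" that splits the product. Writing $A(U)=h_k(U)^*U^{h(\alpha)}$ and $B(U)=U^{h-h(\beta)}g_k(U)$, both Laurent polynomials in $U$, the expression becomes
\[
(S_2^*)^k A(U)\,S_2^k(S_2^*)^k\,B(U)\,S_2^k=\big[(S_2^*)^k A(U)S_2^k\big]\,\big[(S_2^*)^k B(U)S_2^k\big].
\]
Each factor is of the form $(S_2^*)^k f(U)S_2^k$, and the closing step is the elementary fact that such an element lies in $C^*(U)$: since $S_2^* U^n S_2=U^{n/2}$ when $n$ is even and $0$ when $n$ is odd (equivalently, by a one-line computation in the canonical representation), $(S_2^*)^k f(U)S_2^k$ is the Laurent polynomial obtained from $f$ by retaining only the exponents divisible by $2^k$ and dividing each by $2^k$. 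Hence both factors, and therefore their product, belong to $C^*(U)$.

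The main obstacle is really the bookkeeping in the second step together with the recognition in the third that, once the monomial $x$ and the powers $\widetilde{S}_i^k$ are all written in the shape $(\text{function of }U)\cdot S_2^k$, the whole claim collapses to the inclusion $(S_2^*)^k\,C^*(U)\,S_2^k\subseteq C^*(U)$. A pleasant feature worth flagging is that this argument never uses the extended unitary $\widetilde{\alpha}(U)$ at all: it only needs the concrete images $\widetilde{S}_1,\widetilde{S}_2\in\O_2$, so the lemma holds even before one knows that $\lambda_A$ is extensible.
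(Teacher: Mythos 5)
Your proposal is correct, and it is essentially the paper's own argument carried out in full detail: the paper reduces to a spanning monomial $S_\alpha S_\beta^* U^h$ and simply asserts (splitting into the cases $h\geq 0$ and $h\leq 0$) that the resulting expression is a polynomial in $U$, which is exactly what your normal-form computation $S_\alpha S_\beta^* U^h = U^{h(\alpha)}S_2^k(S_2^*)^kU^{h-h(\beta)}$, $\widetilde{S}_i^k = (\text{Laurent polynomial in }U)\cdot S_2^k$, and the compression fact $(S_2^*)^k f(U) S_2^k \in C^*(U)$ establish. Your write-up is in fact tidier than the printed one — the factorization through the middle projection treats all $h\in\mathbb{Z}$ uniformly, avoiding the paper's case split — and your closing observation that the lemma never uses $\widetilde{\alpha}(U)$, hence holds independently of the extensibility of $\lambda_A$, is accurate.
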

\begin{proof}
Suppose that $x=S_\alpha S_\beta^* U^h$, where $|\alpha|=|\beta|=k$. If $h\geq 0$, we have that
$(\widetilde{S}_2^*)^k x \widetilde{S}_1^k = (\widetilde{S}_2^*)^k S_\alpha S_\beta^* U^h \widetilde{S}_1^k$ is a polynomial in $U$.  
The case $h\leq 0$ can be handled with similar computations.
\end{proof}

\begin{lemma}\label{lemma-4-R-2}
Let $x\in \Q_2^\mathbb{T}$ such that the sequence $(\widetilde{S}_2^*)^k x \widetilde{S}_1^k$ converges to an element $z$. Then $z\in C^*(U)$.
\end{lemma}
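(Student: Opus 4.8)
The plan is to reduce the general element $x\in\Q_2^{\mathbb T}$ to the dense building blocks $\B_2^k$ already handled in Lemma \ref{lemma-3-R-2}, and then transfer the membership in $C^*(U)$ to the limit $z$ by exploiting that $C^*(U)$ is norm-closed. The one structural fact that makes everything go through is that $\widetilde S_1=\alpha(S_1)$ and $\widetilde S_2=\alpha(S_2)$ are isometries, being images of the isometries $S_1,S_2$ under the $*$-homomorphism $\lambda_A$; consequently $(\widetilde S_2^*)^m$ and $\widetilde S_1^m$ are contractions, so $\|(\widetilde S_2^*)^m\|\le 1$ and $\|\widetilde S_1^m\|\le 1$ for every $m$.

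First I would recall that $\Q_2^{\mathbb T}=\overline{\bigcup_k\B_2^k}$, so I can fix a sequence $x_n\in\B_2^{k_n}$ with $\|x_n-x\|\to 0$. The crucial point is the uniform estimate
$$\|(\widetilde S_2^*)^m x\,\widetilde S_1^m-(\widetilde S_2^*)^m x_n\,\widetilde S_1^m\|\le \|(\widetilde S_2^*)^m\|\,\|x-x_n\|\,\|\widetilde S_1^m\|\le\|x-x_n\|,$$
which holds for all $m$ simultaneously, precisely because the powers and adjoints of the isometries $\widetilde S_i$ are contractions and do not amplify the error.

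Next, fixing $\varepsilon>0$, I would pick $n$ with $\|x-x_n\|<\varepsilon/2$. For every $m\ge k_n$ one has $x_n\in\B_2^{k_n}\subseteq\B_2^{m}$, so Lemma \ref{lemma-3-R-2} applied at level $m$ gives $(\widetilde S_2^*)^m x_n\,\widetilde S_1^m\in C^*(U)$. Combining the uniform estimate with the hypothesis $(\widetilde S_2^*)^m x\,\widetilde S_1^m\to z$, I can then choose some $m\ge k_n$ large enough that $\|z-(\widetilde S_2^*)^m x\,\widetilde S_1^m\|<\varepsilon/2$, whence
$$\|z-(\widetilde S_2^*)^m x_n\,\widetilde S_1^m\|\le\|z-(\widetilde S_2^*)^m x\,\widetilde S_1^m\|+\|(\widetilde S_2^*)^m(x-x_n)\widetilde S_1^m\|<\varepsilon.$$
Thus $z$ lies within $\varepsilon$ of $C^*(U)$ for every $\varepsilon>0$, and since $C^*(U)$ is norm-closed we conclude $z\in C^*(U)$.

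As for difficulty, there is essentially no deep obstacle: all the genuine content already resides in Lemma \ref{lemma-3-R-2}, and the present statement is a soft passage to the closure. The only matter requiring care is to keep the approximation \emph{uniform in the exponent} $m$, so that the error $\|x-x_n\|$ is not magnified as $m$ grows; this is exactly what the contractivity of $(\widetilde S_2^*)^m$ and $\widetilde S_1^m$ secures. A secondary subtlety worth flagging is that, for a fixed approximant $x_n\in\B_2^{k_n}$, one may invoke Lemma \ref{lemma-3-R-2} only once $m\ge k_n$, so that $x_n$ genuinely belongs to $\B_2^m$.
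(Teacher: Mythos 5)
Your proof is correct and follows essentially the same route as the paper's: approximate $x$ in norm by elements of the $\B_2^k$'s, use the contractivity of powers of the isometries $\widetilde S_i$ to keep the error uniform in the exponent, invoke Lemma \ref{lemma-3-R-2}, and conclude by norm-closedness of $C^*(U)$. The only difference is bookkeeping — the paper uses a diagonal sequence $y_k\in\B_2^k$ and lets the approximant and the exponent move together, while you fix one approximant and let $m$ grow, exploiting the inclusions $\B_2^{k_n}\subseteq\B_2^m$; these are the same argument.
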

\begin{proof}
Let $\{ y_k \}_{k\geq 0}$ be a sequence such that $y_k\in\B_2^k$ and $y_k\to x$ normwise. Then the thesis follows from the following inequality
$$
\| z - (\widetilde{S}_2^*)^k y_k \widetilde{S}_1^k \|\leq \| z - (\widetilde{S}_2^*)^k x \widetilde{S}_1^k \|+ \| (\widetilde{S}_2^*)^k (x-y_k) \widetilde{S}_1^k \|\; .
$$
\end{proof}

\begin{lemma}\label{lemma-5-R-2}
We have that $\tilde{U}\in C^*(U)$.
\end{lemma}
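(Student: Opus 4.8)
The plan is to apply Lemma \ref{lemma-4-R-2} directly to the element $x=\tilde{U}$ itself. By the preceding lemma we already know that $\tilde{U}\in\Q_2^{\mathbb{T}}$, so by Lemma \ref{lemma-4-R-2} it suffices to prove that the sequence $(\widetilde{S}_2^*)^k\,\tilde{U}\,\widetilde{S}_1^k$ converges, its limit being then automatically forced to lie in $C^*(U)$. In fact I expect this sequence to be \emph{constant}, equal to $\tilde{U}$ for every $k$, which settles the matter at a stroke.

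First I would record the commutation rules inherited from the fact that $\widetilde{\alpha}$ is a $*$-endomorphism extending $\lambda_A$. Applying $\widetilde{\alpha}$ to the defining relation $S_2U=U^2S_2$ of $\Q_2$ gives $\widetilde{S}_2\tilde{U}=\tilde{U}^2\widetilde{S}_2$, while applying it to $S_1=US_2$ gives $\widetilde{S}_1=\tilde{U}\,\widetilde{S}_2$. Iterating the first relation yields $\widetilde{S}_2^k\tilde{U}=\tilde{U}^{2^k}\widetilde{S}_2^k$ and, by a short induction using $\widetilde{S}_2\tilde{U}^m=\tilde{U}^{2m}\widetilde{S}_2$, the closed form $\widetilde{S}_1^k=\tilde{U}^{2^k-1}\widetilde{S}_2^k$ for every $k$.

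The computation then runs as follows. Using the two displayed identities,
\[
(\widetilde{S}_2^*)^k\,\tilde{U}\,\widetilde{S}_1^k
=(\widetilde{S}_2^*)^k\,\tilde{U}\,\tilde{U}^{2^k-1}\widetilde{S}_2^k
=(\widetilde{S}_2^*)^k\,\tilde{U}^{2^k}\,\widetilde{S}_2^k .
\]
Taking adjoints in $\widetilde{S}_2^k\tilde{U}^{*}=\tilde{U}^{*2^k}\widetilde{S}_2^k$ (itself obtained by iterating $\widetilde{S}_2\tilde{U}^{*}=\tilde{U}^{*2}\widetilde{S}_2$, which follows from $\widetilde{S}_2\tilde{U}=\tilde{U}^2\widetilde{S}_2$) produces the intertwining $(\widetilde{S}_2^*)^k\,\tilde{U}^{2^k}=\tilde{U}\,(\widetilde{S}_2^*)^k$. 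Since $\widetilde{S}_2$ is an isometry we have $(\widetilde{S}_2^*)^k\widetilde{S}_2^k=1$, and hence $(\widetilde{S}_2^*)^k\,\tilde{U}\,\widetilde{S}_1^k=\tilde{U}$ for all $k$. The sequence being constant, it converges to $\tilde{U}$, so Lemma \ref{lemma-4-R-2} delivers $\tilde{U}\in C^*(U)$, as desired.

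The genuinely substantive work has already been done in the preceding lemmas: Lemma \ref{lemma-3-R-2} and Lemma \ref{lemma-4-R-2} show that the maps $x\mapsto(\widetilde{S}_2^*)^k x\widetilde{S}_1^k$ push $\Q_2^{\mathbb{T}}$ into $C^*(U)$ in the limit, while the earlier result places $\tilde{U}$ inside $\Q_2^{\mathbb{T}}$. Consequently the only real obstacle at this stage is the bookkeeping with the powers of $\tilde{U}$ — in particular getting the adjoint intertwining $(\widetilde{S}_2^*)^k\tilde{U}^{2^k}=\tilde{U}(\widetilde{S}_2^*)^k$ correct — together with the observation that the resulting sequence collapses to the constant value $\tilde{U}$ rather than requiring a genuine convergence argument.
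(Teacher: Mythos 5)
Your proof is correct and takes essentially the same approach as the paper: both arguments show that the sequence $(\widetilde{S}_2^*)^k\,\tilde{U}\,\widetilde{S}_1^k$ is in fact constant, equal to $\tilde{U}$, and then invoke Lemma \ref{lemma-4-R-2} (with $\tilde{U}\in\Q_2^{\mathbb{T}}$ supplied by the earlier lemma). The only difference is cosmetic: the paper gets the key identity $\tilde{U}\widetilde{S}_1^k=\widetilde{S}_2^k\tilde{U}$ in one stroke by applying $\widetilde{\lambda_A}$ to $US_1^k=S_2^kU$, whereas you reconstruct it via the power identities $\widetilde{S}_1^k=\tilde{U}^{2^k-1}\widetilde{S}_2^k$ and $(\widetilde{S}_2^*)^k\tilde{U}^{2^k}=\tilde{U}(\widetilde{S}_2^*)^k$.
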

\begin{proof}
By applying $\widetilde{\lambda_A}$ to the identity $US_1^k =S_2^k U$ we get $ \tilde{U} \tilde{S}_1^k=\tilde{S}_2^k\tilde{U}$.
For  all $k\in \mathbb{N}$ we have that 
$(\tilde{S}_2^*)^k\tilde{U}\tilde{S}_1^k=\tilde{U}$.
Therefore, $\tilde{U}$ is in $C^*(U)$ thanks to Lemma \ref{lemma-4-R-2}, applied to
 $x=\tilde{U}$.
 \end{proof}

We have verified that $\alpha(U)=g(U)$ for some $g\in C(\mathbb{T})$, which turns out to be vital in proving the following result.
\begin{theorem}\label{Bogoljubov}
If $\alpha\in \Aut(\O_2)$ is a Bogoljubov automorphism, then $\alpha$ extends to $\Q_2$ if and only if $\alpha$ is  the flip-flop, a gauge automorphism, or a composition of these two.
\end{theorem}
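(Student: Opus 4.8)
The plan is to prove the two implications separately, disposing of sufficiency and the degenerate matrices quickly and concentrating the real work on the generic case.

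\textbf{Sufficiency and the degenerate case.} We already know the flip-flop $\widetilde\lambda_f$ and every gauge automorphism $\widetilde\alpha_z$ extend; since a composite of extensible automorphisms is extensible and the flip-flop commutes with the gauge automorphisms, the subgroup they generate (which is $\cong\mathbb{T}\times\mathbb{Z}_2$) consists entirely of extensible automorphisms. For the converse, suppose $\lambda_A$ extends. A $2\times2$ unitary either has a zero entry or has all four entries nonzero. If some entry vanishes, orthonormality of the columns forces $A$ to be diagonal or anti-diagonal, so $\lambda_A=\eta_{\alpha,\beta}$ or $\lambda_A=\rho_{\alpha,\beta}$; the propositions already established for those two families show that extendability is equivalent to $\alpha=\beta$, i.e. $\lambda_A$ is a gauge automorphism or the flip-flop composed with one. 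This leaves only the case where $a,b,c,d$ are all nonzero.

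\textbf{Setting up the generic case.} Here I use that $\tilde U=\widetilde\alpha(U)=g(U)\in C^*(U)$ for some $g\in C(\mathbb{T})$, which is exactly Lemma~\ref{lemma-5-R-2}. I extract two scalar constraints from the homomorphism property. From $S_1=US_2$ the relation $\widetilde S_1=\tilde U\,\widetilde S_2$ reads $(aU+c)S_2=g(U)(bU+d)S_2$; from $S_2U=U^2S_2$ together with $S_2h(U)=h(U^2)S_2$, the relation $\widetilde S_2\,\tilde U=\tilde U^2\,\widetilde S_2$ reads $(bU+d)\bigl(g(U^2)-g(U)^2\bigr)S_2=0$. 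To convert such operator identities into identities of functions on $\mathbb{T}$, I note that whenever $h_1(U)S_2=h_2(U)S_2$ with $h_i\in C^*(U)$, applying both sides to $e_0$ in the canonical representation and using $S_2e_0=e_0$ reduces everything to $\bigl(h_1(U)-h_2(U)\bigr)e_0=0$, whence $h_1=h_2$ because $e_0$ is separating for $W^*(U)$. This yields $g(z)(bz+d)=az+c$ and $(bz+d)\bigl(g(z^2)-g(z)^2\bigr)=0$ on $\mathbb{T}$.

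\textbf{Eliminating $g$ and concluding.} Rather than divide by $bz+d$, which may vanish on $\mathbb{T}$, I multiply the second identity through by $(bz^2+d)$ and then by $(bz+d)$, substituting $g(z)(bz+d)=az+c$ and its analogue at $z^2$ at each step; this produces the denominator-free polynomial identity
\[
(bz+d)^2(az^2+c)=(bz^2+d)(az+c)^2.
\]
Comparing the coefficients of $z^4,z^3,z^1,z^0$ and using that $a,b,c,d$ are all nonzero forces $a=b$ and $c=d$ (the $z^2$ coefficient is then automatically consistent). But then the two columns of $A$ coincide, so $\det A=0$, contradicting the unitarity of $A$. Hence no matrix with all entries nonzero yields an extension, which completes the proof. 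I expect the main obstacle to be precisely this last elimination: dividing by $bz+d$ is illegitimate exactly when $|b|=|d|$, so one must keep the argument rigorous by clearing denominators through multiplication; the cancellation of $S_2$ via the separating vector $e_0$ is the other point that must be handled carefully rather than taken for granted.
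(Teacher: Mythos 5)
Your proof is correct, and in the crucial generic case (all four entries of $A$ nonzero) it takes a genuinely different route from the paper's. Both arguments hinge on the same key lemma, namely that $\widetilde\alpha(U)=g(U)\in C^*(U)$, and both dispose of the degenerate matrices via the earlier propositions on $\eta_{\alpha,\beta}$ and $\rho_{\alpha,\beta}$. But from there the paper proceeds by \emph{solving for} $g$: it cancels $f(U)=bU+d$ on the left (legitimate because $U$ has empty point spectrum, so $f(U)$ is injective), obtains the functional equation $g(z^2)=g(z)^2$, invokes the winding-number result of the Appendix to conclude $g(z)=z^l$, and then evaluates $\widetilde\alpha(S_1)=bU^{l+1}S_2+dU^lS_2$ against $aS_1+cS_2$ on the basis vectors $e_m$, which forces $l=\pm1$ and kills two of the four entries — contradicting the standing assumption. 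You instead \emph{eliminate} $g$ without ever identifying it: clearing the possibly vanishing factor $bz+d$ by multiplication rather than division, you reach the denominator-free identity $(bz+d)^2(az^2+c)=(bz^2+d)(az+c)^2$, and coefficient comparison gives $a=b$, $c=d$, i.e. equal columns, contradicting unitarity of $A$. What your route buys is self-containedness: you bypass both the spectral injectivity/cancellation step and, more significantly, the Appendix's functional-equation lemma, which is the one genuinely nontrivial external ingredient in the paper's endgame. What the paper's route buys is structural information — it actually computes $g(z)=z^{\pm1}$, which makes transparent \emph{why} the surviving automorphisms are exactly the gauge automorphisms and the flip-flop, rather than only showing that nothing else survives. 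A further point in your favor: your reduction of operator identities $h_1(U)S_2=h_2(U)S_2$ to function identities, via $S_2e_0=e_0$ and the fact that $e_0$ is separating for $W^*(U)$, makes explicit a cancellation that the paper performs silently when it passes "at the function level" from $f(U)g(U^2)S_2=f(U)g(U)^2S_2$ to an equation of functions on $\mathbb{T}$.
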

\begin{proof}
By the discussion at the beginning of this section it is enough to consider the case in which $a,b,c,d$ are all different from zero.
All the computations are henceforth made in the canonical representation. The condition $f(U) S_2 \alpha(U)=\alpha(U)^2 f(U) S_2$ yields 
\begin{align*}
f(U) S_2 g(U) & = g(U)^2 f(U) S_2\\
f(U) g(U^2) S_2 & = f(U) g(U)^2 S_2 .
\end{align*}
Since the point spectrum of $U$ is empty, $f(U)$ is always injective, unless $b=d=0$, in which case $A$ is not unitary. 
Thus $g(U^2) S_2  = g(U)^2 S_2$. At the function level we must then have $g(z^2)=g(z)^2$ for every $z\in\mathbb{T}$. By continuity, we find that $g(z)=z^l$, for this see e.g. the Appendix. Therefore $g(U)=U^{l}$. 
We have that $\alpha(S_1)=aS_1+c S_2= bU^{l+1} S_2+d U^{l}S_2$. If we compute the above equality on the vectors $e_m$, we get
$$
a e_{2m+1}+c e_{2m} = b e_{2m+l+1}+ d e_{2m+l} .
$$
which is to be satisfied for each $m\in\mathbb{Z}$. Therefore, there are only two possibilities to fulfill these conditions:
\begin{enumerate}
\item $l=1$, and $a=d\neq 0$, $b=c=0$;
\item $l=-1$, and $b=c\neq 0$, $a=d=0$.
\end{enumerate}
The first corresponds to gauge automorphisms, whilst  the second to the flip-flop and its compositions with gauge automorphisms.

\end{proof}

\section{Outer automorphisms}\label{Outerness}
In this section the group $\out(\Q_2)$ is shown to be non-trivial. More precisely, it turns out to be a non-abelian uncountable group. A thorough description of its structure, though, is still missing. As far as  we know, it might  well be chimerical to get. 

\subsection{Gauge automorphisms and the flip-flop}
Below the flip-flop and non-trivial gauge automorphisms are proved to be outer. In fact,  this parallels analogue known results for $\O_2$. Since gauge automorphisms are more easily dealt with, we start our discussion focusing
on them first.  The next result (partly) follows from Proposition \ref{zS}, however we give an alternative proof because it will shed further light on the properties of the gauge automorphisms, see Remark \ref{notweakcont}.

\begin{theorem}\label{GaugeOut}
The extensions to $\Q_2 $ of the non-trivial gauge automorphisms of $\O_2$ 
are still outer automorphisms
(and they are not weakly inner in the canonical representation). 
\end{theorem}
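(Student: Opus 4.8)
The plan is to show that a non-trivial gauge automorphism $\widetilde\alpha_z$ (with $z\in\mathbb{T}\setminus\{1\}$) cannot be implemented by a unitary in $\Q_2$, and in fact not even by a unitary in the weak closure $\pi(\Q_2)''=B(\ell_2(\mathbb{Z}))$ of the canonical representation, which is the stronger ``not weakly inner'' assertion. Suppose for contradiction that there exists a unitary $w$ (first in $\Q_2$, then more ambitiously in $B(\ell_2(\mathbb{Z}))$) with $\widetilde\alpha_z=\mathrm{ad}(w)$. Since $\widetilde\alpha_z(U)=U$, the unitary $w$ commutes with $U$, hence $w\in W^*(U)$ because $W^*(U)$ is a maximal abelian von Neumann algebra of $B(\ell_2(\mathbb{Z}))$, as established in the excerpt. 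Writing $w=g(U)$ for some $g\in L^\infty(\mathbb{T})$, the remaining relation $\widetilde\alpha_z(S_2)=zS_2$ becomes $g(U)S_2 g(U)^*=zS_2$.

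Next I would convert this operator identity into a functional equation. Using the fundamental intertwining rule $S_2 U=U^2 S_2$, and hence $S_2 g(U)^*=g(U^2)^* S_2$ (valid for Borel functions in the canonical representation, as the excerpt explicitly notes), the relation $g(U)S_2 g(U)^*=zS_2$ rewrites as $g(U)g(U^2)^* S_2=zS_2$. Because $S_2 e_k=e_{2k}$ has dense range span of the even-indexed basis vectors and $g(U)g(U^2)^*$ is a function of $U$ acting diagonally in the Fourier picture $W^*(U)\cong L^\infty(\mathbb{T})$, this forces the scalar-valued functional equation
\begin{equation*}
g(\zeta)\,\overline{g(\zeta^2)}=z \qquad \text{for a.e. } \zeta\in\mathbb{T}.
\end{equation*}
Writing $g=\sum_{n}\hat g(n)\chi_n$ in terms of characters $\chi_n(\zeta)=\zeta^n$, or equivalently comparing Fourier coefficients, one sees that $|g(\zeta)|=|g(\zeta^2)|$ a.e.; since $\zeta\mapsto\zeta^2$ is ergodic for Haar measure on $\mathbb{T}$, the modulus $|g|$ is a.e.\ constant, and as $w$ is unitary we may take $|g|\equiv 1$. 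The equation then says the phase $\theta(\zeta):=\arg g(\zeta)$ satisfies $\theta(\zeta)-\theta(\zeta^2)\equiv \arg z \pmod{2\pi}$.

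The decisive step is to show this phase cocycle equation has no measurable solution when $z\neq 1$. Iterating gives $\theta(\zeta)-\theta(\zeta^{2^n})\equiv n\arg z$ for all $n$, so $\theta(\zeta^{2^n})\equiv \theta(\zeta)-n\arg z$. One then integrates: since the doubling map $\zeta\mapsto\zeta^2$ preserves Haar measure, $\int_{\mathbb{T}}\theta(\zeta^{2^n})\,d\zeta=\int_{\mathbb{T}}\theta\,d\zeta$ is independent of $n$, yet the right-hand side forces $\int_{\mathbb{T}}\theta\,d\zeta - n\arg z$ to be $n$-independent, which is impossible unless $\arg z\equiv 0$, i.e.\ $z=1$. (A cleaner variant avoids the branch ambiguity of $\arg$ by testing against the invariant measure directly: integrate $g(\zeta)\overline{g(\zeta^2)}=z$ against the $L^2$ inner product and use that the Koopman operator $V\colon h\mapsto h(\cdot^2)$ is an isometry, giving $\langle g, Vg\rangle = \bar z\langle g,g\rangle=\bar z$ with $|\langle g,Vg\rangle|\le \|g\|^2=1$ and equality only when $Vg=\bar z g$; the eigenvalue equation $g(\zeta^2)=\bar z g(\zeta)$ then has no nonzero $L^2$ solution unless $\bar z=1$, by comparing Fourier supports, since doubling indices cannot be shifted by the constant $\arg z$ consistently.) This contradiction proves $\widetilde\alpha_z$ is not weakly inner, hence a fortiori outer. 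I expect the main obstacle to be the rigorous treatment of the measurable cocycle equation for $g\in L^\infty$ rather than continuous $g$: one must rule out wild measurable phases, and the ergodicity/isometry argument for the doubling map is exactly what is needed to close this gap cleanly.
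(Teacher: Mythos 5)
Your proposal is correct, but it is your ``cleaner variant'' that carries it: the first argument, which integrates the congruence $\theta(\zeta)-\theta(\zeta^2)\equiv\arg z \pmod{2\pi}$ over $\mathbb{T}$, is not valid as stated, because the congruence holds with a $\zeta$-dependent integer multiple of $2\pi$ and so cannot be integrated term by term (nor is $\int\theta$ even well defined without a choice of branch). The Koopman variant does close the argument: from $g(\zeta)\overline{g(\zeta^2)}=z$ and $|g|=1$ a.e.\ one gets $g(\zeta^2)=\bar z\,g(\zeta)$ a.e.\ (Cauchy--Schwarz is not even needed here: just multiply the equation by $g(\zeta^2)$), and comparing Fourier coefficients, namely $\hat g(m)=0$ for $m$ odd and $\hat g(2n)=z\,\hat g(n)$, kills every coefficient when $z\neq1$, contradicting $|g|\equiv1$. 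One small repair: the range of $S_2$ is not dense (it is the closed span of the even-indexed basis vectors); the correct justification for passing to the scalar equation is that this range contains $e_0$, which is separating for $W^*(U)$. With that said, your route agrees with the paper's up to the point where the implementing unitary is written as $g(U)$, $g\in L^\infty(\mathbb{T})$, by maximality of $W^*(U)$, and then diverges in the final contradiction. The paper finishes in two lines: evaluating $zS_2V=VS_2$ at $e_0$ and using $S_2e_0=e_0$ shows that $Ve_0$ would be an eigenvector of $S_2$ with eigenvalue $\bar z\neq1$; since $1$ is the only eigenvalue of $S_2$ in the canonical representation, $Ve_0=0$, and since $e_0$ is separating for $W^*(U)$, $V=0$, which is absurd. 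That argument is shorter and entirely elementary. What your version buys is robustness: it is the measurable analogue of the functional-equation technique that the paper itself uses for \emph{continuous} symbols in Section~\ref{SecAut} (where $f(z)=g(z)\overline{g(z^2)}$ forces $f(1)=1$) and in the Appendix; the evaluation-at-$1$ trick is unavailable for $L^\infty$ symbols, and your Fourier/doubling-map argument is exactly what replaces it, proving the stronger fact that $g(\zeta)\overline{g(\zeta^2)}=z$ admits no measurable unimodular solution at all when $z\neq1$.
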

\begin{proof}
We shall argue by contradiction. From now on $\Q_2$ will always be thought of as a concrete subalgebra of $B(\ell_2(\mathbb{Z}))$ via the canonical representation. We will actually prove: 
no unitary $V\in B(\ell_2(\mathbb{Z}))$ can implement a non-trivial gauge automorphism.
Indeed, let $z\in\mathbb{T}$ different from $1$, and  let $\Lambda_z\in\Aut(\Q_2)$ the corresponding gauge automorphism. If $V$ is a unitary operator on $\ell_2(\mathbb{Z})$ such that 
$\Lambda_z=\textrm{ad}(V)\upharpoonright_{\Q_2}$, then in particular we must have $\Lambda_z(U)=\textrm{ad}(V)(U)$, namely $U=VUV^*$. This shows that $V$ commute with $U$. Since $U$ generates a \emph{MASA}, $V$ must take the form $V=f(U)$, for some $f\in L^{\infty}(\mathbb{T})$; in particular it belongs to $W^*(U)$ too. But we also have   $zS_2=\Lambda_z(S_2)=\textrm{ad}(V)(S_2)=VS_2V^*$, that is to say $zS_2V=VS_2$. If we now compute this identity between operators on the vector $e_0$, we get $zS_2Ve_0=Ve_0$, i.e. 
$S_2Ve_0=\bar zVe_0$. As a consequence, we also have $Ve_0=0$, merely because $1$ is the only eigenvalue of $S_2$. Since
$V\in W^*(U)$ and $e_0$ is a separating vector for $W^*(U)$ as well, we finally find that $V$ is zero, which is clearly a contradiction.
\end{proof}
\begin{remark}\label{notweakcont}
Actually, the proof given above says a bit more. Indeed, the non-trivial gauge automorphisms of $\Q_2\subset B(\ell_2(\mathbb{Z}))$  are not weakly continuous. For if they were, they should clearly extend to an automorphism of $B(\ell_2(\mathbb{Z}))$, but this is absurd
because the automorphisms of $B(\ell_2(\mathbb{Z}))$ are all inner.
\end{remark}
Among other things, we also gain the additional information that $\out(\Q_2)$ is an uncountable group, in that different gauge automorphisms give raise to distinct classes in $\out(\Q_2)$. Indeed, if $\tilde\alpha_z$ and $\tilde\alpha_w$ are two different gauge automorphisms, then by Proposition \ref{zS} there cannot exist any unitary $u\in\Q_2$ such that $u\tilde\alpha_z(x) u^*=\tilde\alpha_w(x)$ for every $x\in\Q_2$. 
For the sake of completeness we should also mention that every separable traceless $C^*$-algebra is actually known to have uncountable many outer automorphisms \cite{Phillips}. 
\begin{remark}
Notably, the former result also provides a new and simpler proof of the well-known fact that the gauge automorphisms on $\O_2$
are outer. However, the case of a general $\O_n$ cannot be recovered from our discussion, and must needs be treated separately, as already done elsewhere. 
\end{remark}

As for the flip-flop, instead, we start our discussion by showing it is a weakly inner automorphism, which is the content of the next 
result.
\begin{proposition}
The extension of the flip-flop to $\Q_2$ is a weakly inner automorphism. 
\end{proposition}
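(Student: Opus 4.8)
The plan is to realize the flip-flop as conjugation by a concrete unitary in the canonical representation. Since the canonical representation $\pi$ is irreducible (by the earlier proposition), its weak closure is all of $B(\ell_2(\mathbb{Z}))$, so $\pi(\Q_2)''=B(\ell_2(\mathbb{Z}))$. Consequently, to say that $\widetilde\lambda_f$ is weakly inner in the canonical representation is exactly to produce a unitary $V\in B(\ell_2(\mathbb{Z}))$ with $\widetilde\lambda_f(x)=VxV^*$ for all $x\in\Q_2$. Recalling that $\widetilde\lambda_f(U)=U^*$ and $\widetilde\lambda_f(S_2)=US_2$, and that $U$ and $S_2$ generate $\Q_2$, it is enough to find a unitary $V$ satisfying the two relations $VUV^*=U^*$ and $VS_2V^*=US_2$.

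To pin down such a $V$, I would exploit the explicit form of the canonical representation, namely $Ue_k=e_{k+1}$ and $S_2e_k=e_{2k}$. If one looks for $V$ of permutation type, $Ve_k=e_{\sigma(k)}$, then the requirement $VUV^*=U^*$ translates into $\sigma(k+1)=\sigma(k)-1$, which forces $\sigma$ to be a reflection $\sigma(k)=c-k$ for some constant $c\in\mathbb{Z}$. Imposing the second relation $VS_2V^*=US_2$ then determines $c$: comparing $VS_2V^*e_{\sigma(k)}=e_{c-2k}$ with $US_2e_{\sigma(k)}=e_{2c-2k+1}$ yields $c=-1$. This leads to the candidate $Ve_k\doteq e_{-k-1}$, which is a self-adjoint unitary (an involution permuting the canonical basis), hence certainly an element of $B(\ell_2(\mathbb{Z}))=\pi(\Q_2)''$.

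It then remains to verify by direct computation on basis vectors that this $V$ works. One checks $\sigma(k+1)=-k-2=\sigma(k)-1$, so that $VUV^*e_k=e_{k-1}=U^*e_k$, giving $VUV^*=U^*$; and $VS_2V^*e_k=VS_2e_{-k-1}=Ve_{-2k-2}=e_{2k+1}=US_2e_k$, giving $VS_2V^*=US_2$. Since $U$ and $S_2$ generate $\Q_2$, these two identities force $VxV^*=\widetilde\lambda_f(x)$ on all of $\Q_2$, so $\widetilde\lambda_f$ is implemented by $V$ and is therefore weakly inner in the canonical representation.

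I do not expect a genuine obstacle here: the only nontrivial point is guessing the correct reflection $Ve_k=e_{-k-1}$ rather than the naive $e_k\mapsto e_{-k}$ (which would implement $U\mapsto U^*$ but fix $S_2$ instead of sending it to $US_2$), and once the correct center of reflection is identified the verification is entirely routine. This is exactly the phenomenon that fails for the gauge automorphisms, which were shown above \emph{not} to be weakly inner, the difference being that a candidate implementer for $\widetilde\lambda_f$ need not centralize $U$ and so is not constrained to lie in $W^*(U)$.
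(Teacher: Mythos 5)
Your proof is correct and takes essentially the same approach as the paper: the paper exhibits exactly the same self-adjoint unitary $Ve_k\doteq e_{-k-1}$ in the canonical representation and concludes by irreducibility that it lies in $\pi(\Q_2)''=B(\ell_2(\mathbb{Z}))$. The only difference is cosmetic --- you verify conjugation on the generators $U,S_2$ of $\Q_2$ whereas the paper checks $VS_1V^*=S_2$ and $VS_2V^*=S_1$; if anything your check is the more self-contained one, since $U\notin\O_2$ and so the action on $S_1,S_2$ alone does not formally pin down the automorphism of $\Q_2$ without also noting $VUV^*=U^*$.
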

\begin{proof}
By definition, we only have to produce a representation $\pi$ of $\Q_2$ such that
$\tilde\lambda_f$ is implemented by a unitary in $\pi(\Q_2)''$. The canonical representation does this job well. For if $V\in U(\ell_2(\mathbb{Z}))$ is the self-adjoint unitary given by $Ve_k\doteq e_{-k-1}$, the equalities $VS_1V^*=S_2$ and $VS_2V^*=S_1$ are both easily checked. Since the canonical representation is irreducible, the proof is thus complete.
 \end{proof}
This result should also be compared with a well-known theorem by Archbold \cite{Archbold} that the flip-flop is weakly inner on $\O_2$.
\begin{remark}
The unitary $V$ as defined above can be rewritten as $V=\mathcal{P}U=U^*\mathcal{P}$, where $\mathcal{P}$ is the self-adjoint unitary given by $\mathcal{P}e_k=e_{-k}$, $k\in\mathbb{Z}$.
Obviously, $V$ is in $\Q_2$ if and only if $\mathcal{P}$ is. We shall prove that $\mathcal{P}$ is not in $\Q_2$ in a while. At any rate, we observe the equality $V(S_1VS_1^*+S_2VS_2^*)^*=S_1S_2^*+S_2S_1^*\doteq f\in\O_2$, which is immediately checked, and  $fV=S_1VS_1^*+S_2VS_2^*=Vf$. 
Finally, it is worth noting that $U=\lambda_{\mathbb{Z}}(1)$, and that $\mathcal{P}$ is the canonical intertwiner between $\lambda_{\mathbb{Z}}$ and $\rho_{\mathbb{Z}}$. In this picture, $\Q_2$ is thus the concrete C*-algebra on $\ell_2(\mathbb{Z})$ generated by $C^*_r(\mathbb{Z})$ and the copy of $\O_2$ provided by the canonical representation. 
\end{remark}
In spite of being weakly inner, $\tilde\lambda_f$ is an outer automorphism, as is its restriction to $\O_2$. To prove that, we first need to show that the unitary $V$ above is up to multiplicative scalars the unique operator in $B(\ell_2(\mathbb{Z}))$ that implements the flip-flop.   
\begin{proposition}
If $W\in B(\ell_2(\mathbb{Z}))$ is such that ${\rm ad}(W)\upharpoonright_{\Q_2}=\tilde\lambda_f$, then $W=\lambda V$ for some $\lambda\in\mathbb{T}$.
\end{proposition}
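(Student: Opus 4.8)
The plan is to show that the flip-flop implementer is unique up to a scalar by exploiting the fact that two implementers of the same automorphism differ by an element of the commutant, together with the irreducibility of the canonical representation. First I would recall that the previous proposition already produced one concrete unitary $V\in B(\ell_2(\mathbb{Z}))$ with $\mathrm{ad}(V)\upharpoonright_{\Q_2}=\tilde\lambda_f$, namely $Ve_k=e_{-k-1}$. Given any other $W\in B(\ell_2(\mathbb{Z}))$ implementing $\tilde\lambda_f$, the composite $V^*W$ would implement the identity automorphism on $\Q_2$, i.e. $V^*W\,x=x\,V^*W$ for every $x\in\Q_2$, so that $V^*W\in\Q_2'$.

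The key step is then to invoke the irreducibility of the canonical representation, established earlier in the Proposition asserting that the canonical representation of $\Q_2$ is irreducible. Irreducibility means precisely that $\Q_2'=\mathbb{C}1$, so $V^*W=\lambda 1$ for some scalar $\lambda\in\mathbb{C}$. Since both $V$ and $W$ are unitaries (or at least $V$ is unitary and $W$ is assumed to implement an automorphism, forcing $\|W x\|=\|x\, W\|$ behavior that pins down $|\lambda|=1$), we conclude $|\lambda|=1$, i.e. $\lambda\in\mathbb{T}$, whence $W=V(V^*W)=\lambda V$. This is exactly the claimed form $W=\lambda V$.

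The only mild subtlety, and the step I would be most careful about, is justifying that $W$ may be taken unitary (or deducing $|\lambda|=1$) under the stated hypothesis $\mathrm{ad}(W)\upharpoonright_{\Q_2}=\tilde\lambda_f$: a priori $\mathrm{ad}(W)(x)=WxW^{-1}$ presupposes $W$ invertible, and one should record that the relation $V^*W\in\Q_2'=\mathbb{C}1$ forces $V^*W=\lambda 1$ with $\lambda\neq 0$, after which unitarity of $V$ gives $W=\lambda V$ and comparing norms (or simply noting $W$ implements an automorphism isometrically) yields $\lambda\in\mathbb{T}$. Everything else is a direct consequence of irreducibility, so there is no real computational obstacle here; the proof is genuinely a two-line application of Schur's lemma in the guise of $\Q_2'=\mathbb{C}1$.

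Thus the structure I would write is: take $V$ as in the previous proposition, observe $V^*W\in\Q_2'$, apply irreducibility to get $V^*W=\lambda 1$, and solve for $W=\lambda V$ with $\lambda\in\mathbb{T}$.
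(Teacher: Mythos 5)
Your proof is correct, but it takes a different route from the one the paper actually displays. The paper's proof is a hands-on computation: it first uses the fact that $\tilde\lambda_f$ is involutive together with $\Q_2''=B(\ell_2(\mathbb{Z}))$ to conclude that $W^2$ is scalar, normalizes so that $W=W^*$, $W^2=1$, and then determines $W$ on every basis vector $e_k$ by exploiting the relations $WS_1W=S_2$ and $WU=U^*W$ inductively, ending with $We_k=e_{-k-1}$ up to sign. Your argument instead composes $W$ with the concrete implementer $V$ from the preceding proposition, notes that $V^*W$ implements the identity on $\Q_2$, and invokes irreducibility of the canonical representation ($\Q_2'=\mathbb{C}1$) to get $V^*W=\lambda 1$ with $|\lambda|=1$ by unitarity. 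This is precisely the shortcut the authors themselves acknowledge in the remark immediately following their proof: uniqueness ``could also have been obtained faster merely by irreducibility of $\Q_2$,'' but their longer computation ``has the advantage of showing how we came across the operator $V$.'' So your approach buys brevity and conceptual clarity (it is Schur's lemma in disguise), while the paper's buys a constructive derivation of $V$ itself; both are sound. Your handling of the side issue about unitarity of $W$ is also appropriate: under the standing convention that $\mathrm{ad}(W)(x)=WxW^*$ with $W$ unitary, the identity $W=\lambda V$ forces $|\lambda|=1$ at once, and this is the same implicit convention the paper uses when it normalizes $W^2=1$.
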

\begin{proof}
First note that we must have  ${\rm ad}(W^2)=id_{B(\ell_2(\mathbb{Z}))}$ since the flip-flop is an involutive automorphism and $\Q_2''=B(\ell_2(\mathbb{Z}))$. Hence $W^2$ is a multiple of $1$, and therefore there is no loss of generality if we also assume
that $W^2=1$, i.e. $W=W^*$. From the relation $WS_1W=S_2$, we get $S_1We_0=We_0$. Hence $We_0=\lambda e_{-1}$ for some $\lambda\in\mathbb{T}$. From $e_0=W^2e_0=\lambda W e_{-1}$ we get $We_{-1}=\bar\lambda e_0$. We now show that either $\lambda=1$ or $\lambda=-1$. Indeed, from $UW=WU^*$ it follows that $We_{-1}=WU^*e_0=UWe_0=U(\lambda e_{-1})=\lambda e_0=\bar\lambda e_0$, which in turn implies that $\lambda$ is real. Of course, we only need to deal with the case $\lambda=1$. The conclusion is now obtained at once if we use the equality $WU=U^*W$ inductively, for $We_{k+1}=WUe_k=U^*We_k=U^*e_{-k-1}=e_{-k-2}$, as maintained.
\end{proof}
\begin{remark}
Of course, the uniqueness of $V$ could also have been obtained faster merely by irreducibility of $\Q_2$. However, the proof displayed above has the advantage of showing how we came across the operator $V$.
\end{remark}
Here finally follows the theorem on the outerness of the flip-flop.
\begin{theorem}\label{FlipFlopOut}
The extension of the flip-flop is an outer automorphism.
\end{theorem}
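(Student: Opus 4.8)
The plan is to argue by contradiction, reducing the statement to the claim that the intertwining unitary $V$ of the canonical representation does not belong to $\Q_2$, and then to deduce the latter from the triviality of the relative commutant $C^*(S_2)'\cap\Q_2$ established in Theorem \ref{Relcomm}.

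First I would suppose that $\tilde\lambda_f$ is inner, say $\tilde\lambda_f=\mathrm{ad}(w)$ for some unitary $w\in\Q_2$. Viewing $\Q_2$ inside $B(\ell_2(\mathbb{Z}))$ through the canonical representation, $w$ is then a unitary of $B(\ell_2(\mathbb{Z}))$ implementing the flip-flop, so the preceding proposition forces $w=\lambda V$ for some $\lambda\in\mathbb{T}$. In particular $V\in\Q_2$. Since $U\in\Q_2$ and $V=\mathcal{P}U$, this is the same as saying that $\mathcal{P}\in\Q_2$, where $\mathcal{P}e_k=e_{-k}$. Thus everything comes down to showing that $\mathcal{P}\notin\Q_2$, which is exactly the point anticipated in the remark above.

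The key observation is that, although $\mathcal{P}$ does \emph{not} commute with $U$ (indeed $\mathcal{P}U=U^*\mathcal{P}$), it \emph{does} commute with $S_2$. This is a one-line verification in the canonical representation: for every $k$ one has $S_2\mathcal{P}e_k=S_2e_{-k}=e_{-2k}=\mathcal{P}e_{2k}=\mathcal{P}S_2e_k$, whence $\mathcal{P}S_2=S_2\mathcal{P}$. Consequently, were $\mathcal{P}$ in $\Q_2$, it would be a unitary lying in $C^*(S_2)'\cap\Q_2$, which by Theorem \ref{Relcomm} equals $\mathbb{C}1$. But $\mathcal{P}$ is visibly not a scalar, since $\mathcal{P}e_0=e_0$ while $\mathcal{P}e_1=e_{-1}$. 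This contradiction shows $\mathcal{P}\notin\Q_2$, hence $V\notin\Q_2$, and therefore no unitary of $\Q_2$ can implement $\tilde\lambda_f$; that is, the flip-flop is outer.

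The argument has essentially no hard analytic core once Theorem \ref{Relcomm} is available: the real work has already been done in establishing the triviality of $C^*(S_2)'\cap\Q_2$. If I had to single out the decisive step, it is the choice of the \emph{right} commutant against which to test $\mathcal{P}$ — not $C^*(U)'$, with which $\mathcal{P}$ interacts only by the twisting relation $\mathcal{P}U=U^*\mathcal{P}$, but $C^*(S_2)'$, for which the clean commutation $\mathcal{P}S_2=S_2\mathcal{P}$ holds and the relative commutant is known to be trivial.
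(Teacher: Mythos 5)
Your proof is correct, but the decisive step is carried out quite differently from the paper. Both arguments start the same way: by irreducibility of the canonical representation and the preceding uniqueness proposition, any implementing unitary would have to be $\lambda V$, so everything reduces to showing $\mathcal{P}\notin\Q_2$. At that point the paper proceeds by a bare-hands norm estimate: if $\mathcal{P}$ were within $\varepsilon<1$ of a finite sum $\sum c_{\alpha,\beta,h}S_\alpha S_\beta^*U^h$, then evaluating on $e_n$ with $n$ larger than every $|h|$ puts the approximant's image in $\H_+$ while $\mathcal{P}e_n=e_{-n}\in\H_-$, forcing the distance to be at least $1$ — an elementary, self-contained argument that uses nothing beyond the spanning set for $\Q_2$. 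You instead observe that $\mathcal{P}S_2=S_2\mathcal{P}$ (a clean computation on basis vectors that the paper never records; it only notes the twisted relation $\mathcal{P}U=U^*\mathcal{P}$ and the identity $fV=Vf$), so that $\mathcal{P}\in\Q_2$ would place the non-scalar unitary $\mathcal{P}$ inside $C^*(S_2)'\cap\Q_2$, contradicting Theorem \ref{Relcomm}. Strictly speaking that theorem is phrased for unitaries commuting with $S_1$, but the equivalence with the $S_2$ statement is exactly the conjugation-by-$U^*$ remark opening Section \ref{Secrelcomm} (or, directly: $U^*\mathcal{P}U$ commutes with $U^*S_2U=S_1$), so your invocation is legitimate and non-circular, since Theorem \ref{Relcomm} is proved independently earlier. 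The trade-off: your route is shorter and conceptually crisper — it identifies the "right" relative commutant to test $\mathcal{P}$ against — but it imports one of the paper's main structural theorems, whose proof requires the whole apparatus of the maps $F_i$ and Proposition \ref{limfixpoint}; the paper's route is longer to write but entirely elementary, costing only the density of the span of the monomials $S_\alpha S_\beta^*U^h$.
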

\begin{proof}
Thanks to the former result, all we have to prove is that $\mathcal{P}$ is not in $\Q_2$, which entails checking that $\mathcal{P}$ cannot be a norm limit of a sequence ${x_k}$ of operators of the form $x_k=\sum_k c_k S_{\alpha_k} S_{\beta_k^*}U^{h_k}$. Indeed, if this were the case, we should have $\varepsilon >\|\P-\sum_{\alpha,\beta, h} c_{\alpha,\beta,h}S_\alpha S_\beta^*U^h\|$  for some finite sum of the kind $\sum_{\alpha, \beta, h} c_{\alpha,\beta,h}S_\alpha S_\beta U^h$, with $\varepsilon>0$ as small as needed. If so, we would also find the inequality  
$$
\|e_{-n}-\sum_{\alpha, \beta, h}c_{\alpha,\beta,h}S_\alpha S_\beta^*U^he_n\|= \|\mathcal{P}e_n-\sum_{\alpha, \beta, h}c_{\alpha,\beta,h}S_\alpha S_\beta^*U^he_n\|<\varepsilon
$$ 
This inequality, though, becomes absurd as soon as $\varepsilon< 1$ and $n$ is sufficiently large, i.e. $n$ is bigger than the largest value of $|h|$, for we would have $\|e_{-n}-\sum_{\alpha, \beta, h}c_{\alpha,\beta,h}S_\alpha S_\beta^*U^he_n\|^2=1+\|\sum_{\alpha, \beta, h}c_{\alpha,\beta,h}S_\alpha S_\beta^*U^he_n\|^2\geq 1$, as $\sum_{\alpha, \beta, h}c_{\alpha,\beta,h}S_\alpha S_\beta^*U^he_n\in \H_+$.
\end{proof}
Now, as we know $\tilde\lambda_f$ is outer, we would also like to raise the question whether there exists a representation of $\Q_2$ in which $\tilde\lambda_f$ is not unitarily implemented. The answer would indeed complete our knowledge of $\tilde\lambda_f$ itself.

\subsection{A general result}
We saw above that the flip-flop is an outer automorphism. This is not an isolated case, for every automorphism that takes $U$ to its adjoint must in fact be outer. This is the content of the next result.
\begin{theorem}\label{GenOut}
Every automorphism $\alpha\in\Aut(\Q_2)$ such that $\alpha(U)=U^*$ is an outer automorphism.
\end{theorem}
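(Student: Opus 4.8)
The plan is to argue by contradiction, working throughout in the (faithful) canonical representation on $\ell_2(\mathbb{Z})$. Suppose $\alpha={\rm ad}(w)$ for some unitary $w\in\U(\Q_2)$; the hypothesis $\alpha(U)=U^*$ then reads $wUw^*=U^*$. I would recall from the discussion of the flip-flop that the self-adjoint unitary $\mathcal{P}$, $\mathcal{P}e_k=e_{-k}$, also conjugates $U$ to its adjoint, i.e. $\mathcal{P}U\mathcal{P}=U^*$. Consequently $\mathcal{P}w$ commutes with $U$, since $(\mathcal{P}w)U(\mathcal{P}w)^*=\mathcal{P}(wUw^*)\mathcal{P}=\mathcal{P}U^*\mathcal{P}=U$. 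Because $W^*(U)$ is a maximal abelian von Neumann subalgebra of $B(\ell_2(\mathbb{Z}))$, any bounded operator commuting with $U$ lies in $W^*(U)$; hence $\mathcal{P}w=f(U)$ for some $f\in L^\infty(\mathbb{T})$ with $|f|=1$ almost everywhere, and therefore $w=\mathcal{P}f(U)$.

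The contradiction will then come from comparing the asymptotic behaviour of $we_n$ for large $n$, computed in two ways. On the one hand, using $w=\mathcal{P}f(U)$ and identifying $\ell_2(\mathbb{Z})$ with $L^2(\mathbb{T})$ (so that $U$ becomes multiplication by $z$ and $e_n\leftrightarrow z^n$), one finds $we_n=\sum_k \hat f(k)\,e_{-n-k}$; the component of this vector in $\H_+$ has squared norm $\sum_{k\le -n}|\hat f(k)|^2$, a tail of the convergent series $\sum_k|\hat f(k)|^2=\|f\|_2^2$, hence $\|E_+ we_n\|\to 0$. On the other hand, exactly as in the proof that $\mathcal{P}\notin\Q_2$, every monomial $S_\alpha S_\beta^* U^h$ sends $e_n$ into $\H_+$ as soon as $n$ exceeds the relevant powers of $U$, because both $S_i^*$ and $S_i$ preserve the set of basis vectors of non-negative index. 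Approximating $w$ in norm by finite linear combinations of such monomials then shows $\|E_- we_n\|\to 0$, and since $w$ is unitary one has $\|E_+ we_n\|^2=1-\|E_- we_n\|^2\to 1$. The two conclusions are incompatible, so no such $w$ can exist and $\alpha$ is outer.

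I expect the only delicate point to be the bookkeeping in the second computation, namely verifying carefully that $S_\alpha S_\beta^* U^h e_n\in\H_+$ for all sufficiently large $n$, uniformly over the finitely many monomials appearing in a fixed norm-approximant of $w$; this is precisely the estimate already exploited for the flip-flop, so it should transfer with no essential change. Everything else—the reduction to $w=\mathcal{P}f(U)$ via the MASA property of $W^*(U)$, and the $L^2$-tail estimate—is routine. It is worth remarking that this argument simultaneously reproves the outerness of the flip-flop (the special case $f\equiv 1$) while covering, in one stroke, all automorphisms that invert $U$.
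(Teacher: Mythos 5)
Your proof is correct, and its first half coincides exactly with the paper's own argument: both conjugate by the symmetry $\mathcal{P}$ (given by $\mathcal{P}e_k=e_{-k}$), note that $\mathcal{P}w$ then commutes with $U$, and invoke the maximality of $W^*(U)$ in $B(\ell_2(\mathbb{Z}))$ to write $w=\mathcal{P}f(U)$ with $f\in L^\infty(\mathbb{T})$, $|f|=1$ a.e. Where you genuinely diverge is in showing that $\mathcal{P}f(U)$ cannot lie in $\Q_2$. The paper proceeds by cases: for continuous $f$ it reduces at once to the previously established fact that $\mathcal{P}\notin\Q_2$, while for general measurable $f$ it uses Lusin's theorem and the Tietze extension theorem to write $\mathcal{P}f(U)g_\varepsilon(U)^*=\mathcal{P}+\mathcal{P}h_\varepsilon(U)$ with $h_\varepsilon$ supported on a set of small measure, and then reruns the flip-flop argument with an extra $L^2$ error term bounded by $2\varepsilon^{1/2}$. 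Your Parseval argument replaces all of this: the tail estimate $\|E_+we_n\|^2=\sum_{k\le -n}|\hat f(k)|^2\to 0$ treats continuous and merely measurable $f$ on the same footing, with no measure-theoretic surgery, while the approximation step (monomials $S_\alpha S_\beta^*U^h$ map $e_n$ into $\H_+$ once $n\ge\max|h|$) forces $\|E_+we_n\|\to 1$ whenever $w\in\Q_2$; this is the same mechanism the paper exploits, but organized as a clean asymptotic dichotomy rather than as a perturbation of $\mathcal{P}$. What your route buys: it is shorter, it is self-contained in that it never presupposes $\mathcal{P}\notin\Q_2$, and it recovers the outerness of the flip-flop as the special case $f\equiv 1$; what the paper's route buys is that it isolates the obstruction as literally being the operator $\mathcal{P}$ up to a small perturbation. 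The one step you flagged as delicate is indeed harmless: $U^he_n=e_{n+h}$ has non-negative index for $n\ge|h|$, and both $S_i$ and $S_i^*$ send basis vectors of non-negative index to basis vectors of non-negative index (or to $0$), so the containment $S_\alpha S_\beta^*U^he_n\in\H_+$ holds uniformly over the finitely many monomials in a fixed approximant.
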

\begin{proof}
All we have to prove is that there is no unitary $W\in\Q_2$ such that $WUW^*=U^*$. To this aim, we will be working in the canonical representation. If $W\in B(\H)$ is a unitary operator such that $WUW^*=U^*$, then we have 
$WUW^*=\mathcal{P}U\mathcal{P}$, hence $\mathcal{P}WU(\mathcal{P}W)^*=U$, which says that $\mathcal{P}W$ commutes with $U$. Therefore, $\mathcal{P}W=f(U)$ for some $f\in L^\infty(\mathbb{T})$ with $|f(z)|=1$ a.e. with respect to the Haar measure of $\mathbb{T}$, hence $W=\mathcal{P}f(U)$. Then we need to show that such a $W$ cannot be in $\Q_2$. If $f$ is a continuous function, there is not much to say, for $\mathcal{P}f(U)\in\Q_2$ would immediately imply that $\mathcal{P}=\mathcal{P}f(U)f(U)^*$ is in $\Q_2$ as well, which we know is not the case. The general case of an essentially bounded function is dealt with in much the same way apart from some technicalities to be overcome. Given any $f\in L^\infty(\mathbb{T})$ and $\varepsilon>0$, thanks to Lusin's theorem  we find a closed set $C_\varepsilon\subset\mathbb{T}$ such that $\mu(\mathbb{T}\setminus C_\varepsilon)<\epsilon$ and $f\upharpoonright_{C_\varepsilon}$ is continuous. This in turn guarantees that there exists a continuous function $g_\varepsilon\in C(\mathbb{T}, \mathbb{T})$ that coincides with $f$ on $C_\varepsilon$ by an easy application of the Tietze extension theorem. If $\mathcal{P}f(U)$ is in $\Q_2$, then $\mathcal{P}f(U)g_\varepsilon(U)^*$ is also in $\Q_2$. Note that $f\bar{g_\varepsilon}=1+h_\varepsilon$, where $h_\varepsilon$ is a suitable function whose support is contained in $\mathbb{T}\setminus C_\varepsilon$. In particular, we can rewrite  $\mathcal{P}f(U)g_\varepsilon(U)^*$ as $\mathcal{P}+\mathcal{P}h_\varepsilon(U)$. If the latter operator were in $\Q_2$, then we could find an operator of the form $\sum_{\alpha, \beta, h} c_{\alpha,\beta, h}S_\alpha S_\beta^*U^h$ such that $\|\mathcal{P}+\mathcal{P}h_\varepsilon(U)-\sum_{\alpha, \beta, h} c_{\alpha,\beta, h}S_\alpha S_\beta^*U^h\|<\varepsilon$.
If $N$ is any natural number greater than the maximum value of $|h|$ as $h$ runs over the set the above summation is performed on, we should have $\|\mathcal{P}e_N+\mathcal{P}h_\varepsilon(U)e_N-\sum_{\alpha, \beta, h} c_{\alpha,\beta, h}S_\alpha S_\beta^*U^he_N\|<\varepsilon$, namely $$\|e_{-N}+\mathcal{P}h_\varepsilon(U) e_N-\sum_{\alpha, \beta, h} c_{\alpha,\beta, h}S_\alpha S_\beta^*U^h e_N \|<\varepsilon$$ 
But then we should also have
$$\|e_{-N}+\mathcal{P}h_\varepsilon(U) e_N-\sum_{\alpha, \beta, h} c_{\alpha,\beta, h}S_\alpha S_\beta^*U^h e_N \|\geq
\|e_{-N}-\sum_{\alpha, \beta, h} c_{\alpha,\beta, h}S_\alpha S_\beta^*U^h e_N\|-\|\mathcal{P}h_\varepsilon(U) e_N\|$$
Hence
$$\varepsilon >\|e_{-N}+\mathcal{P}h_\varepsilon(U) e_N-\sum_{\alpha, \beta, h} c_{\alpha,\beta, h}S_\alpha S_\beta^*U^h e_N \|\geq
1-\|\mathcal{P}h_\varepsilon(U) e_N\|$$
The conclusion is now got to if we can show that  $\|\mathcal{P}h_\varepsilon(U) e_N\|$ is also as small as needed.
But the norm $\|\mathcal{P}h_\varepsilon(U) e_N\|$ is easily computed in the Fourier transform of the canonical representation, where it takes the more workable form $(\int|h_\varepsilon(z^{-1})z^{-N}|^2\rm {d}\mu(z))^{\frac{1}{2}}$ and is accordingly smaller than $2\mu(\mathbb{T}\setminus C_\varepsilon)^{\frac{1}{2}}\leq 2\varepsilon^{\frac{1}{2}}$. The above inequality becomes absurd as soon as $\varepsilon$ is taken small enough.
\end{proof}
As a straightforward consequence, we can immediately see that the class of the flip-flop in $\out(\Q_2)$ do not coincide with any of the classes of the gauge automorphisms. In other terms, the automorphisms
$\widetilde\alpha_z\circ\widetilde\lambda_f^{-1}$ are all outer, sending $U$ in $U^*$. Furthermore, if we now denote by $\chi_{-1}$ the automorphism such that
$\chi_{-1}(S_2)\doteq S_2$ and $\chi_{-1}(U)\doteq U^{-1}=U^*$, then the above result also applies to $\chi_z\doteq\chi_{-1}\circ\widetilde\alpha_z$, which are outer as well by the same reason. Also note that $\chi_z(S_2)=zS_2$.
More interestingly, if $z$ is not $1$, the corresponding $\chi_z$ yields a class in $\out(\Q_2)$ other than the one of the flip-flop. To make sure this is true, we start by noting that $\chi_{-1}$ and $\widetilde\lambda_f$ do not commute with each other.  
Even so, they do commute in $\out(\Q_2)$, in that they even yield the same conjugacy class. Indeed, we have ${\rm ad}(U^*)\circ\chi_{-1}=\widetilde\lambda_f$, or equivalently  $\widetilde\lambda_f\circ \chi_{-1}= {\rm ad}(U^*)$, which in addition says that $\widetilde\lambda_f\circ \chi_{-1}$ has infinite order in $\Aut(\Q_2)$ while being the product of two automorphisms of order $2$. From this our claim follows easily. For, if $\chi_z\circ\widetilde\lambda_f$ is an inner automorphism, the identity $\chi_z\circ\widetilde\lambda_f=\widetilde\chi_z\circ\chi_{-1}\circ\widetilde\lambda_f=\widetilde\chi_z\circ\rm{ad}(U)$ implies at once that  $\widetilde\chi_z$ is inner as well, which is possible for $z=1$ only.
However, the classes $[\chi_z]$ and $[\widetilde\lambda_f]$ do commute in $\out(\Q_2)$, because $\chi_z\circ\widetilde\lambda_f\circ\chi_z^{-1}\circ\widetilde\lambda_f=\rm{ad}(U^2)$. In order to prove that $\out(\Q_2)$ is not abelian, we still need to sort out a new class of outer automorphisms. This will be done in the next sections.

\section{Notable endomorphisms and automorphism classes}\label{SecAut}

\subsection[Endomorphisms and automorphisms $\alpha$ such that $\alpha(S_2)=S_2$]{Endomorphisms and automorphisms $\pmb{\alpha}$ such that $\pmb{\alpha(S_2)=S_2}$}
For any odd integer $2k+1$, whether it be positive or negative, the pair $(S_2, U^{2k+1})$ still satisfies the two defining relations of $\Q_2$. This means
that the map that takes $S_2$ to itself and $U$ to $U^{2k+1}$ extends to an endomorphism of $\Q_2$, which will be denoted by $\chi_{2k+1}$. Trivially, this endomorphism extends the identity automorphism of $C^*(S_2)$.
A slightly less obvious thing to note is that these endomorphisms cannot be obtained as extensions of endomorphisms of $\O_2$. Indeed, $\chi_{2k+1}(S_1)=\chi_{2k+1}(US_2)=U^{2k+1}S_2$, and $U^{2k+1}S_2$ is not in $\O_2$: if it were, we would also find that $S_1^*U^{2k+1}S_2=S_2^*U^*UU^{2k}S_2=S_2^*S_2U^k=U^k$ would be in $\O_2$, which is not.
In this way we get a class endomorphisms $\chi_{2k+1}$, $k \in {\mathbb Z}$ with $\chi_1={\rm id}$ and $\chi_{-1}$ being clearly an automorphism of order two. All these endomorphisms commute with one other and we have $\chi_{2k+1} \circ \chi_{2h+1} = \chi_{(2k+1)(2h+1)}$ for any $k,h \in {\mathbb Z}$. Phrased differently, the set $\{\chi_{2k+1}: k\in\mathbb{Z}\}$ is a semigroup of proper endomorphisms of
$\Q_2$.
One would like to know if the endomorphisms singled out above give the complete list of the endomorphisms of $\Q_2$ fixing $S_2$. In other words, the question is  whether the set
$$\U_2 \doteq  \{V \in U({\mathcal Q}_2) \ | \  V^2 S_2 = S_2 V, \: S_2 {S_2}^* + V S_2 S_2^* V^* = 1 \}$$
contains elements other than the $U^{2k+1}$ with $k \in {\mathbb Z}$ above. As a matter of fact, answering this question in its full generality is not an easy task. An interesting if partial result does surface, though, as soon as we introduce an extra assumption.   
Going back to the endomorphisms $\chi_{2k+1}$, we next show they are all proper apart from $\chi_{\pm 1}$. The proof cannot be considered quite elementary, in that it uses the maximality of $C^*(U)$. 

\begin{proposition}
None of the endomorphisms $\chi_{2k+1}$ is surjective if $2k+1\neq \pm 1 $.
\end{proposition}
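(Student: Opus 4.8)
The plan is to argue by contradiction, exploiting the maximality of $C^*(U)$ established in Theorem \ref{UMASA}. Suppose that $\chi := \chi_{2k+1}$ is surjective for some $k$ with $2k+1 \neq \pm 1$. Since $\Q_2$ is simple, $\chi$ is automatically injective, so under this assumption it would be an automorphism of $\Q_2$. In particular $U$ would lie in its range, and there would exist $x \in \Q_2$ with $\chi(x) = U$. My first goal is to pin down where such a preimage $x$ must live.

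The key observation is that $x$ is forced to belong to $C^*(U)$. Indeed, $U$ commutes with $\chi(U) = U^{2k+1}$, both being powers of the same unitary; applying the automorphism $\chi^{-1}$ to this commutation relation yields that $x = \chi^{-1}(U)$ commutes with $\chi^{-1}(U^{2k+1}) = \chi^{-1}(\chi(U)) = U$. Thus $x \in C^*(U)'\cap\Q_2$, and the maximality of $C^*(U)$ gives $x \in C^*(U)$. This is the step where Theorem \ref{UMASA} is genuinely needed, and it is what makes the argument non-elementary; everything downstream is routine.

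Once $x = f(U)$ for some $f \in C(\mathbb{T})$, I would translate $\chi(x) = U$ into a functional equation on the circle. Since the continuous functional calculus of the normal element $U$ commutes with the endomorphism $\chi$, one has $U = \chi(f(U)) = f(\chi(U)) = f(U^{2k+1})$. As $C^*(U) \cong C(\mathbb{T})$ with $U$ corresponding to the identity function and $\sigma(U) = \mathbb{T}$, this operator identity is equivalent to the scalar requirement $f(z^{2k+1}) = z$ for every $z \in \mathbb{T}$.

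The contradiction then comes for free from the non-injectivity of the power map. For $|2k+1| \geq 3$ the map $z \mapsto z^{2k+1}$ is $|2k+1|$-to-one on $\mathbb{T}$; choosing, say, $z_1 = 1$ and $z_2 = e^{2\pi i/(2k+1)} \neq 1$, both satisfying $z_j^{2k+1} = 1$, the functional equation would force simultaneously $f(1) = z_1 = 1$ and $f(1) = z_2 \neq 1$, which is absurd. Hence $U$ admits no preimage and $\chi_{2k+1}$ cannot be surjective. The only real obstacle is the confinement of the preimage to $C^*(U)$; the remaining spectral bookkeeping runs exactly parallel to the remark following Theorem \ref{Shift}, where the same circle of ideas shows that $U \notin \widetilde\varphi(\Q_2)$.
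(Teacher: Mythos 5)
Your proof is correct. It rests on the same two pillars as the paper's: injectivity of $\chi_{2k+1}$ coming from simplicity of $\Q_2$, and the maximality of $C^*(U)$ (Theorem \ref{UMASA}) combined with the fact that $\chi_{2k+1}$ acts on $C^*(U)\cong C(\mathbb{T})$ as composition with $z\mapsto z^{2k+1}$, which can never produce the identity function since that map is not injective. Where you genuinely differ is in how the preimage of $U$ gets confined to $C^*(U)$. You assume surjectivity, so that $\chi_{2k+1}^{-1}$ exists as an automorphism, and push the trivial commutation relation between $U$ and $U^{2k+1}$ through $\chi_{2k+1}^{-1}$ to conclude that the preimage of $U$ lies in $C^*(U)'\cap\Q_2=C^*(U)$. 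The paper instead considers the entire preimage algebra $\mathfrak{A}=\{x\in\Q_2 : \chi_{2k+1}(x)\in C^*(U)\}$: injectivity alone forces $\mathfrak{A}$ to be commutative (a commutator mapping to zero must itself be zero), and since $\mathfrak{A}\supset C^*(U)$, maximality gives $\mathfrak{A}=C^*(U)$. The paper's variant buys a slightly stronger conclusion at no extra cost: it shows unconditionally that $U$ is not in the range of the endomorphism $\chi_{2k+1}$, with no contradiction hypothesis, whereas your argument, needing $\chi_{2k+1}^{-1}$ to exist, only rules out surjectivity. Both are complete proofs of the stated proposition, and your final functional-equation step $f(z^{2k+1})=z$, refuted by evaluating at two distinct $(2k+1)$-th roots of unity, is exactly the spectral obstruction the paper invokes more tersely.
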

\begin{proof}
Let $\mathfrak{A}\subset\Q_2$ the $C^*$-subalgebra of those $x\in \Q_2$ such that $\chi_{2k+1}(x)\in C^*(U)$. We clearly have $C^*(U)\subset\mathfrak{A}$. By simplicity of
$\Q_2$ the endomorphism $\chi_{2k+1}$ is injective, which means $\mathfrak{A}$ is still commutative. Therefore, by maximality of $C^*(U)$, we must have $\mathfrak{A}=C^*(U)$. 
From this it now follows that $U$ is not in the range of $\chi_{2k+1}$, for the restriction $\chi_{2k+1}\upharpoonright_{C^*(U)}$ is induced, at the spectrum level, by the map $\mathbb{T}\ni z\mapsto z^{2k+1}\in\mathbb{T}$.
\end{proof}

In addition, in the canonical representation the endomorphisms $\chi_{2k+1}$ cannot be implemented by any unitary $W\in\B(\ell_2(\mathbb{Z}))$. Indeed, we can state the following result.

\begin{proposition} 
Let $2k+1$ be an odd integer different from 1. Then there is no unitary $V$ in $B(\ell_2(\mathbb{Z}))$ such that $V S_2 = S_2 V$ and $VUV^* = U^{2k+1}$.
\end{proposition}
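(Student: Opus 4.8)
The plan is to carry out everything inside the canonical representation, where $S_2e_k=e_{2k}$ and $Ue_k=e_{k+1}$, and to show that the two relations force $V$ to have a completely prescribed action on the basis $\{e_n\}$ which is incompatible with unitarity once $|2k+1|>1$. The whole argument is a rigidity computation on basis vectors: the constraints determine $V$ up to a scalar, and then one simply counts the range.

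First I would use the relation $VS_2=S_2V$ together with the fact, already recorded for the canonical representation, that $1$ is the only eigenvalue of $S_2$ and its eigenspace is $\mathbb{C}e_0$. Since $S_2(Ve_0)=V(S_2e_0)=Ve_0$, the vector $Ve_0$ lies in that one-dimensional eigenspace; as $V$ is unitary we have $Ve_0\neq 0$, so $Ve_0=\lambda e_0$ for some $\lambda\in\mathbb{T}$. Next I would feed in $VUV^*=U^{2k+1}$, rewritten as $VU=U^{2k+1}V$, and propagate it to all integer powers, obtaining $VU^{n}=U^{(2k+1)n}V$ for every $n\in\mathbb{Z}$ (the negative powers follow by multiplying the relation on both sides by suitable powers of $U$). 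Applying this to $e_0$ and using $e_n=U^ne_0$ yields
$$Ve_n=VU^ne_0=U^{(2k+1)n}Ve_0=\lambda\,e_{(2k+1)n}\qquad(n\in\mathbb{Z}).$$
Thus $V$ is entirely pinned down: it carries the orthonormal basis $\{e_n\}$ onto $\{\lambda e_{(2k+1)n}\}$.

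The contradiction then comes from unitarity. Were $V$ unitary, the family $\{Ve_n:n\in\mathbb{Z}\}$ would be an orthonormal basis of $\ell_2(\mathbb{Z})$; but the closed span of $\{e_{(2k+1)n}:n\in\mathbb{Z}\}$ consists only of vectors supported on the integer multiples of $2k+1$, a proper subspace as soon as $|2k+1|>1$ (for instance $e_1$ is orthogonal to it). Hence $V$ fails to be surjective, contradicting unitarity, and no such $V$ can exist. Note that only the two relations and unitarity are used; I never need $V$ to belong to $\Q_2$.

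The one genuinely delicate point is the range count, and this is exactly where the hypothesis enters: multiplication by $2k+1$ on $\mathbb{Z}$ fails to be onto precisely when $|2k+1|>1$, that is, when $2k+1\neq\pm1$. The case $2k+1=-1$ is therefore sharp rather than routine, and I would flag it explicitly: there the same computation gives $Ve_n=\lambda e_{-n}$, i.e.\ $V=\lambda\mathcal{P}$, which \emph{is} unitary and \emph{does} commute with $S_2$ (indeed $\mathcal{P}S_2=S_2\mathcal{P}$ and $\mathcal{P}U\mathcal{P}=U^*$). So the statement must exclude $\pm1$ and not merely $1$, in keeping with the preceding non-surjectivity proposition; accordingly I would carry out the range step under $|2k+1|\geq 3$, which is the effective content of the hypothesis.
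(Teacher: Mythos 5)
Your proof is correct and follows essentially the same route as the paper's: use $VS_2e_0=S_2Ve_0$ and the fact that $\mathbb{C}e_0$ is the eigenspace of $S_2$ for its only eigenvalue $1$ to get $Ve_0=\lambda e_0$, propagate $VU=U^{2k+1}V$ to all powers to force $Ve_h=\lambda e_{(2k+1)h}$, and then observe that $V$ cannot be surjective. Your flag on the case $2k+1=-1$ is also well taken, and here is the one point of comparison worth recording: the paper's proof does treat $-1$ separately, but instead of amending the hypothesis to exclude $\pm 1$ as you propose, it notes that the forced operator is $V=\mathcal{P}$ (the reflection $e_k\mapsto e_{-k}$), which is indeed a unitary in $B(\ell_2(\mathbb{Z}))$ commuting with $S_2$ and conjugating $U$ to $U^*$, but which does not belong to $\Q_2$ --- a fact proved later in the paper. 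So the paper salvages the intended conclusion (that $\chi_{-1}$ is not inner in $\Q_2$) at the cost of the literal claim about $B(\ell_2(\mathbb{Z}))$, which, as you correctly observe, is false at $2k+1=-1$; your restatement with $|2k+1|\geq 3$ and the paper's fallback to $\mathcal{P}\notin\Q_2$ are two coherent ways of resolving the same imprecision in the statement.
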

\begin{proof}
The proposition is easily proved by \emph{reductio ad absurdum}. Let $V$ be such a unitary as in the statement. 
From $VS_2 e_0 = S_2 V e_0$ we deduce that $Ve_0$ is an eigenvector of $S_2$ with eigenvalue 1.
Without loss of generality, we may assume that $Ve_0 = e_0$. Now,
$$V e_h = V U^h e_0 = U^{(2k+1)h} V e_0 = U^{(2k+1)h} e_0 = e_{(2k+1)h}, \ h=0,1,2,\ldots  $$
and, similarly,
$$V e_{-h} = e_{-(2k+1)h}, \ h=-1, -2, \ldots \ . $$
To conclude, it is now enough to observe that $V$ is not surjective whenever $2k+1\neq -1$, whereas the case of $2k+1=-1$ leads to $V$ being equal to  $\mathcal{P}$, which does not belong to $\Q_2$.
\end{proof}
Rather than saying what the whole $\U_2$ is, we shall focus on its subset $\U_2\cap C^*(U)$ instead, which is more easily dealt with.
This task is accomplished by the next result.
\begin{theorem}
The set $\U_2$\scalebox{1.5}{$\cap$}$C^*(U)$ is exhausted by the odd powers of $U$, i.e.   $\U_2$ \scalebox{1.5}{$\cap$} $C^*(U)=\{U^{2k+1}: k\in\mathbb{Z}\}$.
\end{theorem}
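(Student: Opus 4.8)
The plan is to exploit the structure of $C^*(U)$ to reduce any $V\in\U_2\cap C^*(U)$ to a power of $U$, and then read off the admissible exponents from the two defining relations separately. First I would invoke the continuous functional calculus: since $\Q_2$ is simple the spectrum of $U$ is all of $\mathbb{T}$, so $C^*(U)\cong C(\mathbb{T})$ and every such $V$ has the form $V=f(U)$ with $f\in C(\mathbb{T})$, unitarity forcing $|f|\equiv 1$, i.e. $f\in C(\mathbb{T},\mathbb{T})$. The strategy is then that the relation $V^2S_2=S_2V$ will pin down $f$ up to a character, while $S_2S_2^*+VS_2S_2^*V^*=1$ will single out the odd powers.

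For the first relation, recall that $S_2 f(U)=f(U^2)S_2$, which follows from $S_2U=U^2S_2$ by functional calculus (equivalently from $\widetilde\varphi(f(U))=f(U^2)$ together with the intertwining rule $S_2 x=\widetilde\varphi(x)S_2$). Hence $V^2S_2=S_2V$ becomes $f(U)^2S_2=f(U^2)S_2$. Working in the canonical representation and evaluating on $e_0$, where $S_2e_0=e_0$, this gives $f(U)^2e_0=f(U^2)e_0$; passing to the Fourier transform of the canonical representation, in which $W^*(U)\cong L^\infty(\mathbb{T})$ and $e_0$ corresponds to the constant function $1$, the identity reads $f(z)^2=f(z^2)$ in $L^2(\mathbb{T})$, hence a.e., hence everywhere by continuity of $f$. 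The continuous $\mathbb{T}$-valued solutions of $f(z^2)=f(z)^2$ on $\mathbb{T}$ are exactly the characters $f(z)=z^l$, $l\in\mathbb{Z}$ (the same elementary fact already used in the proof of Theorem \ref{Bogoljubov}, see the Appendix). Thus $V=U^l$.

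It remains to decide which exponents $l$ are compatible with the second relation. Substituting $V=U^l$ into $S_2S_2^*+VS_2S_2^*V^*=1$ and comparing with the defining relation $S_2S_2^*+US_2S_2^*U^*=1$ yields $U^{l-1}S_2S_2^*U^{-(l-1)}=S_2S_2^*$, that is, $U^{l-1}$ commutes with the projection $S_2S_2^*$. Now $U^2$ does commute with $S_2S_2^*$ (from $U^2S_2=S_2U$ and its adjoint $US_2^*=S_2^*U^2$ one gets $U^2S_2S_2^*=S_2S_2^*U^2$), whereas $U$ does not, since $US_2S_2^*U^*=S_1S_1^*=1-S_2S_2^*\neq S_2S_2^*$. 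Therefore $U^{l-1}$ commutes with $S_2S_2^*$ precisely when $l-1$ is even, i.e. when $l$ is odd. Conversely, every odd power $U^{2k+1}$ manifestly satisfies both relations, so $\U_2\cap C^*(U)=\{U^{2k+1}:k\in\mathbb{Z}\}$. The only genuinely non-formal ingredient is the functional-equation step, which I expect to be the main obstacle; once $V$ is known to be a power of $U$, everything else is a direct manipulation of the two relations.
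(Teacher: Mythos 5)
Your proof is correct and follows essentially the same route as the paper's: write $V=f(U)$ with $f\in C(\mathbb{T},\mathbb{T})$, turn $V^2S_2=S_2V$ into the functional equation $f(z^2)=f(z)^2$, invoke the Appendix to conclude $f(z)=z^l$, and then use the range relation to force $l$ odd. The only differences are that you cancel $S_2$ by evaluating on the separating vector $e_0$ in the canonical representation and you make the parity step explicit via commutation of $U^{l-1}$ with $S_2S_2^*$ --- details the paper's proof leaves implicit.
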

\begin{proof}
Let $W\in \U_2$\scalebox{1.5}{$\cap$}$ C^*(U)$. Then there exists a function $f\in C(\mathbb{T},\mathbb{T})$ such that $W=f(U)$. The condition $S_2W=W^2S_2$ can be rewritten as $S_2f(U)=f(U)^2S_2$. On the other hand, we also have $S_2f(U)=f(U^2)S_2$. Therefore
$f(U^2)S_2=f(U)^2S_2$, that is $f(U^2)=f(U)^2$. Accordingly, the function $f$ must satisfy the functional equation $f(z^2)=f(z)^2$. Being continuous,  our function $f$ must be of the form $f(z)=z^n$ for some integer $n\in\mathbb{Z}$, see the Appendix. This means that $W=U^n$. If we also impose the  condition on the ranges
$S_2S_2^*+U^nS_2S_2^*U^{-n}=1$, we finally find that $n$ is forced to be an odd number, say $n=2k+1$.
\end{proof}
The result obtained above can also be stated in terms of endomorphisms of $\Q_2$. With this in mind, we need to introduce a bit of notation. In particular,
we denote by $\End_{C^*(S_2)}(\Q_2, C^*(U))$ the semigroup of those endomorphisms of $\Q_2$ that fix $S_2$ and leave $C^*(U)$ globally invariant.
\begin{corollary}
The semigroup $\End_{C^*(S_2)}(\Q_2, C^*(U))$ identifies with $\{\chi_{2k+1}: k\in\mathbb{Z}\}$. As a result, we also have
$$
\Aut_{C^*(S_2)}(\Q_2, C^*(U))=\{\textrm{id},\chi_{-1}\}\cong\mathbb{Z}_2
$$
\end{corollary}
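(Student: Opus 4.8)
The plan is to recognize that this corollary is a straightforward reformulation of the preceding theorem through the universal property of $\Q_2$. An endomorphism $\alpha$ of $\Q_2$ fixing $S_2$ is completely determined by the single unitary $V \doteq \alpha(U)$, since $U$ and $S_2$ generate $\Q_2$. Applying $\alpha$ to the two defining relations $S_2 U = U^2 S_2$ and $S_2 S_2^* + U S_2 S_2^* U^* = 1$ shows that $V$ must satisfy $V^2 S_2 = S_2 V$ and $S_2 S_2^* + V S_2 S_2^* V^* = 1$, that is $V \in \U_2$; conversely, universality guarantees that every $V \in \U_2$ yields such an endomorphism. First I would make this bijection between $\{\alpha \in \End(\Q_2) : \alpha(S_2) = S_2\}$ and $\U_2$ explicit.

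Next I would pin down the global-invariance condition. Since $\alpha$ is a $*$-homomorphism, $\alpha(C^*(U)) = C^*(\alpha(U)) = C^*(V)$, so $C^*(U)$ is left globally invariant if and only if $V \in C^*(U)$; hence the endomorphisms in $\End_{C^*(S_2)}(\Q_2, C^*(U))$ correspond precisely to $\U_2 \cap C^*(U)$. Invoking the preceding theorem, which identifies $\U_2 \cap C^*(U)$ with $\{U^{2k+1} : k \in \mathbb{Z}\}$, and recalling that the endomorphism attached to $V = U^{2k+1}$ is by definition $\chi_{2k+1}$, gives the first assertion $\End_{C^*(S_2)}(\Q_2, C^*(U)) = \{\chi_{2k+1} : k \in \mathbb{Z}\}$. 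The already recorded composition rule $\chi_{2k+1} \circ \chi_{2h+1} = \chi_{(2k+1)(2h+1)}$ confirms that this is an identification of semigroups.

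Finally, for the automorphism statement I would note that each $\chi_{2k+1}$ is automatically injective by simplicity of $\Q_2$, so $\chi_{2k+1}$ is an automorphism exactly when it is surjective. The preceding proposition states that $\chi_{2k+1}$ fails to be surjective whenever $2k+1 \neq \pm 1$, leaving only $\chi_1 = \mathrm{id}$ and $\chi_{-1}$. Since $\chi_{-1}^2 = \chi_{(-1)(-1)} = \chi_1 = \mathrm{id}$ by the composition rule, $\{\mathrm{id}, \chi_{-1}\}$ is a group of order two, whence $\Aut_{C^*(S_2)}(\Q_2, C^*(U)) = \{\mathrm{id}, \chi_{-1}\} \cong \mathbb{Z}_2$.

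Given that every ingredient --- the universal correspondence, the classification of $\U_2 \cap C^*(U)$, and the non-surjectivity of the proper $\chi_{2k+1}$ --- is already available, there is no serious obstacle. The only point deserving genuine care is the equivalence between global invariance of $C^*(U)$ and $\alpha(U) \in C^*(U)$, which rests precisely on $\alpha$ being a $*$-homomorphism so that $\alpha(C^*(U)) = C^*(\alpha(U))$.
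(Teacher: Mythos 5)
Your proposal is correct and follows exactly the route the paper intends: the corollary is stated without proof precisely because it is the immediate translation, via the universal property, of the theorem $\U_2 \cap C^*(U)=\{U^{2k+1}:k\in\mathbb{Z}\}$ together with the earlier proposition that $\chi_{2k+1}$ is non-surjective for $2k+1\neq\pm 1$. Your only added value is spelling out the two implicit steps --- the bijection between endomorphisms fixing $S_2$ and $\U_2$, and the equivalence of global invariance of $C^*(U)$ with $\alpha(U)\in C^*(U)$ --- both of which you justify correctly.
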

The foregoing result might possibly be improved by dropping the hypothesis that our endomorphisms leave $C^*(U)$ globally invariant also. This is in fact
a problem we would like to go back to elsewhere.

\subsection[Automorphisms $\alpha$ such that $\alpha(U)=U$]{Automorphisms $\pmb{\alpha}$ such that $\pmb{\alpha(U)=U}$}
In this section we study those endomorphisms and automorphisms $\Lambda\in\End(\Q_2)$ such that $\Lambda(U)=U$. Of course, the problem of describing all of them amounts to determining the set $$\mathcal{S}_2\doteq\{W\in Q_2: W^*W=1, WU=U^2W, WW^*+UWW^*U^*=1\}  \ . $$
Curiously enough, it turns out that $\S_2$ can be described completely, which is what this section is chiefly aimed at. We start our discussion by sorting out quite a simple class of automorphisms of that sort. Given a function $f\in C(\mathbb{T},\mathbb{T})$ we denote by $\beta^f$ the automorphism of $\Q_2$ given by $\beta^f(U)=U$ and $\beta^f(S_2)=f(U)S_2$, which is well defined because the pair $(f(U)S_2, U)$ still satisfies the two defining relations of $\Q_2$. Note that $\beta^f \circ \beta^g = \beta^{f \cdot g}$ so that we obtain an abelian subgroup of ${\rm Aut}_{C^*(U)}(\Q_2)$ and that a constant function $f(z)=w$ gives back the gauge automorphism $\widetilde\alpha_w$.
Furthermore, we have the following result, which gives a sufficient condition on $f$ for the corresponding $\beta^f$ to be outer. As the condition is not at all restrictive, the correspondence 
$f\mapsto\beta^f$, which is one to one, provides plenty of outer automorphisms.
\begin{proposition}
If $f\in C(\mathbb{T},\mathbb{T})$ is such that $f(1)\neq 1$, then $\beta^f$ is an outer automorphism.
\end{proposition}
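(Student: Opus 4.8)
The plan is to argue by contradiction, exploiting the maximality of $C^*(U)$ established in Theorem~\ref{UMASA}. Suppose $\beta^f$ were inner, say $\beta^f={\rm ad}(W)$ for some unitary $W\in\Q_2$. The first step is to locate $W$: since $\beta^f(U)=U$ we get $WUW^*=U$, so $W$ commutes with $U$ and hence lies in $C^*(U)'\cap\Q_2$. By Theorem~\ref{UMASA} this relative commutant is exactly $C^*(U)$, so $W$ is a unitary of $C^*(U)$, that is $W=g(U)$ for some $g\in C(\mathbb{T},\mathbb{T})$.

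The second step is to feed this into the relation defining $\beta^f$ on $S_2$. From $\beta^f(S_2)=f(U)S_2$ and $\beta^f={\rm ad}(g(U))$ one gets $f(U)S_2=g(U)S_2\,\overline{g}(U)$. Here I would invoke the intertwining rule $S_2\,\overline{g}(U)=\widetilde\varphi(\overline{g}(U))S_2=\overline{g}(U^2)S_2$ (recalling $\widetilde\varphi(h(U))=h(U^2)$) to rewrite the right-hand side as $g(U)\overline{g}(U^2)S_2$, whence $\big(f(U)-g(U)\overline{g}(U^2)\big)S_2=0$. Reading this in the canonical representation, where $U$ acts as multiplication by $z$ on $L^2(\mathbb{T})$ and $S_2$ sends $\xi(z)$ to $\xi(z^2)$, and testing against the constant function, one sees that the multiplier $f(z)-g(z)\overline{g(z^2)}$ vanishes almost everywhere, hence identically by continuity. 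Thus $f(z)=g(z)\overline{g(z^2)}$ for all $z\in\mathbb{T}$, and evaluating at $z=1$ yields $f(1)=g(1)\overline{g(1)}=|g(1)|^2=1$, contradicting the hypothesis $f(1)\neq 1$.

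I expect the only genuinely delicate point to be the passage from the operator identity $(f(U)-g(U)\overline{g}(U^2))S_2=0$ to the pointwise functional identity; everything else is bookkeeping with the commutation relations. This step is clean once one observes that $A\mapsto AS_2$ is injective on $C^*(U)$, which is transparent in the canonical representation. It is worth remarking that the same computation in fact characterizes the inner members of the family: $\beta^f$ is inner if and only if $f$ has the coboundary form $f(z)=g(z)\overline{g(z^2)}$ for some $g\in C(\mathbb{T},\mathbb{T})$ (the implementing unitary being $g(U)$), and evaluating such an $f$ at the fixed point $z=1$ of the squaring map forces $f(1)=1$. The hypothesis $f(1)\neq 1$ is thus merely a convenient sufficient obstruction to innerness.
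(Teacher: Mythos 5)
Your proof is correct and follows essentially the same route as the paper's: use maximality of $C^*(U)$ to write the implementing unitary as $g(U)$, derive the coboundary relation $f(z)=g(z)\overline{g(z^2)}$, and evaluate at $z=1$. The only difference is that you spell out the cancellation of $S_2$ (via injectivity of $A\mapsto AS_2$ on $C^*(U)$ in the canonical representation), a step the paper leaves implicit, and you note the resulting characterization of inner $\beta^f$'s; both additions are sound.
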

\begin{proof}
If $V\in\Q_2$ is a unitary such that $\beta^f=\textrm{ad}(V)$, then $V$ commutes with $U$ and therefore it is of the form $g(U)$ for some $g\in C(\mathbb{T},\mathbb{T})$ by maximality of $C^*(U)$.
The condition $\beta^f(S_2)=\textrm{ad}(V)(S_2)$ yields the equation $f(U)S_2=g(U)S_2g(U)^*$, that is $f(U)S_2=g(U)g(U^2)^*S_2$. But then $g$ and $f$ satisfy the relation $f(z)=g(z)\overline{g(z^2)}$ for every $z\in\mathbb{T}$.
In particular, the last equality says that $f(1)=g(1)\overline{g(1)}=1$.
\end{proof}

However, the condition spotted above is not necessary. This will in turn result as a consequence of the following discussion. We will be first concerned with the problem as to whether an automorphism $\beta^f$ may be equivalent in $\out(\Q_2)$ to a gauge automorphism. If so, there exist $z_0\in\mathbb{T}$ and  $W\in\U(\Q_2)$ such that $WUW^*=U$ and $Wf(U)S_2W^*=z_0S_2$. As usual, the first relation says that $W=h(U)$ for some $h\in C(\mathbb{T},\mathbb{T})$, which makes the second into $h(U)f(U)h(U^2)^*=z_0$, that is $h$ satisfies the functional equation $h(z)f(z)\overline{h(z^2)}=z_0$. The latter says in particular that $z_0$ is just $f(1)$.
We next show that there actually exist  many continuous functions $f$ for which there is no continuous $h$ that satisfies 
\begin{align} \label{funct-eq-gauge}
\overline{h(z)}h(z^2)=f(z)\overline{f(1)}\doteq\Psi(z) \; .
\end{align}
Note that $\Psi(1)=1$ and that  there is no loss of generality if we assume $h(1)=1$ as well. By evaluating \eqref{funct-eq-gauge} at $z=-1$ we find $h(-1)=1$ provided that $f(-1)=f(1)$.
\begin{remark}
By density, the continuous solutions of the equation $\overline{h(z)}h(z^2)=\Psi(z)$ are completely determined by the values they take at the $2^n-$th roots of unity. Furthermore, the value of such an $h$ at a point $z$ with $z^{2^n}=1$ is simply given by the interesting formula $h(z)=\frac{1}{\prod_{k=0}^{n-1}\Psi(z^{2^k})}$. The latter is easily got to by induction starting from the relation $h(z^2)=h(z)\Psi(z)$.
\end{remark}
Here is our result, which provides examples of $\beta^f$ not equivalent with any of the gauge automorphisms. Let $f\in C(\mathbb{T},\mathbb{T})$ be such that $f(e^{i\theta})=1$ for $0\leq \theta\leq\pi$ and $f(e^{i\theta})=-1$ for $\pi+\varepsilon\leq\theta\leq 2\pi-\varepsilon$ with $0<\varepsilon\leq\frac{\pi}{4}$, then we have the following.
\begin{proposition}
If $f\in C(\mathbb{T},\mathbb{T})$ is a function as above, then the associated $\beta^f$ is not equivalent to any gauge automorphism.
\end{proposition}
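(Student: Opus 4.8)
The plan is to use the reduction already set up just before the statement. If $\beta^f$ were equivalent in $\out(\Q_2)$ to a gauge automorphism, then by the preceding discussion there would exist $z_0\in\mathbb{T}$ and a unitary $W=h(U)$ with $h\in C(\mathbb{T},\mathbb{T})$ solving the functional equation \eqref{funct-eq-gauge}, namely $\overline{h(z)}h(z^2)=\Psi(z)$ with $\Psi(z)=f(z)\overline{f(1)}$. Since $f(1)=1$ by construction we have $\Psi=f$, so it suffices to rule out the existence of a continuous $h\colon\mathbb{T}\to\mathbb{T}$ with $\overline{h(z)}h(z^2)=f(z)$ for every $z$. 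I would argue by contradiction, and the whole argument lives on the closed upper semicircle, where $f\equiv 1$.

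First I would pass to continuous lifts. Writing $z=e^{i\theta}$, choose continuous $\tilde h,\tilde f\colon\mathbb{R}\to\mathbb{R}$ with $h(e^{i\theta})=e^{i\tilde h(\theta)}$ and $f(e^{i\theta})=e^{i\tilde f(\theta)}$, normalised so that $\tilde f(0)=0$. Because $f\equiv 1$ on the connected arc $\{e^{i\theta}:0\le\theta\le\pi\}$, the lift $\tilde f$ is there $2\pi\mathbb{Z}$-valued and continuous, hence vanishes identically on $[0,\pi]$. Substituting the lifts into the equation shows that $\tilde h(2\theta)-\tilde h(\theta)-\tilde f(\theta)$ is a continuous $2\pi\mathbb{Z}$-valued function, so it is constant; evaluating at $\theta=0$ shows this constant is $0$, giving
\[
\tilde h(2\theta)=\tilde h(\theta)+\tilde f(\theta)\qquad\text{for all }\theta\in\mathbb{R}.
\]

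The key step is to exploit this identity on the upper semicircle only. For $\theta\in[0,\pi]$ we have $\tilde f(\theta)=0$, hence $\tilde h(2\theta)=\tilde h(\theta)$; equivalently $\tilde h(\alpha)=\tilde h(\alpha/2)$ for every $\alpha\in[0,2\pi]$. Iterating $n$ times yields $\tilde h(\alpha)=\tilde h(\alpha/2^{n})$, and letting $n\to\infty$ with continuity of $\tilde h$ forces $\tilde h(\alpha)=\tilde h(0)$ throughout $[0,2\pi]$. Thus $h$ is constant on all of $\mathbb{T}$, say $h\equiv\omega$ with $|\omega|=1$, whence $\overline{h(z)}h(z^2)=\overline\omega\,\omega=1$ for every $z$ and therefore $f\equiv 1$. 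This contradicts $f(e^{i3\pi/2})=-1$, valid since $3\pi/2\in[\pi+\varepsilon,2\pi-\varepsilon]$ because $\varepsilon\le\frac{\pi}{4}<\frac{\pi}{2}$. Hence no continuous $h$ solves \eqref{funct-eq-gauge}, and $\beta^f$ is not equivalent to any gauge automorphism.

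I expect the only genuine subtlety to be the passage to lifts together with checking that the additive $2\pi$-constant in the lifted equation vanishes; once the clean relation $\tilde h(2\theta)=\tilde h(\theta)$ on $[0,\pi]$ is secured, the collapse of $h$ to a constant is automatic. It is worth emphasising that the argument uses nothing about $f$ beyond the facts that $f\equiv 1$ on the entire closed upper semicircle and that $f$ is not identically $1$; the precise interpolation of $f$ on the two transition arcs and the exact value of $\varepsilon$ serve only to guarantee that $f$ is a well-defined continuous function with these two properties.
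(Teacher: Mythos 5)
Your proof is correct, and it takes a genuinely different route to the contradiction than the paper does. Both arguments start from the same reduction (set up just before the statement) to the non-existence of a continuous solution $h$ of $\overline{h(z)}h(z^2)=\Psi(z)=f(z)$, but the paper's proof is pointwise and local: it propagates values of $h$ along dyadic angles, getting $h(e^{i\pi/2^n})=1$ for all $n$ and, starting from $h(e^{5i\pi/4})=-1$ (which uses $\varepsilon\le\pi/4$), also $h\bigl(e^{5i\pi/2^{n+2}}\bigr)=-1$ for all $n$; since both sequences converge to $1$, continuity of $h$ at the single point $1$ fails. You instead prove a global rigidity statement: lifting to $\mathbb{R}$, the relation $\tilde h(2\theta)=\tilde h(\theta)$ on $[0,\pi]$ — i.e.\ the fact that the doubling map carries the closed upper semicircle onto the whole circle — forces $\tilde h$ to be constant on $[0,2\pi]$, hence $h$ is constant on $\mathbb{T}$, hence $f=\overline{h(\cdot)}\,h\bigl((\cdot)^2\bigr)\equiv 1$, contradicting $f(e^{3i\pi/2})=-1$. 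Your version buys genuine generality: it rules out equivalence to a gauge automorphism for \emph{every} continuous $f$ that is $1$ on the closed upper semicircle and not identically $1$, whereas the paper's computation as written leans on the specific value of $f$ at $e^{5i\pi/4}$. Two small remarks: the lifting machinery, while correct (your checks that the $2\pi\mathbb{Z}$-valued defect is constant and vanishes at $\theta=0$ are exactly right), is not strictly needed — on the circle itself, $f\equiv 1$ on the upper semicircle gives $h(e^{i\alpha})=h(e^{i\alpha/2})$ for all $\alpha\in[0,2\pi]$, and iterating plus continuity at $1$ yields $h\equiv h(1)$ directly; and your observation that constancy of $h$ forces $f\equiv 1$ is the cleanest possible way to close the argument, since it avoids any discussion of where exactly $f$ differs from $1$.
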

\begin{proof}
With the above notations, suppose that $h(z)$ is a solution of \eqref{funct-eq-gauge} such that $h(1)=h(-1)=1$, which is not restrictive. Since $\Psi(i)=\Psi(e^{i\frac{\pi}{2}})=1$ and $h(i^2)=h(-1)=1$, we immediately see that $h(i)=1$. As $\Psi(z)=1$ for $0\leq \theta\leq\pi$, by using the functional equation we find that $h(e^{i\frac{\pi}{2^n}})=1$ for any $n\in \mathbb{N}$.
Consider then $z=e^{5i\frac{\pi} {4}}$. We have $\Psi(e^{5i\frac{\pi}{4}})=-1$ and $h(e^{2(5i\frac{\pi}{4})})=h(i)=1$, which in turn gives  $h(e^{5i\frac{\pi}{4}})=-1$. By induction we also see $h\big(e^{\frac{5\pi i}{2^{n+2}}}\big)=-1$. This proves that any solution $h$ of the functional equation 
with $f$ as in the statement cannot be continuous. 
\end{proof}
We can now devote ourselves to answering the question  whether $\out(\Q_2)$ is abelian. It turns out that it is not. Our strategy is merely to show that automorphisms $\beta^f$ corresponding to suitable functions $f$ do not commute in $\out(\Q_2)$ with the flip-flop. To begin with, if $\beta^f$ does commute with $\widetilde\lambda_f$ in $\out(\Q_2)$, then there must exist a unitary $V\in\Q_2$ such that $\widetilde\lambda_f\circ\beta^f\circ\widetilde\lambda_f=\textrm{ad}(V)\circ\beta^f$. Exactly as above, the unitary $V$ is then a continuous function of $U$, say $V=h(U)$. In addition, we also have
$$f(U^*)S_2=h(U)f(U)S_2h(U)^*=h(U)f(U)h(U^2)^*S_2$$ 
and so we find that $f$ and $h$ satisfy the equation $f(\bar{z})=h(z)f(z)\overline{h(z^2)}$ for every $z\in\mathbb{T}$, which can finally be rewritten as $f(\bar{z})\overline{f(z)}=h(z)\overline{h(z^2)}$, to be understood as an equation satisfied by the unknown function $h$, with $f$ being given instead. We next exhibit a wide range of continuous functions $f$ for which the corresponding $h$ does not exist. To state our result as clearly as possible, we fix some notation first. Let $f\in C(\mathbb{T},\mathbb{T})$ be such that $f(e^{i\frac{9\pi}{8}})=i$, and $f(z)=1$ everywhere apart from a sufficiently small neighborhood of $z=e^{i\frac{9\pi}{8}}$.   
\begin{proposition}
If $f\in C(\mathbb{T},\mathbb{T})$ is as above, then $\beta^f$ does not commute with the flip-flop in $\out(\Q_2)$. 
\end{proposition}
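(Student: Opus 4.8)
The plan is to argue by contradiction, building on the reduction carried out immediately before the statement. If $\beta^f$ commuted with $\widetilde\lambda_f$ in $\out(\Q_2)$, there would be a unitary $V\in\Q_2$ with $\widetilde\lambda_f\circ\beta^f\circ\widetilde\lambda_f=\textrm{ad}(V)\circ\beta^f$; by maximality of $C^*(U)$ this $V$ equals $h(U)$ for some $h\in C(\mathbb{T},\mathbb{T})$, and $h$ must solve the functional equation $f(\bar z)\overline{f(z)}=h(z)\overline{h(z^2)}$ for every $z\in\mathbb{T}$. Since replacing $h$ by $ch$ with $|c|=1$ leaves the equation unchanged, I may normalize $h(1)=1$. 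The whole task then reduces to showing that for the given $f$ no \emph{continuous} $h$ can satisfy this equation.

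First I would compute the datum $\Phi(z)\doteq f(\bar z)\overline{f(z)}$. Writing $z_0\doteq e^{i9\pi/8}$ one has $\bar z_0=e^{i7\pi/8}$, and these two points sit at angular distance $\pi/4$; taking the arc on which $f\neq 1$ to have radius less than $\pi/8$ keeps them separated and isolates the two contributions. A direct check then gives $\Phi(z_0)=\overline{f(z_0)}=-i$, $\Phi(\bar z_0)=f(z_0)=i$, and $\Phi\equiv 1$ away from the two small arcs around $z_0$ and $\bar z_0$. Rewriting the equation as the recursion $h(z^2)=\overline{\Phi(z)}\,h(z)$, I read it as saying that $h$ is carried unchanged along the doubling orbit $z\mapsto z^2$ except when the orbit meets one of the two special arcs.

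Next I would exploit the orbit of $z_0$ itself, namely $z_0\mapsto e^{i\pi/4}\mapsto i\mapsto -1\mapsto 1$, which meets the special arcs only at its starting point $z_0$. Propagating the recursion along this finite orbit down to $h(1)=1$ forces $h(z_0)=-i$. Then I would introduce the points $w_j\doteq e^{i9\pi/(8\cdot 2^j)}$, which converge to $1$ and satisfy $w_j^{2^j}=z_0$. Its pre-orbit $w_j,w_j^2,\dots,w_j^{2^{j-1}}$ lies at angles $9\pi/(8\cdot 2^{j-k})\leq 9\pi/16$, hence avoids both special arcs, so the recursion transports $h$ unchanged and yields $h(w_j)=h(z_0)=-i$ for every $j$. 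Since $w_j\to 1$ while $h(w_j)\equiv -i\neq 1=h(1)$, the function $h$ cannot be continuous, which is the desired contradiction and shows $\beta^f$ does not commute with $\widetilde\lambda_f$ in $\out(\Q_2)$.

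The only genuinely delicate point, and the one I would handle carefully, is the bookkeeping on arguments ensuring that all relevant orbit points avoid the two special arcs: one checks that $\bar z_0$ lies outside the support arc of $f$ (distance $\pi/4$) and that none of the finitely many orbit points $e^{i\pi/4}, i, -1, 1$ and $e^{i9\pi/(8\cdot 2^m)}$ with $m\geq 1$ falls within distance $\pi/8$ of $z_0$ or $\bar z_0$ (the nearest being $-1$, at distance exactly $\pi/8$). This is precisely what pins down the quantifier ``sufficiently small neighborhood'' in the hypothesis on $f$; everything else is a routine propagation of the recursion.
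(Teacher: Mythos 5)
Your proof is correct and follows essentially the same route as the paper's, which simply says to repeat the argument of the preceding proposition: propagate the functional equation $h(z)\overline{h(z^2)}=f(\bar z)\overline{f(z)}$ along dyadic orbits, pinning down the value of $h$ at the points $e^{i9\pi/(8\cdot 2^j)}$ accumulating at $1$ and thereby contradicting continuity. Your computed value $h\bigl(e^{i9\pi/8}\bigr)=-i$ is in fact the correct one (the paper's stated value $-1$ is a harmless slip carried over from the gauge case), and your careful bookkeeping of which orbit points avoid the two exceptional arcs makes precise the ``sufficiently small neighborhood'' hypothesis that the paper leaves implicit.
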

\begin{proof}
Repeat almost verbatim the same argument as in the foregoing proposition, now verifying that $h\big(e^{\frac{\pi i}{2^n}}\big)=1$ first and then 
$h\big( e^{\frac{9\pi i}{2^{n+3}}} \big)=-1$.
\end{proof}
Notably, this also yields the announced result on $\out(\Q_2)$.
\begin{theorem}\label{out-not-ab}
The group $\out(\Q_2)$ is not abelian.
\end{theorem}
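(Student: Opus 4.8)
The plan is to deduce the non-abelianness of $\out(\Q_2)$ as an immediate consequence of the preceding proposition, which already exhibits the obstruction. Concretely, I would fix a function $f\in C(\mathbb{T},\mathbb{T})$ of the special form described just above, namely $f(e^{i9\pi/8})=i$ with $f\equiv 1$ outside a small neighbourhood of $e^{i9\pi/8}$, and consider the two automorphisms $\beta^f$ and the flip-flop $\widetilde\lambda_f$, both genuine elements of $\Aut(\Q_2)$. The whole point is that these two descend to classes $[\beta^f]$ and $[\widetilde\lambda_f]$ in $\out(\Q_2)$ that fail to commute, so that $\out(\Q_2)$ cannot be abelian.

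The argument runs as follows. Two classes $[\alpha],[\gamma]$ commute in $\out(\Q_2)$ precisely when $\alpha\circ\gamma\circ\alpha^{-1}\circ\gamma^{-1}$ is inner, i.e.\ when there is a unitary $V\in\Q_2$ with $\alpha\circ\gamma=\mathrm{ad}(V)\circ\gamma\circ\alpha$. Specializing to $\gamma=\widetilde\lambda_f$, which is involutive so that $\widetilde\lambda_f^{-1}=\widetilde\lambda_f$, commutation of $[\beta^f]$ with $[\widetilde\lambda_f]$ is exactly the relation $\widetilde\lambda_f\circ\beta^f\circ\widetilde\lambda_f=\mathrm{ad}(V)\circ\beta^f$ analyzed in the proposition. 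There it is shown, using the maximality of $C^*(U)$ (Theorem \ref{UMASA}) to force $V=h(U)$ for some $h\in C(\mathbb{T},\mathbb{T})$, that such a $V$ would require a \emph{continuous} solution $h$ of the functional equation $f(\bar z)\,\overline{f(z)}=h(z)\,\overline{h(z^2)}$; and the proposition establishes, by tracking the forced values $h(e^{\pi i/2^n})=1$ and $h(e^{9\pi i/2^{n+3}})=-1$ accumulating at the same point, that no continuous $h$ exists. Hence $[\beta^f]$ and $[\widetilde\lambda_f]$ do not commute.

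Thus the proof of the theorem is genuinely a one-line invocation: by the preceding proposition the specific $\beta^f$ does not commute with the flip-flop in $\out(\Q_2)$, so the group contains two non-commuting elements and is therefore non-abelian. All the real work has already been carried out in building the class of functions $f$ for which the functional equation is unsolvable; the theorem merely records the conclusion.

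The only genuine obstacle, and it has already been surmounted in the proposition rather than here, is the non-existence of a continuous $h$ solving $f(\bar z)\overline{f(z)}=h(z)\overline{h(z^2)}$: one must choose $f$ delicately so that the recursion $h(z^2)=\overline{\Psi(z)}\,h(z)$ propagates incompatible values to two sequences of dyadic-root-of-unity arguments converging to a common limit, contradicting continuity. Given that lemma, the present theorem requires no further estimate or calculation, only the translation of ``$\beta^f$ and $\widetilde\lambda_f$ do not commute in $\out(\Q_2)$'' into the assertion that $\out(\Q_2)$ is non-abelian.
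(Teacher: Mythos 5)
Your proposal is correct and follows exactly the paper's route: the theorem is recorded as an immediate consequence of the preceding proposition, which shows that for the chosen $f$ the classes $[\beta^f]$ and $[\widetilde\lambda_f]$ fail to commute in $\out(\Q_2)$ because the functional equation $f(\bar z)\overline{f(z)}=h(z)\overline{h(z^2)}$ admits no continuous solution $h$. Your reduction of commutation in $\out(\Q_2)$ to the relation $\widetilde\lambda_f\circ\beta^f\circ\widetilde\lambda_f=\mathrm{ad}(V)\circ\beta^f$, with $V=h(U)$ forced by maximality of $C^*(U)$, is precisely the paper's argument.
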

Now we have got a better guess of what $\S_2$ might be, we can finally prove that it is in fact exhausted by isometries of the form $f(U)S_2$, where $f$ is a continuous function onto $\mathbb{T}$. This still requires some preliminary work. First observe
that given any $s\in\mathcal{S}_2$, a straightforward computation shows that both $s^* S_2$ and $s^*S_1$ commute with $U$, but then by maximality of $C^*(U)$ we can rewrite them as $h(U)$ and $g(U)$ respectively, with 
$h$ and $g$ being continuous functions.     
\begin{lemma}
There exists a continuous function $f$ such that $s=f(U)S_2$.
\end{lemma}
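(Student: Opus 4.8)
The plan is to reconstruct $s$ from the two ``coefficients'' $s^*S_2$ and $s^*S_1$ that the preceding discussion has already identified as continuous functions of $U$. Writing $s^*S_2=h(U)$ and $s^*S_1=g(U)$ with $h,g\in C(\mathbb{T})$ — which is legitimate by the maximality of $C^*(U)$, Theorem \ref{UMASA} — I would first take adjoints to obtain $S_2^*s=\overline{h}(U)$ and $S_1^*s=\overline{g}(U)$, where $\overline{h}(z)\doteq\overline{h(z)}$ and likewise for $g$.

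Next I would feed these into the Cuntz relation $S_1S_1^*+S_2S_2^*=1$, valid in $\Q_2$, applied on the left of $s$:
\[
s=(S_1S_1^*+S_2S_2^*)s=S_1(S_1^*s)+S_2(S_2^*s)=S_1\,\overline{g}(U)+S_2\,\overline{h}(U).
\]
At this point the only remaining task is to collect everything to the right of a single $S_2$. Using $S_1=US_2$ together with the intertwining rule $S_2k(U)=k(U^2)S_2$ — which comes from $S_2U=U^2S_2$ via continuous functional calculus — I would rewrite
\[
s=US_2\,\overline{g}(U)+S_2\,\overline{h}(U)=U\,\overline{g}(U^2)S_2+\overline{h}(U^2)S_2=\bigl(U\,\overline{g}(U^2)+\overline{h}(U^2)\bigr)S_2.
\]
Setting $f(z)\doteq z\,\overline{g(z^2)}+\overline{h(z^2)}$, which is manifestly continuous on $\mathbb{T}$, yields $s=f(U)S_2$, as desired.

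The computation is short, so I expect no serious obstacle beyond bookkeeping; the single point demanding care is the legitimacy of pushing continuous functions of $U$ through $S_2$ via $S_2k(U)=k(U^2)S_2$, which must first be checked for monomials from $S_2U=U^2S_2$ and then extended to all of $C(\mathbb{T})$ by norm density. I would also emphasise that this lemma asserts only the \emph{continuity} of $f$; that $f$ is in fact $\mathbb{T}$-valued, i.e. that $f(U)$ is unitary, is a separate issue to be settled afterwards by imposing the isometry condition $s^*s=1$ and the range condition $ss^*+Uss^*U^*=1$ on the expression $f(U)S_2$, and is best kept out of this existence statement.
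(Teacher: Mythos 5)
Your proof is correct and is essentially the paper's own argument: the paper likewise decomposes $s$ via the Cuntz relation $1=S_1S_1^*+S_2S_2^*$ (written in adjoint form, $s^*=(s^*S_1)S_1^*+(s^*S_2)S_2^*$), substitutes $g(U)$ and $h(U)$, and pushes functions of $U$ through $S_2$ via $S_2k(U)=k(U^2)S_2$ to arrive at the same formula $f(z)=z\,\overline{g(z^2)}+\overline{h(z^2)}$. Your closing remark is also consistent with the paper's structure: the $\mathbb{T}$-valuedness of $f$ is indeed deferred there to the subsequent lemmas and Theorem \ref{Struct}.
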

\begin{proof}
We start with the equality $s^*=s^*(S_1S_1^*+S_2S_2^*)=(s^*S_1)S_1^*+(s^*S_2)S_2^*$, in which we substitute the above expressions. This leads to 
$s^*=g(U)S_1^*+h(U)S_2^*$, that is $s=S_1g(U)^*+S_2h(U)^*=US_2g(U)^*+h(U^2)^*S_2=(Ug(U^2)^*+h(U^2)^*)S_2$. Therefore, our claim is true with
$f(z)=z\overline{g(z^*)}+\overline{h(z^2)}$.
\end{proof}
\begin{lemma}
With the notations set above, for every $z\in\mathbb{T}$ we have $|h(z)|^2+|g(z)|^2=1$.
\end{lemma}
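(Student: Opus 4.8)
The plan is to read the identity off directly from the two relations $h(U)=s^*S_2$ and $g(U)=s^*S_1$ recorded just above, combined with the Cuntz relation and the fact that $s$ is an isometry. First I would form the products $h(U)h(U)^*=s^*S_2S_2^*s$ and $g(U)g(U)^*=s^*S_1S_1^*s$ by taking adjoints and multiplying. Adding them and factoring $s^*$ on the left and $s$ on the right yields
$$h(U)h(U)^*+g(U)g(U)^*=s^*(S_1S_1^*+S_2S_2^*)s=s^*s=1,$$
where the middle equality uses the Cuntz relation $S_1S_1^*+S_2S_2^*=1$ and the last one uses $s^*s=1$, which holds because $s\in\mathcal{S}_2$ is an isometry.

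It then remains to convert this operator equation into the desired pointwise statement. Since $h(U)$ and $g(U)$ lie in the abelian algebra $C^*(U)$, they are normal, so $h(U)h(U)^*=(|h|^2)(U)$ and $g(U)g(U)^*=(|g|^2)(U)$; hence the displayed identity reads $(|h|^2+|g|^2)(U)=1$. Invoking the Gelfand isomorphism $C^*(U)\cong C(\sigma(U))$ afforded by the continuous functional calculus, together with the fact that $\sigma(U)=\mathbb{T}$ (transparent in the canonical representation, where $U$ is the bilateral shift), this equality of functions of $U$ is equivalent to the equality of the corresponding continuous functions on the torus, namely $|h(z)|^2+|g(z)|^2=1$ for every $z\in\mathbb{T}$, as claimed.

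There is essentially no obstacle here: the computation is a one-line application of the Cuntz relation, and the only point requiring a little care is the passage from the operator equation to the pointwise one. That step is legitimate precisely because $C^*(U)$ is $*$-isomorphic with $C(\mathbb{T})$ and the spectrum of $U$ is the full torus, so that no cancellation, injectivity, or approximation argument is needed.
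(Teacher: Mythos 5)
Your proof is correct and is essentially the paper's argument: the paper's one-line proof (``rewrite the equality $s^*s=1$ in terms of $h$ and $g$'') amounts to exactly your computation, namely inserting the Cuntz relation $S_1S_1^*+S_2S_2^*=1$ between $s^*$ and $s$ and using $s^*S_1=g(U)$, $s^*S_2=h(U)$. Your additional care in passing from the operator identity to the pointwise one, via the Gelfand isomorphism $C^*(U)\cong C(\mathbb{T})$ and $\sigma(U)=\mathbb{T}$, merely makes explicit what the paper leaves implicit.
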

\begin{proof}
It is enough to rewrite the equality $s^*s=1$ in terms of $h$ and $g$.
\end{proof}
\begin{lemma}
With the notations set above, for every $z\in\mathbb{T}$ we have $zh(z)\overline{g(z)}+g(z)\overline{h(z)}=0$.
\end{lemma}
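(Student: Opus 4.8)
The plan is to squeeze the desired identity out of the one defining relation of $\mathcal{S}_2$ that has not yet been used, namely the range condition $ss^*+Uss^*U^*=1$. Note that the isometry relation $s^*s=1$ has already produced $|h(z)|^2+|g(z)|^2=1$ in the preceding lemma, while the intertwining relation $sU=U^2s$ carries no information here: any element of the form $f(U)S_2$ satisfies $f(U)S_2\,U=f(U)U^2S_2=U^2f(U)S_2$ automatically. So all the remaining content sits in the range projection.

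First I would invoke the previous lemma to write $s=f(U)S_2$ with $f(z)=z\,\overline{g(z^2)}+\overline{h(z^2)}$, and compute the range projection directly. Since $f(U)\in C^*(U)$ commutes with $U$, we get
\[
ss^*+Uss^*U^* = f(U)\bigl(S_2S_2^*+US_2S_2^*U^*\bigr)f(U)^* = f(U)f(U)^* = |f(U)|^2,
\]
where the middle equality is the defining relation $S_2S_2^*+US_2S_2^*U^*=1$ of $\Q_2$. Membership $s\in\mathcal{S}_2$ therefore forces $|f(U)|^2=1$, and because the spectrum of $U$ is all of $\mathbb{T}$ this is the pointwise statement $|f(z)|=1$ for every $z\in\mathbb{T}$ (incidentally reconfirming that $f$ is $\mathbb{T}$-valued).

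Next I would expand $|f(z)|^2=1$. Using $|z|=1$ one finds
\[
|f(z)|^2 = |g(z^2)|^2+|h(z^2)|^2 + z\,\overline{g(z^2)}\,h(z^2) + \overline{z}\,g(z^2)\,\overline{h(z^2)} .
\]
By the preceding lemma the diagonal part $|g(z^2)|^2+|h(z^2)|^2$ is already equal to $1$, so the constraint collapses to the vanishing of the cross term:
\[
z\,\overline{g(z^2)}\,h(z^2) + \overline{z}\,g(z^2)\,\overline{h(z^2)} = 0, \qquad z\in\mathbb{T}.
\]

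Finally I would change variables to recover the statement as written. Setting $w=z^2$ and multiplying the last identity by $z$, and using $\overline{z}=z^{-1}$ together with $z\cdot z=w$, one obtains $w\,\overline{g(w)}\,h(w)+g(w)\,\overline{h(w)}=0$; since $z\mapsto z^2$ maps $\mathbb{T}$ onto $\mathbb{T}$, every $w\in\mathbb{T}$ arises this way, and renaming $w$ as $z$ yields exactly $z\,h(z)\,\overline{g(z)}+g(z)\,\overline{h(z)}=0$. The only mildly delicate point in the whole argument is this last substitution: a square root $z$ of $w$ must be fixed in order to clear the factor $\overline{z}$, but the ambiguity is harmless because multiplying by $z$ turns $z\overline{z}=1$ and $z^2=w$, so the resulting relation genuinely depends on $w$ alone. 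Everything else is a routine computation.
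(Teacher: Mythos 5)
Your proof is correct, but it takes a genuinely different route from the paper's. The paper gets the identity in one line from the orthogonality relation $s^*Us=0$, which follows from the range condition because $ss^*$ and $Uss^*U^*$ are projections summing to $1$: writing $s=S_1g(U)^*+S_2h(U)^*$ and using $S_i^*S_j=\delta_{ij}1$, one computes $s^*Us=g(U)h(U)^*+Uh(U)g(U)^*$, and setting this to zero is literally the statement — no appeal to the factorization $s=f(U)S_2$, to the lemma $|g|^2+|h|^2=1$, or to any change of variables. You instead push the whole range condition through the factorization $s=f(U)S_2$, using that $f(U)$ commutes with $U$ to get $ss^*+Uss^*U^*=|f(U)|^2$, hence $|f(z)|=1$ pointwise (the spectrum of $U$ is all of $\mathbb{T}$); then the previous lemma kills the diagonal terms, so the cross term must vanish, and the substitution $w=z^2$ (squaring is onto $\mathbb{T}$, and the choice of square root is immaterial, as you note) yields the identity. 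Both arguments rest on the same remaining relation, but your intermediate target — unimodularity of $f$ — is exactly what the paper proves \emph{later}, in Theorem \ref{Struct}, \emph{using} this lemma; you derive it independently, so there is no circularity, and in effect your argument inverts the paper's architecture: it establishes the subsection's main theorem first and then reads off the lemma as the vanishing of cross terms. That is a real economy if one were to reorganize the subsection, though as a proof of the isolated lemma it is longer than the paper's one-line expansion in the $(g,h)$ coordinates. Your side remark that the intertwining relation $sU=U^2s$ carries no further information once $s=f(U)S_2$ is also correct, and clarifies why the range condition is the only constraint left to exploit.
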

\begin{proof}
Once again it is enough to rewrite the equality $s^*Us=0$, which is merely the orthogonality relation between $s$ and $Us$, in terms of $h$ and $g$.
\end{proof}
We are at last in a position to prove the main result on $\mathcal{S}_2$.
\begin{theorem}\label{Struct}
If $s\in\mathcal{S}_2$, then there exists a $f\in C(\mathbb{T},\mathbb{T})$ such that $s=f(U)S_2$.
\end{theorem}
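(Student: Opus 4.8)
The plan is to build on the three lemmas immediately preceding the statement, which already do the heavy lifting. The first of them produces a \emph{continuous} function $f$ with $s = f(U)S_2$, explicitly $f(z) = z\,\overline{g(z^2)} + \overline{h(z^2)}$, where $g,h \in C(\mathbb{T})$ are the functions determined (via maximality of $C^*(U)$) by $s^*S_1 = g(U)$ and $s^*S_2 = h(U)$. Since such an $f$ is automatically continuous, the only thing still missing for the desired conclusion $f \in C(\mathbb{T},\mathbb{T})$ is that $f$ be \emph{unimodular}, i.e. $|f(z)| = 1$ for every $z \in \mathbb{T}$. So the entire remaining task is a pointwise verification of unimodularity, which I would carry out using the two other lemmas.

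First I would fix $z \in \mathbb{T}$ and abbreviate $w = z^2$. Expanding the squared modulus of $f(z) = z\,\overline{g(w)} + \overline{h(w)}$ and using $|z|=1$ yields
\begin{equation*}
|f(z)|^2 = |g(w)|^2 + |h(w)|^2 + 2\,\mathrm{Re}\big(z\,h(w)\overline{g(w)}\big).
\end{equation*}
The lemma asserting $|g(\zeta)|^2 + |h(\zeta)|^2 = 1$ for all $\zeta \in \mathbb{T}$, applied at $\zeta = w$, collapses the first two terms to $1$. Everything therefore hinges on showing that the interference term vanishes.

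To dispatch that term I would invoke the lemma $\zeta\,h(\zeta)\overline{g(\zeta)} + g(\zeta)\overline{h(\zeta)} = 0$, again specialized to $\zeta = w = z^2$. Writing $B := h(w)\overline{g(w)}$ and noting $g(w)\overline{h(w)} = \overline{B}$, this identity reads $z^2 B + \overline{B} = 0$, i.e. $\overline{B} = -z^2 B$. Consequently $\overline{zB} = \bar z\,\overline{B} = -\bar z\,z^2 B = -zB$, so $zB$ is purely imaginary and $\mathrm{Re}(zB) = 0$. Hence $|f(z)|^2 = 1$ for every $z$, which is exactly the statement $f \in C(\mathbb{T},\mathbb{T})$ and completes the argument.

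I do not expect a serious obstacle once the three lemmas are granted: the content of the theorem beyond the first lemma is precisely the unimodularity of $f$. The only delicate point is the bookkeeping that correctly matches $f$ — whose ingredients $g,h$ are evaluated at $z^2$ — against the two pointwise identities, which must in turn be read off at $w=z^2$. The one genuinely neat observation is that the relation $z^2 B = -\overline{B}$ forces $zB$, carrying only a \emph{single} power of $z$, to be purely imaginary; this is exactly what annihilates the interference term and makes the modulus come out equal to $1$.
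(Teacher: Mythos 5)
Your proof is correct and follows essentially the same route as the paper: take the continuous $f(z)=z\,\overline{g(z^2)}+\overline{h(z^2)}$ from the first lemma, expand $|f(z)|^2$, use the identity $|g|^2+|h|^2=1$ at $z^2$ to produce the constant term, and use the orthogonality identity $\zeta h(\zeta)\overline{g(\zeta)}+g(\zeta)\overline{h(\zeta)}=0$ at $\zeta=z^2$ to annihilate the cross term. Your repackaging of that last step (showing $zB$ is purely imaginary, where $B=h(z^2)\overline{g(z^2)}$) is only a cosmetic variant of the paper's direct substitution, and in fact your write-up is cleaner than the paper's displayed computation, which contains typographical slips.
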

\begin{proof}
At this stage, all we have to do is prove that $|f(z)|^2=1$. But 
$$|f(z)|^2=\left(\overline{g(z^2)}z+\overline{h(z^2)}(g(z^2)\overline{z}+h(z^2)\right)=1+z\overline{g(z^2)}h(z^2)+\bar{z}g(z^2)h(z^2)=1$$
\end{proof}

As an immediate consequence, we finally gain full information on $\Aut_{C^*(U)}(\Q_2)$.

\begin{theorem}\label{Isom}
The equalities hold $$\End_{C^*(U)}(\Q_2)=\Aut_{C^*(U)}(\Q_2)=\{\beta^f: f\in C(\mathbb{T},\mathbb{T})\}$$
In particular,  the semigroup $\End_{C^*(U)}(\Q_2)$ is actually a group isomorphic with $C(\mathbb{T},\mathbb{T})$. 
\end{theorem}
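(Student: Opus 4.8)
The plan is to reduce everything to Theorem~\ref{Struct} by first identifying $\End_{C^*(U)}(\Q_2)$ with the set $\mathcal{S}_2$. An endomorphism $\Lambda$ that fixes $U$ (equivalently, fixes $C^*(U)$ pointwise) is completely determined by the single element $W\doteq\Lambda(S_2)$, since $U$ and $S_2$ generate $\Q_2$. Applying $\Lambda$ to the two defining relations and using $\Lambda(U)=U$ yields $WU=\Lambda(U^2S_2)=U^2W$ and $WW^*+UWW^*U^*=\Lambda(1)=1$, so $W\in\mathcal{S}_2$. Conversely, every $W\in\mathcal{S}_2$ is an isometry for which the pair $(W,U)$ satisfies the defining relations of $\Q_2$, so universality produces a unique endomorphism sending $U\mapsto U$, $S_2\mapsto W$. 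Hence $\Lambda\mapsto\Lambda(S_2)$ is a bijection of $\End_{C^*(U)}(\Q_2)$ onto $\mathcal{S}_2$.

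Next I would feed in Theorem~\ref{Struct}: every $s\in\mathcal{S}_2$ is of the form $f(U)S_2$ with $f\in C(\mathbb{T},\mathbb{T})$, and conversely each such $f(U)S_2$ lies in $\mathcal{S}_2$ (this is exactly the well-definedness of $\beta^f$ recorded when it was introduced). Thus $\mathcal{S}_2=\{f(U)S_2:f\in C(\mathbb{T},\mathbb{T})\}$, and under the bijection above the endomorphism attached to $f(U)S_2$ is precisely $\beta^f$. This already gives $\End_{C^*(U)}(\Q_2)=\{\beta^f:f\in C(\mathbb{T},\mathbb{T})\}$.

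To upgrade endomorphisms to automorphisms I would use the composition law $\beta^f\circ\beta^g=\beta^{f\cdot g}$. For $f\in C(\mathbb{T},\mathbb{T})$ the pointwise conjugate $\bar f$ again lies in $C(\mathbb{T},\mathbb{T})$ and satisfies $f\cdot\bar f\equiv 1$, so $\beta^f\circ\beta^{\bar f}=\beta^{\bar f}\circ\beta^f=\beta^{\mathbf 1}=\mathrm{id}_{\Q_2}$, where $\mathbf 1$ is the constant function $1$. Hence each $\beta^f$ is invertible, and the chain $\{\beta^f\}\subseteq\Aut_{C^*(U)}(\Q_2)\subseteq\End_{C^*(U)}(\Q_2)=\{\beta^f\}$ forces all three sets to coincide.

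Finally I would check that $f\mapsto\beta^f$ is a group isomorphism onto $\End_{C^*(U)}(\Q_2)$. It is a homomorphism by the composition law and surjective by the above; for injectivity, suppose $\beta^f=\beta^g$. Then $f(U)S_2=g(U)S_2$, so with $h(U)\doteq f(U)-g(U)\in C^*(U)$ one has $h(U)S_2=0$, and since $h(U)$ commutes with $U$ also $h(U)S_1=h(U)US_2=Uh(U)S_2=0$. Via the Cuntz relation $S_1S_1^*+S_2S_2^*=1$ this gives $h(U)=h(U)S_1S_1^*+h(U)S_2S_2^*=0$, i.e.\ $f(U)=g(U)$; as $C^*(U)\cong C(\mathbb{T})$ this means $f=g$. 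Consequently $\End_{C^*(U)}(\Q_2)$ is a group isomorphic with $C(\mathbb{T},\mathbb{T})$. Since the decisive structural input, Theorem~\ref{Struct}, is already available, the only genuinely substantive elementary step here is the injectivity argument, whose point is to cancel the isometry $S_2$ by simultaneously exploiting $S_1=US_2$ and the Cuntz relation; everything else is bookkeeping.
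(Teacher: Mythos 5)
Your proof is correct and takes essentially the same route as the paper: there, Theorem \ref{Isom} is obtained as an immediate consequence of Theorem \ref{Struct}, combined with the identification of $\End_{C^*(U)}(\Q_2)$ with $\mathcal{S}_2$ made at the start of the section and the composition rule $\beta^f\circ\beta^g=\beta^{f\cdot g}$, exactly the ingredients you use. The only difference is that you make explicit the bookkeeping the paper leaves implicit, namely the bijection $\Lambda\mapsto\Lambda(S_2)$, invertibility via $\beta^{\bar f}$, and the injectivity of $f\mapsto\beta^f$.
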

\begin{remark}
The bijective correspondence $f\leftrightarrow\beta^f$ is also a homeomorphism between $C(\mathbb{T},\mathbb{T})$ equipped with the uniform convergence topology and $\Aut_{C^*(U)}(\Q_2)$ endowed with the norm pointwise convergence.
\end{remark}
We end this section by proving that $\Aut_{C^*(U)}(\Q_2)$ is in addition a maximal abelian subgroup of $\Aut(\Q_2)$.
\begin{theorem}\label{MaxAb}
The group $\Aut_{C^*(U)}(\Q_2)$ is  maximal abelian in $\Aut(\Q_2)$.
\end{theorem}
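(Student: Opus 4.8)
The plan is to show that any $\psi\in\Aut(\Q_2)$ commuting with every element of $\Aut_{C^*(U)}(\Q_2)$ must itself fix $U$; once this is granted, Theorem~\ref{Isom} identifies $\psi$ with some $\beta^f$, so $\psi\in\Aut_{C^*(U)}(\Q_2)$ and maximality follows. Throughout I write $G=\Aut_{C^*(U)}(\Q_2)=\{\beta^f:f\in C(\mathbb{T},\mathbb{T})\}$ and I exploit that the gauge automorphisms $\widetilde\alpha_z=\beta^{(z)}$ (constant functions) lie in $G$.

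First I would record two consequences of commutation. Since $\psi$ commutes with the gauge action, it preserves each spectral subspace; in particular $\psi(\Q_2^{\mathbb{T}})=\Q_2^{\mathbb{T}}$ and $\psi(U)\in\Q_2^{\mathbb{T}}$. Evaluating $\psi\circ\beta^f=\beta^f\circ\psi$ at $U$ and using $\beta^f(U)=U$ gives $\beta^f(\psi(U))=\psi(U)$ for every $f$, so $\psi(U)$ belongs to the common fixed-point set $\Q_2^{G}:=\bigcap_f\mathrm{Fix}(\beta^f)$. The first key step is the lemma $\Q_2^{G}=C^*(U)$. The inclusion $\supseteq$ is clear because each $\beta^f$ fixes $C^*(U)$ pointwise. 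For $\subseteq$ one first lands in $\Q_2^{\mathbb{T}}$ by gauge-invariance and then projects with the unique faithful expectation $E:\Q_2\to C^*(U)$ of Theorem~\ref{UMASA}: writing $x=E(x)+(x-E(x))$ with both summands in $\Q_2^{G}$, it suffices to see that a $\beta^f$-fixed element $y\in\Q_2^{\mathbb{T}}$ with $E(y)=0$ vanishes. This I would obtain by recovering $E$ as an average of the $\beta^f$ over a suitable compact subgroup of step-type functions (equivalently, by checking on the spanning monomials $S_\alpha S_\beta^*U^h$ that only pure powers of $U$ can survive the requirement of being fixed), so that $y=\int\beta^f(y)=E(y)=0$. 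Hence $\psi(U)=g(U)$ for some $g\in C(\mathbb{T},\mathbb{T})$, and the same argument applied to $\psi^{-1}$ yields $\psi^{-1}(U)=\tilde g(U)$ with $g\circ\tilde g=\tilde g\circ g=\mathrm{id}_{\mathbb{T}}$; thus $g$ is a homeomorphism of $\mathbb{T}$ and $\psi(C^*(U))=C^*(U)$.

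The main obstacle is the last step, namely forcing $g=\mathrm{id}_{\mathbb{T}}$. Setting $w:=\psi(S_2)$ and applying $\psi$ to $S_2U=U^2S_2$ gives $w\,g(U)=g(U)^2w$, while commutation of $\psi$ with $\beta^{z^n}$ (the automorphism $S_2\mapsto U^nS_2$) yields $\beta^{z^n}(w)=g(U)^n w$, hence $\beta^{z^n}(wS_2^*)=g(U)^n(wS_2^*)U^{-n}$. Feeding this into the unique trace $\tau$ of the Bunce–Deddens algebra $\Q_2^{\mathbb{T}}$ (which is $\psi$-invariant, the trace being unique) and setting $c:=E(wS_2^*)\in C(\mathbb{T})$, $\rho(z):=g(z)/z$, I get $\int_{\mathbb{T}}\rho(z)^n c(z)\,d\mu(z)=\int_{\mathbb{T}} c\,d\mu$ for all $n\in\mathbb{Z}$, which forces $c$ to be supported on the closed set $\{z:g(z)=z\}$. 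Since $c$ is continuous and $\{g\neq\mathrm{id}\}$ is open, this says $c\equiv0$ wherever $g$ moves a point. Combining this with the partial-isometry identity $(wS_2^*)^*(wS_2^*)=S_2S_2^*$ and the range relation $ww^*+g(U)ww^*g(U)^*=1$ should rule out $c$ vanishing on a nonempty open arc unless $g=\mathrm{id}$.

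An alternative, and probably cleaner, way to finish—should the measure-theoretic estimate prove delicate—is to first establish that $g$ is a group endomorphism of $\mathbb{T}$, i.e.\ $g(z^2)=g(z)^2$: this reduces $g$ to $z\mapsto z^{\pm1}$ by continuity (cf.\ the Appendix) together with the homeomorphism property. The case $g(z)=z$ is what we want, while the case $g(z)=\bar z$, i.e.\ $\psi(U)=U^*$, can be excluded formally: then $\psi\circ\widetilde\lambda_f$ fixes $U$, so by Theorem~\ref{Isom} it equals some $\beta^h$, whence $\psi=\beta^h\circ\widetilde\lambda_f$; substituting into $\psi\circ\beta^k=\beta^k\circ\psi$ and using that the $\beta$'s commute forces $\widetilde\lambda_f\circ\beta^k=\beta^k\circ\widetilde\lambda_f$ for every $k$, i.e.\ the flip-flop would commute in $\Aut(\Q_2)$ with every $\beta^k$. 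This contradicts the non-commutation exploited in the proof of Theorem~\ref{out-not-ab} (and is consistent with Theorem~\ref{GenOut}). Either route concludes $g=\mathrm{id}_{\mathbb{T}}$, so $\psi(U)=U$ and $\psi\in\Aut_{C^*(U)}(\Q_2)$, proving maximality.
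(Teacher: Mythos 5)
Your overall strategy (show $\psi(U)=U$, then invoke Theorem~\ref{Isom}) is the same as the paper's, but your execution has genuine gaps at both of its load-bearing points, and it misses the one observation that makes the paper's proof short: the group $\Aut_{C^*(U)}(\Q_2)$ \emph{contains inner automorphisms}, namely $\textrm{ad}(h(U))$ for every $h\in C(\mathbb{T},\mathbb{T})$ (these fix $U$, being $\beta^{f}$ with $f(z)=h(z)\overline{h(z^2)}$). First gap: your key lemma $\Q_2^{G}=C^*(U)$ is true, but neither of your proposed proofs works --- there is no compact subgroup of ``step-type'' functions in $C(\mathbb{T},\mathbb{T})$ over which to average (continuous step functions on $\mathbb{T}$ are constants, and the characters form a discrete, non-compact subgroup), and checking fixedness ``on spanning monomials'' is not valid since a fixed element is a limit of linear combinations of monomials, none of which need be fixed. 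The lemma does follow in one line from the missed observation: $\Q_2^{G}\subseteq\bigcap_h\{x:\textrm{ad}(h(U))x=x\}\cap\Q_2=C^*(U)'\cap\Q_2=C^*(U)$ by Theorem~\ref{UMASA}. Second and more serious gap: neither of your two routes to $g=\mathrm{id}_\mathbb{T}$ is complete. In the trace route, from $\int_\mathbb{T}\rho^n c\,d\mu=\int_\mathbb{T} c\,d\mu$ for all $n\in\mathbb{Z}$ you may only conclude that the \emph{pushforward} of the complex measure $c\,d\mu$ under $\rho$ is a point mass at $1$; this does not force $c$ to vanish off $\{g=\mathrm{id}\}$, because cancellation inside the level sets of $\rho$ is possible ($c\,d\mu$ is not positive). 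Moreover the closing claim that the partial-isometry and range relations ``should rule out'' $c$ vanishing on an arc is asserted, not proved. In the alternative route, the crucial claim $g(z^2)=g(z)^2$ is likewise asserted with no argument; the relations you actually have ($wg(U)=g(U)^2w$, etc.) do not visibly yield it. So as written the proof does not close.

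For comparison, the paper's proof runs entirely through the inner elements of the group: commutation of $\psi$ with $\textrm{ad}(U)\in\Aut_{C^*(U)}(\Q_2)$ gives $\textrm{ad}(\psi(U))=\textrm{ad}(U)$, hence $\psi(U)=zU$ by simplicity of $\Q_2$ --- a rotation, much stronger than your ``$\psi(U)=g(U)$ with $g$ a homeomorphism.'' Then commutation with $\textrm{ad}(g(U))$ for every $g\in C(\mathbb{T},\mathbb{T})$ gives $g(zU)=\psi(g(U))=\lambda_g\, g(U)$, i.e.\ $g(zw)=\lambda_g g(w)$ for all $w$ and all continuous $g$, which forces $z=1$ since the characters are the only eigenfunctions of the rotation operator $\Phi_z$ on $L^2(\mathbb{T})$ when $z$ is not a root of unity (the root-of-unity case being similar). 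Your opening reduction and your exclusion of the case $\psi(U)=U^*$ (via $\psi=\beta^h\circ\widetilde\lambda_f$ contradicting the non-commutation behind Theorem~\ref{out-not-ab}) are fine; if you replace your fixed-point-algebra argument by the maximality argument above and note that commutation with $\textrm{ad}(U)$ already pins $g$ down to a rotation, your proof collapses into essentially the paper's.
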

\begin{proof}
We have to show that if $\alpha\in\Aut(\Q_2)$ commutes with any element of $\Aut_{C^*(U)}(\Q_2)$ then $\alpha$ is
itself an element of the latter group. Now, the equality $\alpha\circ\textrm{ad}(U)=\textrm{ad}(U)\circ\alpha$ gives
$\textrm{ad}(\alpha(U))=\textrm{ad}(U)$. Therefore, $\alpha(U)=zU$ for some $z\in\mathbb{T}$ by simplicity of $\Q_2$. The conclusion is then achieved if we show that
actually $z=1$. Exactly as above, we also have $\textrm{ad}(\alpha(g(U)))=\textrm{ad}(g(U))$ for any $g\in C(\mathbb{T},\mathbb{T})$. Again, thanks to simplicity we see that
$g(zU)=g(\alpha(U))=\alpha(g(U))=\lambda g(U)$ for some $\lambda\in\mathbb{T}$, possibly depending on $g$. In terms of functions we find the equality 
$g(zw)=\lambda_g g(w)$, which can hold true for any $g\in C(\mathbb{T},\mathbb{T})$ only if $z=1$. Indeed, when $z\neq 1$ is not a root of unity the characters $w^n$ are the sole eigenfunctions of the unitary operator
$\Phi_z$ acting on $L^2(\mathbb{T})$ as $(\Phi_z f)(w)\doteq f(zw)$. Finally, the case of a $z$ that is a root of unity is dealt with similarly. 
\end{proof}
\begin{remark}
The findings above are worth comparing with a result obtained in \cite{Cuntzsurvey} 
that the group of automorphisms of $\O_2$ fixing the diagonal $\D_2$ is maximal abelian too.
\end{remark}
Moreover, the theorem enables to thoroughly describe the automorphisms that send $U$ to its adjoint, which have been shown to be automatically outer.
\begin{theorem}
If $\alpha$ is an automorphism of $\Q_2$ such that $\alpha(U)=U^*$, then $\alpha(S_2)=f(U)S_2$ for a suitable $f\in C(\mathbb{T},\mathbb{T})$.
\end{theorem}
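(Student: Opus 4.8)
The plan is to reduce everything to the classification of $\Aut_{C^*(U)}(\Q_2)$ obtained in Theorem \ref{Isom}, using the order-two automorphism $\chi_{-1}$ (defined by $\chi_{-1}(U)=U^*$ and $\chi_{-1}(S_2)=S_2$) to absorb the inversion of $U$. The key tactical choice is to compose $\alpha$ with $\chi_{-1}$ on the \emph{right}, precisely so as to exploit the fact that $\chi_{-1}$ fixes $S_2$.

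Concretely, I would set $\beta\doteq\alpha\circ\chi_{-1}$ and first check that $\beta$ fixes $U$: indeed $\beta(U)=\alpha(\chi_{-1}(U))=\alpha(U^*)=\alpha(U)^*=(U^*)^*=U$. Hence $\beta$ is an automorphism of $\Q_2$ with $\beta(U)=U$, that is $\beta\in\Aut_{C^*(U)}(\Q_2)$. Theorem \ref{Isom} then supplies a continuous function $f\in C(\mathbb{T},\mathbb{T})$ with $\beta=\beta^f$; in particular $\beta(S_2)=f(U)S_2$ by the very definition of $\beta^f$.

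Finally, since $\chi_{-1}$ is involutive we have $\alpha=\beta\circ\chi_{-1}$, whence
$$\alpha(S_2)=\beta(\chi_{-1}(S_2))=\beta(S_2)=f(U)S_2,$$
which is exactly the asserted form. I do not expect a genuine obstacle in this argument: the entire substance is carried by the preceding classification of $\Aut_{C^*(U)}(\Q_2)$, and the only thing one must get right is the bookkeeping. The one subtlety worth flagging is the side on which $\chi_{-1}$ is applied: multiplying on the right lets the relation $\chi_{-1}(S_2)=S_2$ make the last computation collapse immediately, whereas the other order would leave a factor $f(U^*)$ that one would then have to rewrite as $g(U)$ with $g(z)=f(\bar z)$ via the functional calculus—still harmless, but an avoidable detour.
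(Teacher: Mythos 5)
Your proof is correct and takes essentially the same route as the paper, which simply applies Theorem \ref{Isom} to $\widetilde\lambda_f\circ\alpha$ (composing on the left with the flip-flop rather than on the right with $\chi_{-1}$). Your variant is marginally tidier, since $\chi_{-1}$ fixes $S_2$ and so the conclusion drops out with no residual rewriting, whereas the paper's choice implicitly leaves the step $\alpha(S_2)=\widetilde\lambda_f(g(U)S_2)=g(U^*)US_2=f(U)S_2$ with $f(z)=z\,g(\bar z)$ to the reader.
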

\begin{proof}
Just apply the former result to $\widetilde\lambda_f\circ\alpha$.
\end{proof}
Finally, the automorphisms $\beta^f$ can also be characterized in terms of the Cuntz-Takesaki generalized correspondence we discussed at the end of Section 4.1. Indeed, they turn out to be precisely  those $\Lambda\in\End(\Q_2)$ for which the corresponding $W\doteq U^*u^*\Lambda(U)u$ equals $1$, where $u$ stands for $u_\Lambda$ for brevity. For $W=1$ we find in fact the equality
$S_2uUu^*=U^2S_2=S_2U$, whence $uUu^*=U$. Therefore by maximality there exists a function $f\in C(\mathbb{T},\mathbb{T})$ such that $u=f(U)$, that is $\Lambda=\beta^f$. 

\subsection[Automorphisms $\alpha$ such that $\alpha(U)=zU$]{Automorphisms $\pmb{\alpha}$ such that $\pmb{\alpha(U)=zU}$}
The following discussion addresses the problem of studying those automorphisms $\Lambda$ of $\Q_2$ such that $\Lambda(U)=zU$, with $z\in\mathbb{T}$.  We start tackling the problem by defining two operators acting on $\ell_2(\mathbb{Z})$. The first is the isometry $S_z'$, which is given by $S_z'e_k\doteq z^ke_{2k}$.   The second is the unitary $U_z$, which is given by $U_ze_k\doteq z^k e_k$. The following commutation relations are both easily verified:
\begin{itemize}
\item $U_zU=zUU_z$
\item $U_zS_2=S_z'U_z$
\end{itemize}
The first relation can also be rewritten as $\textrm{ad}(U_z)(U)=zU$. We caution the reader that at this level $\textrm{ad}(U_z)$ makes sense as an automorphism of $B(\ell_2(\mathbb{Z}))$ only, because we do not know yet
whether $U_z$ sits in $\Q_2$. If it does, the first relation says, inter alia, that $\Q_2$ also contains a copy of the noncommutative torus $\mathcal{A}_z$ in a rather explicit way, which is worth mentioning.
In order to decide what values of $z$ do give a unitary $U_z$ belonging to $\Q_2$, the first thing to note is that if $U_z$ is in $\Q_2$, then it must be in the diagonal subalgebra $\D_2$, as shown in the following lemma.
\begin{lemma}
If $U_z$ is in $\Q_2$, then $U_z\in\D_2$.
\end{lemma}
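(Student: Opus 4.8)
The plan is to reduce the statement to the maximal abelianness of $\D_2$ already established in Theorem \ref{DMASA}, working throughout in the canonical representation. The essential observation is that $U_z$ is by construction a diagonal operator: since $U_z e_k = z^k e_k$ for every $k\in\mathbb{Z}$, the operator $U_z$ is multiplication by the bounded sequence $\{z^k\}$ with respect to the canonical orthonormal basis, so that $U_z\in\ell_\infty(\mathbb{Z})$, the atomic MASA of $B(\ell_2(\mathbb{Z}))$.

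With this in hand the proof is essentially immediate. I would recall that in the canonical representation one has $\D_2'=\ell_\infty(\mathbb{Z})$ (the proposition proved in Section \ref{SecPrep}), so that the relative commutant appearing in the proof of Theorem \ref{DMASA} is exactly $\D_2'\cap\Q_2=\ell_\infty(\mathbb{Z})\cap\Q_2$, and that theorem asserts precisely $\ell_\infty(\mathbb{Z})\cap\Q_2=\D_2$. Now invoke the standing hypothesis $U_z\in\Q_2$: combining it with $U_z\in\ell_\infty(\mathbb{Z})$ yields
$$
U_z\in\ell_\infty(\mathbb{Z})\cap\Q_2=\D_2,
$$
which is the desired conclusion. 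Equivalently, one may phrase the middle step as noting that $U_z$, being diagonal, commutes with every $S_\alpha S_\alpha^*\in\D_2$, hence $U_z\in\D_2'\cap\Q_2=\D_2$ by maximality of $\D_2$.

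I do not expect any genuine obstacle here: the content of the lemma has already been absorbed into Theorem \ref{DMASA}, and all that remains is the trivial remark that $U_z$ acts diagonally. The only point worth stating explicitly, to keep the argument self-contained, is the identification $\D_2'\cap\Q_2=\ell_\infty(\mathbb{Z})\cap\Q_2$ valid in the canonical representation, after which the inclusion $U_z\in\D_2$ is forced.
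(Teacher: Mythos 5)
Your proof is correct and is essentially the paper's own argument: the paper likewise observes that $U_z$ is diagonal, hence lies in $\ell_\infty(\mathbb{Z})=\D_2'$, and then concludes $U_z\in\D_2'\cap\Q_2=\D_2$ by the maximality established in Theorem \ref{DMASA}. Nothing is missing and nothing differs in substance.
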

\begin{proof}
A straightforward application of the equality $\D_2'\cap\Q_2=\D_2$, as $U_z$ is in $\D_2'=\ell_\infty(\mathbb{Z})$.
\end{proof}
The second thing to note is that the unitary representation $\mathbb{T}\ni z\mapsto U_z\in \U(B(\ell_2(\mathbb{Z})))$ is only strongly continuous. 
This implies that not every $U_z$ is an element of $\Q_2$. For the representation $z\mapsto U_z$ is only strongly continuous, which means 
the set $\{U_z\}_{z\in\mathbb{T}}$ is not separable with respect to the norm topology, whereas $\Q_2$ obviously is.
The next result provides a first answer to the question whether $U_z$ belongs to $\D_2$ . More than that, it also gives an explicit formula for $U_z$.
\begin{proposition}
If $z\in\mathbb{T}$ satisfies $z^{2^n}=1$ for some natural number $n$, then $U_z$ is in $\D_2$.
\end{proposition}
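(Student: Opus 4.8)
The plan is to exhibit $U_z$ explicitly as a finite linear combination of the length-$n$ range projections $S_\alpha S_\alpha^*$, all of which sit inside $\D_2$. First I would recall the action of the generators in the canonical representation: $S_2 e_k = e_{2k}$ and, since $S_1 = U S_2$, also $S_1 e_k = e_{2k+1}$. From this one reads off, by an immediate induction on $n = |\alpha|$, that for each multi-index $\alpha$ of length $n$ the isometry $S_\alpha$ sends $e_k$ to $e_{2^n k + r_\alpha}$, where $r_\alpha = \sum_{i=1}^{n} 2^{i-1}\epsilon_i \in \{0, 1, \ldots, 2^n - 1\}$ with $\epsilon_i = 1$ if $\alpha_i = 1$ and $\epsilon_i = 0$ if $\alpha_i = 2$. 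Consequently $S_\alpha S_\alpha^*$ is the orthogonal projection $P_{r_\alpha}$ onto $\overline{{\rm span}}\{e_m : m \equiv r_\alpha \pmod{2^n}\}$.

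Next I would observe that as $\alpha$ ranges over the $2^n$ multi-indices of length $n$, the residues $r_\alpha$ exhaust $\{0, 1, \ldots, 2^n - 1\}$: indeed the projections $S_\alpha S_\alpha^*$ are pairwise orthogonal and sum to $1$ by the length-$n$ Cuntz relation $\sum_{|\alpha| = n} S_\alpha S_\alpha^* = 1$, while the $2^n$ residue classes likewise partition the canonical basis. Hence $\{S_\alpha S_\alpha^* : |\alpha| = n\} = \{P_0, \ldots, P_{2^n-1}\}$, and these projections span the finite-dimensional subalgebra $\D_2^n \subset \D_2$ consisting of those diagonal operators that are constant on residue classes modulo $2^n$.

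Finally, the hypothesis $z^{2^n} = 1$ enters: since $U_z e_k = z^k e_k$ and $z^{k + 2^n} = z^k z^{2^n} = z^k$, the diagonal entry $z^k$ depends only on $k \bmod 2^n$. Thus $U_z$ is constant on each residue class modulo $2^n$, which yields the explicit expression
\[
U_z = \sum_{j=0}^{2^n - 1} z^j P_j = \sum_{|\alpha| = n} z^{r_\alpha}\, S_\alpha S_\alpha^* \in \D_2^n \subset \D_2,
\]
as desired. There is no genuine obstacle here: the only point requiring a little care is the bookkeeping in the first step identifying $S_\alpha S_\alpha^*$ with the residue-class projection $P_{r_\alpha}$, and this is a routine induction once the base cases $S_1 S_1^* = P_1$ and $S_2 S_2^* = P_0$ are checked. (One may note in passing that for $z$ \emph{not} a $2^n$-th root of unity for any $n$ the sequence $k \mapsto z^k$ fails to be eventually periodic, so $U_z$ cannot lie in the norm closure $\D_2$ of $\bigcup_n \D_2^n$; this is consistent with the earlier remark that $z \mapsto U_z$ is merely strongly continuous.)
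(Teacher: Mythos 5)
Your proof is correct and follows essentially the same route as the paper: both identify $U_z$ with the finite sum $\sum_{j} z^j P_j$, where the residue-class-mod-$2^n$ projections $P_j$ are recognized as the range projections $S_\alpha S_\alpha^*$ of length-$n$ multi-indices (your binary bookkeeping $r_\alpha = \sum_i 2^{i-1}\epsilon_i$ is exactly the paper's ``read the multi-index right to left in the order $2<1$''). Your version is in fact slightly cleaner, since it handles any $z$ with $z^{2^n}=1$ directly without the paper's preliminary reduction to primitive roots, and it makes explicit the verification the paper only asserts.
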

\begin{proof}
Obviously only primitive roots have to be dealt with. But for such roots, say $z=e^{i2\pi/2^k}$, the unitary $U_z$ may in fact be identified to the sum $\sum_{j=0}^{2^{k-1}}z^jP_j$, where the projection $P_j$ belongs to $\D_2$, being more explicitly given by
$P_{i_1i_2\ldots i_k}$, where the multi-index $(i_1,i_2,\ldots, i_k)\in \{1,2\}^k$ is the $j$-th with respect to the lexicographic order in which $2<1$ and the multi-index itself is read from right to left.
\end{proof}
The automorphisms obtained above are of course of finite order. More precisely, the order of $\textrm{ad}(U_z)$ is just the same as the order of the corresponding $z$. In other words, what we know is that the automorphism of
$C^*(U)$ induced by the rotation on $\mathbb{T}$ by a $2^n$-th root of unity extends to an inner automorphism of $\Q_2$, whose order is still finite being just $2^n$. Due to the lack of norm continuity of the representation $z\rightarrow U_z$, though, the case of a general $z$ is  out of the reach of the foregoing proposition and must needs be treated separately with different techniques. 
To begin with, we recall a result whose content should be well known. Nevertheless, we do include a proof not only for the sake of completeness but also to set some notations 
we shall need in the following considerations.
\begin{lemma}\label{lemma-proj}
Any projection $P\in \D_2$ is in the linear algebraic span of $\{S_\alpha S_\alpha^*\}_{\alpha \in W_2}$.
\end{lemma}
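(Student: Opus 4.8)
The plan is to exploit that $\D_2$ is \emph{abelian} and is the norm closure of the increasing union $\bigcup_k \D_2^k$ of the finite-dimensional subalgebras $\D_2^k={\rm span}\{S_\alpha S_\alpha^* : |\alpha|=k\}$, whose minimal projections are precisely the mutually orthogonal $e_\alpha\doteq S_\alpha S_\alpha^*$ with $|\alpha|=k$, which satisfy $\sum_{|\alpha|=k}e_\alpha=1$. Since the algebraic linear span of $\{S_\alpha S_\alpha^*\}_{\alpha\in W_2}$ is exactly $\bigcup_k\D_2^k$, the statement amounts to showing that a projection $P\in\D_2$ already lies in some $\D_2^k$.

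First I would approximate: given a projection $P\in\D_2=\overline{\bigcup_k\D_2^k}$, choose $k$ and an element $y\in\D_2^k$ with $\|P-y\|<\tfrac{1}{2}$; replacing $y$ by $(y+y^*)/2\in\D_2^k$ we may take $y$ self-adjoint, so $y=\sum_{|\alpha|=k}c_\alpha e_\alpha$ with $c_\alpha\in\mathbb{R}$.

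The key step is then to deduce that $P$ lies in $\D_2^k$. Because $\D_2$ is commutative, $P$ commutes with each $e_\alpha$, so $q_\alpha\doteq Pe_\alpha$ is again a projection, a subprojection of $e_\alpha$. Multiplying $\|P-y\|<\tfrac{1}{2}$ on the right by $e_\alpha$ gives $\|q_\alpha-c_\alpha e_\alpha\|<\tfrac{1}{2}$. Were both $q_\alpha\neq 0$ and $e_\alpha-q_\alpha\neq 0$, the operator $q_\alpha-c_\alpha e_\alpha$ would take the two values $1-c_\alpha$ and $-c_\alpha$ on orthogonal nonzero subspaces, forcing $\max(|1-c_\alpha|,|c_\alpha|)<\tfrac{1}{2}$, which is impossible since $|1-c_\alpha|+|c_\alpha|\geq 1$. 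Hence for every $\alpha$ with $|\alpha|=k$ either $Pe_\alpha=0$ or $Pe_\alpha=e_\alpha$. Assembling via $\sum_{|\alpha|=k}e_\alpha=1$ then yields $P=P\sum_\alpha e_\alpha=\sum_\alpha Pe_\alpha=\sum_{\alpha\in F}e_\alpha$, where $F\doteq\{\alpha:|\alpha|=k,\ Pe_\alpha=e_\alpha\}$ is finite, i.e. $P$ is a finite sum of the $S_\alpha S_\alpha^*$, as desired.

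The main obstacle—really the only subtle point—is the dichotomy $Pe_\alpha\in\{0,e_\alpha\}$: this is where the projection hypothesis, the abelianness of $\D_2$ (so that $Pe_\alpha$ is a projection and the pieces decouple), and the sharp threshold $\tfrac{1}{2}$ in the approximation all come together. Conceptually this reflects the Gelfand picture $\D_2\cong C(X)$ with $X$ the Cantor set, the $e_\alpha$ being characteristic functions of cylinder sets and the claim reducing to the fact that every clopen subset of $X$ is a finite disjoint union of cylinders; the operator-theoretic argument above simply makes this concrete without invoking the Gelfand transform.
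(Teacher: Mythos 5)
Your proof is correct, but it takes a genuinely different route from the paper's. The paper realizes $\D_2$ concretely as $C(K)$ with $K=\{1,2\}^{\mathbb{N}}$ the Cantor set, identifies projections with characteristic functions of clopen sets, and then invokes the fact that every clopen in this product space is a finite disjoint union of cylinders $C_\alpha$, each corresponding to $S_\alpha S_\alpha^*$ --- essentially the Gelfand-picture argument you mention at the end as motivation but deliberately avoid. You instead run the standard AF-algebra approximation argument: approximate the projection $P$ within $\tfrac12$ by a self-adjoint element of $\D_2^k$, and use commutativity plus the spectral rigidity of projections (the ``$\tfrac12$ trick'': a self-adjoint element within distance $\tfrac12$ of both $1-c_\alpha$ and $-c_\alpha$ cannot exist, since $|1-c_\alpha|+|c_\alpha|\geq 1$) to force the dichotomy $Pe_\alpha\in\{0,e_\alpha\}$, whence $P=\sum_{\alpha\in F}e_\alpha$. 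Each step checks out: $\D_2^k\subset\D_2^{k+1}$ by the refinement $S_\alpha S_\alpha^*=S_{\alpha 1}S_{\alpha 1}^*+S_{\alpha 2}S_{\alpha 2}^*$, so the algebraic span of the $S_\alpha S_\alpha^*$ is indeed $\bigcup_k\D_2^k$; the element $q_\alpha=Pe_\alpha$ is a projection below $e_\alpha$ by commutativity; and your subspace dichotomy is sound (one small stylistic point: the ``two values on orthogonal subspaces'' step implicitly fixes a faithful representation, which is harmless, or can be replaced by noting that the spectrum of $q_\alpha-c_\alpha e_\alpha$ in the corner $e_\alpha\D_2 e_\alpha$ contains both $1-c_\alpha$ and $-c_\alpha$). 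What the two approaches buy: yours is self-contained and purely $C^*$-algebraic, needing neither the identification of the spectrum of $\D_2$ nor the combinatorial fact about clopens that the paper dismisses with ``clearly''; the paper's is shorter and sets up the Cantor-set (and subsequently $2$-adic integer) picture of $\D_2$ that is reused immediately afterwards in Lemma \ref{Referee} and the theorem on $U_z$.
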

\begin{proof}
It is convenient to realize $\D_2$ as the concrete $C^*$-algebra $C(K)$, with the spectrum $K$ being given by the Tychonoff product $\{1,2\}^{\mathbb{N}}$. If we do so, the projections of $\D_2$ are immediately seen 
to identify with the characteristic functions of the clopens of $K$, and these are clearly the cylinder sets in the product space. The conclusion now follows noting that for any multi-index $\alpha\in W_2$ the characteristic function of a cylinder
$C_{\alpha}=\{x\in K: x(k)=\alpha_k\,\textrm{for any}\, k=0,1,\ldots ,|\alpha| \}$ corresponds indeed to $S_{\alpha}S_{\alpha}^*$. 
\end{proof}
At this point, it remains to show that $U_z$ does not belong to $\D_2$ for any other values of $z \in {\mathbb T}$. 
Although this could be done by means of explicit computations, as it was in an early version of this paper, we prefer to present a rather elegant method suggested by the referee.\\

The Cantor set $K=\{0,1\}^{\mathbb{N}}$ can also be realized as the ring of $2$-adic integer numbers $\mathbb{Z}_2$ via the bijective correspondence
$K\ni x=\{x_n\}_{n=0}^\infty\leftrightarrow \sum_{n=0}^\infty x_n2^n\in\mathbb{Z}_2$. In this picture the former digit $2$ has to be replaced by $0$. 
Accordingly, as of now we think of $\D_2$ as $C(\mathbb{Z}_2)$. Furthermore, as $\mathbb{Z}_2$ is
by definition the completion of $\mathbb{Z}$ under the metric induced by the $2$-adic absolute value, any $f\in C(\mathbb{Z}_2)$
is uniquely determined by its restriction to $\mathbb{Z}\subset\mathbb{Z}_2$. This gives an isometric inclusion of $\D_2\cong C(\mathbb{Z}_2)$ into 
$C(\mathbb{Z})\subset\ell_\infty (\mathbb{Z})$, which is nothing but the canonical representation of $\D_2$ on $\ell_2(\mathbb{Z})$. To see this,
it is enough to note that the generating projections $S_\alpha S_\alpha^*$ are indeed the characteristic functions of the subsets $\{2^n k+ l: k\in\mathbb{Z}\}\subset\mathbb{Z}$, where
$n=|\alpha|$ and $l=\sum_{j=0}^{n-1}\alpha_j2^j$. Phrased differently, we  have obtained the following useful characterization.

\begin{lemma}\label{Referee}
Let $f\in\ell_\infty(\mathbb{Z}) \subset B(\ell_2(\mathbb{Z}))$. Then $f$ is in $\D_2$ if and only if $f:\mathbb{Z}\rightarrow\mathbb{C}$ extends to a continuous function $\tilde f:\mathbb{Z}_2\rightarrow\mathbb{C}$.

\end{lemma}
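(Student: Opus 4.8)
The plan is to read the lemma directly off the identification set up in the paragraphs immediately preceding it: the canonical representation realizes $\D_2\cong C(\mathbb{Z}_2)$ inside $\ell_\infty(\mathbb{Z})$ as the operation of restricting a continuous function on $\mathbb{Z}_2$ to the dense subset $\mathbb{Z}\subset\mathbb{Z}_2$. Concretely, I would introduce the restriction map $\rho\colon C(\mathbb{Z}_2)\to\ell_\infty(\mathbb{Z})$, $\rho(g)\doteq g|_{\mathbb{Z}}$, and first record that $\rho$ is an isometric $*$-homomorphism: since $\mathbb{Z}$ is dense in the compact space $\mathbb{Z}_2$, continuity forces $\sup_{m\in\mathbb{Z}}|g(m)|=\sup_{x\in\mathbb{Z}_2}|g(x)|$, so $\|\rho(g)\|=\|g\|$; in particular $\rho$ is injective with norm-closed range. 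The content of the preceding discussion is precisely that the canonical image of $\D_2$ equals $\rho(C(\mathbb{Z}_2))$: on the generators $S_\alpha S_\alpha^*$ the canonical representation produces the characteristic function of the coset $\{2^n k+l:k\in\mathbb{Z}\}$, which is exactly the restriction to $\mathbb{Z}$ of the indicator of the clopen cylinder $C_\alpha\subset\mathbb{Z}_2$ (the $2$-adic ball $l+2^{n}\mathbb{Z}_2$). This agreement on generators then propagates to all of $\D_2$ by linearity and norm continuity, using that $\rho$ is isometric.

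Granting this, the lemma becomes essentially formal, and I would split it into the two implications. For the ``only if'' direction, if $f\in\D_2$ then $f=\rho(g)=g|_{\mathbb{Z}}$ for some $g\in C(\mathbb{Z}_2)$, so $\tilde f\doteq g$ is the desired continuous extension. For the ``if'' direction, if $f$ admits a continuous extension $\tilde f\in C(\mathbb{Z}_2)$, then $f=\tilde f|_{\mathbb{Z}}=\rho(\tilde f)$ lies in $\rho(C(\mathbb{Z}_2))=\D_2$. I would also note that the extension, when it exists, is unique, again by density of $\mathbb{Z}$ in $\mathbb{Z}_2$, so there is no ambiguity in passing back and forth between $f$ and $\tilde f$.

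The only genuine point requiring care --- and hence the ``main obstacle,'' though it is a mild one already carried out in the text above the statement --- is the verification that the canonical representation coincides with $\rho$. Once one checks that the generators match, the passage to the full algebra is automatic: a norm-Cauchy sequence of finite linear combinations of cylinder indicators on the $\mathbb{Z}$ side lifts, via the isometry $\rho$, to a norm-Cauchy sequence in $C(\mathbb{Z}_2)$ whose limit is the sought continuous function. No delicate estimate is needed; the whole argument is a repackaging of the dense-restriction isomorphism between $C(\mathbb{Z}_2)$ and the algebra of functions on $\mathbb{Z}$ admitting a continuous $2$-adic extension.
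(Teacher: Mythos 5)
Your proposal is correct and takes essentially the same route as the paper: the paper's own justification of Lemma \ref{Referee} is exactly the preceding identification of $\D_2$ with $C(\mathbb{Z}_2)$ (cylinder sets corresponding to the $2$-adic balls $l+2^n\mathbb{Z}_2$) together with the check, on the generating projections $S_\alpha S_\alpha^*$, that the canonical representation of $\D_2$ on $\ell_2(\mathbb{Z})$ is restriction of continuous functions to the dense subring $\mathbb{Z}\subset\mathbb{Z}_2$, which is isometric. Your map $\rho$ and the two formal implications are just a slightly more explicit packaging of that same argument.
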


We are now in a position to state and prove the main result of the present subsection.
\begin{theorem}
Let $z\in \mathbb{T}$. Then $U_z\in \D_2$ if and only if $z$ is a root of unity of order a power of $2$.
\end{theorem}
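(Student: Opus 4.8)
The plan is to reduce everything to the characterization of $\D_2$ furnished by Lemma \ref{Referee}. Viewed as an element of $\ell_\infty(\mathbb{Z})$, the unitary $U_z$ is nothing but the function $f_z:\mathbb{Z}\to\mathbb{C}$, $f_z(k)=z^k$. Hence, by Lemma \ref{Referee}, $U_z\in\D_2$ if and only if $f_z$ extends to a continuous function on $\mathbb{Z}_2$. Since $\mathbb{Z}_2$ is by definition the completion of $\mathbb{Z}$ in the $2$-adic metric and $\mathbb{C}$ is complete, I would invoke the standard fact that a map defined on a dense subset of a metric space, valued in a complete space, extends continuously precisely when it is uniformly continuous. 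Thus the whole theorem becomes a statement purely about uniform continuity of $f_z$ with respect to the $2$-adic metric: $U_z\in\D_2$ iff for every $\varepsilon>0$ there is $N$ such that $2^N\mid(k-k')$ implies $|z^k-z^{k'}|<\varepsilon$.

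For the nontrivial implication I would assume $U_z\in\D_2$ and exploit this uniform continuity. Writing $m=k-k'$ and using $|z^k-z^{k'}|=|z^m-1|$, the condition says that for every $\varepsilon>0$ there is $N$ with $|z^m-1|<\varepsilon$ whenever $2^N\mid m$. Fixing, say, $\varepsilon=1$, the set $\{z^m:2^N\mid m\}$ is exactly the cyclic subgroup $\langle z^{2^N}\rangle$ of $\mathbb{T}$, and it is entirely contained in the arc $\{u\in\mathbb{T}:|u-1|<1\}$. The key point is then that a subgroup of $\mathbb{T}$ confined to such a small arc around $1$ must be trivial: a nontrivial cyclic subgroup is either dense in $\mathbb{T}$ or equals the full group $\mu_d$ of $d$-th roots of unity for some $d\geq2$, and in either case it contains a point at distance at least $\sqrt{3}>1$ from $1$. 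Therefore $z^{2^N}=1$, i.e. $z$ is a root of unity of order dividing $2^N$, hence of order a power of $2$.

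The converse direction is already the content of the proposition established earlier, but it also falls out of the same framework: if $z^{2^N}=1$ then $f_z(k)=z^k$ depends only on $k\bmod 2^N$, so it is constant on $2$-adic balls of radius $2^{-N}$ and is trivially uniformly continuous, whence $U_z\in\D_2$. The one genuinely substantive step, and the place I expect to have to argue carefully, is the group-theoretic observation that a nontrivial subgroup of $\mathbb{T}$ cannot be squeezed into a small neighborhood of the identity; everything else is routine translation between the operator, its symbol on $\mathbb{Z}$, and the $2$-adic topology.
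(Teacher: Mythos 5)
Your proposal is correct and follows essentially the paper's own route: the paper likewise reduces via Lemma \ref{Referee} to deciding when $\mathbb{Z}\ni k\mapsto z^k$ extends continuously to $\mathbb{Z}_2$, and dismisses that last step as ``easily seen,'' which you instead prove carefully (uniform continuity in the $2$-adic metric plus the fact that a subgroup of $\mathbb{T}$ confined near $1$ is trivial). One minor point: the equivalence ``extends continuously iff uniformly continuous'' uses the compactness of $\mathbb{Z}_2$ (Heine--Cantor) for the only-if direction, not merely the completeness of the target, though this is harmless here since $\mathbb{Z}_2$ is indeed compact.
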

\begin{proof}
Thanks to Lemma \ref{Referee}, it is enough to make it plain when $\mathbb{Z}\ni k\mapsto z^k\in\mathbb{C}$ extends to a continuous function of
$\mathbb{Z}_2$. It is easily seen that this is the case if and only if $z$ is a dyadic root of unity.
\end{proof}

For  those $z\in\mathbb{T}$ such that $U_z$ lies in $\Q_2$ we can say a bit more.

\begin{proposition}
Let $z\in\mathbb{T}$ be a dyadic root of unity and let $\alpha\in \Aut(\Q_2)$ be such that $\alpha(U)=zU$. Then there exists a $f\in C(\mathbb{T},\mathbb{T})$ such that $\alpha(S_2)=f(zU)S_z'$.
\end{proposition}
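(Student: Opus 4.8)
The plan is to reduce everything to the already-settled case $\alpha(U)=U$ by composing with the inner automorphism $\textrm{ad}(U_z)$. Since $z$ is a dyadic root of unity, the preceding theorem guarantees $U_z\in\D_2\subset\Q_2$, so $\textrm{ad}(U_z)$ is a genuine (inner) automorphism of $\Q_2$, not merely of $B(\ell_2(\mathbb{Z}))$. By the first commutation relation $U_zU=zUU_z$ it satisfies $\textrm{ad}(U_z)(U)=zU$, and hence $\textrm{ad}(U_z^*)(U)=z^{-1}U$.

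First I would set $\beta\doteq\textrm{ad}(U_z^*)\circ\alpha\in\Aut(\Q_2)$ and compute $\beta(U)=\textrm{ad}(U_z^*)(\alpha(U))=\textrm{ad}(U_z^*)(zU)=z\,\textrm{ad}(U_z^*)(U)=z\cdot z^{-1}U=U$. Thus $\beta\in\Aut_{C^*(U)}(\Q_2)$, and Theorem \ref{Isom} applies at once: there is a unique $g\in C(\mathbb{T},\mathbb{T})$ with $\beta=\beta^g$, so that $\beta(S_2)=g(U)S_2$.

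Next I would recover $\alpha=\textrm{ad}(U_z)\circ\beta$ and evaluate it on $S_2$:
$$\alpha(S_2)=U_z\,\beta(S_2)\,U_z^*=U_z\,g(U)S_2\,U_z^*=\bigl(U_z g(U)U_z^*\bigr)\bigl(U_zS_2U_z^*\bigr).$$
Because continuous functional calculus of a normal element commutes with any automorphism, $U_z g(U)U_z^*=g(\textrm{ad}(U_z)(U))=g(zU)$; and the second commutation relation $U_zS_2=S_z'U_z$ gives $U_zS_2U_z^*=S_z'$. Combining these yields $\alpha(S_2)=g(zU)S_z'$, so the assertion holds with $f\doteq g\in C(\mathbb{T},\mathbb{T})$.

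The only genuine subtlety is making sure the whole manipulation takes place inside $\Q_2$ rather than just in $B(\ell_2(\mathbb{Z}))$; this is exactly what the dyadic hypothesis secures, since it places $U_z$ in $\Q_2$ and thereby turns $\textrm{ad}(U_z)$ into an honest inner automorphism to which Theorem \ref{Isom} can legitimately be applied. Once $U_z\in\Q_2$ is in force the argument is purely algebraic, requires no estimates, and the consistency $S_z'=g(zU)^*\alpha(S_2)\in\Q_2$ is automatic because $g(zU)$ is unitary.
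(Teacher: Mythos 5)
Your proof is correct and follows essentially the same route as the paper: compose $\alpha$ with $\textrm{ad}(U_z^*)=\textrm{ad}(U_{z^{-1}})$ to obtain an automorphism fixing $U$, invoke Theorem \ref{Isom} to write it as $\beta^g$, and then undo the conjugation using $U_zg(U)U_z^*=g(zU)$ and $U_zS_2U_z^*=S_z'$. Your explicit remark that the dyadic hypothesis is what places $U_z$ inside $\Q_2$ (so that $\textrm{ad}(U_z)$ is a genuine inner automorphism of $\Q_2$) is exactly the point the paper uses implicitly.
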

\begin{proof}
By its very definition $\textrm{ad}(U_{z^{-1}})\circ\alpha(U)=U$. Therefore, we must have $\textrm{ad}(U_{z^{-1}})\circ\alpha=\beta^f$ for some $f\in C(\mathbb{T},\mathbb{T})$. But then $f(U)S_2=\beta^f(S_2)=U_{z^{-1}}\alpha(S_2)U_z$, i.e.
$\alpha(S_2)=U_zf(U)S_2U_{z^{-1}}=f(zU)U_zS_2U_{z^{-1}}=f(zU)S_z'$.
\end{proof}

\begin{remark}
We have already seen that if $U_z\in \Q_2$ then $S'_z\in \Q_2$. The converse, too, is  true. In fact, one can easily observe that $U_z=S^*_2S'_z$ hence the claim follows. In particular, whenever $U_z$ is not in $\Q_2$, the corresponding $\textrm{ad}(U_z)$ understood as an automorphism of the whole
$B(\ell_2(\mathbb{Z}))$ does not even leave $\Q_2$ globally invariant. 
\end{remark}

\appendix
\section{The functional equation $\pmb{f(z^2)=f(z)^2}$ on the torus}
This appendix presents a self-contained treatment of the functional equation $f(z^2)=f(z)^2$, of which we made an intensive use in the previous sections.
Although the following facts might all be well known, we do include complete arguments, because their proofs are not to be easily found in the literature, however carefully examined.  

\begin{proposition}
Let $f$ be a continuous function from $\mathbb{T}$ to $\mathbb{T}$ such that $f(z^2)=f(z)^2$ for every $z\in\mathbb{T}$. Then there exists a unique $n\in\mathbb{Z}$ such that $f(z)=z^n$.
\end{proposition}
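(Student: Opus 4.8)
The plan is to reduce the statement to a problem about a real lift of $f$ and then pin down that lift by a maximum principle. First I would pass to the covering map $p:\mathbb{R}\to\mathbb{T}$, $p(s)=e^{2\pi i s}$, and set $F(t):=f(e^{2\pi i t})$, a continuous $1$-periodic map $\mathbb{R}\to\mathbb{T}$. By the path-lifting property of $p$ there exists a continuous $\phi:\mathbb{R}\to\mathbb{R}$ with $F(t)=e^{2\pi i\phi(t)}$. The periodicity $F(t+1)=F(t)$ forces $\phi(t+1)-\phi(t)$ to be integer-valued and continuous on the connected space $\mathbb{R}$, hence equal to a constant $n\in\mathbb{Z}$; this $n$ is the candidate exponent.

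Next I would translate the hypothesis into a functional equation for $\phi$. Since $z^2=e^{2\pi i(2t)}$ when $z=e^{2\pi i t}$, the identity $f(z^2)=f(z)^2$ reads $e^{2\pi i\phi(2t)}=e^{2\pi i\,2\phi(t)}$, so that $\phi(2t)-2\phi(t)$ is again a continuous integer-valued function, equal to a constant $m\in\mathbb{Z}$. Introducing $\psi(t):=\phi(t)-nt$ removes the linear growth: a direct check gives that $\psi$ is continuous, $1$-periodic, and satisfies $\psi(2t)=2\psi(t)+m$ for all $t$.

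The heart of the argument, and the step I expect to be the only real obstacle, is to deduce from $\psi(2t)=2\psi(t)+m$ together with periodicity that $\psi$ is constant. Here I would invoke compactness: being continuous and $1$-periodic, $\psi$ attains a maximum $M$ at some $t_0$ and a minimum $\mu$ at some $t_1$. Evaluating the relation at $t_0$ gives $2M+m=\psi(2t_0)\le M$, hence $M\le -m$; evaluating it at $t_1$ gives $2\mu+m=\psi(2t_1)\ge\mu$, hence $\mu\ge -m$. Since $\mu\le M$ always holds, the chain $-m\le\mu\le M\le -m$ collapses, forcing $\psi\equiv -m$. (A purely iterative approach via $\psi(t)+m=2^k(\psi(t/2^k)+m)$ is consistent but does not by itself yield the conclusion, which is why the extremal argument is the right tool.)

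Finally I would unwind the substitutions. From $\psi\equiv -m$ we obtain $\phi(t)=nt-m$, whence $f(e^{2\pi i t})=e^{2\pi i(nt-m)}=e^{2\pi i n t}$ because $m\in\mathbb{Z}$; that is, $f(z)=z^n$. Uniqueness is immediate: if $z^n=z^{n'}$ for every $z\in\mathbb{T}$, then $z^{\,n-n'}=1$ identically on $\mathbb{T}$, which forces $n=n'$.
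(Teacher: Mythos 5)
Your proof is correct and follows essentially the paper's route: both arguments extract the degree $n$ (the paper via the winding number, you via the lift $\phi$ through the covering $\mathbb{R}\to\mathbb{T}$), divide it out, and reduce to showing that a bounded continuous real-valued solution of $\psi(2t)=2\psi(t)+m$ must be constant. One remark is worth making: the closing step the paper actually uses is precisely the iteration you dismiss in your parenthesis, but run in the expanding direction rather than the contracting one. Absorbing the constant first (replace $\psi$ by $\chi:=\psi+m$, so that $\chi(2t)=2\chi(t)$), forward iteration gives $\chi(2^kt)=2^k\chi(t)$, hence $|\chi(t)|\le 2^{-k}\sup|\chi|\to 0$ by boundedness of the continuous periodic $\chi$, so $\chi\equiv 0$ with no extremal argument needed. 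It is only the contracting iteration $t\mapsto t/2^k$ you wrote down that fails to conclude on its own; your max/min argument and the paper's forward iteration are equally efficient substitutes for each other.
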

\begin{proof}
Thanks to the compactness of $\mathbb{T}$ and the continuity of $f$, the winding number of $f$ is a well-defined integer $n\in\mathbb{Z}$, for details see e.g. Arveson's book \cite[Chapter 4, pp 114-115]{Arveson}.  The new function
$g(z)\doteq z^{-n}f(z)$ still satisfies our equation. Furthermore, the winding number of $g$ is zero by construction.
Therefore, there exists $h\in C(\mathbb{T},\mathbb{R})$ such that $g(z)=e^{2 \pi i h(z)}$ for every $z\in\mathbb{T}$. Rephrasing
the equation in terms of $h$, we find that $h(z^2)-2h(z)$ must be an integer for every $z\in\mathbb{T}$. 
By connectedness, the function $h$ is  thus a constant. Obviously there is no lack in generality if we also assume $h(z^2)-2h(z)=0$ for every $z\in\mathbb{T}$. Being bounded, the function $h$ is then forced to be identically zero, which finally leads to $f(z)=z^n$ for every $z\in\mathbb{T}$.
 \end{proof}

The above proposition can be regarded as a one-variable description of the characters of the one-dimensional torus.
It is worth pointing out, though, that it no longer holds true as soon as $\mathbb{T}$ is replaced by the additive group $\mathbb{R}$. In other words, there do exist continuous functions $f:\mathbb{R}\rightarrow \mathbb{T}$ such that
$f(2x)=f(x)^2$ other than $f_t(x)\doteq e^{itx}$, which are obtained by exponentiating non-linear continuous functions $g:\mathbb{R}\rightarrow\mathbb{R}$ such that $g(2x)=2g(x)$ for every $x\in\mathbb{R}$. 
However, any such $g$ cannot be everywhere differentiable with continuous derivative at $0$.\\  

The proof given above can be further simplified if we assume that $f$ satisfies a stronger functional equation, i.e. $f(z^n)=f(z)^n$ for every $n\in\mathbb{N}$ and $z\in\mathbb{T}$.
\begin{proposition}
If $f\in C(\mathbb{T}, \mathbb{T})$ satisfies $f(z^n)=f(z)^n$ for every $z\in\mathbb{T}$ and $n\in\mathbb{N}$, then there exists a unique $k\in\mathbb{Z}$ such that $f(z)=z^k$.
\end{proposition}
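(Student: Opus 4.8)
The plan is to follow the strategy of the preceding proposition but to exploit the stronger hypothesis, which makes the functional-equation analysis essentially trivial once a logarithm is available. First I would record the normalization: evaluating $f(1)=f(1^n)=f(1)^n$ for all $n\in\mathbb{N}$ gives $f(1)^{n-1}=1$ for every $n$, hence $f(1)=1$.

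Next I would let $k\in\mathbb{Z}$ be the winding number (degree) of $f$, defined exactly as in the proof of the previous proposition via \cite[Chapter 4, pp 114-115]{Arveson}, and set $g(z)\doteq z^{-k}f(z)$. Then $g\in C(\mathbb{T},\mathbb{T})$ still satisfies $g(z^n)=g(z)^n$, since $z\mapsto z^k$ does, and by construction $g$ has winding number $0$. The only substantive point of the argument is then the lifting step: because $g$ has vanishing winding number, there exists $h\in C(\mathbb{T},\mathbb{R})$ with $g(z)=e^{2\pi i h(z)}$, and since $g(1)=f(1)=1$ forces $h(1)\in\mathbb{Z}$, after subtracting that integer we may assume $h(1)=0$.

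The core computation is short. Rewriting $g(z^n)=g(z)^n$ as $e^{2\pi i h(z^n)}=e^{2\pi i\, n h(z)}$ shows that, for each fixed $n$, the map $z\mapsto h(z^n)-n h(z)$ is continuous and $\mathbb{Z}$-valued, hence constant on the connected space $\mathbb{T}$; evaluating at $z=1$ with $h(1)=0$ identifies that constant as $0$. Thus $h(z^n)=n h(z)$ for all $z\in\mathbb{T}$ and all $n\in\mathbb{N}$. Since $h$ is continuous on the compact set $\mathbb{T}$ it is bounded, say $|h|\leq M$, so for each fixed $z$ one has $n|h(z)|=|h(z^n)|\leq M$ for every $n$, forcing $h(z)=0$. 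Therefore $g\equiv 1$ and $f(z)=z^k$; uniqueness of $k$ is immediate as distinct integer powers differ on $\mathbb{T}$.

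I expect the lifting step to be the only genuine obstacle, and it is handled precisely by the vanishing of the winding number of $g$; everything afterwards is made trivial by the boundedness of $h$. This is notably cleaner than the $n=2$ case treated above, where one could only conclude $h(z^2)-2h(z)\in\mathbb{Z}$ and had to argue more delicately, whereas here the full hypothesis $h(z^n)=n h(z)$ kills $h$ in one line.
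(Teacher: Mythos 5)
Your proof is correct, but it takes a genuinely different route from the paper's. The paper does \emph{not} adapt the winding-number argument here; instead it exploits the stronger hypothesis via a density argument: for a fixed $z$ the set $C_z=\{w\in\mathbb{T}: f(zw)=f(z)f(w)\}$ is closed and contains the orbit $\{z^n: n\geq 0\}$ (this is exactly the hypothesis), so $C_z=\mathbb{T}$ whenever $z=e^{2\pi i\theta}$ with $\theta$ irrational; letting such $z$ vary over a dense set shows $f$ is multiplicative on all of $\mathbb{T}$, i.e.\ a continuous character, and hence $f(z)=z^k$. Your argument instead re-runs the proof of the preceding proposition ($n=2$ case): subtract the winding number, lift the degree-zero map $g$ to $g=e^{2\pi i h}$, deduce $h(z^n)=nh(z)$ by connectedness, and then kill $h$ by boundedness. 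Both are complete. What each buys: yours is self-contained (it never needs the classification of continuous characters of $\mathbb{T}$, which the paper's last step implicitly invokes), and it shows cleanly how the stronger hypothesis trivializes the step that was delicate for $n=2$ — there one only gets $h(z^2)-2h(z)\in\mathbb{Z}$ and must argue that $h$ is constant, whereas you get $n h(z)$ bounded uniformly in $n$. The paper's proof is softer: no covering-space/lifting machinery at all, just closedness plus density of irrational rotations, at the price of quoting the standard fact about characters of $\mathbb{T}$. It is also worth noting that under the hypothesis for all $n$ (in particular $n=2$) the statement is an immediate corollary of the preceding proposition, so either proof is really a demonstration of an alternative technique rather than a logical necessity.
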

\begin{proof}
Obviously, it is enough to prove that $f(zw)=f(z)f(w)$ for any $z, w\in\mathbb{T}$. Let $z$ be a fixed element of $\mathbb{T}$, and let $C_z\subset\mathbb{T}$ be the set $C_z\doteq\{w\in\mathbb{T}: f(zw)=f(z)f(w)\}$.
From the equality $f(z^n)=f(z)^n$ we may note that $C_z$ contains the set $\{z^n:n=0,1,2,\ldots\}$. By continuity of $f$ we have in addition that $C_z$ is closed. Hence $C_z=\mathbb{T}$ if
$z=e^{2i \pi\theta}$, with $\theta$ being irrational. In other words, for such $z$, we have $f(zw)=f(z)f(w)$ for any $w\in\mathbb{T}$. The full conclusion is now easily got to by density. 
\end{proof}
\begin{remark}
The continuity assumption cannot be left out in either the above propositions. To see this, let $R\subset\mathbb{T}$ be the set $\cup_n H_n$, where $H_n\subset\mathbb{T}$
is the subgroup of the $n$th roots of unity. The function $f$ that is $1$ on $R$ and $f(z)=z$ on its complement still satisfies $f(z^n)=z^n$ for every $n\in\mathbb{N}$, as easily verified. Due to the density of $R$ in the torus, this function is nowhere continuous. Nevertheless, it is equal to the character $z$ almost everywhere. This seems to indicate that any measurable solution of the equation might equal a character almost everywhere. At any rate, it is worth pointing up that the solutions of
the equation $f(z^n)=f(z)^n$ do not enjoy automatic continuity, unlike the solutions of the equation $f(zw)=f(z)f(w)$, which are of course even automatically differentiable.
\end{remark}

\bigskip
\noindent {\it Acknowledgments} We would like to take this opportunity to thank L\'aszl\'o Zsid\'o for a fruitful conversation about the functional equation discussed in the Appendix. We are also grateful to Nicolai Stammeier for his valuable comments on the first draft of this paper. Finally, we owe the referee a debt of gratitude for his or her particularly attentive perusal of the manuscript, which resulted in many improvements not only to notation and presentation but even to the proof of the main result in the Appendix.

\end{document}